\newcommand{\Rplus}{{\mathbb R}_{>0}}
\newcommand{\Rgeq}{{\mathbb R}_{\geq 0}}
\newcommand{\R}{{\mathbb R}}
\newcommand{\bZ}{\mathbb{Z}}
\newcommand{\II}{{\rm I\!I}}
\newcommand{\tr}{\operatorname{tr}}
\newcommand{\spa}{\operatorname{span}}
\newcommand{\Gauss}{{\mathcal{K}}}
\newcommand{\GT}{{\Gamma_T}}
\newcommand{\GTi}{{\Gamma_{i,T}}}
\newcommand{\GhT}{{\Gamma^h_T}}
\newcommand{\GhTi}{{\Gamma^h_{i,T}}}
\newcommand{\dH}[1]{\;{\rm d}{\mathcal{H}}^{#1}} 
\newcommand{\spont}{{\overline\varkappa}}
\newcommand{\kappastar}{\vec\varkappa\rule{0pt}{0pt}^\star}
\newcommand{\Vh}{\underline{V}^h(\Gamma^m)}
\newcommand{\Vhi}{\underline{V}^h(\Gamma^m_i)}
\newcommand{\Vhone}{\underline{V}^h(\Gamma^m_1)}
\newcommand{\Vhtwo}{\underline{V}^h(\Gamma^m_2)}
\newcommand{\Vhiz}{\underline{V}^h(\Gamma^0_i)}
\newcommand{\Vhpartialt}{\underline{V}^h(\gamma^h(t))}
\newcommand{\Whpartialt}{W^h(\gamma^h(t))}
\newcommand{\Vhpartial}{\underline{V}^h(\gamma^m)}
\newcommand{\Vhipartial}{\underline{V}^h(\partial\Gamma^m_i)}
\newcommand{\Vhpartialz}{\underline{V}^h(\gamma^0)}
\newcommand{\Wh}{W^h(\Gamma^m)}
\newcommand{\Whi}{W^h(\Gamma^m_i)}
\newcommand{\Vt}{[H^1(\Gamma(t))]^d}
\newcommand{\Vti}{[H^1(\Gamma_i(t))]^d}
\newcommand{\VLtwoti}{[L^2(\Gamma_i(t))]^d}
\newcommand{\VLtwotone}{[L^2(\Gamma_1(t))]^d}
\newcommand{\Vpartial}{[H^1(\gamma(t))]^d}
\newcommand{\VLtwopartial}{[L^2(\gamma(t))]^d}
\newcommand{\xspacec}{[H^1_\gamma(\Gamma(t))]^d}
\newcommand{\Vht}{\underline{V}^h(\Gamma^h(t))}
\newcommand{\Vhti}{\underline{V}^h(\Gamma_i^h(t))}
\newcommand{\Vhtz}{\underline{V}^h(\Gamma^h(0))}
\newcommand{\Vhzerot}{\underline{V}_0^h(\Gamma^h(t))}
\newcommand{\Vhzeroti}{\underline{V}^h_{0}(\Gamma^h_i(t))}
\newcommand{\Wht}{W^h(\Gamma^h(t))}
\newcommand{\Whti}{W^h(\Gamma_i^h(t))}
\newcommand{\hatVh}{\underline{\hat{V}}\rule{0pt}{0pt}^h(\Gamma^m)}
\newcommand{\nabs}{\nabla_{\!s}}
\newcommand{\Id}{{\rm Id}}
\newcommand{\id}{{\rm id}}
\newcommand{\deldel}[1]{\frac{\delta}{{\delta}#1}}
\newcommand{\dd}[1]{\frac{\rm d}{{\rm d}#1}}
\newcommand{\ddt}{\dd{t}}
\newcommand{\matpartx}{\partial_t^\circ}
\newcommand{\matpartxh}{\partial_t^{\circ,h}}
\newcommand{\unitt}{\vec{\mathfrak t}}
\newcommand{\ttau}{\Delta t}
\def\epsilon{\varepsilon} 
\newcommand{\mat}[1]{\underline{\underline{#1}}\rule{0pt}{0pt}}
\def\hat{\widehat}
\begin{document}

\title[Gradient flow dynamics of two-phase biomembranes]{
Gradient flow dynamics of two-phase biomembranes: Sharp interface variational
formulation and finite element approximation}

\author[J. W. Barrett]{\firstname{John} \middlename{W.} \lastname{Barrett}}
\address{Department of Mathematics, 
Imperial College London, London, SW7 2AZ, UK}
\email{j.barrett@imperial.ac.uk}

\author[H. Garcke]{\firstname{Harald} \lastname{Garcke}}
\address{Fakult{\"a}t f{\"u}r Mathematik, Universit{\"a}t Regensburg, 
93040 Regensburg, Germany}
\email{harald.garcke@ur.de}

\author[R. N\"urnberg]{\firstname{Robert} \lastname{N\"urnberg}}
\address{Department of Mathematics, 
Imperial College London, London, SW7 2AZ, UK}
\email{robert.nurnberg@imperial.ac.uk}

\date{}

\keywords{%
parametric finite elements, Helfrich energy, 
spontaneous curvature, multi-phase membrane,
line energy, $C^0$-- and $C^1$--matching conditions}

\subjclass{35R01; 49Q10; 65M12; 65M60; 82B26; 92C10}

\begin{abstract}
A finite element method for the evolution of a two-phase membrane in a
sharp interface formulation is introduced. The evolution equations are
given as an $L^2$--gradient flow of an energy involving an elastic
bending energy and a line energy. In the two phases Helfrich-type
evolution equations are prescribed, and on the interface, an evolving
curve on an evolving surface, highly nonlinear boundary conditions have
to hold. Here we consider both $C^0$-- and $C^1$--matching conditions
for the surface at the interface.
A new weak formulation is introduced, allowing for a
stable semidiscrete parametric finite element approximation of the
governing equations. In addition, we show existence and uniqueness for a
fully discrete version of the scheme. Numerical simulations demonstrate 
that the approach can deal with a multitude of geometries. In
particular, the paper shows the first computations based on a sharp
interface description, which are not restricted to the axisymmetric case.  
\end{abstract} 

\maketitle
\renewcommand{\shortauthors}{J. W. Barrett, H. Garcke, R. N\"urnberg}
%

\setcounter{equation}{0}
\section{Introduction} \label{sec:1}

Two-phase elastic membranes, consisting of coexisting fluid domains,
have received a lot of attention in the last 20 years. The interest in
two-phase membranes in particular was triggered by the multitude of
different shapes observed in experiments with inhomogeneous
biomembranes and vesicles. Biomembranes are typically formed as a
lipid bilayer, and often multiple lipid components are involved, which
laterally can separate into coexisting phases with different
properties. Among the complex morphologies that appear are micro-domains, which
resemble lipid rafts, and these are of huge interest in biology and
medicine. As the thickness of the membrane is much smaller than its
lateral length scale, typically the membrane is modelled as a
two-dimensional hypersurface in three dimensional Euclidean
space. The equilibrium shape of the membrane is obtained by minimizing an 
energy which --besides other contributions-- contains bending energies
involving the mean curvature and the Gaussian curvature of the
membrane. If different phases occur, parameters in the curvature energy
are inhomogeneous, leading to an interesting free boundary problem as
well as to a plethora of different shapes. We refer to
\cite{BaumgartHW03}, where multi-component giant unilamellar
vesicles (GUVs) separating into different phases were studied. 
These authors were 
able to optically resolve interactions between the different phases, its
curvature elasticity and the line tension of its interface.

There have been several studies on theoretical and numerical aspects
of two-phase membranes taking curvature elasticity and line energy
into account, see e.g.\
\cite{JulicherL93,JulicherL96,TuO-Y04,BaumgartDWJ05,WangD08,DasJB09,%
LowengrubRV09,ElliottS10,ElliottS10a,ElliottS13,Helmers11,Helmers13,%
ChoksiMV13,MerckerM-CRH13,CoxL15,nsns2phase},
which we discuss in the following. 

The by now classical model for a one-phase membrane rests on the
Canham--Helfrich--Evans elastic bending energy 
\begin{equation*}
\tfrac12\,\alpha\,
\int_\Gamma(\varkappa-\spont)^2\dH{2}+
\alpha^G\,\int_\Gamma\Gauss \dH{2} \,,
\end{equation*}
where $\Gamma$ is a closed two-dimensional hypersurface and $\mathcal{H}^{2}$
denotes the two-dimensional Hausdorff measure. 
The mean curvature of $\Gamma$ is denoted by $\varkappa$, and
$\Gauss$ is its Gaussian curvature. The constants $\alpha$ and $\alpha^G$
are bending rigidities, while $\spont$ is the spontaneous curvature
reflecting asymmetry in the membrane introduced, for instance, by
different environments on both sides of the membrane.

In a fundamental work, J{\"u}licher and Lipowsky 
(\cite{JulicherL93,JulicherL96}) generalized the
Canham--Helfrich--Evans model to two-phase membranes. The geometry is
now given by two smooth surfaces $\Gamma_1$ and $\Gamma_2$, with a
common boundary $\gamma$. In general, the constants
$\alpha$, $\alpha^G$ and $\spont$ take different values in the two phases
$\Gamma_1$ and $\Gamma_2$, which we will denote with an index $i$. On
the curve $\gamma$ line tension effects play an important role, and the
total energy introduced in \cite{JulicherL93,JulicherL96} is given as 
\begin{equation} \label{eq:Eneu}
E((\Gamma_i)^2_{i=1}) = \sum_{i=1}^2 \left[
\tfrac12\,\alpha_i\,\int_{\Gamma_i} (\varkappa_i - \spont_i)^2 \dH{2}
+ \alpha^G_i\,\int_{\Gamma_i} \Gauss_i \dH{2}
\right] 
+ \varsigma\,\mathcal{H}^1(\gamma)\,,
\end{equation}
where the constant $\varsigma \in \Rgeq$ denotes a possible line tension, and
where an index $i\in\{1,2\}$ states that quantities such as the
curvatures and physical constants are evaluated with respect to
$\Gamma_i$. Of course, $\mathcal{H}^1$ denotes the one-dimensional 
Hausdorff measure. 

In \cite{JulicherL96} it is assumed that the surface
$\Gamma=\Gamma_1\cup\gamma\cup\Gamma_2$ is a $C^1$--surface, meaning in
particular that the normal to $\Gamma$ is continuous across the phase
boundary $\gamma$. The works \cite{Helmers11,Helmers13,Helmers15},
on the other hand, 
also allow for discontinuities of the normal at $\gamma$. The first
variation of the energy $E$ in (\ref{eq:Eneu}) has been derived in
\cite{ElliottS10} for the $C^1$--case and in \cite{Wutz10} for the
$C^1$-- and the $C^0$--case. It is the goal of this paper to develop a
numerical method for a gradient flow evolution of the energy $E$. 
To be more precise, we will consider an evolution of the form 
\begin{equation} \label{eq:VrV}
\left\langle \vec{\mathcal{V}},\vec{\chi}\right\rangle_\Gamma +
\varrho\left\langle\vec{\mathcal{V}}, \vec{\chi}\right\rangle_\gamma =
\left[\deldel{\Gamma}\,E((\Gamma_i)^2_{i=1})\right](\vec{\chi})\,.
\end{equation}
Here $\vec{\mathcal{V}}$ is the velocity of the surface,
$\deldel{\Gamma}\,E$ is the first variation of the energy,
$\vec{\chi}$ 
is a test vector field on the surface related to directions in which
one perturbs the given surface $\Gamma$, and $\varrho\geq 0$ is a given
constant. In addition, $\langle \cdot,\cdot\rangle_\Gamma$
and $\langle \cdot,\cdot\rangle_\gamma$ 
denote the $L^2$--inner products on the surface
$\Gamma$ and on the curve $\gamma$, respectively. 
The evolution of the surface is
hence given as a steepest descent dynamics with respect to a weighted
$L^2$--inner product that combines contributions from the surface and the
curve. It will turn out that the governing equations in the case where
the surface is restricted to be $C^1$ are 
\begin{equation}\label{eq:intro1}
\vec{\mathcal{V}} = [-\alpha_i\,\Delta_s\,\varkappa_i
+\tfrac{1}{2}\,\alpha_i\,(\varkappa_i-\spont_i)^2\,\varkappa_i
  -\alpha_i\,(\varkappa_i-\spont_i)\,|\nabs\,\vec\nu_i|^2]\, \vec\nu_i 
\quad\mbox{ on } \Gamma_i\,,
\end{equation}
together with the boundary conditions on $\gamma=\partial\Gamma_i$: 
\begin{subequations}
\begin{align}\label{eq:intro2}
& \alpha_1\,(\varkappa-\spont_1)+\alpha^G_1\,\vec\varkappa_\gamma\,.\,
\vec\nu=\alpha_2\,(\varkappa_2-\spont_2) 
+ \alpha^G_2\,\vec\varkappa_\gamma\,.\,\vec\nu\,,\\
& [\alpha_i\,(\nabs\,\varkappa_i)]_1^2\,.\,\vec\mu 
- [\alpha^G_i]_1^2\,\tau_s +
\varsigma\,\vec\varkappa_\gamma\,.\,\vec\nu
 = \varrho\,\vec{\mathcal{V}}\,.\,\vec\nu \,, \label{eq:intro3}\\
& -\tfrac12\, [ \alpha_i\,(\varkappa_i - \spont_i)^2]_1^2
+ [\alpha_i\,(\varkappa_i - \spont_i)\,(\varkappa_i -
\vec\varkappa_\gamma\,.\,\vec\nu)]_1^2
+ [\alpha^G_i]_1^2\,\tau^2 
 + \varsigma\,\vec\varkappa_\gamma\,.\,\vec\mu = 
\varrho\,\vec{\mathcal{V}}\,.\,\vec\mu\,.
\label{eq:intro4}
\end{align}
\end{subequations}
Equation (\ref{eq:intro1}),
with $\Delta_s$ and $\nabs$ denoting the surface Laplacian and the surface
gradient on $\Gamma_i$, respectively, is Willmore flow taking spontaneous
curvature effects into account. The boundary condition
(\ref{eq:intro2}), with $\vec\varkappa_\gamma$ denoting the curvature vector on
$\gamma(t)$, generalizes the equation for the mean curvature in Navier boundary
conditions, appearing for example in \cite[(6)]{DeckelnickGR17}. 
The equations (\ref{eq:intro3},c), with $\tau$ being the geodesic torsion of 
the curve $\gamma(t)$ on $\Gamma(t)$ and with 
$[a_i]_1^2 = a_2 - a_1$
denoting the jump of $a$ across $\gamma(t)$, appear in the case
$\varrho=0$ in \cite[(3.17), (3.18)]{ElliottS10a},
where additional terms to fix the surface areas and the enclosed volume appear. 
In the axisymmetric
case, the equations (\ref{eq:intro2}--c) reduce to the
equations studied in \cite{JulicherL96}. Similar conditions have been
derived in \cite{TuO-Y04}, and it has already been discussed in
\cite[Appendix B]{ElliottS10a} that these authors miss one term. For
positive $\varrho$ the equations (\ref{eq:intro3},c)
give rise to dynamic boundary conditions taking into account an
additional dissipation mechanism at the boundary. A similar condition
for semi-free boundary conditions has been analyzed in
\cite[(1.3)]{AbelsGM15}. 
For evolutions where the surface areas of $\Gamma_1$ and $\Gamma_2$, as well as
the volume enclosed by $\Gamma$, are conserved, additional terms appear
in (\ref{eq:intro1}) and (\ref{eq:intro4}),
see (\ref{eq:gradflowlambda}) and (\ref{eq:C1bc3}), below.
Moreover, in the case that the surface $\Gamma$ is just
continuous, the boundary conditions (\ref{eq:intro2}--c) have to be replaced,
and we refer to (\ref{eq:C0bc1},b), below, for the relevant equations.

Numerically mainly the $C^1$--case has been studied, with the exception
of \cite{Helmers13}, where $C^0$--surfaces with kinks in the 
axisymmetric case were studied numerically with the help of a phase field 
method. In the $C^1$--case already in \cite{JulicherL96} several 
two-phase equilibrium shapes in the axisymmetric case were computed by 
solving a governing
boundary value problem for a system of ordinary differential
equations. Based on research on model membranes, see
\cite{BaumgartHW03}, it has now become possible to perform a
systematic analysis of the influence of parameters also in the case of
two-phase coexistence. We refer to \cite{BaumgartDWJ05}, where
experimental vesicle shapes were compared with shapes obtained by solving
numerically the axisymmetric shape equations derived in
\cite{JulicherL96}. In this context, we also refer to \cite{CoxL15},
where, in contrast to the above works, also the effect of
spontaneous curvature is taken into account in the axisymmetric case. These
authors were able to show that spontaneous curvatures already in an
axisymmetric setup give rise to a multitude of morphologies not seen
in the case without spontaneous curvature.

Almost all numerical results mentioned so far were for a sharp
interface setup. Another successful approach uses a phase field to
describe the two phases on the membrane. Line energy in this context
is replaced by a Ginzburg--Landau energy like in the classical
Cahn--Hilliard theory. 
We refer to
\cite{WangD08,LowengrubRV09,ElliottS10,ElliottS10a,ElliottS13,Helmers11,Helmers13,MerckerM-CRH13,MerckerM-C15}
for numerical results based on the phase field approach. The above
papers use a gradient flow approach to obtain equilibrium shapes in
the large time limit. An evolution law using a Cahn--Hilliard equation
on the membrane coupled to surface and bulk (Navier--)Stokes equations
has been studied by the present authors in \cite{nsns2phase}.

Rigorous analytical results for two-phase elastic membranes are very
limited. So far only results for the axisymmetric case are known.
We refer to the work \cite{ChoksiMV13}, where the existence of
global minimizers for axisymmetric multi-phase membranes was shown, 
and the works \cite{Helmers11,Helmers13,Helmers15}, where the sharp interface
limit of the phase field approach in an axisymmetric situation was studied.
Existence results for the evolution problem are not available in the literature
so far and should be addressed in the future.

It is the goal of this paper to introduce a finite element approximation
for a gradient flow dynamics of the membrane energy $E$, which is based
on a sharp interface approach. Instead of using a phase field on the
membrane, we will directly discretize the curve $\gamma$ separating the
two phases $\Gamma_1$ and $\Gamma_2$. In three dimensions the total
surface $\Gamma$ will be discretized with the help of polyhedral
surfaces consisting of a union of triangles. The curve $\gamma$ is
discretized as a polygonal curve in 
${\R}^3$ fitted to the discretization of $\Gamma$ in the sense that 
the polygonal curve is the boundary of the open polyhedral sets
$\Gamma_1$ and $\Gamma_2$. The boundary conditions
(\ref{eq:intro2}--c) are highly nonlinear and involve
derivatives of an order up to three when formulated with the help of a 
parameterization. It is hence highly non-trivial to discretize them in a
piecewise linear setup. In this work, a splitting method is used, which
basically uses the position vectors of the nodes and an approximation of
the mean curvature vector as unknowns. The approach in this paper relies
on a discretization of mean curvature leading to good mesh properties.
This discretization was introduced by the present authors in
\cite{triplej,gflows3d} and has been 
previously used for closed and open membranes, see
\cite{pwfade, pwfopen} and for elastic curvature flow of curves
with junctions, see \cite{pwftj}.

We will use the variational structure of the problem to derive a
discretization which will turn out to be stable in a spatially discrete
and continuous-in-time semidiscrete formulation. In order to do so, we
will make use of an appropriate Lagrangian and will use ideas of PDE
constrained optimization.

The outline of this paper is as follows. In the subsequent section we
will formulate the governing equations with all the details. 
In Section~\ref{sec:2}
a weak formulation is introduced using the calculus of PDE constrained
optimization. A semidiscrete discretization is formulated in
Section~\ref{sec:3}.
For this scheme also energy decay properties and conservation properties
are shown. In Section~\ref{sec:5} a fully discrete version of the scheme is
introduced, leading to a linear system at each time level, 
which is shown to be uniquely solvable.
In Section~\ref{sec:6} we discuss ideas on how to solve the resulting 
linear algebra problems numerically. In Section~\ref{sec:7}
we present several numerical results
showing that the new approach allows to approximate solutions to the
governing equations also in highly nontrivial geometries. In an appendix
we show that the weak formulation derived in this work yields in fact
the strong formulation for sufficiently smooth evolutions.
 
\setcounter{equation}{0}
\section{The governing equations} \label{sec:1a}
In this section we precisely formulate the governing equations
both for the $C^0$-- and the $C^1$--case.
We always assume that $(\Gamma(t))_{t\in [0,T]}$ 
is an evolving hypersurface without boundary in $\R^d$, $d=2,3$,
that is parameterized by $\vec x(\cdot,t):\Upsilon\to\R^d$,
where $\Upsilon\subset \R^d$ is a given reference manifold, i.e.\
$\Gamma(t) = \vec x(\Upsilon,t)$. Then
\begin{equation} \label{eq:V}
\vec{\mathcal{V}}(\vec q, t) := \vec x_t(\vec z, t)
\qquad \forall\ \vec q = \vec x(\vec z,t) \in \Gamma(t)
\end{equation}
defines the velocity of $\Gamma(t)$.
In order to introduce the two-phase aspect, we consider
the decomposition
 $\Gamma(t) = \Gamma_1(t) \cup \gamma(t) \cup
\Gamma_2(t)$, where
the interiors of $\Gamma_1(t) $ and 
$\Gamma_2(t) $ are disjoint and  $\gamma(t) = \partial\Gamma_1(t) = \partial\Gamma_2(t)$.
We assume that each $\Gamma_i(t)$ is smooth, with outer unit normal
$\vec\nu_i(t)$. 
See Figure~\ref{fig:sketch} for a sketch of the setup in the case $d=3$.
In particular, we
parameterize the two parts of the surface over fixed oriented, compact, smooth
reference manifolds $\Upsilon_i\subset\Upsilon$, i.e.\ we let
$\Gamma_i(t) = \vec x(\Upsilon_i,t)$, $i=1,2$.
\begin{figure}
\center
\begin{tikzpicture}
\draw[dashed,color=gray] (0,0) arc (-90:90:0.5 and 1.5);
\draw[semithick] (0,0) arc (270:90:0.5 and 1.5);
\draw[semithick] 
(0,3) to [out=0, in=180] (0.2,3)
to [out=0, in=180] (2.5,3.5)
to [out=0, in=20] (2.9,-0.5)
to [out=200, in=0] (0,0)
to [out=180, in=0] (-0.2,0)
to [out=180, in=0] (-2,-0.5)
to [out=180, in=200] (-2.5,3.5)
to [out=20, in=180] (0,3)
;
\draw[-latex,ultra thick] (-0.5,1.5) -- (-0.5,0.25) node[left] {$\vec\id_s$};
\draw[-latex,ultra thick] (-0.5,1.5) -- (0.75,1.5) node[below] {$\vec\mu_2$};
\draw[-latex,ultra thick] (-0.4,2.4) -- (-1.65,2.4) node[below] {$\vec\mu_1$};
\draw[-latex,ultra thick] (-2.5,3.5) -- (-3.0,4.6) node[left] {$\vec\nu_2$};
\draw[-latex,ultra thick] (3.3,3.1) -- (4.1,4.0) node[right] {$\vec\nu_1$};
\node at (-2.8,1.0) {$\Gamma_2$};
\node at (2.5,2.9) {$\Gamma_1$};
\node at (0.5,2.6) {$\gamma$};
\end{tikzpicture}
\caption{Sketch of $\Gamma = \Gamma_1\cup\gamma\cup\Gamma_2$ 
with outer unit normals $\vec\nu_i$, conormals $\vec\mu_i$ and tangent vector 
$\vec{\rm id}_s$ on $\gamma$ for the case $d=3$.
}
\label{fig:sketch}
\end{figure}
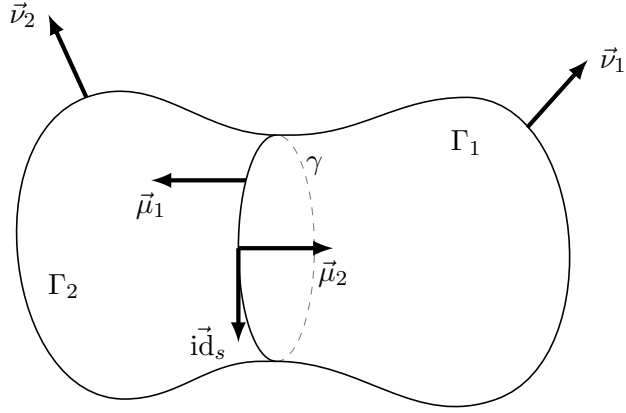
Throughout this paper we will investigate two different types of junction
conditions on $\gamma(t)$:
\begin{subequations}
\begin{align}
\text{$C^0$--case :} & 
\quad \gamma(t) = \partial\Gamma_1(t) = \partial\Gamma_2(t)\,, \label{eq:C0} \\ 
\text{$C^1$--case :} & 
\quad \gamma(t) = \partial\Gamma_1(t) = \partial\Gamma_2(t) \ \text{ and }\
\vec\nu_1 = \vec\nu_2 \quad\text{on } \gamma(t)\,.
\label{eq:C1}
\end{align}
\end{subequations}
Of course, in the case (\ref{eq:C1}) it also holds that $\vec\mu_1 =
-\vec\mu_2$, where $\vec\mu_i$ denotes the outer conormal to
$\Gamma_i(t)$ on $\gamma(t)$.

In order to formulate the governing problems in more detail, we 
denote by $\nabs =
(\partial_{s_1}, \ldots, \partial_{s_d})$ the surface gradient on
$\Gamma_i$, and then define $\nabs\,\vec\chi = \left( \partial_{s_j}\, \chi_k
\right)_{k,j=1}^d$, as well as the Laplace--Beltrami operator
$\Delta_s = \nabs\,.\,\nabs = \sum_{j=1}^d \partial_{s_j}^2$.
We then introduce the mean curvature vector as
\begin{equation} \label{eq:LBop}
\vec\varkappa_i = \varkappa_i\, \vec\nu_i = \Delta_s\, \vec\id 
\qquad \mbox{on $\Gamma_i$}\,,
\end{equation}
where $\vec\id$ is the identity function on $\R^d$, and $\varkappa_i$ is 
the mean curvature of $\Gamma_i$, 
i.e.\ the sum of the principal curvatures of $\Gamma_i$.
In particular, the principal curvatures $\varkappa_{i,j}$, $j=1,\ldots,d-1$, 
together with the eigenvalue zero for the eigenvector $\vec\nu_i$, are the $d$
eigenvalues of the symmetric linear map
$-\nabs\,\vec\nu_i : \R^d \to \R^d$; see e.g.\ \cite[p.~152]{DeckelnickDE05},
where a different sign convention is used. 
The map $- \nabs\,\vec\nu_i$ is also called the Weingarten map or 
shape operator. The mean curvature $\varkappa_i$ and the Gaussian
curvature $\Gauss_i$ of $\Gamma_i$ can now be stated as
\begin{equation}
\varkappa_i = \sum_{j=1}^{d-1} \varkappa_{i,j} = -\tr (\nabs\,\vec\nu_i) = -
\nabs\,.\,\vec\nu_i \qquad \mbox{and} \qquad
\Gauss_i = \prod_{j=1}^{d-1} \varkappa_{i,j}\,. \label{eq:secondform}
\end{equation}

Throughout the paper the main case we are interested in is $d=3$, but it is
often convenient to also discuss the case $d=2$ at the same time. To this end,
we generalize the free energy (\ref{eq:Eneu}) to
\begin{equation} \label{eq:E}
E((\Gamma_i(t))_{i=1}^2) = \sum_{i=1}^2 \left[
\tfrac12\,\alpha_i\,\int_{\Gamma_i(t)} (\varkappa_i - \spont_i)^2 \dH{d-1}
+ \alpha^G_i\,\int_{\Gamma_i(t)} \Gauss_i \dH{d-1}
\right]
+ \varsigma\,\mathcal{H}^{d-2}(\gamma(t))\,,
\end{equation}
where $\varkappa_i$ and $\Gauss_i$ are the mean and Gaussian curvatures of
$\Gamma_i(t)$, $i=1,2$, $\varsigma \in \Rgeq$ denotes a possible line tension,
and $\alpha_i\in\Rplus$ and $\alpha^G_i \in \R$ denote the bending 
and Gaussian bending rigidities of $\Gamma_i(t)$, $i=1,2$,
respectively.
Here and throughout $\mathcal{H}^{k}$, $k=0, 1, 2$, denotes the $k$-dimensional 
Hausdorff measure in $\R^d$.

In the case $d=2$, we always assume that 
$\varsigma = \alpha^G_1 = \alpha^G_2 = 0$.
For the case $d=3$, on the other hand,
we mention that the contributions
\begin{equation} \label{eq:Econ}
\sum_{i=1}^2\left[\tfrac12\,\alpha_i\, 
\int_{\Gamma_i(t)} \varkappa^2_i \dH{2} + 
\alpha^G_i\,\int_{\Gamma_i(t)} \Gauss_i \dH{2}\right]
\end{equation}
to the energy (\ref{eq:E}) 
are positive semidefinite with respect to the principal curvatures if 
\begin{equation} \label{eq:C0alphaGbound}
\alpha^G_i \in [-2\,\alpha_i,0]\,,\quad i=1,2\,. 
\end{equation}
In the $C^1$--case, recall (\ref{eq:C1}), 
adding multiples of $\sum_{i=1}^2 \Gauss_i \dH{2}$ to the energy only 
changes the energy by a constant which follows from the Gauss--Bonnet theorem,
see (\ref{eq:GB}) below. Hence we obtain that
the energy (\ref{eq:E}) can be bounded from below if
$\alpha^G_i \geq \max\{\alpha^G_1,\alpha^G_2\}-2\,\alpha_i$ for
$i=1,2$, which will hold whenever
\begin{equation} \label{eq:alphaGbound}
\min\{\alpha_1,\alpha_2\} \geq \tfrac12\,|\alpha^G_1 - \alpha^G_2|\,.
\end{equation}
Variational problems for integrals including the energy 
(\ref{eq:Econ}) require that the energy is definite, see e.g.\
\cite[p.\ 364]{Nitsche93}, in order to be able to show a priori estimates.
As discussed in \cite{Nitsche93}, the condition of definiteness leads to the
constraints (\ref{eq:C0alphaGbound}) and (\ref{eq:alphaGbound}),
and it is likely that these conditions also have
implications for the existence and regularity theory of 
gradient flows for (\ref{eq:E}) in the $C^0$-- and $C^1$--case, respectively.

In the case $d=3$, 
similarly to (\ref{eq:LBop}), fundamental to many approaches, 
which numerically approximate evolving curves in a parametric way, 
is the identity
\begin{equation}\label{eq:ident}
\vec\id_{ss} = \vec\varkappa_\gamma 
\quad \text{on }\ \gamma(t)\,,
\end{equation}
where $\vec\varkappa_\gamma$ is the curvature vector on
$\gamma(t)$. Here we choose the arclength $s$ of the curve
$\gamma(t)$ such that 
\begin{equation} \label{eq:mu}
\vec\mu_i= (-1)^{i}\,\vec\nu_i \times \vec\id_s \quad \text{on }\ \gamma(t),
\end{equation}
for $i=1,2$, denote the outer conormals to $\Gamma_i(t)$ on $\gamma(t)$.
Note that $\vec\mu_i$ is a vector that is perpendicular to the unit 
tangent $\vec\id_s$ on $\partial\Gamma_i(t)$ and lies in the tangent 
space of $\Gamma_i(t)$.
Now (\ref{eq:ident}) can be rewritten as
\begin{equation} \label{eq:idss}
\vec\id_{ss} = \vec\varkappa_\gamma =
(\vec\varkappa_\gamma\,.\,\vec\mu_i)\,\vec\mu_i 
+ (\vec\varkappa_\gamma\,.\,\vec\nu_i)\,\vec\nu_i
\quad \text{on }\ \gamma(t)\,,
\end{equation}
where $\vec\varkappa_\gamma\,.\,\vec\mu_i$ is the geodesic curvature
and $\vec\varkappa_\gamma\,.\,\vec\nu_i$ is the normal curvature
of $\gamma(t)$ on $\Gamma_i(t)$, $i=1,2$.
It then follows from the Gauss--Bonnet theorem,
\begin{equation} \label{eq:GB}
\int_{\Gamma_i(t)} \Gauss_i \dH{2} = 2\,\pi\,m(\Gamma_i(t)) 
+ \int_{\gamma(t)} \vec\varkappa_\gamma\,.\,\vec\mu_i \dH{1} ,
\end{equation}
where $m(\Gamma_i(t)) \in \bZ$ denotes the Euler characteristic of 
$\Gamma_i(t)$,
that the energy (\ref{eq:E}), is equivalent to
\begin{align}\label{eq:E2}
& E((\Gamma_i(t))_{i=1}^2) = \nonumber \\ & \quad
\sum_{i=1}^2 \left[
\tfrac12\,\alpha_i\,\int_{\Gamma_i(t)} (\varkappa_i - \spont_i)^2 \dH{2}
 + \alpha^G_i \left[\int_{\gamma(t)} 
\vec\varkappa_\gamma\,.\,\vec\mu_i \dH{1} + 2\,\pi\,m(\Gamma_i(t)) \right]
\right]
+ \varsigma\, \mathcal{H}^1(\gamma(t)) \,.
\end{align}
We note that we use a sign for the conormal that is different from many 
authors in differential geometry, and hence we obtain a different sign in 
the Gauss--Bonnet formula. 

In some cases, in particular in applications for biomembranes, cf.\
\cite{Tu13}, the surface areas of $\Gamma_1(t)$ and $\Gamma_2(t)$ need 
to stay constant during the evolution, as well as the volume enclosed by
$\Gamma(t)$. Here and throughout we use the terminology ``surface area'' and
``enclosed volume'' also for the case $d=2$, when the former is really 
curve length, and the latter means enclosed area.
In this case one can consider
\begin{equation}
E_\lambda ((\Gamma_i(t))_{i=1}^2) = 
E ((\Gamma_i(t))_{i=1}^2) + \lambda^V(t)\,\mathcal{L}^d(\Omega(t))
+ \sum_{i=1}^2 \lambda^A_i(t)\,\mathcal{H}^{d-1} (\Gamma_i(t)) \,,
\label{eq:areaE}
\end{equation}
where $\Omega(t)$ denotes the interior of $\Gamma(t)$ and
$\mathcal{L}^d$ denotes the Lebesgue measure in $\mathbb{R}^d$.
Here, $\lambda^A_i(t)$ are Lagrange multipliers for the
area constraints, which can be interpreted as a surface tension,
and $\lambda^V(t)$ is a Lagrange multiplier for the volume constraint
which might be interpreted as a pressure difference.

For the convenience of the reader, 
we end this section by stating the strong formulations of the 
$L^2$--gradient flows for (\ref{eq:E}) in the presence of the matching
conditions (\ref{eq:C0}) and (\ref{eq:C1}), respectively.
These strong formulations directly follow from 
the weak formulation introduced in Section~\ref{sec:2}, as we show rigorously
in the appendix.

The weighted $L^2$--gradient flow, (\ref{eq:VrV}), 
of (\ref{eq:E2}), for $d=2$ or $d=3$,
then leads to the evolution law 
\begin{equation} 
\vec{\mathcal{V}}\,.\,\vec\nu_i = - \alpha_i\,\Delta_s\,\varkappa_i +
\tfrac12\,\alpha_i\,(\varkappa_i - \spont_i)^2 \,\varkappa_i -
\alpha_i\,(\varkappa_i - \spont_i)\,|\nabs\,\vec\nu_i|^2 
\quad\text{on } \Gamma_i(t)\,,\ i=1,2\,.
\label{eq:gradflow}
\end{equation}
See (\ref{eq:strongopen}) in the appendix for a derivation of 
(\ref{eq:gradflow}).
We remark that if the more general energy (\ref{eq:areaE}) is considered, then
(\ref{eq:gradflow}) is replaced by
\begin{equation}
\vec{\mathcal{V}}\,.\,\vec\nu_i = - \alpha_i\,\Delta_s\,\varkappa_i +
\tfrac12\,\alpha_i\,(\varkappa_i - \spont_i)^2 \,\varkappa_i -
\alpha_i\,(\varkappa_i - \spont_i)\,|\nabs\,\vec\nu_i|^2 +
\lambda^A_i\,\varkappa_i - \lambda^V
\quad\text{on } \Gamma_i(t)\,,
\label{eq:gradflowlambda}
\end{equation}
for $i=1,2$, see (\ref{eq:stronggradflowlambda}) in the appendix.

In the case $d=3$ we introduce
the second fundamental form $\II_i$ of $\Gamma_i(t)$, which is given as
\begin{equation} \label{eq:II}
\II_i (\unitt_1,\unitt_2) = -[\partial_{\unitt_1}\, \vec\nu_i]
\,.\, \unitt_2 
= - [ (\nabs\,\vec\nu_i)\,\unitt_1] \,.\, \unitt_2 
\qquad\text{on }\ \Gamma_i(t)\,,
\end{equation}
for all tangential vectors $\unitt_j$, $j=1,2$.
We note that $\II_i(\cdot,\cdot)$ is a symmetric bilinear form, as
$\nabs\,\vec\nu_i$ is symmetric.
In addition, we define
\begin{equation} \label{eq:torsion2}
\tau_i = \II_i(\vec\id_s,\vec\mu_i) \qquad \text{on } \gamma(t)\,,
\end{equation}
i.e.\ $\tau_i = - (\vec\nu_i)_s\,.\,\vec\mu_i$ on $\gamma(t)$.

Still considering the case $d=3$,
in the $C^0$--junction case, the boundary conditions on $\gamma(t)$ are given 
by
\begin{subequations}
\begin{align}
&
\alpha_i\,(\varkappa_i - \spont_i) + \alpha^G_i\,
\vec\varkappa_\gamma\,.\,\vec\nu_i = 0 
\quad \text{on }\ \gamma(t)\,,\quad i = 1,2\,, \label{eq:C0bc1} \\
&
\sum_{i=1}^2 \left[
 ( (\alpha_i\,(\nabs\,\varkappa_i)\,.\,\vec\mu_i - 
 \alpha^G_i\,(\tau_i)_s)\,\vec\nu_i
-(\tfrac12\,\alpha_i\,(\varkappa_i - \spont_i)^2 
+ \alpha^G_i\,\Gauss_i + \lambda^A_i)\,\vec\mu_i \right] +
 \varsigma\, \vec\varkappa_\gamma
 = \varrho\,\vec{\mathcal{V}} 
\quad \text{on }\ \gamma(t)\,, \label{eq:C0bc2} 
\end{align}
\end{subequations}
see (\ref{eq:strongC0bc1}), (\ref{eq:strong2C0bc2}) in the appendix.
We note that (\ref{eq:C0bc1}) are two scalar conditions, while
(\ref{eq:C0bc2}) gives rise to two conditions as $\vec\mu_i$,
$\vec\nu_i$ and $\vec\varkappa_\gamma$ are all perpendicular to the tangent
space to $\gamma(t)$. Expressing $\Gamma_1$ and $\Gamma_2$ locally as
two graphs, we also obtain one condition for the height functions stemming
from the $C^0$--condition. Altogether we have five conditions, as is to be
expected for a free boundary problem involving fourth order operators
on both sides of the free boundary. In this context we also refer to
Remark~2.1 in \cite{pwftj}.

In the $C^1$--junction case, when $\vec\nu = \vec\nu_1 = \vec\nu_2$ and 
$\vec\mu = \vec\mu_2 = - \vec\mu_1$ on $\gamma(t)$, 
the boundary conditions on $\gamma(t)$ 
for the dissipation dynamics (\ref{eq:VrV}), with $E$ replaced by $E_\lambda$,
are given by
\begin{subequations}
\begin{align}
& [\alpha_i\,(\varkappa_i - \spont_i)]_1^2 + [\alpha^G_i]_1^2\,
\vec\varkappa_\gamma\,.\,\vec\nu = 0 
\quad \text{on }\ \gamma(t)\,, \label{eq:C1bc1} \\
& [\alpha_i\,(\nabs\,\varkappa_i)]_1^2\,.\,\vec\mu +\varsigma\,\vec\varkappa_\gamma
\,.\,\vec\nu - [\alpha^G_i]_1^2\,\tau_s = 
\varrho\,\vec{\mathcal{V}}\,.\,\vec\nu \quad \text{on }\ \gamma(t)\,,
\label{eq:C1bc2} \\
& [-\tfrac12\,\alpha_i\,(\varkappa_i - \spont_i)^2
+\alpha_i\,(\varkappa_i - \spont_i)\,
(\varkappa_i - \vec\varkappa_\gamma\,.\,\vec\nu)
- \lambda^A_i]_1^2
+ [\alpha^G_i]_1^2\,\tau^2
 +\varsigma\,\vec\varkappa_\gamma\,.\,\vec\mu  = 
\varrho\,\vec{\mathcal{V}}\,.\,\vec\mu 
\quad \text{on }\ \gamma(t)\,, \label{eq:C1bc3}
\end{align}
\end{subequations}
where $\tau = \tau_2 = -\tau_1$ is the geodesic torsion of the curve
$\gamma(t)$ on $\Gamma(t)$. We note that (\ref{eq:C1bc1}--c),
in the case $\varrho=0$, agree with 
(3.16)--(3.18) in \cite{ElliottS10a}, see also \cite[(2.7b,a,c)]{ElliottS13}.
In terms of counting the number of equations, we see that (\ref{eq:C1bc1}--c) 
are three conditions, together with one condition coming from
$\vec\nu_1 = \vec\nu_2$ and one condition from the requirement that the two
phases match up continuously, leading to five conditions in total.
We refer to (\ref{eq:akkakn}), (\ref{eq:C1norm},b) in the appendix 
for a derivation of (\ref{eq:C1bc1}--c). 

\begin{remark} \label{rem:2d}
We note that although the conditions {\rm (\ref{eq:C0bc1},b)} and
{\rm (\ref{eq:C1bc1}--c)} were derived for the case $d=3$, they are also valid
in the case $d=2$ on recalling that in this case we set $\varsigma =
\alpha^G_1 = \alpha^G_2 = 0$.
In particular, {\rm (\ref{eq:C0bc1})} then 
simplifies to $\varkappa_i=\spont_i$ on $\gamma(t)$, $i=1,2$, which is the same
as the condition \cite[(2.13c)]{pwftj} that was derived by the authors
for a $C^0$--junction between two curves meeting in 2d.
In addition, {\rm (\ref{eq:C0bc2})} for $d=2$ and $\varrho=0$ collapses to 
\cite[(2.13b)]{pwftj}, modulo the different sign convention employed there.

Similarly, {\rm (\ref{eq:C1bc1})} for $d=2$ simplifies to
$\alpha_1\,(\varkappa_1-\spont_1) 
= \alpha_2\,(\varkappa_2-\spont_2)$ on $\gamma(t)$, which is the same
as the condition \cite[(2.18e)]{pwftj},
modulo the different sign convention employed there, 
that was derived by the authors
for a $C^1$--junction between two curves meeting in 2d.
In addition, {\rm (\ref{eq:C1bc2},c)} for $d=2$ and $\varrho=0$, collapse to 
\cite[(2.18b,c)]{pwftj}.
\end{remark}

\setcounter{equation}{0}
\section{Weak formulation} \label{sec:2}

In this section we derive a weak formulation of a generalized $L^2$--gradient 
flow of $E((\Gamma_i(t))_{i=1}^2)$.
The weak formulation of the standard $L^2$--gradient flow is given by 
(\ref{eq:weakGD3a}), below, 
with $\varrho = 0$, where $f_\Gamma$ represents 
the first variation of the energy 
$E((\Gamma_i(t))_{i=1}^2)$ formulated in a suitable weak form. 
In what follows we will define a Lagrangian
involving the energy and suitable constraints, which for example relate the
curvatures to the parametrizations of the surfaces. 
This Lagrangian will allow us to derive (\ref{eq:LM4a}--f), below, 
which defines $f_\Gamma$ in a weak formulation involving only 
first order derivatives.
This formulation will be suitable for a numerical approximation based on 
continuous, piecewise linear finite elements, and such an approximation will be
considered in Section~\ref{sec:3}.

On recalling (\ref{eq:V}), we define the following time derivative that
follows the parameterization $\vec x(\cdot, t)$ of $\Gamma(t)$. Let
\begin{equation} \label{eq:matpartx}
(\matpartx\, \phi)\!\mid_{\Gamma_i(t)} = 
(\phi_t + \vec{\mathcal{V}} \,.\,\nabla\,\phi)\!\mid_{\Gamma_i(t)}
\qquad \forall\ \phi \in H^1(\GTi)\,,
\end{equation}
where we have defined the space-time surfaces
\begin{equation*} 
\GTi := \bigcup_{t \in [0,T]} \Gamma_i(t) \times \{t\}\,,\ i=1,2\,,
\quad\text{and}\quad
\GT := \bigcup_{t \in [0,T]} \Gamma(t) \times \{t\}\,.
\end{equation*}
Here we stress that (\ref{eq:matpartx}) is well-defined, 
even though $\phi_t$ and $\nabla\,\phi$ do not make sense separately for a
function $\phi \in H^1(\GTi)$.
We note that
\begin{equation} \label{eq:DElem5.2}
\ddt \left\langle \psi_i, \phi_i \right\rangle_{\Gamma_i(t)}
 = \left\langle \matpartx\,\psi_i, \phi \right\rangle_{\Gamma_i(t)}
 + \left\langle \psi_i, \matpartx\,\phi_i \right\rangle_{\Gamma_i(t)}
+ \left\langle \psi_i\,\phi_i, \nabs\,.\,\vec{\mathcal{V}} 
 \right\rangle_{\Gamma_i(t)}
\qquad \forall\ \psi_i,\phi_i \in H^1(\GTi)\,,
\end{equation}
see Lemma~5.2 in \cite{DziukE13}.
Here $\langle \cdot, \cdot \rangle_{\Gamma_i(t)}$ denotes the 
$L^2$--inner product on $\Gamma_i(t)$,
and $\langle \cdot, \cdot \rangle_{\Gamma(t)}
= \sum_{i=1}^2$ $\langle \cdot, \cdot \rangle_{\Gamma_i(t)}$. 
It immediately follows from (\ref{eq:DElem5.2}) that
\begin{equation} \label{eq:dtarea}
\ddt\, \mathcal{H}^{d-1}(\Gamma_i(t)) 
= \left\langle \nabs\,.\,\vec{\mathcal{V}} , 1 \right\rangle_{\Gamma_i(t)}
= \left\langle \nabs\,\vec\id ,\nabs\,\vec{\mathcal{V}}
\right\rangle_{\Gamma_i(t)} .
\end{equation}
Moreover, on recalling Lemma~2.1 from \cite{DeckelnickDE05}, it holds that
\begin{equation} \label{eq:dtvol}
\ddt\, \mathcal{L}^{d}(\Omega(t)) = 
\sum_{i=1}^2
\left\langle \vec{\mathcal{V}} , \vec\nu_i\right\rangle_{\Gamma_i(t)}.
\end{equation}

In this section we would like to derive a weak formulation for the 
$L^2$--gradient flow of $E((\Gamma_i(t))_{i=1}^2)$. To this end,
we need to consider variations of the energy with respect to 
$\Gamma(t) = \vec x(\Upsilon,t)$.
Let
\begin{align*} 
H^1_\gamma(\Gamma(t)) := \{
\eta \in L^2(\Gamma(t)) : &\  \eta\!\mid_{\Gamma_i(t)} \in H^1(\Gamma_i(t)), 
i=1,2\,,\ \nonumber \\ & 
(\eta\!\mid_{\Gamma_1(t)})\!\mid_{\gamma(t)} = 
(\eta\!\mid_{\Gamma_2(t)})\!\mid_{\gamma(t)}
=:\eta\!\mid_{\gamma(t)} \in H^1(\gamma(t))\}\,.
\end{align*}
In addition, for any given $\vec\chi \in \xspacec$ 
and for any $\epsilon \in (0,\epsilon_0)$ for some $\epsilon_0
\in {\mathbb R}_{>0}$, let
\begin{equation}
\Gamma_{\epsilon}(t) := \{ \vec\Psi(\vec z,\epsilon) : 
\vec z \in \Gamma(t) \} \,, \ \text{~where~}\
\vec\Psi(\vec z, 0) = \vec z \text{~and~}
\tfrac{\partial\vec\Psi}{\partial\epsilon}(\vec z, 0) = \vec\chi(\vec z)
\quad \forall\ \vec z \in \Gamma(t)\,.
\label{eq:Gammadelta}
\end{equation}
Of course, we have that
$\Gamma_\epsilon(t) = \Gamma_{1,\epsilon}(t) \cup \gamma_\epsilon(t) \cup
\Gamma_{2,\epsilon}(t)$, where
\begin{equation*} 
\Gamma_{i,\epsilon}(t) := \{ \vec\Psi(\vec z,\epsilon) : 
\vec z \in \Gamma_i(t) \}\,,\ i=1,2\,,\quad\text{and}\quad
\gamma_{\epsilon}(t) = \partial\Gamma_{1,\epsilon}(t)
= \partial\Gamma_{2,\epsilon}(t) \,.
\end{equation*}
Similarly to (\ref{eq:dtarea}), 
the first variation of $\mathcal{H}^{d-1}(\Gamma_i(t))$
with respect to $\Gamma(t)$ in the direction $\vec\chi \in \xspacec$ 
is given by
\begin{align}
\left[ \deldel{\Gamma} \, \mathcal{H}^{d-1}(\Gamma_i(t))\right]
(\vec\chi) & = 
\dd\epsilon\,\mathcal{H}^{d-1}(\Gamma_{i,\epsilon}(t)) \mid_{\epsilon=0} 
\nonumber \\ & 
= \lim_{\epsilon\to0} 
\tfrac1\epsilon\left[\mathcal{H}^{d-1}(\Gamma_{i,\epsilon}(t))
- \mathcal{H}^{d-1}(\Gamma_i(t)) \right] = 
\left\langle \nabs\,\vec\id , \nabs \,\vec\chi \right\rangle_{\Gamma_i(t)} ,
\label{eq:vardet}
\end{align}
see e.g.\ the proof of Lemma~1 in \cite{Dziuk08}. 

In order to derive a suitable weak formulation, we formally consider
the first variation of (\ref{eq:E}) subject to the following 
side constraint, which is inspired by the weak formulation of (\ref{eq:LBop}),
\begin{equation}
\left\langle \mat Q_{i,\theta}\,\kappastar_i, \vec\eta 
\right\rangle_{\Gamma_i(t)}
+ \left\langle \nabs \,\vec\id,\nabs\, \vec\eta\right\rangle_{\Gamma_i(t)}= 
\left\langle \vec{\rm m}_i, \vec\eta \right\rangle_{\gamma(t)}
\qquad \forall\ \vec\eta \in \Vti\,,\ i = 1,2\,,
\label{eq:side3}
\end{equation}
where $\theta \in [0,1]$ is a fixed parameter, and 
where $\mat Q_{i,\theta}$ are defined by
\begin{equation} \label{eq:Q}
\mat Q_{i,\theta} = \theta\,\mat\Id +
(1-\theta)\,\vec\nu_i\otimes\vec\nu_i
\quad\text{on }\ \Gamma_i(t)\,.
\end{equation}
Of course, (\ref{eq:side3}) holds trivially on the continuous level for 
$\kappastar_i = \vec\varkappa_i$
and for $\vec{\rm m}_i$ being the conormal $\vec\mu_i$,
independently of the choice of $\theta \in [0,1]$.
Here we remark that the natural weak formulation of (\ref{eq:LBop})
would correspond to (\ref{eq:side3}) with $\theta =1$.
However, under discretization that formulation would lead to 
undesirable mesh effects. Hence, in line with the authors previous work
in \cite{pwfopen}, we also allow $\theta \in [0,1)$, which under
discretization
leads to an induced tangential motion and good meshes for $\theta=0$, 
in general.
In rare cases we may need to dampen the tangential motion that occurs
in the case $\theta=0$. To this end, we allow for the full range
of values $\theta \in [0,1]$.

Similarly to (\ref{eq:side3}), we introduce the following side constraint,
inspired by the weak formulation of (\ref{eq:ident}): 
\begin{equation} 
 \left\langle \kappastar_\gamma, 
\vec\eta \right\rangle_{\gamma(t)} +
\left\langle \vec\id_s , \vec\eta_s \right\rangle_{\gamma(t)} = 
0 \quad \forall\ \vec\eta \in \Vpartial\,. \label{eq:varkappapartial} 
\end{equation}
Finally, in order to model a $C^0$-- or $C^1$--contact we require 
\begin{equation} \label{eq:mC1}
C_1\,(\vec{\rm m}_1 + \vec{\rm m}_2) = \vec 0 \qquad \text{on}\quad \gamma(t)\,,
\end{equation}
where $C_1 = 0$ for $C^0$ and $C_1 = 1$ for $C^1$.

We now define the Lagrangian 
\begin{align*} &
L((\Gamma_i(t), \kappastar_i, \vec{\rm m}_i, \vec y_i)_{i=1}^2, 
\kappastar_\gamma, \vec z, \vec\phi) =
\sum_{i=1}^2 \left[
\tfrac12 \alpha_i \left\langle \kappastar_i - \spont_i\,\vec\nu_i, 
\kappastar_i - \spont_i\,\vec\nu_i\right\rangle_{\Gamma_i(t)} 
+ \alpha^G_i \left\langle \kappastar_\gamma, \vec{\rm m}_i 
\right\rangle_{\gamma(t)} \right] 
 \nonumber \\ & \quad
+ \varsigma\,\mathcal{H}^{d-2}(\gamma(t))
- \left\langle \kappastar_\gamma, \vec z \right\rangle_{\gamma(t)} -
\left\langle \vec\id_s , \vec z_s \right\rangle_{\gamma(t)}
+ C_1 
\left\langle \vec{\rm m}_1 + \vec{\rm m}_2, \vec\phi\right\rangle_{\gamma(t)} 
\nonumber \\ & \quad
- \sum_{i=1}^2 \left[
\left\langle \mat Q_{i,\theta}\,\kappastar_i, \vec y_i 
\right\rangle_{\Gamma_i(t)} +
\left\langle \nabs \,\vec\id, \nabs\,\vec y_i \right\rangle_{\Gamma_i(t)} 
- \left\langle \vec{\rm m}_i, \vec y_i \right\rangle_{\gamma(t)} \right] , 
\end{align*}
where $\vec y_i\in \Vti$ and
$\vec z \in \Vpartial$ are Lagrange multipliers for 
(\ref{eq:side3}) and (\ref{eq:varkappapartial}), respectively. 
Similarly, $\vec\phi \in [L^2(\gamma(t))]^d$ is a Lagrange multiplier for
(\ref{eq:mC1}).
We now want to compute the first variation 
$f_\Gamma$ of $E((\Gamma_i(t))_{i=1}^2)$, 
subject to the side constraints (\ref{eq:side3}),
(\ref{eq:varkappapartial}) and (\ref{eq:mC1}). 
This means that $f_\Gamma$ needs to fulfill
\begin{equation}
f_\Gamma(\vec\chi) = 
 - \left[ \deldel\Gamma\, E(t) \right] (\vec\chi)
\qquad \forall \ \vec\chi \in \xspacec\,.
\label{eq:varEkb}
\end{equation}
In particular, on using ideas from 
the formal calculus of PDE constrained optimization, see
e.g.\ \cite{Troltzsch10},
we can formally compute $f_\Gamma$ by requiring that
\begin{subequations}
\begin{align}
\left[\deldel{\Gamma}\,L\right](\vec\chi) 
& = \lim_{\epsilon\to0} \tfrac1\epsilon \left[ 
(L(\Gamma_{i,\epsilon}(t),
\kappastar_i, \vec{\rm m}_i,\vec y_i)_{i=1}^2,\kappastar_\gamma,\vec z,
\vec\phi) \right. \nonumber \\ & \qquad \qquad \left. - 
L((\Gamma_i(t), \kappastar_i, \vec{\rm m}_i, \vec y_i)_{i=1}^2, 
\kappastar_\gamma, \vec z, \vec\phi)
\right] = - f_\Gamma(\vec\chi)
\label{eq:deltaLa} \\
\left[\deldel{\kappastar_1}\, L\right](\vec\xi_1) & =
\lim_{\epsilon\to0} \tfrac1\epsilon\left[ 
L(\Gamma_1(t), \kappastar_1 + \epsilon\,\vec\xi_1,
\vec{\rm m}_1,\vec y_1,
\Gamma_2(t),  \kappastar_2,
\vec{\rm m}_2,\vec y_2, \kappastar_\gamma,\vec z,
\vec\phi) \right. \nonumber \\ & \qquad \qquad \left. - 
L((\Gamma_i(t), \kappastar_i, \vec{\rm m}_i, \vec y_i)_{i=1}^2, 
\kappastar_\gamma, \vec z, \vec\phi)
 \right] = 0\,, \label{eq:deltaLb} \\
\left[\deldel{\vec{\rm m}_1}\, L\right](\vec\zeta_1) & =
\lim_{\epsilon\to0} \tfrac1\epsilon\left[ 
L(\Gamma_1(t), \kappastar_1,
\vec{\rm m}_1 + \epsilon\,\vec\zeta_1,\vec y_1,
\Gamma_2(t),  \kappastar_2,
\vec{\rm m}_2,\vec y_2, \kappastar_\gamma,\vec z,
\vec\phi)
 \right. \nonumber \\ & \qquad \qquad \left. - 
L((\Gamma_i(t), \kappastar_i, \vec{\rm m}_i, \vec y_i)_{i=1}^2, 
\kappastar_\gamma, \vec z, \vec\phi)
 \right] = 0\,, \label{eq:deltaLc} \\
\left[\deldel{\vec y_1}\, L\right](\vec\eta_1) & =
\lim_{\epsilon\to0} \tfrac1\epsilon\left[ 
L(\Gamma_1(t), \kappastar_1,
\vec{\rm m}_1 ,\vec y_1+ \epsilon\,\vec\eta_1,
\Gamma_2(t),  \kappastar_2,
\vec{\rm m}_2,\vec y_2, \kappastar_\gamma,\vec z,
\vec\phi)
 \right. \nonumber \\ & \qquad \qquad \left. - 
L((\Gamma_i(t), \kappastar_i, \vec{\rm m}_i, \vec y_i)_{i=1}^2, 
\kappastar_\gamma, \vec z, \vec\phi)
 \right] = 0\,, \label{eq:deltaLd}
\end{align}
\end{subequations}
for variations $\vec\chi \in \xspacec$, 
$\vec\xi_1 \in \VLtwotone$, $\vec\zeta_1 \in \VLtwopartial$ and 
$\vec\eta_1 \in \VLtwotone$; 
and similarly for the variations for 
$\kappastar_2$, $\vec{\rm m}_2$, $\vec y_2$, $\kappastar_\gamma$, 
$\vec z$ and $\vec\phi$.

In order to calculate (\ref{eq:deltaLa}--d), we note that
generalized variants of (\ref{eq:vardet}) also hold. Namely, we have that
\begin{equation}
\left[\deldel{\Gamma} \left\langle w_i, 1 \right\rangle_{\Gamma_i(t)}
\right] (\vec\chi) = 
\dd\epsilon \left\langle w_{i,\epsilon}, 1 
\right\rangle_{\Gamma_{i,\epsilon}(t)} 
\mid_{\epsilon=0} = 
\left\langle w_i\,\nabs\,\vec\id , \nabs \,\vec\chi 
\right\rangle_{\Gamma_i(t)}
 \quad \forall\ w_i \in L^\infty(\Gamma_i(t))\,,
\label{eq:vardetw}
\end{equation}
where $w_{i,\epsilon} \in L^\infty(\Gamma_{i,\epsilon}(t))$, for any
$w_i \in L^\infty(\Gamma_i(t))$, is defined by 
\begin{equation*} 
w_{i,\epsilon}(\vec\Psi(\vec z, \epsilon)) = w_i(\vec z) \qquad \forall\
\vec z \in \Gamma_i(t)\,,
\end{equation*}
and similarly for $\vec w \in [L^\infty(\Gamma_i(t))]^d$.
This definition of $w_{i,\epsilon}$ yields that
$\partial^0_\epsilon\, w_i=0$, where
\begin{equation}
\partial^0_\epsilon\,w_i(\vec{z})=
\dd\epsilon w_{i,\epsilon}(\vec\Psi(\vec{z},\epsilon)) \mid_{\epsilon=0}
\qquad \forall\ \vec z \in \Gamma_i(t).
\label{eq:deltaderiv}
\end{equation}
Of course, (\ref{eq:vardetw}) is the first variation analogue of
(\ref{eq:DElem5.2}) with $w_i = \psi_i\,\phi_i$ and 
$\partial^0_\epsilon\,\psi_i = \partial^0_\epsilon\,\phi_i = 0$.
Similarly, it holds that
\begin{align}
& \left[ \deldel{\Gamma} \left\langle \vec w_i, \vec\nu_i 
\right\rangle_{\Gamma_i(t)} \right] (\vec\chi) = 
\dd\epsilon \left\langle \vec w_{i,\epsilon}, \vec\nu_{i,\epsilon}
\right\rangle_{\Gamma_{i,\epsilon}(t)} \mid_{\epsilon=0}
\nonumber \\ & \qquad
= \left\langle (\vec w_i\,.\,\vec\nu_i)\,\nabs\,\vec\id , \nabs \,\vec\chi 
\right\rangle_{\Gamma_i(t)}
+ \left\langle \vec w_i, 
 \partial^0_\epsilon\,\vec\nu_i \right\rangle_{\Gamma_i(t)}
 \quad \forall\ \vec w_i \in [L^\infty(\Gamma_i(t))]^d\,,
\label{eq:vardetwnu}
\end{align}
where $\partial^0_\epsilon\, \vec w_i =\vec 0$ and  
$\vec\nu_{i,\epsilon}(t)$ denotes the unit normal on $\Gamma_{i,\epsilon}(t)$.
Moreover, we will make use of the following result
concerning the variation of $\vec\nu_i$, with respect to $\Gamma(t)$, 
in the direction $\vec\chi \in \xspacec$:
\begin{equation}
\partial^0_\epsilon \,\vec\nu_i
= - [\nabs\, \vec\chi]^T \,\vec\nu_i \quad \text{on}\quad \Gamma_i(t)
\quad \Rightarrow \quad
\matpartx\,\vec\nu_i = - [\nabs\, \vec{\mathcal{V}}]^T \,\vec\nu_i
\quad \text{on}\quad \Gamma_i(t)\,, 
\label{eq:normvar}
\end{equation}
see \cite[Lemma~9]{SchmidtS10}.
We also note that for $\vec\eta_i \in \Vti$ it holds that
\begin{align}
&
\left[ \deldel{\Gamma} \left\langle \nabs\,\vec\id,
\nabs\,\vec\eta_i\right\rangle_{\Gamma_i(t)} \right](\vec\chi)
= \dd\epsilon \left\langle \nabs\,\vec\id,
\nabs\,\vec\eta_{i,\epsilon}\right\rangle_{\Gamma_{i,\epsilon}(t)} 
\mid_{\epsilon=0} 
= \left\langle \nabs\,.\,\vec\eta_i, \nabs \,.\,\vec\chi
\right\rangle_{\Gamma_i(t)}
\nonumber \\ & \qquad \qquad
+ \sum_{l,\,m=1}^d \left[
\left\langle (\vec\nu_i)_l\,(\vec\nu_i)_m\,
 \nabs\,(\vec\eta_i)_m, \nabs \,(\vec\chi)_l \right\rangle_{\Gamma_i(t)}
- \left\langle (\nabs)_m\,(\vec\eta_i)_l, (\nabs)_l \,(\vec\chi)_m 
\right\rangle_{\Gamma_i(t)} \right]  \nonumber \\ & \qquad
 = 
 \left\langle \nabs\,\vec\eta_i, \nabs \,\vec\chi\right\rangle_{\Gamma_i(t)}
+ \left\langle \nabs\,.\,\vec\eta_i, \nabs \,.\,\vec\chi
 \right\rangle_{\Gamma_i(t)}
- \left\langle (\nabs \,\vec\eta_i)^T, \mat D(\vec\chi)\,(\nabs\,\vec\id)^T 
 \right\rangle_{\Gamma_i(t)},
\label{eq:secvar}
\end{align}
where $\partial^0_\epsilon\, \vec\eta_i =\vec 0$, 
see Lemma~2 and the proof of Lemma~3 in \cite{Dziuk08}. Here
\begin{equation*}
\mat D(\vec\chi) := \nabs\,\vec\chi + (\nabs\,\vec\chi)^T\,, 
\end{equation*}
and we note that our notation is such that 
$\nabs\,\vec\chi = (\nabla_{\!\Gamma}\,\vec\chi)^T$, with 
$\nabla_{\!\Gamma}\,\vec\chi=\left( \partial_{s_l}\, \chi_m \right)_{l,m=1}^d$ 
defined as in \cite{Dziuk08}.
It follows from (\ref{eq:secvar}) that
\begin{align}
& \ddt \left\langle \nabs\,\vec\id,
\nabs\,\vec\eta\right\rangle_{\Gamma_i(t)} = 
 \left\langle \nabs\,\vec\eta, \nabs \,\vec{\mathcal V} 
 \right\rangle_{\Gamma_i(t)}
+ \left\langle \nabs\,.\,\vec\eta, \nabs \,.\,\vec{\mathcal V}
 \right\rangle_{\Gamma_i(t)}
\nonumber \\ 
& \qquad \qquad - \left\langle (\nabs \,\vec\eta)^T, \mat D(\vec{\mathcal V})
 \,(\nabs\,\vec\id)^T \right\rangle_{\Gamma_i(t)} 
\qquad \forall\ \vec\eta \in \{ \vec\xi \in H^1(\GTi) : \matpartx\,\vec\xi = \vec0
\}\,.
\label{eq:secvar2}
\end{align}
Similarly to (\ref{eq:vardetw}) it holds that
\begin{equation}
\left[\deldel{\Gamma} \left\langle w, 1 \right\rangle_{\gamma(t)}
\right] (\vec\chi) = 
\dd\epsilon \left\langle w_\epsilon, 1 
\right\rangle_{\gamma_\epsilon(t)} \mid_{\epsilon=0} = 
\left\langle w\,\vec\id_s , \vec\chi_s \right\rangle_{\gamma(t)}
 \quad \forall\ w \in L^\infty(\gamma(t)),\
 \vec\chi \in \xspacec\,, 
\label{eq:vardetwpartial} 
\end{equation}
where $\partial^0_\epsilon\, w = 0$.
Moreover, similarly to (\ref{eq:secvar}), we note that for 
$\vec\eta \in \xspacec$ it holds that
\begin{equation}
\left[ \deldel{\Gamma} \left\langle \vec\id_s,
\vec\eta_s\right\rangle_{\gamma(t)} \right](\vec\chi)
= \left\langle \mat{\mathcal{P}}_\gamma\,\vec\eta_s, \vec\chi_s
\right\rangle_{\gamma(t)}\,,
\label{eq:secvarpartial}
\end{equation}
where $\partial^0_\epsilon\, \vec\eta =\vec 0$, and where
\begin{equation} \label{eq:mathcalP}
\mat{\mathcal{P}}_\gamma = \mat\Id - \vec\id_s \otimes \vec\id_s
\qquad\text{on }\ \gamma(t)\,.
\end{equation}

Now combining (\ref{eq:deltaLa}--d), on noting
(\ref{eq:vardetw})--(\ref{eq:mathcalP}), yields that
\begin{subequations}
\begin{align}
& 
f_\Gamma(\vec\chi) = 
\sum_{i=1}^2 \left[
\left\langle \nabs\,\vec y_i, \nabs\,\vec\chi \right\rangle_{\Gamma_i(t)} +
\left\langle \nabs\,.\,\vec y_i, \nabs\,.\,\vec\chi \right\rangle_{\Gamma_i(t)}
-\left\langle (\nabs\,\vec y_i)^T , \mat D(\vec\chi)\,(\nabs\,\vec\id)^T
\right\rangle_{\Gamma_i(t)} \right.
\nonumber \\ & \qquad \left.
-\tfrac12\left\langle [ \alpha_i\,|\kappastar_i - \spont_i\,\vec\nu_i|^2
- 2\,(\kappastar_i\,.\,\mat Q_{i,\theta}\,\vec y_i)]
\,\nabs\,\vec\id,\nabs\,\vec\chi \right\rangle_{\Gamma_i(t)}
+ \alpha_i\,\spont_i 
\left\langle \kappastar_i, \partial^0_\epsilon\,\vec\nu_i
\right\rangle_{\Gamma_i(t)} \right. \nonumber \\ & \qquad \left.
+ \left\langle 
\partial^0_\epsilon\,[\mat Q_{i,\theta}\,\kappastar_i]
, \vec y_i \right\rangle_{\Gamma_i(t)}
\right] - \varsigma \left\langle \vec\id_s, \vec\chi_s \right\rangle_{\gamma(t)}
\nonumber\\ &\qquad 
+ \left\langle \kappastar_\gamma\,.\,\vec z 
- C_1\,(\vec{\rm m}_1 + \vec{\rm m}_2)\,.\,\vec\phi
- \sum_{i=1}^2 (\alpha^G_i\,
 \kappastar_\gamma + \vec y_i)\,.\,\vec{\rm m}_i , 
\vec\id_s\,.\,\vec\chi_s \right\rangle_{\gamma(t)}
+ \left\langle \mat{\mathcal{P}}_\gamma\,\vec z_s, 
\vec\chi_s \right\rangle_{\gamma(t)} 
\nonumber\\ &\hspace{11cm}
\qquad \forall\ \vec\chi\in \xspacec\,, \label{eq:LM3a} \\
& \alpha_i\,(\kappastar_i - \spont_i\,\vec\nu_i) - 
\mat Q_{i,\theta}\,\vec y_i = \vec 0
\qquad \text{on }\ \Gamma_i(t)\,,\ i = 1,2\,,\label{eq:LM3b} \\
& \alpha^G_i \,\kappastar_\gamma
 + \vec y_i + C_1\,\vec\phi = \vec 0 
\qquad \text{on }\ \gamma(t)\,,\ i = 1,2\,,
\label{eq:LM3c}  \\
&  \sum_{i=1}^2 \alpha^G_i \,\vec{\rm m}_i - \vec z = \vec 0 
\qquad \text{on }\ \gamma(t)\,,\ i = 1,2\,,
 \label{eq:LM3d}  
\end{align} 
\end{subequations}
with (\ref{eq:side3}), (\ref{eq:mC1})  and (\ref{eq:varkappapartial}).
As $\partial^0_\epsilon\, \kappastar_i = \vec 0$, we have that
\begin{equation}  \label{eq:Qdiv}
\partial^0_\epsilon\,[\mat Q_{i,\theta}\,\kappastar_i] =
(1-\theta)\left[ (\kappastar_i\,.\,\partial^0_\epsilon\,\vec\nu_i)\,\vec\nu_i
+ (\kappastar_i\,.\,\vec\nu_i)\,\partial^0_\epsilon\,\vec\nu_i \right].
\end{equation}
We observe that (\ref{eq:LM3b},c) imply that 
\begin{equation} \label{eq:yboundstar}
\mat Q_{i,\theta}\,\vec y_i  =  \alpha_i\,\kappastar_i -\alpha_i\,\spont_i
\,\vec\nu_i
\quad \text{on }\ \Gamma_i(t) \qquad\text{and}\qquad
\vec y_i + C_1\,\vec\phi =  - \alpha^G_i\,\kappastar_\gamma
\quad \text{on }\ \gamma(t)\,.
\end{equation}

Let us now recover $\kappastar_i$ and $\kappastar_\gamma$ in terms of
the geometry again. To this end, we first recall the identity
\begin{equation} \label{eq:DEthm2.10}
\int_{\Gamma_i(t)} \nabs\,g \dH{d-1} =
- \int_{\Gamma_i(t)} g\,\varkappa_i\,\vec\nu_i \dH{d-1} 
+ \int_{\gamma(t)}  g\, \vec\mu_i \dH{d-2}
\qquad\forall\ g\in H^1(\Gamma_i(t))\,,
\end{equation}
see e.g.\ Theorem~2.10 in \cite{DziukE13} and Proposition~4.5 in
\cite[p.~334]{Taylor11I}.
It immediately follows from (\ref{eq:side3}),
(\ref{eq:LBop}) and (\ref{eq:DEthm2.10}) that $\vec{\rm m}_i = \vec\mu_i$ and 
$\mat Q_{i,\theta}\,\kappastar_i = \vec\varkappa_i = \varkappa_i\,\vec\nu_i$, 
with the latter implying that 
\begin{equation} \label{eq:kappastarnu}
\kappastar_i\,.\,\vec\nu_i = \varkappa_i\,.
\end{equation}
Hence we immediately get $\kappastar_i = \vec\varkappa_i$ for
$\theta\in(0,1]$. For $\theta=0$, on the other hand, it follows from
(\ref{eq:yboundstar}) and (\ref{eq:kappastarnu}) that
$\alpha_i\,\kappastar_i = 
[\vec y_i\,.\,\vec\nu_i + \alpha_i\,\spont_i]
\,\vec\nu_i$, and so
$\kappastar_i = \varkappa_i\,\vec\nu_i = \vec\varkappa_i$. 
Moreover, combining (\ref{eq:varkappapartial}) 
and (\ref{eq:ident}) yields that $\kappastar_\gamma =
\vec\varkappa_\gamma$. Overall, we obtain from (\ref{eq:yboundstar}) 
that
\begin{equation} \label{eq:ybound}
\mat Q_{i,\theta}\,\vec y_i = \alpha_i\,(\varkappa_i
-\spont_i)\,\vec\nu_i \quad \text{on }\ \Gamma_i(t) \qquad\text{and}\qquad
\vec y_i + C_1\,\vec\phi =  - \alpha^G_i\,\vec\varkappa_\gamma
\quad \text{on }\ \gamma(t)\,.
\end{equation}
However, if $\theta\in(0,1]$, then the two conditions in
(\ref{eq:ybound}) are incompatible in general if $\alpha^G_i \not=0$, 
since the first condition in (\ref{eq:ybound}) yields that
$\vec y_i = \alpha_i(\varkappa_i - \spont_i)\,\vec\nu_i$. If $C_1=1$ then
the two conditions are in general incompatible even if $\alpha^G_i =0$.
Hence for general boundaries $\gamma(t)$ and $\alpha^G_i \not=0$
we need to take $\theta = 0$, at least locally at the boundary. 
Therefore it may be desirable to consider a variable 
$\theta \in L^\infty(\Gamma(t))$. 
The calculation 
(\ref{eq:LM3a}--d) remains valid provided 
that $\partial^0_\epsilon\,\theta = 0$.
We will make this more rigorous on the discrete
level, see (\ref{eq:thetah}) below.

Using (\ref{eq:normvar}), (\ref{eq:Qdiv}) and 
(\ref{eq:LM3c},d) in (\ref{eq:LM3a}) yields the 
condensed version
\begin{subequations}
\begin{align}
& 
f_\Gamma(\vec\chi) = 
\sum_{i=1}^2 \left[
\left\langle \nabs\,\vec y_i, \nabs\,\vec\chi \right\rangle_{\Gamma_i(t)} +
\left\langle \nabs\,.\,\vec y_i, \nabs\,.\,\vec\chi \right\rangle_{\Gamma_i(t)}
-\left\langle (\nabs\,\vec y_i)^T , \mat D(\vec\chi)\,(\nabs\,\vec\id)^T
\right\rangle_{\Gamma_i(t)} \right.
\nonumber \\ & \qquad \left.
-\tfrac12\left\langle [ \alpha_i\, |\vec\varkappa_i - \spont_i\,\vec\nu_i|^2
- 2\,(\vec\varkappa_i\,.\,\mat Q_{i,\theta}\,\vec y_i)]
\,\nabs\,\vec\id,\nabs\,\vec\chi \right\rangle_{\Gamma_i(t)}
- \alpha_i\,\spont_i \left\langle \vec\varkappa_i, 
[\nabs\,\vec\chi]^T\,\vec\nu_i \right\rangle_{\Gamma_i(t)} \right.
\nonumber \\ & \qquad \left.
- (1-\theta) \left\langle 
 \left[ (\vec\varkappa_i\,.\,[\nabs\,\vec\chi]^T\,\vec\nu_i)\,\vec\nu_i
+ (\vec\varkappa_i\,.\,\vec\nu_i)\,[\nabs\,\vec\chi]^T\,\vec\nu_i \right] 
, \vec y_i \right\rangle_{\Gamma_i(t)}
\right] - \varsigma \left\langle \vec\id_s, \vec\chi_s \right\rangle_{\gamma(t)}
\nonumber\\ &\qquad 
+\sum_{i=1}^2 \alpha^G_i 
\left[ \left\langle \vec\varkappa_\gamma\,.\,\vec{\rm m}_i, 
\vec\id_s\,.\,\vec\chi_s \right\rangle_{\gamma(t)}
+ \left\langle \mat{\mathcal{P}}_\gamma\,(\vec{\rm m}_{i})_s, 
\vec\chi_s \right\rangle_{\gamma(t)} \right]
\qquad \forall\ \vec\chi\in \xspacec\,, \label{eq:LM4a} \\
& \alpha_i\,(\vec\varkappa_i - \spont_i\,\vec\nu_i) - 
\mat Q_{i,\theta}\,\vec y_i= \vec 0 
\qquad \text{on }\ \Gamma_i(t)\,,\ i = 1,2\,, \label{eq:LM4b} \\
& \alpha^G_i \,\vec\varkappa_\gamma
 + \vec y_i + C_1\,\vec\phi = \vec0 
\qquad \text{on }\ \gamma(t)\,,\ i = 1,2\,, \label{eq:LM4c} \\ 
& \left\langle \mat Q_{i,\theta}\,\vec\varkappa_i, \vec\eta 
\right\rangle_{\Gamma_i(t)}
+ \left\langle \nabs \,\vec\id,\nabs\, \vec\eta\right\rangle_{\Gamma_i(t)}= 
\left\langle \vec{\rm m}_i, \vec\eta \right\rangle_{\gamma(t)}
\qquad \forall\ \vec\eta \in \Vti\,,\ i = 1,2\,, \label{eq:LM4d} \\ 
& C_1\,(\vec{\rm m}_1 + \vec{\rm m}_2) = \vec 0 
\qquad \text{on}\quad \gamma(t)\,, \label{eq:LM4e} \\ 
& \left\langle \vec\varkappa_\gamma, 
\vec\eta \right\rangle_{\gamma(t)} +
\left\langle \vec\id_s , \vec\eta_s \right\rangle_{\gamma(t)} = 
0 \quad \forall\ \vec\eta \in \Vpartial\,. \label{eq:LM4f}
\end{align} 
\end{subequations}

\begin{remark} \label{rem:C1}
We recall from {\rm (\ref{eq:ybound})} and the discussion below that in
general we require $\theta=0$.
If $C_1=0$ then it follows from {\rm (\ref{eq:LM4c})} that
$\vec y_i = - \alpha^G_i\,\vec\varkappa_\gamma$ on $\gamma(t)$, for $i=1,2$.
Combining this with {\rm (\ref{eq:LM4b})} for $\theta=0$
then yields that {\rm (\ref{eq:C0bc1})} holds.

On the other hand, in the case of a $C^1$--junction, when $C_1=1$, then
{\rm (\ref{eq:LM4e})} implies that $\vec\mu_1 + \vec\mu_2=\vec0$ and hence that
$\vec\nu_1 = \vec\nu_2 = \vec\nu$ on $\gamma(t)$, and so it follows from
{\rm (\ref{eq:LM4b},c)} with $\theta=0$ that
\begin{equation*} 
\alpha_i\,(\varkappa_i-\spont_i) +
\alpha^G_i\,\vec\varkappa_\gamma\,.\,\vec\nu + \vec\phi\,.\,\vec\nu = 0
\qquad\text{on }\ \gamma(t)\,,\ i=1,2\,,
\end{equation*}
which means that {\rm (\ref{eq:C1bc1})} holds.
\end{remark}

The weak formulation of a generalized $L^2$--gradient flow of 
$E((\Gamma_i(t))_{i=1}^2)$ can then be formulated as follows.
Given $\Gamma_i(0)$, $i=1,2$, for all $t\in(0,T]$ find
$\Gamma_i(t) = \vec x_i(\Upsilon_i,t)$, $i=1,2$, 
with $\vec{\mathcal{V}}(t) \in \Vt$, and
$\vec\varkappa_i(t) \in \VLtwoti$, $\vec y_i(t) \in \Vti$,
$\vec{\rm m}_i(t) \in \Vpartial$, $i=1,2$, as well as
$\vec\varkappa_\gamma \in \VLtwopartial$,
$\vec z \in \VLtwopartial$, $\vec\phi \in \VLtwopartial$ such that
\begin{equation}
\left\langle \vec{\mathcal{V}} , 
\vec\chi \right\rangle_{\Gamma(t)} 
+ \varrho\left\langle\vec{\mathcal{V}} ,\vec\chi 
\right\rangle_{\gamma(t)} \black
 = 
f_\Gamma(\vec\chi) 
 \quad \forall\ \vec\chi \in \xspacec 
\label{eq:weakGD3a}
\end{equation}
and (\ref{eq:LM4a}--f) hold. Here we note that $\varrho=0$ recovers 
a weak formulation for the standard $L^2$--gradient flow. 
As stated in (\ref{eq:VrV}), we allow for $\varrho\geq0$ in general, to
allow for a damping of the movement of the contact line $\gamma(t)$. 
In numerical simulations such a damping often proves beneficial, as it 
suppresses possible oscillations at the contact line. On the other hand,
such a dissipation mechanism at the boundary is probably also relevant 
in applications.

\setcounter{equation}{0}
\section{Semidiscrete finite element approximation} \label{sec:3}
It is the aim of this section to introduce a semidiscrete 
continuous-in-time finite element approximation of the weak formulation
(\ref{eq:weakGD3a}), (\ref{eq:LM4a}--f) derived in the previous section.
Our finite element discretization will be given by
(\ref{eq:weakGFa}--f) below, and the main result of this section is the
stability proof in Theorem~\ref{thm:sd3stab} below.

Similarly to \cite{gflows3d}, we introduce the following discrete
spaces. Let
$\Gamma^h(t)\subset\R^d$ be $(d-1)$-dimensional {\em polyhedral surfaces}, 
i.e.\ unions of non-degenerate $(d-1)$-simplices with no
hanging vertices (see \cite[p.~164]{DeckelnickDE05} for $d=3$),
approximating the surfaces $\Gamma(t)$. 
In
particular, let $\Gamma^h(t)=\bigcup_{j=1}^{J}
\overline{\sigma^h_{j}(t)}$, where $\{\sigma^h_{j}(t)\}_{j=1}^{J}$
is a family mutually disjoint open $(d-1)$-simplices with vertices
$\{\vec{q}^h_{k}(t)\}_{k=1}^{K}$.
In analogy to the continuous setting, we write 
$\Gamma^h(t) = \Gamma_1^h(t) \cup \gamma^h(t) \cup \Gamma_2^h(t)$, where
$\gamma^h(t) = \partial \Gamma_1^h(t) = \partial \Gamma_2^h(t)$. 
Here we let $\Gamma^h_i(t)=\bigcup_{j=1}^{J_i}
\overline{\sigma^h_{i,j}(t)}$, with vertices
$\{\vec{q}^h_{i,k}(t)\}_{k=1}^{K_i}$, $i=1,2$.
We also assume that $\gamma^h(t)$ has the vertices
$\{\vec q^h_{\gamma,k}(t)\}_{k=1}^{K_\gamma}$.
Clearly, it holds that $J = J_1 + J_2$ and
$K = K_1 + K_2 - K_\gamma$.
Then let
\[
\Vhti = \{\vec\chi \in [C(\Gamma_i^h(t))]^d:\vec\chi\!\mid_{\sigma^h_{i,j}}
\mbox{ is linear}\ \forall\ j=1,\ldots, J_i\} 
= [\Whti]^d \,,\ i=1,2\,,
\]
where $\Whti$ is the space of scalar continuous
piecewise linear functions on $\Gamma_i^h(t)$, with \linebreak
$\{\chi^h_{i,k}(\cdot,t)\}_{k=1}^{K_i}$ 
denoting the standard basis of $\Whti$, i.e.\
\begin{equation} \label{eq:bf}
\chi^h_{i,k}(\vec q^h_{i,l}(t),t) = \delta_{kl}\qquad
\forall\ k,l \in \{1,\ldots,K_i\}\,, t \in [0,T]\,.
\end{equation}
In addition, let
\begin{equation*} 
\Vht = \{ \vec\chi \in  [C(\Gamma^h(t))]^d:\vec\chi\!\mid_{\Gamma^h_i(t)}
\in \Vhti,\ i=1,2 \} = [\Wht]^d\,.
\end{equation*}
We denote the basis functions of $\Wht$ by $\{\chi^h_{k}(\cdot,t)\}_{k=1}^{K}$.
Moreover, let
\begin{subequations}
\begin{align} \label{eq:Vhpartialt}
\Vhpartialt & := \{\vec\psi \in [C(\gamma^h(t))]^d: 
\exists\ \vec\chi\in\Vht\ \vec\chi\!\mid_{\gamma^h(t)} = \vec\psi \} =:
 [\Whpartialt]^d \,, \\
\Vhzerot & := \{\vec\chi \in \Vht : \vec\chi\!\mid_{\gamma^h(t)} = \vec 0\}\,,
\label{eq:Vhzerot} \\
\Vhzeroti & := \{\vec\chi \in \Vhti : \vec\chi\!\mid_{\gamma^h(t)} = \vec 0\}
\,.
\label{eq:Vhizerot}
\end{align}
\end{subequations}
We denote the basis functions of $\Whpartialt$ by 
$\{\phi^h_{k}(\cdot,t)\}_{k=1}^{K_\gamma}$.
We require that $\Gamma^h_i(t) = \vec X^h(\Gamma^h_i(0), t)$ with
$\vec X^h \in \Vhtz$, and that $\vec q^h_k \in [H^1(0,T)]^d$,
$k = 1,\ldots,K$.

We denote the $L^2$--inner products on $\Gamma^h(t)$, $\Gamma^h_i(t)$ and
and $\gamma^h(t)$ by
$\langle\cdot,\cdot\rangle_{\Gamma^h(t)}$,
$\langle\cdot,\cdot\rangle_{\Gamma^h_i(t)}$
and $\langle\cdot,\cdot\rangle_{\gamma^h(t)}$, respectively. 
In addition, for piecewise continuous functions, with possible jumps
across the edges of $\{\sigma_{i,j}^h\}_{j=1}^{J_i}$,
we also introduce the mass lumped inner product
\begin{equation*} 
\left\langle \eta, \phi \right\rangle^h_{\Gamma^h_i(t)} 
= \sum_{j=1}^{J_i} \left\langle \eta, \phi \right\rangle^h_{\sigma^h_{i,j}(t)} 
 :=
\sum_{j=1}^{J_i}\tfrac1d\,\mathcal{H}^{d-1}(\sigma^h_{i,j}(t))\,\sum_{k=1}^{d} 
(\eta\,\phi)((\vec{q}^h_{i,j_k}(t))^-)\,,
\end{equation*}
where $\{\vec{q}^h_{i,j_k}(t)\}_{k=1}^{d}$ are the vertices of 
$\sigma^h_{i,j}(t)$, and where
we define $\eta((\vec{q}^h_{i,j_k}(t))^-):= 
\underset{\sigma^h_j(t)\ni \vec{p}\to \vec{q}^h_{i,j_k}(t)}{\lim}\, 
\eta(\vec{p})$.
We naturally extend this definition to vector and tensor functions. 
We also define the mass lumped inner products 
$\langle\cdot,\cdot\rangle_{\Gamma^h(t)}^h$ and
$\langle\cdot,\cdot\rangle_{\gamma^h(t)}^h$ in the obvious way.

Let $\vec\nu^h_i$ denote the the outward unit normal to $\Gamma^h_i(t)$,
$i=1,2$, and similarly let $\vec\nu^h$ denote the the outward unit normal to 
$\Gamma^h(t)$.
Then we introduce the vertex normal functions
$\vec\omega^h_i(\cdot, t) \in\Vhti$ with
\begin{equation} \label{eq:omegahi}
\vec\omega^h_i(\vec{q}^h_{i,k}(t),t ) := 
\frac{1}{\mathcal{H}^{d-1}(\Lambda^h_{i,k}(t))}
\sum_{j\in\Theta_{i,k}^h} \mathcal{H}^{d-1}(\sigma^h_{i,j}(t))\,
\vec\nu^h_i\!\mid_{\sigma^h_{i,j}(t)}\,,
\end{equation}
where for $k= 1 ,\ldots, K_i$ we define
$\Theta_{i,k}^h:= \{j : \vec{q}^h_{i,k}(t) \in \overline{\sigma^h_{i,j}(t)}\}$
and set
$\Lambda_{i,k}^h(t) := \cup_{j \in \Theta_{i,k}^h} 
\overline{\sigma^h_{i,j}(t)}$. 
Here we note that 
\begin{equation} 
\left\langle \vec z, w\,\vec\nu^h_i\right\rangle_{\Gamma^h_i(t)}^h =
\left\langle \vec z, w\,\vec\omega^h_i\right\rangle_{\Gamma^h_i(t)}^h 
\qquad \forall\ \vec z \in \Vhti\,,\ w \in \Whti \,.
\label{eq:NIhi}
\end{equation}
In the analogous fashion, we introduce the vertex normal function
$\vec\omega^h(\cdot, t) \in \Vht$, i.e.\ we set
\begin{equation} \label{eq:omegah}
\vec\omega^h(\vec{q}^h_k(t),t ) := 
\frac{1}{\mathcal{H}^{d-1}(\Lambda^h_k(t))}
\sum_{j\in\Theta_k^h} \mathcal{H}^{d-1}(\sigma^h_j(t))\,
\vec\nu^h\!\mid_{\sigma^h_j(t)}\,,
\end{equation}
where for $k= 1 ,\ldots, K$ we define
$\Theta_k^h:= \{j : \vec{q}^h_k(t) \in \overline{\sigma^h_j(t)}\}$
and set $\Lambda_k^h(t) := \cup_{j \in \Theta_k^h} \overline{\sigma^h_j(t)}$. 
Of course, it holds that
\begin{equation} 
\left\langle \vec z, w\,\vec\nu^h\right\rangle_{\Gamma^h(t)}^h =
\left\langle \vec z, w\,\vec\omega^h\right\rangle_{\Gamma^h(t)}^h 
\qquad \forall\ \vec z \in \Vht\,,\ w \in \Wht \,.
\label{eq:NIh}
\end{equation}
It clearly follows from (\ref{eq:NIhi}) and (\ref{eq:NIh}) that
\begin{equation} \label{eq:zo}
\left\langle \vec z, \vec\omega^h\right\rangle_{\Gamma^h(t)}^h 
= \sum_{i=1}^2
\left\langle \vec z, \vec\omega^h_i\right\rangle_{\Gamma^h_i(t)}^h 
\qquad \forall\ \vec z \in \Vht \,.
\end{equation}

In addition, for a given parameter $\theta \in [0,1]$ we introduce 
$\theta^h \in \Wht$ and $\theta^h_\star \in \Wht$ such that 
\begin{equation} \label{eq:thetah}
\theta^h(\vec q^h_k(t), t) = \begin{cases}
0 & \vec q^h_k(t) \in \gamma^h(t)\,,\\
\theta & \vec q^h_k(t) \not\in \gamma^h(t)\,,
\end{cases}
\quad\text{and}\quad
\theta^h_\star(\vec q^h_k(t), t) = \begin{cases}
1 & \vec q^h_k(t) \in \gamma^h(t)\,,\\
\theta & \vec q^h_k(t) \not\in \gamma^h(t)\,.
\end{cases}
\end{equation}
Then, similarly to (\ref{eq:Q}), 
we introduce $\mat Q^h_{i,\theta^h} \in [\Whti]^{d\times d}$ and
$\mat Q^h_{i,\theta^h_\star} \in [\Whti]^{d\times d}$ 
by setting, for $k \in \{1,\ldots,K_i\}$,
\begin{equation} \label{eq:Qalphah}
\mat Q^h_{i,\theta^h}(\vec q^h_{i,k}(t), t) = 
\theta^h(\vec q^h_{i,k}(t), t)\,\mat\Id + (1-\theta^h(\vec q^h_{i,k}(t), t))
\,\frac{\vec\omega^h_i(\vec{q}^h_{i,k}(t), t)
\otimes \vec\omega^h_i(\vec{q}^h_{i,k}(t), t)}{
|\vec\omega^h_i(\vec{q}^h_{i,k}(t), t)|^2}
\,,
\end{equation}
and similarly for $\mat Q^h_{i,\theta^h_\star}$,
where here and
throughout we assume that $\vec\omega^h_i(\vec{q}^h_{i,k}(t), t) \not= \vec0$
for $k= 1 ,\ldots, K_i$ and $t\in[0,T]$. Only in pathological cases could
this assumption be violated, and in practice this never occurred.
We note that
\begin{equation} \label{eq:aQb}
\left\langle \mat Q^h_{i,\theta^h}\, \vec z, 
\vec v\right\rangle_{\Gamma^h_i(t)}^h =
\left\langle \vec z, \mat Q^h_{i,\theta^h}\, 
\vec v\right\rangle_{\Gamma^h_i(t)}^h 
\quad\text{and}\quad
\left\langle \mat Q^h_{i,\theta^h} \vec z, \vec\omega^h_i
\right\rangle_{\Gamma^h_i(t)}^h =
\left\langle \vec z, \vec\omega^h_i \right\rangle_{\Gamma^h_i(t)}^h 
\end{equation}
for all $\vec z, \vec v \in \Vhti$, and analogously for
$\theta^h$ in (\ref{eq:aQb}) replaced by $\theta^h_\star$.
In addition, similarly to (\ref{eq:zo}), it holds that
\begin{equation} \label{eq:sumzo}
 \sum_{i=1}^2
\left\langle \vec z, \mat Q^h_{i,\theta^h_\star}\, 
\vec\omega^h_i\right\rangle_{\Gamma^h_i(t)}^h 
=
 \sum_{i=1}^2
\left\langle \vec z, \mat Q^h_{i,\theta^h_\star} \,
\vec\omega^h\right\rangle_{\Gamma^h_i(t)}^h 
\qquad \forall\ \vec z \in \Vht \,.
\end{equation}

Following the approach in the continuous setting, recall (\ref{eq:E2}), 
(\ref{eq:side3}), (\ref{eq:varkappapartial}), we consider the
first variation of the discrete energy
\begin{align} \label{eq:Eh}
E^h((\Gamma^h_i(t))_{i=1}^2) & 
:= \sum_{i=1}^2 
\left[\tfrac12\,\alpha_i
\left\langle |\vec\kappa^h_i - \spont_i\,\vec\nu^h_i|^2, 1
\right\rangle_{\Gamma^h_i(t)}^h
 + \alpha^G_i \left[ \left\langle \vec\kappa_\gamma^h,
 \vec{\rm m}^h_i \right\rangle_{\gamma^h(t)}^h
 + 2\,\pi\,m(\Gamma^h_i(t))\right] \right] \nonumber \\ & \qquad
+ \varsigma\, \mathcal{H}^{d-2}(\gamma^h(t)) \,,
\end{align}
where $\vec\kappa^h_i \in \Vhti$, $\vec{\rm m}_i^h \in \Vhpartialt$, $i=1,2$,
and $\vec\kappa^h_\gamma \in \Vhpartialt$, 
subject to the side constraints
\begin{subequations}
\begin{align}
\left\langle \mat Q^h_{i,\theta^h}\,\vec\kappa^h_i, 
\vec\eta \right\rangle_{\Gamma^h_i(t)}^h
+ \left\langle
\nabs \,\vec\id,\nabs\, \vec\eta\right\rangle_{\Gamma^h_i(t)} & = 
\left\langle \vec{\rm m}^h_i, \vec\eta \right\rangle_{\gamma^h(t)}^h
 \quad \forall\ \vec\eta \in \Vhti\,,\ i=1,2\,,\label{eq:side3h} \\
\left\langle \vec\kappa_\gamma^h, \vec\chi 
\right\rangle_{\gamma^h(t)}^h
+ \left\langle \vec\id_s, \vec\chi_s \right\rangle_{\gamma^h(t)} & = 0
\qquad \forall\ \vec\chi \in \Vhpartialt\,, \label{eq:sidezh} \\
C_1\,(\vec{\rm m}_1^h + \vec{\rm m}_2^h) & = \vec 0 \qquad \text{on}\quad 
\gamma^h(t)\,. \label{eq:mC1h}
\end{align}
\end{subequations}
In particular, we define the Lagrangian 
\begin{align}
L^h(t) & =
\sum_{i=1}^2 \left[
\tfrac12\,\alpha_i \left\langle |\vec\kappa^h_i - \spont_i\,\vec\nu^h_i|^2, 1
\right\rangle_{\Gamma^h_i(t)}^h
 + \alpha^G_i\left\langle
   \vec\kappa_\gamma^h,\vec{\rm m}^h_i
 \right\rangle_{\gamma^h(t)}^h 
\right] 
+ \varsigma\, \mathcal{H}^{d-2}(\gamma^h(t))
- \left\langle \vec\kappa_\gamma^h, \vec Z^h
\right\rangle_{\gamma^h(t)}^h
 \nonumber \\ & \qquad
- \left\langle \vec\id_s, \vec Z^h_s \right\rangle_{\gamma^h(t)}
- \sum_{i=1}^2 \left[
 \left\langle (\mat Q^h_{i,\theta^h}\,\vec\kappa^h_i, 
\vec Y^h_i \right\rangle_{\Gamma^h_i(t)}^h 
+ \left\langle \nabs \,\vec\id, \nabs\,\vec Y^h_i \right\rangle_{\Gamma^h_i(t)} 
- \left\langle \vec{\rm m}^h_i, \vec Y^h_i 
\right\rangle_{\gamma^h(t)}^h \right]
 \nonumber \\ & \qquad
+ C_1 \left\langle \vec{\rm m}_1^h + \vec{\rm m}_2^h, 
\vec\Phi^h\right\rangle_{\gamma^h(t)}^h, \label{eq:Lagh}
\end{align}
where $\vec\kappa^h_i\in \Vhti$ and
$\vec\kappa^h_\gamma \in \Vhpartialt$
satisfy (\ref{eq:side3h}) and (\ref{eq:sidezh}), respectively, 
with $\vec Y^h_i\in \Vhti$ and 
$\vec Z^h \in \Vhpartialt$ being the corresponding
Lagrange multipliers. Similarly, 
$\vec\Phi^h\!\in \Vhpartialt$ is a Lagrange multiplier for (\ref{eq:mC1h}).
It turns out that when mimicking the continuous approach from
Section~\ref{sec:2} on the discrete level, we need several technical
definitions to make the arguments rigorous. We present the majority of the
necessary definitions and properties next, before proceeding with taking
suitable variations of (\ref{eq:Lagh}).

Following \cite[(5.23)]{DziukE13}, we define the discrete material velocity
for $\vec z \in \Gamma^h(t)$ by
\begin{equation*} 
\vec{\mathcal{V}}^h(\vec z, t) := \sum_{k=1}^{K}
\left[\ddt\,\vec q^h_k(t)\right] \chi^h_k(\vec z, t) \,.
\end{equation*}
We also introduce the finite element spaces
\begin{align*}
W^h_{T}(\GhTi) := \{ \phi \in C(\GhTi) & : 
\phi(\cdot, t) \in \Whti \quad \forall\ t \in [0,T]\,,\nonumber \\ & \quad
\phi(\vec q^h_{i,k}(\cdot), \cdot) \in H^1(0,T) \quad \forall\ k \in\{1,\ldots,K_i\}
 \}\,,
\end{align*}
where $\GhTi := \bigcup_{t \in [0,T]} \Gamma^h_i(t) \times \{t\}$,
as well as the vector valued analogue $\underline{V}^h_{T}(\GhTi)$.
In a similar fashion, we introduce $W^h_{T}(\sigma^h_{j,T})$ and
$\underline {V}^h_{T}(\sigma^h_{j,T})$ via e.g.\
\[
W^h_T(\sigma^h_{j,T}) := \{ \phi \in C(\overline{\sigma^h_{j,T}}) : 
\phi(\cdot, t) \text{~ is linear~} \forall\ t \in [0,T]\,,\
\phi(\vec q^h_{j_k}(\cdot), \cdot) \in H^1(0,T) \quad k = 1,\ldots,d
 \}\,,
\]
where 
$\{\vec{q}^h_{j_k}(t)\}_{k=1}^{d}$ are the vertices of $\sigma^h_j(t)$,
and where
$\sigma^h_{j,T} := \bigcup_{t \in [0,T]} \sigma^h_j(t) \times \{t\}$,
for $j\in\{1,\ldots,J\}$.
Moreover, we define the analogue variants $W^h_{T}(\GhT)$ and
$\underline {V}^h_{T}(\GhT)$ on $\GhT = 
\bigcup_{t \in [0,T]} \Gamma^h(t) \times \{t\}$, as well as
$W^h_{T}(\gamma^h_T)$ and $\underline {V}^h_{T}(\gamma^h_T)$ on 
$\gamma^h_T := \bigcup_{t \in [0,T]} \gamma^h(t) \times \{t\}$, with the scalar
space for the latter e.g.\ being given by
\[
W^h_T(\gamma^h_T) := 
\{\psi \in C(\gamma^h_T): 
\exists\ \chi\in W^h_T(\GhT)\ \chi(\cdot,t)\!\mid_{\gamma^h(t)} = 
\psi(\cdot,t)\quad \forall\ t\in[0,T] \}\,.
\]

Then, similarly to (\ref{eq:matpartx}), we define the discrete material
derivatives on $\Gamma^h(t)$ element-by-element via the equations
\begin{equation*} 
(\matpartxh\, \phi)\!\mid_{\sigma^h_j(t)} = 
(\phi_t + \vec{\mathcal{V}}^h\,.\,\nabla\,\phi) \!\mid_{\sigma^h_j(t)}
\qquad\forall\ \phi\in W^h_T(\sigma^h_{j,T})\,,\ j \in \{1,\ldots,J\}\,.
\end{equation*}
On differentiating (\ref{eq:bf}) with respect to $t$, it immediately follows
that
\begin{equation} \label{eq:mpbf}
\matpartxh\, \chi^h_k = 0
\quad\forall\ k \in \{1,\ldots,K\}\,,
\end{equation}
see also \cite[Lem.\ 5.5]{DziukE13}. 
It follows directly from (\ref{eq:mpbf}) that
\begin{equation} \label{eq:p1}
\matpartxh\,\phi(\cdot,t) = \sum_{k=1}^{K} \chi^h_k(\cdot,t)\,
\ddt\,\phi_k(t) \quad \text{on}\ \Gamma^h(t)
\end{equation}
for $\phi(\cdot,t) = \sum_{k=1}^{K} \phi_k(t)\,\chi^h_k(\cdot,t)
\in \Wht$.

We recall from \cite[Lem.~5.6]{DziukE13} that
\begin{equation} \label{eq:DEeq5.28}
\ddt\, \int_{\sigma^h_j(t)} \phi \dH{d-1} 
= \int_{\sigma^h_j(t)} \matpartxh\,\phi + \phi\,\nabs\,.\,\vec{\mathcal{V}}^h 
\dH{d-1} \quad \forall\ \phi \in W^h_T(\sigma^h_{j,T})\,, j \in
\{1,\ldots,J\}\,.
\end{equation}
Similarly, we recall from \cite[Lem.~3.1]{tpfs} that
\begin{equation} \label{eq:DElem5.6NI}
\ddt \langle \eta, \phi \rangle_{\sigma^h_j(t)}^h
 = \langle \matpartxh\,\eta, \phi \rangle_{\sigma^h_j(t)}^h
 + \langle \eta, \matpartxh\,\phi \rangle_{\sigma^h_j(t)}^h
+ \langle \eta\,\phi, \nabs\,.\,\vec{\mathcal{V}}^h \rangle_{\sigma^h_j(t)}^h
\quad \forall\ \eta,\phi \in W^h_T(\sigma^h_{j,T})\,,
\end{equation}
for all $j\in\{1,\ldots,J\}$.
Moreover, it holds that
\begin{equation} \label{eq:DElem5.6NIpartial}
\ddt \langle \eta, \phi \rangle_{\gamma^h(t)}^h
 = \langle \matpartxh\,\eta, \phi \rangle_{\gamma^h(t)}^h
 + \langle \eta, \matpartxh\,\phi \rangle_{\gamma^h(t)}^h
+ \langle \eta\,\phi, \vec\id_s\,.\,\vec{\mathcal{V}}^h_s 
\rangle_{\gamma^h(t)}^h
\ \forall\ \eta,\phi \in W^h_T(\gamma^h_T)\,.
\end{equation}
We also note the discrete version of the time derivative variant of 
(\ref{eq:secvar}),
\begin{align}
& \ddt \left\langle \nabs\,\vec\id,
\nabs\,\vec\eta\right\rangle_{\Gamma^h_i(t)} = 
 \left\langle \nabs\,\vec\eta, \nabs \,\vec{\mathcal V}^h 
 \right\rangle_{\Gamma^h_i(t)}
+ \left\langle \nabs\,.\,\vec\eta, \nabs \,.\,\vec{\mathcal V}^h
 \right\rangle_{\Gamma^h_i(t)}
\nonumber \\ & \hspace{3cm}
- \left\langle (\nabs \,\vec\eta)^T, \mat D(\vec{\mathcal V}^h)
 \,(\nabs\,\vec\id)^T \right\rangle_{\Gamma^h_i(t)} 
\quad \forall\ \vec\eta \in \{ \vec\xi \in \underline{V}^h_T(\GhTi)
: \matpartxh\,\vec\xi = \vec0
\}\,,
\label{eq:secvar2h}
\end{align}
as well as the corresponding version for $\gamma^h(t)$,
\begin{equation}
 \ddt \left\langle \vec\id_s,
\vec\eta_s\right\rangle_{\gamma^h(t)} = 
\left\langle \mat{\mathcal{P}}_\gamma^h\,\vec\eta_s, 
\vec{\mathcal V}^h_s \right\rangle_{\gamma^h(t)}
\quad \forall\ \vec\eta \in \{ 
\vec\xi \in \underline{V}^h_T(\gamma^h_T) :
\matpartxh\,\vec\xi = \vec0 \}\,,
\label{eq:secvarpartial2h}
\end{equation}
which follows similarly to (\ref{eq:secvarpartial}). 
Here, similarly to (\ref{eq:mathcalP}), we have defined
\begin{equation} \label{eq:Ph}
\mat{\mathcal{P}}^h_\gamma = \mat\Id - \vec\id_s \otimes \vec\id_s
\quad \text{on} \quad \gamma^h(t)\,.
\end{equation}

Finally, 
when taking variations of (\ref{eq:Lagh}), we need to compute variations of
the discrete vertex normals $\vec\omega^h_i$. To this end, 
for any given $\vec\chi \in \Vht$ we introduce  
$\Gamma^h_\epsilon(t)$ as in (\ref{eq:Gammadelta})
and $\partial^{0,h}_\epsilon$ defined by
(\ref{eq:deltaderiv}), both
with $\Gamma(t)$ replaced by $\Gamma^h(t)$.
We then observe that it
follows from (\ref{eq:NIhi}) with $w=1$ and the discrete analogue of 
(\ref{eq:vardetwnu}) that
\begin{align}
& \left\langle \vec z, 
 \partial^{0,h}_\epsilon \,\vec\omega^h_i 
 \right\rangle_{\Gamma^h_i(t)}^h
= \left\langle \vec z, 
\partial^{0,h}_\epsilon \,\vec\nu^h_i 
 \right\rangle_{\Gamma^h_i(t)}^h
+ \left\langle (\vec z\,.\,(\vec\nu^h_i - \vec\omega^h_i))\,\nabs\,\vec\id , 
\nabs \,\vec\chi \right\rangle_{\Gamma^h_i(t)}^h
\nonumber \\ & \hspace{8cm}
 \quad \forall\ \vec z\in\Vhti,\, \vec\chi \in \Vht\,,
\label{eq:ddomegah}
\end{align}
where $\partial^{0,h}_\epsilon \,\vec z = \vec 0$.
In addition, we note that for all $\vec\xi$, $\vec\eta \in \Vhti$ with
$\partial_\epsilon^{0,h} \, \vec\xi = \partial_\epsilon^{0,h} \, \vec\eta =
\vec0$ it holds that
\begin{equation}
\partial^{0,h}_\epsilon \,\pi^h_i \left[
\left( \vec\xi\,.\,\frac{\vec\omega^h_i}{|\vec\omega^h_i|}\right)\left(
\vec\eta\,.\,\frac{\vec\omega^h_i}{|\vec\omega^h_i|} \right)\right] 
= \pi^h_i \left[
\vec G^h_i(\vec\xi,\vec\eta)\,.\,
\partial^{0,h}_\epsilon\,\vec\omega^h_i
\right] \quad\text{on}\ \Gamma^h_i(t)\,,
\label{eq:ddxieta}
\end{equation}
where
\begin{equation}
\vec G^h_i(\vec\xi,\vec\eta) =
\frac{1}{|\vec\omega_i^h|^2}
 \left( (\vec\xi\,.\,\vec\omega_i^h)\,\vec\eta +
 (\vec\eta\,.\,\vec\omega^h_i)\,\vec\xi
- 2\, \frac{(\vec\eta\,.\,\vec\omega^h_i)\,(\vec\xi\,.\,\vec\omega^h_i)}
{|\vec\omega^h_i|^2}\,\vec\omega^h_i
\right) 
, \label{eq:Gxieta}
\end{equation}
and where
$\pi^h_i(t): C(\Gamma_i^h(t))\to \Whti$ is the standard interpolation operator
at the nodes \linebreak $\{\vec{q}_{i,k}^h(t)\}_{k=1}^{K_i}$.
It follows that
\begin{equation} \label{eq:Gomega}
\vec G^h_i(\vec\xi,\vec\eta)\,.\,\vec\omega^h_i = 0 \quad \forall\ \vec\xi,\ 
\vec\eta \in \Vhti\,.
\end{equation}

We are now in a position to formally derive the 
$L^2$--gradient flow of $E^h(t)$ subject 
to the side constraints (\ref{eq:side3h}--c). In particular,
on recalling the formal calculus of PDE
constrained optimization, we set
$[\deldel{\Gamma^h}\,L^h](\vec\chi)=
-\sum_{i=1}^2 \left\langle \mat Q^h_{i,\theta^h_\star}\,\vec{\mathcal{V}}^h, 
\vec\chi \right\rangle_{\Gamma^h_i(t)}^h$
for $\vec\chi\in\Vht$,
$[\deldel{\vec\kappa^h_i}\,L^h](\vec\xi) = 0$ for $\vec\xi\in\Vhti$,
$[\deldel{\vec Y^h_i}\,L^h](\vec\eta) = 0$ for $\vec\eta\in\Vhti$,
$[\deldel{\vec{\rm m}^h_i}\,L^h](\vec\varphi) = 0$ for 
$\vec\varphi\in\Vhpartialt$,
$[\deldel{\vec\kappa_\gamma^h}\,L^h](\vec\phi) = 0$ for 
$\vec\phi\in\Vhpartialt$, leading to $\vec Z^h =
\sum_{i=1}^2\alpha^G_i\,\vec{\rm m}^h_i$, 
$[\deldel{\vec Z^h}\,L^h](\vec\phi) = 0$ for 
$\vec\phi\in\Vhpartialt$ and
$[\deldel{\vec\Phi^h}\,L^h](\vec\eta) = 0$ for 
$\vec\eta\in\Vhpartialt$. 
Here we recall the definition of $\theta^h_\star$ in 
(\ref{eq:thetah}). We employ this doctored version of $\theta^h$ in order to
obtain existence and uniqueness for the fully discrete approximation introduced
in the next section. See also Remark~3.1 in \cite{pwftj}, where the
analogue to our situation here corresponds to two curves meeting in the plane,
i.e.\ $N=d=2$ in their notation.

Overall this gives rise to the following semidiscrete finite element
approximation of the gradient flow (\ref{eq:weakGD3a}), where we have noted 
the discrete version of (\ref{eq:normvar}), 
(\ref{eq:ddomegah}), (\ref{eq:ddxieta}), (\ref{eq:Gomega}), 
variational versions of 
(\ref{eq:DElem5.6NI})--(\ref{eq:secvarpartial2h}) and  
that $\partial_\epsilon^{0,h}\,\theta^h = 0$. 
Given $\Gamma^h(0)$, find $(\Gamma^h(t))_{t\in(0,T]}$ such that
$\vec\id\!\mid_{\Gamma^h(\cdot)} \in \underline{V}^h_T(\GhT)$.
In addition, for all $t\in(0,T]$ find 
$(\vec\kappa^{h}_i,\vec Y^{h}_i) \in [\Vhti]^2$, $i=1,2$,
$\vec\kappa_\gamma^{h}\in \Vhpartialt$, 
$\vec{\rm m}^{h}_i \in \Vhpartialt$, $i=1,2$,
and $C_1\,\Phi^{h} \in \Vhpartialt$ such that
\begin{subequations}
\begin{align}
& 
\sum_{i=1}^2 
\left\langle \mat Q^h_{i,\theta^h_\star}\,\vec{\mathcal{V}}^h,
\vec\chi \right\rangle_{\Gamma^h_i(t)}^h
+ \varrho\left\langle\vec{\mathcal{V}}^h ,\vec\chi 
\right\rangle_{\gamma^h(t)}^h \black
\nonumber \\ & \quad
= 
\sum_{i=1}^2 \left[
  \left\langle \nabs\,\vec Y^{h}_i, \nabs\,\vec\chi 
\right\rangle_{\Gamma^h_i(t)} 
+ \left\langle \nabs\,.\,\vec Y^h_i, \nabs\,.\,\vec\chi 
\right\rangle_{\Gamma^h_i(t)} 
- \left\langle (\nabs\,\vec Y^h_i)^T , 
\mat D(\vec\chi)\,(\nabs\,\vec\id)^T \right\rangle_{\Gamma^h_i(t)} 
\right. \nonumber \\ & \qquad \left.
-\tfrac12\left\langle [ \alpha_i\,|\vec\kappa^h_i - \spont_i\,\vec\nu^h_i|^2
- 2\,(\vec Y^h_i\,.\,\mat Q^h_{i,\theta^h}\,\vec\kappa^h_i)]
\,\nabs\,\vec\id,\nabs\,\vec\chi \right\rangle_{\Gamma^h_i(t)}^h
\right. \nonumber \\ & \qquad \left.
-\alpha_i\,\spont_i 
\left\langle \vec\kappa^h_i, [\nabs\,\vec\chi]^T\,\vec\nu^h_i
\right\rangle_{\Gamma^h_i(t)}^h 
+\left\langle(1-\theta^h)\, (\vec G^h_i(\vec Y^h_i, \vec\kappa^h_i)\,.\,
\vec\nu^h_i)\,
\nabs\,\vec\id, \nabs\,\vec\chi \right\rangle_{\Gamma^h_i(t)}^h
\right. \nonumber \\ & \qquad \left.
- \left\langle(1-\theta^h)\,\vec G^h_i(\vec Y^h_i,\vec\kappa^h_i), 
[\nabs\,\vec\chi]^T\,\vec\nu^h_i \right\rangle_{\Gamma^h_i(t)}^h 
 \right]
\nonumber \\ & \qquad 
+ \sum_{i=1}^2 \alpha^G_i 
\left[\left\langle \vec\kappa_\gamma^h \,.\,\vec{\rm m}^h_i,
\vec\id_s\,.\,\vec\chi_s \right\rangle_{\gamma^h(t)}^h
+ \left\langle\mat{\mathcal{P}}^h_\gamma\,(\vec{\rm m}^h_i)_s, 
\vec\chi_s \right\rangle_{\gamma^h(t)} \right]
- \varsigma \left\langle \vec\id_s, \vec\chi_s \right\rangle_{\gamma^h(t)}
\nonumber \\ & \hspace{10cm}
\quad \forall\ \vec\chi \in \Vht\,, \label{eq:weakGFa} \\
& \left\langle \mat Q^h_{i,\theta^h}\,\vec\kappa^{h}_i, 
\vec\eta\right\rangle_{\Gamma_i^h(t)}^h
+ \left\langle
\nabs \,\vec\id,\nabs\, \vec\eta\right\rangle_{\Gamma^h_i(t)} = 
\left\langle \vec{\rm m}^{h}_i, \vec\eta \right\rangle_{\gamma^h(t)}^h
 \quad\forall\ \vec\eta \in \Vhti\,,\ i=1,2\,, \label{eq:weakGFb} \\
& \left\langle \vec\kappa_\gamma^{h}, \vec\chi 
\right\rangle_{\gamma^h(t)}^h
+ \left\langle \vec\id_s, \vec\chi_s \right\rangle_{\gamma^h(t)} = 0
\qquad \forall\ \vec\chi \in \Vhpartialt\,, \label{eq:weakGFc} \\
& C_1\,(\vec{\rm m}_1^{h} + \vec{\rm m}_2^{h}) = 
\vec 0 \qquad \text{on}\quad \gamma^h(t)\,, \label{eq:weakGFd} \\
& \alpha^G_i \,\vec\kappa_\gamma^{h}
 + \vec Y^{h}_i + C_1\,\vec\Phi^{h} = \vec0 
\qquad \text{on}\quad \gamma^h(t) \,, \ i = 1,2\,, \label{eq:weakGFe} \\
& \left\langle \alpha_i\,(\vec\kappa^{h}_i 
- \spont_i\,\vec\nu^h_i) - \mat Q^h_{i,\theta^h}\,\vec Y^h_i,
\vec\xi \right\rangle_{\Gamma^h_i(t)}^h = 0 
\qquad \forall\ \vec\xi \in \Vhti \,,\ i=1,2\,. \label{eq:weakGFf} 
\end{align}
\end{subequations}
We observe that choosing 
$\vec\xi = \alpha^{-1}_i\, \vec\pi^h_i[\mat Q^h_{i,\theta^h}\,\vec\eta]$ 
in (\ref{eq:weakGFf}) and 
combining with (\ref{eq:weakGFb}), on recalling (\ref{eq:aQb}) and 
(\ref{eq:NIhi}), yields that
\begin{equation} \label{eq:weakGFbnew}
 \alpha^{-1}_i\,\left\langle \mat Q^h_{i,\theta^h}\,\vec Y^h_i, 
\mat Q^h_{i,\theta^h}\,\vec\eta \right\rangle_{\Gamma^h_i(t)}^h
+ \left\langle \nabs\,\vec\id,\nabs\,\vec\eta\right\rangle_{\Gamma^h_i(t)} =  
 \left\langle \vec{\rm m}^h_i, \vec\eta \right\rangle_{\gamma^h(t)}^h
- \spont_i \left\langle \vec\omega^h_i,\vec\eta\right\rangle_{\Gamma^h_i(t)}^h
\quad \forall\ \vec\eta \in \Vhti\,.
\end{equation}
Here $\vec\pi^h_i(t): [C(\Gamma_i^h(t))]^d\to \Vhti$
is the standard interpolation operator
at the nodes $\{\vec{q}_{i,k}^h(t)\}_{k=1}^{K_i}$.

In order to be able to consider area and volume preserving variants of
(\ref{eq:weakGFa}--f), we introduce the Lagrange multipliers
$\lambda_i^{A,h}(t) \in \R$, $i=1,2$, and 
$\lambda^{V,h}(t) \in \R$ for the constraints
\begin{equation} \label{eq:constraintareah}
\ddt\, \mathcal{H}^{d-1}(\Gamma^h_i(t)) = 
\left\langle \nabs\,.\,\vec{\mathcal{V}}^h, 1\right\rangle_{\Gamma^h_i(t)} 
= \left\langle \nabs\,\vec\id, \nabs\,\vec{\mathcal{V}}^h
\right\rangle_{\Gamma^h_i(t)} =0\,,
\end{equation}
where we recall (\ref{eq:DEeq5.28}), and
\begin{equation} \label{eq:constraintvolh}
\ddt\, \mathcal{L}^{d}(\Omega^h(t)) = 
\left\langle \vec{\mathcal{V}}^h, \vec\nu^h\right\rangle_{\Gamma^h(t)} =
\left\langle \vec{\mathcal{V}}^h, \vec\omega^h\right\rangle_{\Gamma^h(t)}^h 
= 0\,,
\end{equation}
where we note a discrete variant of (\ref{eq:dtvol})
and (\ref{eq:NIh}).
Here $\Omega^h(t)$ denotes the interior of $\Gamma^h(t)$.
On recalling (\ref{eq:zo}), (\ref{eq:aQb}) and
(\ref{eq:sumzo}), we can rewrite the constraint (\ref{eq:constraintvolh}) as
\begin{equation} \label{eq:hfd}
0 = \left\langle \vec{\mathcal{V}}^h, \vec\omega^h 
\right\rangle_{\Gamma^h(t)}^h =
\sum_{i=1}^2 \left\langle \vec{\mathcal{V}}^h, \vec\omega^h_i 
\right\rangle_{\Gamma^h_i(t)}^h 
= \sum_{i=1}^2 \left\langle 
\mat Q^h_{i,\theta^h_\star}\,\vec{\mathcal{V}}^h, \vec\omega^h_i 
\right\rangle_{\Gamma^h_i(t)}^h =
\sum_{i=1}^2 \left\langle 
\mat Q^h_{i,\theta^h_\star}\,\vec{\mathcal{V}}^h, \vec\omega^h
\right\rangle_{\Gamma^h_i(t)}^h .
\end{equation}
Hence, on writing (\ref{eq:weakGFa}) as
\[
\sum_{i=1}^2
\left\langle \mat Q^h_{i,\theta^h_\star}\,
\vec{\mathcal{V}}^h, \vec\chi \right\rangle_{\Gamma^h_i(t)}^h
+ \varrho\left\langle\vec{\mathcal{V}}^h ,\vec\chi 
\right\rangle_{\gamma^h(t)}^h \black 
= \left\langle \vec r^h, \vec\chi \right\rangle_{\Gamma^h(t)}^h,
\]
we consider
\begin{equation} \label{eq:LMh3aa}
 \sum_{i=1}^2 \left\langle \mat Q^h_{i,\theta^h_\star}\,
\vec{\mathcal{V}}^h, \vec\chi \right\rangle_{\Gamma^h_i(t)}^h
+ \varrho\left\langle\vec{\mathcal{V}}^h ,\vec\chi 
\right\rangle_{\gamma^h(t)}^h \black
= \left\langle \vec r^h, \vec\chi \right\rangle_{\Gamma^h(t)}^h
- \lambda^{V,h} 
\left\langle \vec\omega^h, \vec\chi \right\rangle_{\Gamma^h(t)}^h
- \sum_{i=1}^2 \lambda^{A,h}_i 
\left\langle \nabs\,\vec\id, \nabs\,\vec\chi \right\rangle_{\Gamma^h_i(t)}
\end{equation}
for all $\vec\chi\in \Vht$,
where $\lambda^{V,h}(t) \in \R$ and $\lambda^{A,h}_i(t) \in \R$, $i=1,2$,
need to be determined. Of course, if we consider a volume preserving variant
only, then we let $\lambda^{A,h}_1(t) = \lambda^{A,h}_2(t) = 0$ and
\begin{equation} \label{eq:muh}
\lambda^{V,h}(t) = 
\left[\left\langle \vec r^h, \vec\omega^h \right\rangle_{\Gamma^h(t)}^h
- \varrho\left\langle\vec{\mathcal{V}}^h ,\vec\omega^h
\right\rangle_{\gamma^h(t)}^h \black \right]
/ \left\langle \vec\omega^h, \vec\omega^h \right\rangle_{\Gamma^h(t)}^h ,
\end{equation}
which we derived on choosing
$\vec\chi = \vec\omega^h$ in (\ref{eq:LMh3aa}), and noting 
(\ref{eq:hfd}).

For the general volume and area preserving flow, we introduce the
projection $\vec\Pi^h_0 : \Vht \to \Vhzerot$ onto
$\Vhzerot$, recall (\ref{eq:Vhzerot}), and similarly 
$\vec\Pi^h_{i,0} : \Vhti \to \Vhzeroti$. 
We introduce the symmetric 
bilinear forms $a^h_{i,\theta} : \Vhti \times \Vhti \to \R$ by setting
\begin{equation} \label{eq:ai}
a^h_{i,\theta}(\vec\zeta, \vec\eta) = 
\left\langle \mat Q^h_{i, \theta^h}\,\vec\zeta
, \vec\Pi^h_{i,0}\,\vec\eta \right\rangle_{\Gamma^h_i(t)}^h\qquad
\forall\ \vec\zeta,\,\vec\eta \in \Vhti\,,\ i=1,2\,,
\end{equation}
where we have noted (\ref{eq:aQb}). 
It holds that $a^h_{i,\theta}(\vec\zeta,\vec\zeta) \geq 0$ for all 
$\vec\zeta \in \Vhti$, with the inequality being strict if 
$\vec\Pi^h_{i,0}\,[\vec Q^h_{i,\theta^h}\,\vec\zeta]\not=\vec0$.
Hence the Cauchy--Schwarz inequality holds, i.e.\
\begin{equation} \label{eq:aiCS}
|a^h_{i,\theta}(\vec\zeta,\vec\eta)| 
\leq [a^h_{i,\theta}(\vec\zeta,\vec\zeta)]^\frac12 \,
[a^h_{i,\theta}(\vec\eta,\vec\eta)]^\frac12 \qquad 
\forall\ \vec\zeta,\,\vec\eta \in \Vhti\,,\ i=1,2\,,
\end{equation}
with strict inequality 
if $\vec\Pi^h_{i,0}\,[\vec Q^h_{i,\theta^h}\,\vec\zeta]$ 
and $\vec\Pi^h_{i,0}\,[\vec Q^h_{i,\theta^h}\,\vec\eta]$ 
are linearly independent.
Then we note, on recalling (\ref{eq:zo}), (\ref{eq:weakGFb}) and 
(\ref{eq:aQb}), that
\begin{subequations}
\begin{equation} \label{eq:aikappaomega}
- \left\langle \nabs\,\vec\id, \nabs\,\vec\Pi^h_{0}\,\vec\omega^h
\right\rangle_{\Gamma^h_i(t)} =
- \left\langle \nabs\,\vec\id, \nabs\,\vec\Pi^h_{i,0}\,\vec\omega^h_i
\right\rangle_{\Gamma^h_i(t)} = a^h_{i,\theta}(\vec\kappa^h_i, \vec\omega^h_i)
= \left\langle \vec\omega^h_i, \vec\Pi^h_{i,0}\,\vec\kappa^h_i 
\right\rangle_{\Gamma^h_i(t)}^h
\end{equation}
and
\begin{equation}
- \left\langle \nabs\,\vec\id, \nabs\,\vec\Pi^h_{i,0}\,\vec\kappa^h_i
\right\rangle_{\Gamma^h_i(t)} = 
a^h_{i,\theta}(\vec\kappa^h_i,\vec\kappa^h_i)\,. 
\label{eq:aikappakappa}
\end{equation}
\end{subequations}
In addition, it follows from (\ref{eq:zo}), (\ref{eq:aQb}) and
(\ref{eq:ai}) that
\begin{equation}
\left\langle \vec\omega^h , \vec\Pi^h_{0}\,\vec\omega^h
\right\rangle_{\Gamma^h(t)}^h
= \sum_{i=1}^2 \left\langle \vec\omega^h_i , \vec\Pi^h_{0}\,\vec\omega^h
\right\rangle_{\Gamma^h_i(t)}^h
= \sum_{i=1}^2 \left\langle \vec\omega^h_i , \vec\Pi^h_{i,0}\,\vec\omega^h_i
\right\rangle_{\Gamma^h_i(t)}^h
= \sum_{i=1}^2 a^h_{i,\theta}(\vec\omega^h_i,\vec\omega^h_i)\,.
\label{eq:aoo}
\end{equation}

Then (\ref{eq:LMh3aa}), (\ref{eq:aoo}) and (\ref{eq:aikappaomega},b) yield that
$(\lambda^{V,h}, \lambda^{A,h}_1, \lambda^{A,h}_2)(t)$ are such that
\begin{subequations}
\begin{equation}
\begin{pmatrix}
\sum_{i=1}^2 a^h_{i,\theta}(\vec\omega^h_i, \vec\omega^h_i) &
a^h_{1,\theta}(\vec\kappa^h_1, \vec\omega^h_1) &
a^h_{2,\theta}(\vec\kappa^h_2, \vec\omega^h_2) \\
a^h_{1,\theta}(\vec\kappa^h_1, \vec\omega^h_1) &
a^h_{1,\theta}(\vec\kappa^h_1, \vec\kappa^h_1) & 0 \\
a^h_{2,\theta}(\vec\kappa^h_2, \vec\omega^h_2) & 0 &
a^h_{2,\theta}(\vec\kappa^h_2, \vec\kappa^h_2) 
\end{pmatrix}
\begin{pmatrix}
- \lambda^{V,h}(t) \\ \lambda^{A,h}_1(t)\\ \lambda^{A,h}_2(t)
\end{pmatrix}
= \begin{pmatrix}
b_0(t) \\ b_1(t)\\ b_2(t)
\end{pmatrix} \,,
\label{eq:mll}
\end{equation}
where
\begin{align}
b_0(t) & =  \sum_{i=1}^2 
\left\langle \vec\Pi^h_0\,\vec{\mathcal{V}}^h - \vec{\mathcal{V}}^h, 
\vec\omega^h \right\rangle_{\Gamma^h_i(t)}^h
- \left\langle \vec r^h, \vec\Pi^h_0\,\vec\omega^h\right\rangle_{\Gamma^h(t)}^h
,\label{eq:bjb} \\ 
b_{i}(t) &
= \left\langle \vec\Pi^h_{i,0}\,\vec{\mathcal{V}}^h -\vec{\mathcal{V}}^h,
\mat Q^h_{i,\theta^h}\,\vec\kappa^h_i \right\rangle_{\Gamma^h_i(t)}^h 
+ \left\langle \vec{\rm m}^h_i, \vec{\mathcal{V}}^h 
\right\rangle_{\gamma^h(t)}^h
- \left\langle \vec r^h, \vec\Pi^h_{i,0}\,\vec\kappa^h_i 
\right\rangle_{\Gamma^h(t)}^h,\ i=1,2\,.
\label{eq:bjc}
\end{align}
\end{subequations}
On recalling (\ref{eq:aiCS}), we observe that the matrix in 
(\ref{eq:mll}) is symmetric and positive definite as long as 
$\vec\Pi^h_{i,0}\,\vec\omega^h_i$ and $\vec\Pi^h_{i,0}\,[
\vec Q^h_{i,\theta^h}\,\vec\kappa^h_i]$ are linearly
independent, for $i=1,2$.
The right hand sides (\ref{eq:bjb},c) 
are obtained by recalling (\ref{eq:LMh3aa}),
and on noting that (\ref{eq:aQb}) and (\ref{eq:hfd}) imply that
\begin{align}
\sum_{i=1}^2 
\left\langle \mat Q^h_{i,\theta^h_\star}\,\vec{\mathcal{V}}^h, 
\vec\Pi^h_0\,\vec\omega^h \right\rangle_{\Gamma^h_i(t)}^h & =
\sum_{i=1}^2 \left[
\left\langle \mat Q^h_{i,\theta^h_\star}\,\vec{\mathcal{V}}^h, 
\vec\Pi^h_0\,\vec\omega^h - \vec\omega^h \right\rangle_{\Gamma^h_i(t)}^h 
+ \left\langle \mat Q^h_{i,\theta^h_\star}\,\vec{\mathcal{V}}^h, 
 \vec\omega^h \right\rangle_{\Gamma^h_i(t)}^h \right] \nonumber \\ &
= \sum_{i=1}^2 
\left\langle \vec{\mathcal{V}}^h, 
\vec\Pi^h_0\,\vec\omega^h - \vec\omega^h \right\rangle_{\Gamma^h_i(t)}^h
= \sum_{i=1}^2 
\left\langle \vec\Pi^h_0\,\vec{\mathcal{V}}^h - \vec{\mathcal{V}}^h, 
\vec\omega^h \right\rangle_{\Gamma^h_i(t)}^h ,
\label{eq:newhfca}
\end{align}
while (\ref{eq:thetah}), (\ref{eq:aQb}), (\ref{eq:weakGFb}) 
and (\ref{eq:constraintareah}) yield that
\begin{align}
 \left\langle \mat Q^h_{i,\theta^h_\star}\,\vec{\mathcal{V}}^h,
\vec\Pi^h_{i,0}\,\vec\kappa^h_i \right\rangle_{\Gamma^h_i(t)}^h & =
\left\langle \vec\Pi^h_{i,0}\,\vec{\mathcal{V}}^h,
\mat Q^h_{i,\theta^h}\,\vec\kappa^h_i \right\rangle_{\Gamma^h_i(t)}^h 
\nonumber \\ & 
=
\left\langle \vec\Pi^h_{i,0}\,\vec{\mathcal{V}}^h -\vec{\mathcal{V}}^h,
\mat Q^h_{i,\theta^h}\,\vec\kappa^h_i \right\rangle_{\Gamma^h_i(t)}^h 
+ \left\langle \vec{\rm m}^h_i, \vec{\mathcal{V}}^h 
\right\rangle_{\gamma^h(t)}^h
- \left\langle \nabs\,\vec\id, \nabs\,\vec{\mathcal{V}}^h
\right\rangle_{\Gamma^h_i(t)} 
\nonumber \\ & 
=
\left\langle \vec\Pi^h_{i,0}\,\vec{\mathcal{V}}^h -\vec{\mathcal{V}}^h,
\mat Q^h_{i,\theta^h}\,\vec\kappa^h_i \right\rangle_{\Gamma^h_i(t)}^h 
+ \left\langle \vec{\rm m}^h_i, \vec{\mathcal{V}}^h 
\right\rangle_{\gamma^h(t)}^h .
\label{eq:newhfcb}
\end{align}
We see that on removing the last two rows and columns in (\ref{eq:mll}),
we obtain an expression similar to (\ref{eq:muh}) for $\lambda^{V,h}(t)$,
but here we test with $\vec\Pi^h_0\,\vec\omega^h$ as opposed to
$\vec\omega^h$.
Analogously, if we want to consider phase area preservations only, then 
removing the first row and column in (\ref{eq:mll}) 
yields a reduced system for the two
Lagrange multipliers $\lambda^{A,h}_i(t)$, $i=1,2$.

The following theorem establishes that (\ref{eq:weakGFa}--f) is indeed a weak
formulation for a generalized $L^2$--gradient flow of
$E^h(t)$ subject to the side constraints (\ref{eq:side3h}--c). 
We will also show that for $\theta=0$ the
scheme produces {\em conformal polyhedral surfaces} 
$\Gamma_1(t)$ and $\Gamma_2(t)$. 
Here we recall from \cite{pwfopen}, see also \cite[\S4.1]{gflows3d}, 
that the open surfaces $\Gamma^h_i(t)$, $i=1,2$, are conformal 
polyhedral surfaces if
\begin{equation} \label{eq:conf}
 \left\langle \nabs\,\vec\id, \nabs\,\vec\eta
\right\rangle_{\Gamma^h_i(t)} = 0
\quad \forall\ \vec\eta \in \left\{ \vec\xi \in \Vhzeroti :
\vec\xi(\vec{q}^h_{i,k}(t)) \,.\,\vec\omega^h_i(\vec{q}^h_{i,k}(t), t) = 0,\
k = 1,\ldots,K_i \right\},\ i=1,2\,.
\end{equation}
We recall from \cite{gflows3d,pwfopen} 
that conformal polyhedral surfaces exhibit good meshes. 
Moreover, we
recall that in the case $d=2$, conformal polyhedral surfaces are 
equidistributed polygonal curves, see \cite{triplej,fdfi}.

\begin{theorem} \label{thm:sd3stab}
Let $\theta\in[0,1]$, $\varrho\geq0$ and let 
$\{(\Gamma^h,\vec\kappa^h_1,\vec\kappa^h_2,\vec Y^h_1,\vec Y^h_2,
\vec\kappa^h_\gamma,\vec{\rm m}^h_1,\vec{\rm m}^h_2,\vec\Phi^h)(t)
\}_{t\in[0,T]}$ be a solution to {\rm (\ref{eq:weakGFa}--f)}. 
In addition, we assume that
$\vec\kappa^h_\gamma \in \underline{V}^h_T(\gamma^h_T)$,
$\vec\kappa^h_i 
,\vec\pi^h_i[\mat Q^h_{i,\theta^h}\,\vec\kappa^h_i]\in\underline{V}^h_T(\GhTi)$,
$\vec{\rm m}^h_i\in \underline{V}^h_T(\gamma^h_T)$, $i=1,2$.
Then 
\begin{equation}
\ddt \, E^h((\Gamma^h_i(t))_{i=1}^2)
= - \sum_{i=1}^2 \left\langle \mat Q^h_{i,\theta^h_\star}\,\vec{\mathcal{V}}^h,
\vec{\mathcal{V}}^h \right\rangle_{\Gamma^h_i(t)}^h 
- \varrho\left\langle\vec{\mathcal{V}}^h ,\vec{\mathcal{V}}^h
\right\rangle_{\gamma^h(t)}^h \black
.
\label{eq:sd3stab}
\end{equation}
Moreover, if $\theta=0$ then $\Gamma^h_1(t)$ and $\Gamma^h_2(t)$
are open conformal polyhedral surfaces for all $t\in(0,T]$.
\end{theorem}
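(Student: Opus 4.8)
The plan is to prove the energy law (\ref{eq:sd3stab}) by differentiating $E^h$ in time along the solution and matching the result with (\ref{eq:weakGFa}). The guiding principle is the Lagrangian structure of the scheme: along a solution the side constraints (\ref{eq:weakGFb}--d) hold for all $t$, so $E^h$ coincides identically in $t$ with the Lagrangian $L^h$ of (\ref{eq:Lagh}), whence $\ddt E^h = \ddt L^h$. Moreover the fields $(\vec\kappa^h_i,\vec Y^h_i,\vec\kappa^h_\gamma,\vec{\rm m}^h_i,\vec\Phi^h)$ render $L^h$ stationary with respect to every argument other than $\Gamma^h(t)$ --- this stationarity being exactly (\ref{eq:weakGFb})--(\ref{eq:weakGFf}) together with $\vec Z^h=\sum_{i=1}^2\alpha^G_i\,\vec{\rm m}^h_i$ --- so the only surviving contribution to $\ddt L^h$ is the geometric first variation in the direction $\vec{\mathcal{V}}^h$. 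Since the scheme was constructed precisely so that this geometric variation equals the negative of the left-hand side of (\ref{eq:weakGFa}), one arrives directly at (\ref{eq:sd3stab}).

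To make this rigorous I would differentiate $E^h$ directly, applying the transport identities (\ref{eq:DEeq5.28}), (\ref{eq:DElem5.6NI}), (\ref{eq:DElem5.6NIpartial}), (\ref{eq:secvar2h}) and (\ref{eq:secvarpartial2h}), together with the normal-variation formula (\ref{eq:normvar}); the Euler-characteristic term $2\,\pi\,m(\Gamma^h_i(t))$ is a piecewise-constant integer and drops out. Besides purely geometric terms this produces the material derivatives $\matpartxh(\vec\kappa^h_i-\spont_i\,\vec\nu^h_i)$, $\matpartxh\vec{\rm m}^h_i$ and $\matpartxh\vec\kappa^h_\gamma$, which I would eliminate by differentiating the constraints (\ref{eq:weakGFb}) and (\ref{eq:weakGFc}) in time and pairing them with the Lagrange multipliers $\vec Y^h_i$ and $\vec Z^h=\sum_{i=1}^2\alpha^G_i\,\vec{\rm m}^h_i$ as dictated by (\ref{eq:Lagh}). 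The regularity hypotheses $\vec\kappa^h_i,\vec\pi^h_i[\mat Q^h_{i,\theta^h}\,\vec\kappa^h_i]\in\underline{V}^h_T(\GhTi)$ and $\vec{\rm m}^h_i,\vec\kappa^h_\gamma\in\underline{V}^h_T(\gamma^h_T)$ are exactly what guarantees that these material derivatives exist. Using the algebraic relations (\ref{eq:weakGFe}), (\ref{eq:weakGFf}) and the contact condition (\ref{eq:weakGFd}), every material-derivative contribution of the auxiliary fields cancels, so that $\ddt E^h$ collapses to the geometric first variation; by the gradient-flow construction this equals minus the left-hand side of (\ref{eq:weakGFa}) at $\vec\chi=\vec{\mathcal{V}}^h$, which is (\ref{eq:sd3stab}).

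The main obstacle is the bookkeeping tied to the discrete vertex normals $\vec\omega^h_i$ and the mass-lumped inner products. Differentiating the $\mat Q^h_{i,\theta^h}$-dependent terms forces one to track $\partial^{0,h}_\epsilon\,\vec\omega^h_i$ through (\ref{eq:ddomegah}), (\ref{eq:ddxieta}) and (\ref{eq:Gomega}), and this is precisely what generates the tensor $\vec G^h_i(\vec Y^h_i,\vec\kappa^h_i)$ and the $(1-\theta^h)$-weighted contributions on the right-hand side of (\ref{eq:weakGFa}). Matching these terms one-to-one, while carefully distinguishing where $\theta^h$ rather than $\theta^h_\star$ enters (recall (\ref{eq:thetah})), is the delicate and most error-prone step; the symmetry and projection identities (\ref{eq:aQb}) and (\ref{eq:NIhi}) are the algebraic tools that make the cancellations close.

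The conformal statement for $\theta=0$ is comparatively direct. For $\theta=0$ we have $\theta^h\equiv0$ by (\ref{eq:thetah}), so (\ref{eq:Qalphah}) gives $\mat Q^h_{i,\theta^h}=\vec\omega^h_i\otimes\vec\omega^h_i/|\vec\omega^h_i|^2$ at every node. Given any $\vec\eta\in\Vhzeroti$ with $\vec\eta(\vec q^h_{i,k}(t))\,.\,\vec\omega^h_i(\vec q^h_{i,k}(t),t)=0$ for all $k$, I would insert it into (\ref{eq:weakGFb}): its trace on $\gamma^h(t)$ vanishes, so the right-hand side is zero, and by the symmetry (\ref{eq:aQb}) the lumped term equals $\langle\vec\kappa^h_i,\mat Q^h_{i,\theta^h}\,\vec\eta\rangle^h_{\Gamma^h_i(t)}$. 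At every node $(\mat Q^h_{i,\theta^h}\,\vec\eta)(\vec q^h_{i,k}(t),t)=(\vec\eta\,.\,\vec\omega^h_i)\,\vec\omega^h_i/|\vec\omega^h_i|^2=\vec0$ by the tangency assumption, so this lumped term vanishes as well. Hence $\langle\nabs\,\vec\id,\nabs\,\vec\eta\rangle_{\Gamma^h_i(t)}=0$, which is precisely the defining property (\ref{eq:conf}) of an open conformal polyhedral surface, completing the proof.
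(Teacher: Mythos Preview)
Your proposal is correct and follows essentially the same route as the paper: the paper differentiates the side constraints (\ref{eq:side3h}), (\ref{eq:sidezh}) in time, tests them with the multipliers $\vec Y^h_i$ and $\vec Z^h=\sum_i\alpha^G_i\,\vec{\rm m}^h_i$, tests (\ref{eq:weakGFa}) with $\vec\chi=\vec{\mathcal{V}}^h$, and then combines, using (\ref{eq:weakGFd}--f), (\ref{eq:aQb}), (\ref{eq:NIhi}) and the vertex-normal identities (\ref{eq:ddomegah}), (\ref{eq:Gxieta}), (\ref{eq:Gomega}) to close the cancellations---precisely the computation you outline, only organised from the scheme side rather than from the Lagrangian side. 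Your conformal argument via (\ref{eq:weakGFb}) with a tangential test function in $\Vhzeroti$ is also exactly what the paper does, stated with a bit more detail.
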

\begin{proof}
Taking the time derivative of (\ref{eq:side3h}), where we choose
discrete test functions $\vec\eta$ such that
$\matpartxh\,\vec\eta = \vec0$, yields for $i=1,2$ that
\begin{align} \label{eq:s31h}
& \left\langle \matpartxh\,(\mat Q^h_{i,\theta^h}\,\vec\kappa^h_i), 
\vec\eta\right\rangle_{\Gamma^h_i(t)}^h
+ \left\langle [(\mat Q^h_{i,\theta^h}\,\vec\kappa^h_i)\,.\,\vec\eta]\,
 \nabs\,\vec\id, \nabs\,\vec{\mathcal{V}}^h \right\rangle_{\Gamma^h_i(t)}^h
+ \left\langle \nabs\,\vec{\mathcal{V}}^h, 
\nabs\,\vec\eta\right\rangle_{\Gamma^h_i(t)} 
\nonumber \\ & \
+ \left\langle \nabs\,.\, \vec{\mathcal{V}}^h, 
\nabs\,.\,\vec\eta\right\rangle_{\Gamma^h_i(t)} 
- \left\langle(\nabs\,\vec\eta)^T, 
 \mat D(\vec{\mathcal{V}}^h)\, (\nabs\,\vec\id)^T \right\rangle_{\Gamma^h_i(t)}
 = 
\left\langle\matpartxh\,\vec{\rm m}^h_i,\vec\eta \right\rangle_{\gamma^h(t)}^h
+ \left\langle\vec{\rm m}^h_i\,.\,\vec\eta,
\vec\id_s\,.\,\vec{\mathcal{V}}^h_s \right\rangle_{\gamma^h(t)}^h ,
\end{align}
where we have noted (\ref{eq:DElem5.6NI}),
(\ref{eq:DElem5.6NIpartial}), (\ref{eq:secvar2h}) and that
$\vec\pi^h_i[\mat Q^h_{i,\theta^h}\,\vec\kappa^h_i]\in\underline{V}^h_T(\GhTi)$,
$\vec{\rm m}^h_i\in \underline{V}^h_T(\gamma^h_T)$, $i=1,2$.
Similarly, taking the time derivative of (\ref{eq:sidezh})
with $\matpartxh\,\vec\chi = \vec0$ yields, on noting
(\ref{eq:DElem5.6NIpartial}), (\ref{eq:secvarpartial2h}) and
$\vec\kappa^h_\gamma \in \underline{V}^h_T(\gamma^h_T)$, that
\begin{equation} \label{eq:s31hpartial}
\left\langle \matpartxh\,\vec\kappa_\gamma^h, \vec\chi 
\right\rangle_{\gamma^h(t)}^h
+\left\langle \vec\kappa_\gamma^h\,.\,\vec\chi, 
\vec\id_s\,.\,\vec{\mathcal{V}}^h_s \right\rangle_{\gamma^h(t)}^h
+ \left\langle \mat{\mathcal{P}}^h_\gamma\,\vec\chi_s,
\vec{\mathcal{V}}^h_s \right\rangle_{\gamma^h(t)} = 0 \,.
\end{equation}
Choosing $\vec\chi = \vec{\mathcal{V}}^h$ in (\ref{eq:weakGFa}),
$\vec\eta = \vec Y^h_i$ in (\ref{eq:s31h}), $i=1,2$, and combining yields,
on noting the discrete variant of (\ref{eq:normvar}), that
\begin{align} \label{eq:s32h}
& \sum_{i=1}^2 \left\langle \mat Q^h_{i,\theta^h_\star}\,\vec{\mathcal{V}}^h, 
\vec{\mathcal{V}}^h \right\rangle_{\Gamma^h(t)}^h 
+ \varrho\left\langle\vec{\mathcal{V}}^h ,\vec{\mathcal{V}}^h
\right\rangle_{\gamma^h(t)}^h \black
\nonumber \\ &\quad
+ \sum_{i=1}^2 \left[
\tfrac12\left\langle [ \alpha_i\,|\vec\kappa^h_i - \spont_i\,\vec\nu^h_i|^2
- 2\,\vec Y^h_i\,.\,\mat Q^h_{i,\theta^h}\,\vec\kappa^h_i ]\,
\nabs\,\vec\id,\nabs\,\vec{\mathcal{V}}^h\right\rangle_{\Gamma^h_i(t)}^h
\right. \nonumber \\ &\quad \left.
 - \alpha_i\,\spont_i \left\langle \vec\kappa^h_i,\matpartxh\,\vec\nu^h_i
\right\rangle_{\Gamma^h_i(t)}^h 
+ \left\langle \matpartxh\,(\mat Q^h_{i,\theta^h}\,\vec\kappa^h_i), 
\vec Y^h_i\right\rangle_{\Gamma^h_i(t)}^h
+ \left\langle (\mat Q^h_{i,\theta^h}\,\vec\kappa^h_i\,.\,\vec Y^h_i)\,
 \nabs\,\vec\id, \nabs\,\vec{\mathcal{V}}^h \right\rangle_{\Gamma^h_i(t)}^h
\right. \nonumber \\ &\quad \left.
-\left\langle(1-\theta^h)\, (\vec G^h_i(\vec Y^h_i, \vec\kappa^h_i)\,.\,
\vec\nu^h_i)\,
\nabs\,\vec\id, \nabs\,\vec{\mathcal{V}}^h \right\rangle_{\Gamma^h_i(t)}^h
- \left\langle(1-\theta^h)\,\vec G^h_i(\vec Y^h_i,\vec\kappa^h_i), 
\matpartxh\,\vec\nu^h_i \right\rangle_{\Gamma^h_i(t)}^h 
\right]
\nonumber \\ & \quad
+ \varsigma \left\langle \vec\id_s, \vec{\mathcal{V}}^h_s 
\right\rangle_{\gamma^h(t)}
- \sum_{i=1}^2
 \alpha^G_i \left[\left\langle \vec\kappa_\gamma^h \,.\,\vec{\rm m}^h_i,
\vec\id_s\,.\,\vec{\mathcal{V}}^h_s \right\rangle_{\gamma^h(t)}^h
+ \left\langle\mat{\mathcal{P}}^h_\gamma\,(\vec{\rm m}^h_i)_s, 
\vec{\mathcal{V}}^h_s \right\rangle_{\gamma^h(t)} \right]
\nonumber \\ & \
= \sum_{i=1}^2
\left[
\left\langle \matpartxh\,\vec{\rm m}^h_i , \vec Y^h_i
\right\rangle_{\gamma^h(t)}
+ \left\langle \vec{\rm m}^h_i\,.\,\vec Y^h_i , 
\vec\id_s, \vec{\mathcal{V}}^h_s \right\rangle_{\gamma^h(t)}
\right] .
\end{align}
Choosing $\vec\chi = \sum_{i=1}^2 \alpha^G_i\,\vec{\rm m}^h_i$ in 
(\ref{eq:s31hpartial}) and recalling 
(\ref{eq:weakGFd},e) and (\ref{eq:DElem5.6NIpartial}), 
it follows from (\ref{eq:s32h}) that
\begin{align} \label{eq:s33h}
& \sum_{i=1}^2 \left\langle\mat Q^h_{i,\theta^h_\star}\, \vec{\mathcal{V}}^h ,
\vec{\mathcal{V}}^h \right\rangle_{\Gamma^h(t)}^h 
+ \varrho\left\langle\vec{\mathcal{V}}^h ,\vec{\mathcal{V}}^h
\right\rangle_{\gamma^h(t)}^h \black 
+ \sum_{i=1}^2 \left[
 \tfrac12\, \alpha_i\left\langle |\vec\kappa^h_i - \spont_i\,\vec\nu^h_i|^2\,
\nabs\,\vec\id,\nabs\,\vec{\mathcal{V}}^h\right\rangle_{\Gamma^h_i(t)}^h
\right. \nonumber \\ &\quad \left.
- \alpha_i\,\spont_i
\left\langle \vec\kappa^h_i, \matpartxh\,\vec\nu^h_i 
\right\rangle_{\Gamma^h_i(t)}^h 
+ \left\langle \matpartxh\,(\mat Q^h_{i,\theta^h}\,\vec\kappa^h_i),
 \vec Y^h_i \right\rangle_{\Gamma^h_i(t)}^h 
\right. \nonumber \\ &\quad \left.
-\left\langle(1-\theta^h) \,(\vec G^h_i(\vec Y^h_i, \vec\kappa^h_i)
\,.\,\vec\nu^h_i)\,\nabs\,\vec\id, \nabs\,\vec{\mathcal{V}}^h 
\right\rangle_{\Gamma^h_i(t)}^h 
-\left\langle(1-\theta^h)\,  \vec G^h_i(\vec Y^h_i, \vec\kappa^h_i), 
\matpartxh\,\vec\nu^h_i \right\rangle_{\Gamma^h_i(t)}^h \right]
\nonumber \\ &\quad
+ \varsigma \left\langle \vec\id_s, \vec{\mathcal{V}}^h_s 
\right\rangle_{\partial\Gamma^h(t)}
=-\sum_{i=1}^2 \alpha^G_i \left[
\left\langle \matpartxh\,\vec{\rm m}^h_i, \vec\kappa^h_\gamma
\right\rangle_{\gamma^h(t)}^h
+ \left\langle \vec{\rm m}^h_i,\matpartxh\,\vec\kappa^h_\gamma
\right\rangle_{\gamma^h(t)}^h
+ \left\langle \vec\kappa_\gamma^h \,.\,\vec{\rm m}^h_i,
\vec\id_s\,.\,\vec{\mathcal{V}}^h_s \right\rangle_{\gamma^h(t)}^h \right]
\nonumber \\ & \
= - \ddt \sum_{i=1}^2 \alpha^G_i 
\left\langle \vec\kappa_\gamma^h, \vec{\rm m}^h_i
\right\rangle_{\gamma^h(t)}^h .
\end{align}
We have from (\ref{eq:aQb}), (\ref{eq:weakGFf}) and (\ref{eq:NIhi}) that
\begin{align}
& \sum_{i=1}^2 \left[
\left\langle \matpartxh\,(\mat Q^h_{i,\theta^h}\,\vec\kappa^h_i),
 \vec Y^h_i \right\rangle_{\Gamma^h_i(t)}^h - \alpha_i\,\spont_i
\left\langle\vec\kappa^h_i, \matpartxh\,\vec\nu^h_i 
\right\rangle_{\Gamma^h_i(t)}^h \right]
\nonumber \\ & \quad
= \sum_{i=1}^2 \left[
\left\langle \matpartxh\,\vec\kappa^h_i,
\mat Q^h_{i,\theta^h}\, \vec Y^h_i \right\rangle_{\Gamma^h_i(t)}^h
- \alpha_i\,\spont_i \left\langle\vec\kappa^h_i - \spont_i\,\vec\nu^h_i, 
\matpartxh\,\vec\nu^h_i \right\rangle_{\Gamma^h_i(t)}^h
\right. \nonumber \\ &\qquad\qquad \left.
+ \left\langle \matpartxh\,(\mat Q^h_{i,\theta^h}\,\vec\kappa^h_i)
- \mat Q^h_{i,\theta^h}\,\matpartxh\,\vec\kappa^h_i ,
 \vec Y^h_i \right\rangle_{\Gamma^h_i(t)}^h  \right]
\nonumber \\ & \quad
= \sum_{i=1}^2 \left[
\tfrac12\,\alpha_i \left\langle \matpartxh\,
|\vec\kappa^h_i - \spont_i\,\vec\nu^h_i|^2, 1
\right\rangle_{\Gamma^h_i(t)}^h
+ \left\langle \matpartxh\,(\mat Q^h_{i,\theta^h}\,\vec\kappa^h_i)
- \mat Q^h_{i,\theta^h}\,\matpartxh\,\vec\kappa^h_i ,
 \vec Y^h_i \right\rangle_{\Gamma^h_i(t)}^h \right].
\label{eq:dagdag}
\end{align}
Combining (\ref{eq:s33h}) and (\ref{eq:dagdag}),
on noting (\ref{eq:DElem5.6NI}), (\ref{eq:DElem5.6NIpartial}), (\ref{eq:Eh}),
$\matpartxh\,\theta^h = 0$ (which follows from (\ref{eq:p1}) and
(\ref{eq:thetah})),
$\vec\kappa^h_i\in\underline{V}^h_T(\GhTi)$, $i=1,2$,
$\vec\nu^h_i\!\mid_{\sigma^h_{i,j}(\cdot)} 
\in\underline{V}^h_T(\sigma^h_{i,j,T})$, $j=1,\ldots,J_i$, $i=1,2$,
(which follows from the discrete analogue of (\ref{eq:normvar}) and
as $\vec\id\!\mid_{\Gamma^h(\cdot)} \in \underline{V}^h_T(\GhT)$)
and the invariance of $m(\Gamma^h_i(t))$ under continuous
deformations, yields that
\begin{equation*}
\sum_{i=1}^2 \left\langle\mat Q^h_{i,\theta^h_\star}\, \vec{\mathcal{V}}^h ,
\vec{\mathcal{V}}^h \right\rangle_{\Gamma^h_i(t)}^h 
+ \varrho\left\langle\vec{\mathcal{V}}^h ,\vec{\mathcal{V}}^h
\right\rangle_{\gamma^h(t)}^h \black 
+ \ddt \, E^h((\Gamma_i^h(t))_{i=1}^2) + \sum_{i=1}^2 P_i = 0\,,
\end{equation*}
where, on noting (\ref{eq:Qalphah}), 
\begin{align}
& P_i := 
\left\langle (1-\theta^h)\,\vec\kappa^h_i\,.\,\matpartxh\,\vec\omega^h_i, 
\frac{\vec Y^h_i\,.\,\vec\omega^h_i}{|\vec\omega^h_i|^2} 
\right\rangle_{\Gamma^h_i(t)}^h
+ \left\langle (1-\theta^h)\,\vec Y^h_i\,.\,\matpartxh\,\vec\omega^h_i, 
\frac{\vec\kappa^h_i\,.\,\vec\omega^h_i}{|\vec\omega^h_i|^2} 
\right\rangle_{\Gamma^h_i(t)}^h
\nonumber \\ &\quad
- 2 \left\langle (1-\theta^h)\,(\vec\kappa^h_i\,.\,\vec\omega^h_i)
\,(\vec Y^h_i\,.\,\vec\omega^h_i)
, \frac{\vec\omega^h_i\,.\,\matpartxh\,\vec\omega^h_i}{|\vec\omega^h_i|^4}
\right\rangle_{\Gamma^h_i(t)}^h
-\left\langle (1-\theta^h)\,(\vec G^h_i(\vec Y^h_i, \vec\kappa^h_i)
\,.\,\vec\nu^h_i)\,\nabs\,\vec\id, \nabs\,\vec{\mathcal{V}}^h 
\right\rangle_{\Gamma^h_i(t)}^h
\nonumber \\ &\quad
- \left\langle (1-\theta^h)\,\vec G^h_i(\vec Y^h_i, \vec\kappa^h_i), 
\matpartxh\,\vec\nu^h_i \right\rangle_{\Gamma^h_i(t)}^h
,\ i=1,2\,.
\label{eq:s38h}
\end{align}
It remains to show that $P_1$ and $P_2$ as defined in (\ref{eq:s38h}) vanish. 
To see this,
we observe that it follows from (\ref{eq:Gomega}), (\ref{eq:Gxieta})
and the time derivative version of (\ref{eq:ddomegah}) that
\begin{align*}
P_i & = 
\left\langle(1-\theta^h)\, \vec G^h_i(\vec Y^h_i, \vec\kappa^h_i) , 
\matpartxh\,\vec\omega^h_i\right\rangle_{\Gamma^h_i(t)}^h
+\left\langle(1-\theta^h)\, \vec G^h_i(\vec Y^h_i, \vec\kappa^h_i)
\,.\,(\vec\omega^h_i-\vec\nu^h_i)\,\nabs\,\vec\id, \nabs\,\vec{\mathcal{V}}^h 
\right\rangle_{\Gamma^h_i(t)}^h 
\nonumber \\ &\qquad
-\left\langle(1-\theta^h)\, \vec G^h_i(\vec Y^h_i, \vec\kappa^h_i) , 
\matpartxh\,\vec\nu^h_i \right\rangle_{\Gamma^h_i(t)}^h = 0\,,\ i=1,2\,.
\end{align*}
This proves the desired result (\ref{eq:sd3stab}).

Finally, 
if $\theta=0$ then it immediately follows from (\ref{eq:weakGFb}) that 
(\ref{eq:conf}) holds. Hence $\Gamma^h_1(t)$ and $\Gamma^h_2(t)$ are
open conformal polyhedral surfaces.
\qquad\end{proof}

\begin{theorem} \label{thm:sd3HFstab}
Let $\theta\in[0,1]$, $\varrho\geq0$ and let 
$\{(\Gamma^h, \vec\kappa^h_1,\vec\kappa^h_2,\vec Y^h_1,\vec Y^h_2,
\vec\kappa^h_\gamma,\vec{\rm m}^h_1,\vec{\rm m}^h_2,\vec\Phi^h,
\lambda^{V,h},\lambda_1^{A,h},\lambda_2^{A,h})(t)$ $\}_{t \in [0,T]}$ 
be a solution to
{\rm (\ref{eq:LMh3aa}}), {\rm (\ref{eq:weakGFb}--f)} and 
{\rm (\ref{eq:mll})}. 
In addition, we assume that
$\vec\kappa^h_\gamma \in \underline{V}^h_T(\gamma^h_T)$,
$\vec\kappa^h_i 
,\vec\pi^h_i[\mat Q^h_{i,\theta^h}\,\vec\kappa^h_i]\in\underline{V}^h_T(\GhTi)$,
$\vec{\rm m}^h_i\in \underline{V}^h_T(\gamma^h_T)$, $i=1,2$.
Then it holds that
\begin{equation}
\ddt \, E^h((\Gamma^h_i(t))_{i=1}^2)
= - \sum_{i=1}^2 \left\langle \mat Q^h_{i,\theta^h_\star}\,\vec{\mathcal{V}}^h,
\vec{\mathcal{V}}^h \right\rangle_{\Gamma^h_i(t)}^h 
- \varrho\left\langle\vec{\mathcal{V}}^h ,\vec{\mathcal{V}}^h
\right\rangle_{\gamma^h(t)}^h \black
,
\label{eq:sd3HFstab}
\end{equation}
as well as
\begin{equation} \label{eq:dtthm}
\ddt\, \mathcal{H}^{d-1}(\Gamma^h_i(t)) = 0\,,\ i=1,2\,,\qquad
\ddt\, \mathcal{L}^d(\Omega^h(t)) = 0 \,,
\end{equation}
where $\Omega^h(t)$ denotes the region bounded by $\Gamma^h(t)$.
Moreover, if $\theta=0$ then $\Gamma^h_1(t)$ and $\Gamma^h_2(t)$ are open 
conformal polyhedral surfaces for all $t\in(0,T]$.
\end{theorem}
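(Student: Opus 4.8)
The plan is to reduce everything to Theorem~\ref{thm:sd3stab}, the only genuinely new ingredient being the conservation property (\ref{eq:dtthm}). First I observe that the conformality claim for $\theta=0$ is unaffected by the Lagrange multiplier terms: it follows verbatim from (\ref{eq:weakGFb}) with $\theta=0$, which yields (\ref{eq:conf}), exactly as in Theorem~\ref{thm:sd3stab}. Next, note that (\ref{eq:LMh3aa}) differs from (\ref{eq:weakGFa}) only through the terms $-\lambda^{V,h}\langle\vec\omega^h,\vec\chi\rangle^h_{\Gamma^h(t)}-\sum_{i=1}^2\lambda^{A,h}_i\langle\nabs\,\vec\id,\nabs\,\vec\chi\rangle_{\Gamma^h_i(t)}$. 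Hence, once (\ref{eq:dtthm}) is established, the choice $\vec\chi=\vec{\mathcal{V}}^h$ makes these extra contributions equal to $-\lambda^{V,h}\langle\vec\omega^h,\vec{\mathcal{V}}^h\rangle^h_{\Gamma^h(t)}-\sum_{i=1}^2\lambda^{A,h}_i\langle\nabs\,\vec\id,\nabs\,\vec{\mathcal{V}}^h\rangle_{\Gamma^h_i(t)}$, which vanish by (\ref{eq:constraintvolh}), (\ref{eq:constraintareah}) and (\ref{eq:dtthm}). The test $\vec\chi=\vec{\mathcal{V}}^h$ in (\ref{eq:LMh3aa}) therefore collapses onto the test $\vec\chi=\vec{\mathcal{V}}^h$ in (\ref{eq:weakGFa}), so the chain of identities (\ref{eq:s31h})--(\ref{eq:s38h}) from the proof of Theorem~\ref{thm:sd3stab} applies unchanged and gives (\ref{eq:sd3HFstab}).

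To prove (\ref{eq:dtthm}) I set $R_i:=\langle\nabs\,\vec\id,\nabs\,\vec{\mathcal{V}}^h\rangle_{\Gamma^h_i(t)}=\ddt\,\mathcal{H}^{d-1}(\Gamma^h_i(t))$ and $R_0:=\langle\vec\omega^h,\vec{\mathcal{V}}^h\rangle^h_{\Gamma^h(t)}=\ddt\,\mathcal{L}^d(\Omega^h(t))$, recalling (\ref{eq:constraintareah}), (\ref{eq:constraintvolh}), and aim to show $R_0=R_1=R_2=0$. For the phase areas I would test (\ref{eq:LMh3aa}) with $\vec\chi=\vec\Pi^h_{i,0}\,\vec\kappa^h_i\in\Vhzeroti$, extended by zero to $\Gamma^h(t)$. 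Since this $\vec\chi$ vanishes on $\gamma^h(t)$, the $\varrho$--term drops, and evaluating the left-hand side honestly, that is, \emph{without} imposing any constraint, via (\ref{eq:aQb}), (\ref{eq:weakGFb}) and (\ref{eq:aikappaomega},b), reproduces the middle line of (\ref{eq:newhfcb}) but retains the term $-R_i$ that was discarded there using (\ref{eq:constraintareah}). Comparing the resulting identity with row $i{+}1$ of (\ref{eq:mll}) — whose right-hand side $b_i$ is exactly the constraint-reduced expression (\ref{eq:bjc}) — then forces $R_i=0$, $i=1,2$.

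The volume constraint follows by the same mechanism, and this is the more delicate step. I would test (\ref{eq:LMh3aa}) with $\vec\chi=\vec\Pi^h_0\,\vec\omega^h$, which again annihilates the $\varrho$--term, and evaluate $\sum_{i=1}^2\langle\mat Q^h_{i,\theta^h_\star}\,\vec{\mathcal{V}}^h,\vec\Pi^h_0\,\vec\omega^h\rangle^h_{\Gamma^h_i(t)}$ as an identity: splitting $\vec\Pi^h_0\,\vec\omega^h=(\vec\Pi^h_0\,\vec\omega^h-\vec\omega^h)+\vec\omega^h$, using that $\theta^h_\star=1$ at the nodes of $\gamma^h(t)$ by (\ref{eq:thetah}) so that $\mat Q^h_{i,\theta^h_\star}=\mat\Id$ on $\supp(\vec\Pi^h_0\,\vec\omega^h-\vec\omega^h)$, together with (\ref{eq:aQb}), (\ref{eq:zo}) and the self-adjointness of the nodal projection $\vec\Pi^h_0$ with respect to the lumped inner product, gives this quantity equal to $\langle\vec\Pi^h_0\,\vec{\mathcal{V}}^h,\vec\omega^h\rangle^h_{\Gamma^h(t)}$. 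This differs from the constraint-reduced left-hand side used to derive (\ref{eq:newhfca}) precisely by the retained residual $R_0$; since the definition (\ref{eq:bjb}) of $b_0$ absorbed $-R_0$, comparison with the first row of (\ref{eq:mll}) yields $R_0=0$. The main obstacle throughout is exactly this bookkeeping: one must keep $\vec\omega^h$, $\vec\omega^h_i$ and $\vec\Pi^h_0\,\vec\omega^h$ strictly apart, track the contact-line nodes where $\mat Q^h_{i,\theta^h_\star}$ collapses to the identity, and verify that (\ref{eq:bjb}), (\ref{eq:bjc}) differ from the honest left-hand sides of the tested equations by exactly the residuals $R_0,R_1,R_2$ — which is what turns the three rows of the positive definite system (\ref{eq:mll}) into the three conservation identities (\ref{eq:dtthm}). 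With (\ref{eq:dtthm}) in hand, the energy identity (\ref{eq:sd3HFstab}) follows as described in the first paragraph, completing the proof.
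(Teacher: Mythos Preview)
Your proof is correct and follows essentially the same approach as the paper. The paper's own argument is very terse: it simply recalls that the system (\ref{eq:mll}) was \emph{constructed} so that (\ref{eq:constraintareah}) and (\ref{eq:constraintvolh}) hold, states (\ref{eq:dtthm}) as an immediate consequence, and then observes that with $\vec\chi=\vec{\mathcal{V}}^h$ in (\ref{eq:LMh3aa}) the Lagrange multiplier terms drop, so the remainder of the proof of Theorem~\ref{thm:sd3stab} applies verbatim. Your write-up makes the converse direction of that ``by construction'' step explicit---testing (\ref{eq:LMh3aa}) with $\vec\Pi^h_0\,\vec\omega^h$ and $\vec\Pi^h_{i,0}\,\vec\kappa^h_i$ and comparing to the rows of (\ref{eq:mll}) to force $R_0=R_1=R_2=0$---which is exactly the content the paper is invoking when it says ``yields that (\ref{eq:constraintareah}) and (\ref{eq:constraintvolh}) hold''.
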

\begin{proof}
We recall that on choosing $(\lambda^{V,h}, \lambda^{A,h}_1, \lambda^{A,h}_2)$
solving the system (\ref{eq:mll}) yields that (\ref{eq:constraintareah}) 
and (\ref{eq:constraintvolh}) hold, and hence the desired results 
(\ref{eq:dtthm}) hold.
The stability result (\ref{eq:sd3HFstab}) directly follows from the proof of
Theorem~\ref{thm:sd3stab}. In
particular, choosing $\vec\chi = \vec{\mathcal{V}}^h$ in (\ref{eq:LMh3aa}),
on noting (\ref{eq:constraintareah}) and (\ref{eq:constraintvolh}), 
yields that
\begin{equation*} 
\left\langle \mat Q^h_{i,\theta^h_\star}\,
\vec{\mathcal{V}}^h, \vec{\mathcal{V}}^h \right\rangle_{\Gamma^h_i(t)}^h
+ \varrho\left\langle\vec{\mathcal{V}}^h ,\vec{\mathcal{V}}^h
\right\rangle_{\gamma^h(t)}^h \black
= \left\langle \vec r^h, \vec{\mathcal{V}}^h
 \right\rangle_{\Gamma^h(t)}^h .
\end{equation*}
Combining this with (\ref{eq:s31h}) yields that (\ref{eq:s32h}) holds, 
and the rest of the proof proceeds as that of Theorem~\ref{thm:sd3stab}.
Finally, as in the proof of Theorem~\ref{thm:sd3stab},
for $\theta=0$ it follows from (\ref{eq:weakGFb}) that $\Gamma^h_1(t)$
and $\Gamma^h_2(t)$ are conformal polyhedral surfaces.
\end{proof}

\setcounter{equation}{0}
\section{Fully discrete finite element approximation} \label{sec:5}
In this section we consider a fully discrete variant of the scheme
{\rm (\ref{eq:weakGFa}--f)} from Section~\ref{sec:3}. To this end, let
$0= t_0 < t_1 < \ldots < t_{M-1} < t_M = T$ be a
partitioning of $[0,T]$ into possibly variable time steps $\ttau_m := t_{m+1} -
t_{m}$, $m=0,\ldots, M-1$. 
Let $\Gamma^{m}$ be a $(d-1)$-dimensional polyhedral surface,
approximating $\Gamma^h(t_m)$, $m=0 ,\ldots, M$, with 
the two parts $\Gamma^m_i$, $i=1,2$ and their common boundary 
$\gamma^m$.
Following \cite{Dziuk91}, we now
parameterize the new surface $\Gamma^{m+1}$ over $\Gamma^m$. 
Hence, we introduce the following
finite element spaces. Let $\Gamma^m=\bigcup_{j=1}^J \overline{\sigma^m_j}$,
where $\{\sigma^m_j\}_{j=1}^J$ is a family of mutually disjoint open triangles
with vertices $\{\vec{q}^m_k\}_{k=1}^K$.
Then for $m =0 ,\ldots, M-1$, let
\begin{equation} \label{eq:Vh}
\Vh := \{\vec\chi \in [C(\Gamma^m)]^d:\vec\chi\!\mid_{\sigma^m_j}
\mbox{ is linear}\ \forall\ j=1,\ldots, J\} 
=: [\Wh]^d \,.
\end{equation}
We denote the standard basis of $\Wh$ by $\{\chi^m_k\}_{k=1}^K$. 
In addition, similarly to the semidiscrete setting in Section~\ref{sec:3},
we introduce the spaces $\Whi$ and $\Vhi$,
denoting the standard basis of $\Whi$ by $\{\chi^m_{i,k}\}_{k=1}^{K_i}$,
as well as $\Vhpartial$, 
and the interpolation operators
$\pi^m_i: C(\Gamma^m_i)\to \Whi$ and similarly 
$\vec\pi^m_i: [C(\Gamma^m_i)]^d\to \Vhi$.

We also introduce the $L^2$--inner 
products $\langle\cdot,\cdot\rangle_{\Gamma^m}$,
$\langle\cdot,\cdot\rangle_{\Gamma^m_i}$ and
$\langle\cdot,\cdot\rangle_{\gamma^m}$, as well as their 
mass lumped inner variants
$\langle\cdot,\cdot\rangle_{\Gamma^m}^h$,
$\langle\cdot,\cdot\rangle_{\Gamma^m_i}^h$ and
$\langle\cdot,\cdot\rangle_{\gamma^m}^h$.
Similarly to (\ref{eq:omegahi}) and (\ref{eq:omegah}) 
we introduce the discrete vertex normals
$\vec\omega^m_i := \sum_{k=1}^{K_i} \chi^m_{i,k}\,\vec\omega^m_{i,k} \in \Vhi$
and $\vec\omega^m := \sum_{k=1}^{K} \chi^m_k\,\vec\omega^m_k \in \Vh$.

We make the following mild assumption.

\begin{itemize}
\item[$(\mathcal{A})$]
We assume for $m=0,\ldots, M-1$ that $\mathcal{H}^{d-1}(\sigma^m_j) > 0$ 
for $j=1,\ldots, J$, and that
$\vec 0 \not \in \{ \vec\omega^m_{i,k} : k = 1,\ldots, K_i\,,i=1,2 \}$. 
Moreover, in the case $C_1 = 1$ and $\theta = 0$ we assume that
$\dim \spa\{ \vec\omega^m_{i,k} : k = 1,\ldots, K_i\,,i=1,2 \} = d$, 
for $m=0,\ldots, M-1$.
\end{itemize}

In addition, and similarly to (\ref{eq:thetah}) and (\ref{eq:Qalphah}),
we introduce $\theta^m$ and $\theta^m_\star \in \Wh$, and then 
$\mat Q^m_{i,\theta^m},\,\mat Q^m_{i,\theta^m_\star} \in [\Whi]^{d \times d}$, 
by setting
$\mat Q^m_{i,\theta^m}(\vec q^m_{i,k}) = 
\theta^m(\vec q^m_{i,k})\,\vec\Id + (1-\theta^m(\vec q^m_{i,k}))
\,|\vec\omega^m_{i,k}|^{-2}\,\vec\omega^m_{i,k} \otimes \vec\omega^m_{i,k}$ 
and
$\mat Q^m_{i,\theta^m_\star}(\vec q^m_{i,k}) = 
\theta^m_\star(\vec q^m_{i,k})\,\vec\Id + (1-\theta^m_\star(\vec q^m_{i,k}))
\,|\vec\omega^m_{i,k}|^{-2}\,\vec\omega^m_{i,k} \otimes \vec\omega^m_{i,k}$ 
for $k= 1,\ldots, K_i$, $i=1,2$.
Similarly to (\ref{eq:Gxieta}) and (\ref{eq:Ph}), we let
\[
\vec G^m_i(\vec\xi,\vec\eta) = \frac{1}{|\vec\omega^m_i|^2}
 \left( (\vec\xi\,.\,\vec\omega^m_i)\,\vec\eta +
 (\vec\eta\,.\,\vec\omega^m_i)\,\vec\xi
- 2\, \frac{(\vec\eta\,.\,\vec\omega^m_i)\,(\vec\xi\,.\,\vec\omega^m_i)}
{|\vec\omega^m_i|^2}\,\vec\omega^m_i
\right) 
\]
and
\begin{equation*} 
\mat{\mathcal{P}}^m_\gamma = \mat\Id - \vec\id_s \otimes \vec\id_s
\quad \text{on} \quad \gamma^m\,.
\end{equation*}

On recalling (\ref{eq:weakGFbnew}), we consider the following fully 
discrete approximation of \mbox{(\ref{eq:weakGFa}--f)}.
For $m=0,\ldots,M-1$, find $\vec X^{m+1} \in \Vh$, 
$(\vec Y^{m+1}_i,\vec{\rm m}^{m+1}_i)_{i=1}^2 
\in \Vhone \times \Vhpartial \times \Vhtwo \times \Vhpartial$,
$\vec\kappa_\gamma^{m+1}\in \Vhpartial$
and $C_1\,\Phi^{m+1} \in \Vhpartial$ such that
\begin{subequations}
\begin{align}
& 
\sum_{i=1}^2 \left[
\left\langle \mat Q^m_{i,\theta^m_\star}\,
\frac{\vec X^{m+1} - \vec\id}{\ttau_m} ,
\vec\chi \right\rangle_{\Gamma^m_i}^h
-  \left\langle \nabs\,\vec Y^{m+1}_i, \nabs\,\vec\chi 
\right\rangle_{\Gamma^m_i} 
+ \alpha^G_i \left\langle (\vec{\rm m}^{m+1}_i)_s, 
\vec\chi_s \right\rangle_{\gamma^m}\right]
\nonumber \\ & \qquad \qquad
+ \varsigma \left\langle \vec X^{m+1}_s, \vec\chi_s \right\rangle_{\gamma^m}
+ \varrho\left\langle \frac{\vec X^{m+1} - \vec\id}{\ttau_m},\vec\chi 
\right\rangle_{\gamma^m}^h 
\black
\nonumber \\ & \quad
= 
\sum_{i=1}^2 \left[
\left\langle \nabs\,.\,\vec Y^m_i, \nabs\,.\,\vec\chi 
\right\rangle_{\Gamma^m_i} 
- \left\langle (\nabs\,\vec Y^m_i)^T , 
\mat D(\vec\chi)\,(\nabs\,\vec\id)^T \right\rangle_{\Gamma^m_i} 
\right. \nonumber \\ & \qquad \left.
-\tfrac12\left\langle [ \alpha_i\,|\vec\kappa^m_i - \spont_i\,\vec\nu^m_i|^2
- 2\,(\vec Y^m_i\,.\,\mat Q^m_{i,\theta^m}\,\vec\kappa^m_i)]
\,\nabs\,\vec\id,\nabs\,\vec\chi \right\rangle_{\Gamma^m_i}^h
-\alpha_i\,\spont_i 
\left\langle \vec\kappa^m_i, [\nabs\,\vec\chi]^T\,\vec\nu^m_i
\right\rangle_{\Gamma^m_i}^h 
\right. \nonumber \\ & \qquad \left.
+\left\langle(1-\theta^m)\, (\vec G^m_i(\vec Y^m_i, \vec\kappa^m_i)\,.\,
\vec\nu^m_i)\,
\nabs\,\vec\id, \nabs\,\vec\chi \right\rangle_{\Gamma^m_i}^h
- \left\langle(1-\theta^m)\,\vec G^m_i(\vec Y^m_i,\vec\kappa^m_i), 
[\nabs\,\vec\chi]^T\,\vec\nu^m_i \right\rangle_{\Gamma^m_i}^h 
 \right]
\nonumber \\ & \qquad 
+ \sum_{i=1}^2 \alpha^G_i 
\left[\left\langle \vec\kappa_\gamma^m \,.\,\vec{\rm m}^m_i,
\vec\id_s\,.\,\vec\chi_s \right\rangle_{\gamma^m}^h
+ \left\langle(\mat\Id + \mat{\mathcal{P}}^m_\gamma)\,(\vec{\rm m}^m_i)_s, 
\vec\chi_s \right\rangle_{\gamma^m} \right]
\nonumber \\ & \qquad
- \lambda^{V,m} 
\left\langle \vec\omega^m, \vec\chi \right\rangle_{\Gamma^m}^h
- \sum_{i=1}^2 \lambda^{A,m}_i 
\left\langle \nabs\,\vec\id, \nabs\,\vec\chi \right\rangle_{\Gamma^m_i}
\quad \forall\ \vec\chi \in \Vh \,,\label{eq:GF2a} \\
& \alpha_i^{-1}
\left\langle \mat Q^m_{i,\theta^m}\,\vec Y^{m+1}_i, \mat Q^m_{i,\theta^m}\,
\vec\eta \right\rangle_{\Gamma^m_i}^h +
\left\langle \nabs\,\vec X^{m+1} , \nabs\,\vec\eta \right\rangle_{\Gamma^m_i} = 
\left\langle \vec{\rm m}^{m+1}_i, \vec\eta \right\rangle_{\gamma^m}^h
- \spont_i \left\langle \vec\omega^m_i, \vec\eta \right\rangle_{\Gamma^m_i}^h
\nonumber \\ & \hspace{9cm}
 \quad\forall\ \vec\eta \in \Vhi\,,\ i=1,2\,, \label{eq:GF2b} \\
& \left\langle \vec\kappa_\gamma^{m+1}, \vec\chi 
\right\rangle_{\gamma^m}^h
+ \left\langle \vec X^{m+1}_s, \vec\chi_s \right\rangle_{\gamma^m} = 0
\qquad \forall\ \vec\chi \in \Vhpartial\,, \label{eq:GF2c} \\
& C_1\,(\vec{\rm m}_1^{m+1} + \vec{\rm m}_2^{m+1}) = 
\vec 0 \qquad \text{on}\quad \gamma^m\,, \label{eq:GF2d} \\
& \alpha^G_i \,\vec\kappa_\gamma^{m+1}
 + \vec Y^{m+1}_i + C_1\,\vec\Phi^{m+1} = \vec0 
\qquad \text{on}\quad \gamma^m \,, \ i = 1,2\,, \label{eq:GF2e}
\end{align}
\end{subequations}
and set $\vec\kappa^{m+1}_i = 
\alpha^{-1}_i\,\vec\pi^m_i\,[\mat Q^m_{i,\theta^m}\,\vec Y^{m+1}_i]
+ \spont_i\,\vec\omega^m_i$ and $\Gamma^{m+1}_i = \vec X^{m+1}(\Gamma^m_i)$,
$i=1,2$.
For $m \geq 1$ we note that here and throughout, as no confusion 
can arise, we denote by $\vec\kappa^m_i$ the function $\vec z \in \Vhi$, 
defined by $\vec z(\vec q^m_{i,k}) = \vec\kappa^m_i(\vec q^{m-1}_{i,k})$, 
$k=1\to K_i$, 
where $\vec\kappa^m_i \in \underline{V}(\Gamma^{m-1}_i)$ is given, 
and similarly for e.g.\ $\vec Y^m_i$, $\vec{\rm m}^m_i$ and 
$\vec\kappa_{\gamma}^m$.

We note that if $C_1= \alpha^G_1 = \alpha^G_2 = 0$ then the weak
conormals $\vec {\rm m}^{m+1}_i$ play no role in the evolution. However, for
surface area conservation they do play a role also in that case, 
see (\ref{eq:bjm2}) below. 
We also remark that the parameter $\varrho\geq0$ has a
stabilizing effect on the evolution of $\gamma^m$. In practice, this was
particularly useful for simulations involving surface area preservation,
and for $C^0$ experiments with Gaussian curvature energy contributions.

Of course, (\ref{eq:GF2a}--e) with $\lambda^{V,m} = \lambda^{A,m}_1 =
\lambda^{A,m}_2 = 0$ corresponds to
a fully discrete approximation of (\ref{eq:weakGFa}--f), 
on recalling (\ref{eq:weakGFbnew}). 
For a fully discrete approximation of the volume and/or surface area 
preserving flow, on recalling (\ref{eq:mll}--c), we let 
$(\lambda^{V,m}, \lambda^{A,m}_1, \lambda^{A,m}_2)$ be the solution of
\begin{subequations}
\begin{equation} 
\begin{pmatrix}
\sum_{i=1}^2 a^m_{i,\theta}(\vec\omega^m_i, \vec\omega^m_i) &
a^m_{1,\theta}(\vec\kappa^m_1, \vec\omega^m_1) &
a^m_{2,\theta}(\vec\kappa^m_2, \vec\omega^m_2) \\
a^m_{1,\theta}(\vec\kappa^m_1, \vec\omega^m_1) &
a^m_{1,\theta}(\vec\kappa^m_1, \vec\kappa^m_1) & 0 \\
a^m_{2,\theta}(\vec\kappa^m_2, \vec\omega^m_2) & 0 &
a^m_{2,\theta}(\vec\kappa^m_2, \vec\kappa^m_2) 
\end{pmatrix}
\begin{pmatrix}
- \lambda^{V,m} \\ \lambda^{A,m}_1\\ \lambda^{A,m}_2
\end{pmatrix}
= \begin{pmatrix}
b_0^m \\ b_1^m\\ b_2^m
\end{pmatrix} \,,
\label{eq:mllm}
\end{equation}
where, on noting the fully discrete variant of (\ref{eq:zo}), 
\begin{align}
b_0^m & =   \sum_{i=1}^2 \left[
\left\langle (\vec\Pi^m_{i,0} - \mat\Id) 
\frac{\vec\id - \vec X^{m-1}}{\ttau_{m-1}},
\vec\omega^m \right\rangle_{\Gamma^m_i}^h
- \left\langle \nabs\,\vec Y^m_i, \nabs\, (\vec\Pi^m_0\,\vec\omega^m)
\right\rangle_{\Gamma^m_i} \right] 
- \left\langle \vec f^m, \vec\Pi^m_0\,\vec\omega^m\right\rangle_{\Gamma^m}^h
 \nonumber \\ & =
\sum_{i=1}^2 \left[ 
\left\langle (\vec\Pi^m_{i,0} - \mat\Id) 
\frac{\vec\id - \vec X^{m-1}}{\ttau_{m-1}},
\vec\omega^m_i \right\rangle_{\Gamma^m_i}^h
- \left\langle \nabs\,\vec Y^m_i, \nabs\, (\vec\Pi^m_{i,0}\,\vec\omega^m_i)
\right\rangle_{\Gamma^m_i} 
- \left\langle \vec f^m, \vec\Pi^m_{i,0}\,\vec\omega^m_i
\right\rangle_{\Gamma^m_i}^h \right]
, \label{eq:bjm1} \\ 
b_{i}^m & =
\left\langle (\vec\Pi^m_{i,0} - \mat\Id)\,
\frac{\vec\id - \vec X^{m-1}}{\ttau_{m-1}},
\mat Q^m_{i,\theta^m}\,\vec\kappa^m_i \right\rangle_{\Gamma^m_i}^h 
+ \left\langle \vec{\rm m}^m_i, \frac{\vec\id - \vec X^{m-1}}{\ttau_{m-1}}
\right\rangle_{\gamma^m}^h \nonumber \\ & \qquad
- \left\langle \nabs\,\vec Y^m_i, \nabs\,(\vec\Pi^m_{i,0}\,\vec\kappa^m_i)
\right\rangle_{\Gamma^m_i} 
- \left\langle \vec f^m, \vec\Pi^m_{i,0}\,\vec\kappa^m_i 
\right\rangle_{\Gamma^m}^h \nonumber \\ & =
\left\langle (\vec\Pi^m_{i,0} - \mat\Id)\,
\frac{\vec\id - \vec X^{m-1}}{\ttau_{m-1}},
\mat Q^m_{i,\theta^m}\,\vec\kappa^m_i \right\rangle_{\Gamma^m_i}^h 
+ \left\langle \vec{\rm m}^m_i, \frac{\vec\id - \vec X^{m-1}}{\ttau_{m-1}}
\right\rangle_{\gamma^m}^h \nonumber \\ & \qquad 
- \left\langle \nabs\,\vec Y^m_i, \nabs\,(\vec\Pi^m_{i,0}\,\vec\kappa^m_i)
\right\rangle_{\Gamma^m_i}
- \left\langle \vec f^m, \vec\Pi^m_{i,0}\,\vec\kappa^m_i 
\right\rangle_{\Gamma^m_i}^h,\ i=1,2\,.
\label{eq:bjm2}
\end{align}
\end{subequations}
Here, for convenience, we have re-written (\ref{eq:GF2a}) as
\begin{align} \label{eq:GF2aa}
&\sum_{i=1}^2 \left[
\left\langle \mat Q^m_{i,\theta^m_\star}\,
\frac{\vec X^{m+1} - \vec\id}{\ttau_m} ,
\vec\chi \right\rangle_{\Gamma^m_i}^h
-  \left\langle \nabs\,\vec Y^{m+1}_i, \nabs\,\vec\chi 
\right\rangle_{\Gamma^m_i}
+ \alpha^G_i \left\langle (\vec{\rm m}^{m+1}_i)_s, 
\vec\chi_s \right\rangle_{\gamma^m} \right]
\nonumber \\ & \qquad\qquad
+ \varsigma \left\langle \vec X^{m+1}_s, \vec\chi_s \right\rangle_{\gamma^m}
+ \varrho\left\langle \frac{\vec X^{m+1} - \vec\id}{\ttau_m},\vec\chi 
\right\rangle_{\gamma^m}^h \black
\nonumber \\ & \quad
= \left\langle \vec f^m , \vec\chi \right\rangle_{\Gamma^m}^h 
- \lambda^{V,m} 
\left\langle \vec\omega^m, \vec\chi \right\rangle_{\Gamma^m}^h
- \sum_{i=1}^2 \lambda^{A,m}_i 
\left\langle \nabs\,\vec\id, \nabs\,\vec\chi \right\rangle_{\Gamma^m_i}
\quad \forall\ \vec\chi \in \Vh \,,
\end{align}
and, analogously to (\ref{eq:ai}), we have defined 
$a^m_{i,\theta} : \Vhi \times \Vhi \to \R$ by setting
\begin{equation*} 
a^m_{i,\theta}(\vec\zeta, \vec\eta) = 
\left\langle \mat Q^m_{i, \theta^m}\,\vec\zeta
, \vec\Pi^m_{i,0}\,\vec\eta \right\rangle_{\Gamma^m_i}^h\qquad
\forall\ \vec\zeta,\,\vec\eta \in \Vhi\,,\ i=1,2\,.
\end{equation*}
As before, we note that the matrix in 
(\ref{eq:mllm}) is symmetric positive definite as long as 
$\vec\Pi^m_{i,0}\,\vec\omega^m_i$ and $\vec\Pi^m_{i,0}\,[
\vec Q^m_{i,\theta^m}\,\vec\kappa^m_i]$ are linearly
independent, for $i=1,2$.

\begin{theorem} \label{thm:ex2}
Let $\theta\in[0,1]$, $\varrho\geq0$ and $\alpha_1,\alpha_2>0$.
Let the assumptions $(\mathcal{A})$ hold. 
Then there exists a unique solution
$\vec X^{m+1} \in \Vh$, 
$(\vec Y^{m+1}_i,\vec{\rm m}^{m+1}_i)_{i=1}^2 
\in \Vhone \times \Vhpartial \times \Vhtwo\times \Vhpartial$,
$\vec\kappa_\gamma^{m+1}\in \Vhpartial$ and $C_1\,\Phi^{m+1} \in \Vhpartial$
to \mbox{\rm (\ref{eq:GF2a}--e)}.
\end{theorem}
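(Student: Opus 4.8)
The plan is to exploit that \mbox{(\ref{eq:GF2a}--e)} is a \emph{square} linear system: the scalar unknowns carried by $(\vec X^{m+1},(\vec Y^{m+1}_i,\vec{\rm m}^{m+1}_i)_{i=1}^2,\vec\kappa_\gamma^{m+1},C_1\,\vec\Phi^{m+1})$ match the scalar equations obtained by testing (\ref{eq:GF2a}), (\ref{eq:GF2b}), (\ref{eq:GF2c}) against the nodal bases together with the nodal identities (\ref{eq:GF2d},e). Hence existence is equivalent to uniqueness, and it suffices to show that the homogeneous problem (all right-hand sides, i.e.\ $\vec f^m$, the $\spont_i$-terms and the Lagrange-multiplier contributions, set to zero) has only the trivial solution.

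For the homogeneous problem I would take $\vec\chi=\vec X^{m+1}$ in (\ref{eq:GF2a}) and $\vec\eta=\vec Y^{m+1}_i$ in (\ref{eq:GF2b}), sum the latter over $i$ and add, so that the cross terms $\langle\nabs\vec Y^{m+1}_i,\nabs\vec X^{m+1}\rangle_{\Gamma^m_i}$ cancel by symmetry. To remove the remaining boundary contributions I would test (\ref{eq:GF2c}) with $\vec\chi=\sum_i\alpha^G_i\vec{\rm m}^{m+1}_i\in\Vhpartial$ and substitute $\vec Y^{m+1}_i=-\alpha^G_i\vec\kappa_\gamma^{m+1}-C_1\vec\Phi^{m+1}$ from (\ref{eq:GF2e}), noting that $C_1\langle\vec{\rm m}^{m+1}_1+\vec{\rm m}^{m+1}_2,\vec\Phi^{m+1}\rangle^h_{\gamma^m}$ vanishes by (\ref{eq:GF2d}). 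A short computation then shows that the Gaussian-curvature coupling terms and the term $\langle\vec{\rm m}^{m+1}_i,\vec Y^{m+1}_i\rangle^h_{\gamma^m}$ exactly cancel, leaving
\begin{equation*}
\sum_{i=1}^2 \tfrac{1}{\ttau_m}\langle\mat Q^m_{i,\theta^m_\star}\,\vec X^{m+1}, \vec X^{m+1}\rangle^h_{\Gamma^m_i} + \sum_{i=1}^2 \tfrac{1}{\alpha_i}\langle\mat Q^m_{i,\theta^m}\,\vec Y^{m+1}_i, \mat Q^m_{i,\theta^m}\,\vec Y^{m+1}_i\rangle^h_{\Gamma^m_i} + \varsigma\,\langle\vec X^{m+1}_s, \vec X^{m+1}_s\rangle_{\gamma^m} + \tfrac{\varrho}{\ttau_m}\,\langle\vec X^{m+1}, \vec X^{m+1}\rangle^h_{\gamma^m} = 0.
\end{equation*}
Since $\alpha_i>0$, $\varsigma,\varrho\geq0$ and each $\mat Q^m_{i,\cdot}$ is positive semidefinite, every summand is non-negative and hence vanishes. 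Because $\theta^m_\star\equiv1$ on $\gamma^m$, the first sum forces $\vec X^{m+1}=\vec0$ at every vertex of $\gamma^m$, while the second gives $\mat Q^m_{i,\theta^m}\,\vec Y^{m+1}_i=\vec0$.

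I would then recover $\vec X^{m+1}=\vec0$ globally: as $\mat Q^m_{i,\theta^m}\,\vec Y^{m+1}_i=\vec0$, equation (\ref{eq:GF2b}) collapses to $\langle\nabs\vec X^{m+1},\nabs\vec\eta\rangle_{\Gamma^m_i}=\langle\vec{\rm m}^{m+1}_i,\vec\eta\rangle^h_{\gamma^m}$; choosing $\vec\eta=\vec X^{m+1}\!\mid_{\Gamma^m_i}$ and using that $\vec X^{m+1}$ vanishes at the $\gamma^m$-vertices makes the right-hand side zero, so $\nabs\vec X^{m+1}=\vec0$ on each $\Gamma^m_i$ and therefore $\vec X^{m+1}=\vec0$ by connectedness. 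Back-substituting, (\ref{eq:GF2c}) yields $\vec\kappa_\gamma^{m+1}=\vec0$ by positive definiteness of the mass-lumped product on $\gamma^m$, the collapsed (\ref{eq:GF2b}) then forces $\vec{\rm m}^{m+1}_i=\vec0$, and (\ref{eq:GF2e}) gives $\vec Y^{m+1}_i=-C_1\vec\Phi^{m+1}$ on $\gamma^m$.

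The final step, showing $\vec Y^{m+1}_i=\vec0$ and $\vec\Phi^{m+1}=\vec0$, is the part I expect to be the main obstacle. With $\vec X^{m+1}=\vec{\rm m}^{m+1}_i=\vec0$, equation (\ref{eq:GF2a}) reduces to $\sum_i\langle\nabs\vec Y^{m+1}_i,\nabs\vec\chi\rangle_{\Gamma^m_i}=0$ for all $\vec\chi\in\Vh$; choosing $\vec\chi$ with $\vec\chi\!\mid_{\Gamma^m_i}=\vec Y^{m+1}_i$ (admissible since $\vec Y^{m+1}_1=\vec Y^{m+1}_2=-C_1\vec\Phi^{m+1}$ agree on $\gamma^m$) gives $\nabs\vec Y^{m+1}_i=\vec0$, so $\vec Y^{m+1}_i$ is constant on each component. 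In the $C^0$ case ($C_1=0$), $\vec Y^{m+1}_i=\vec0$ on $\gamma^m$ from (\ref{eq:GF2e}) then forces $\vec Y^{m+1}_i=\vec0$. In the $C^1$ case ($C_1=1$), $\vec Y^{m+1}_i$ equals a single constant $\vec c$ on the connected surface $\Gamma^m$, and the constraint $\mat Q^m_{i,\theta^m}\,\vec Y^{m+1}_i=\vec0$ reads $\vec c\,.\,\vec\omega^m_{i,k}=0$ at every vertex when $\theta=0$; here I would invoke the non-degeneracy hypothesis in $(\mathcal{A})$, namely $\dim\spa\{\vec\omega^m_{i,k}\}=d$, to conclude $\vec c=\vec0$, hence $\vec\Phi^{m+1}=\vec0$ (for $\theta>0$ the positive definiteness of $\mat Q^m_{i,\theta^m}$ at interior vertices pins $\vec c$ down instead). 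The genuinely delicate points are the careful bookkeeping of mass-lumped versus exact boundary inner products in the Gaussian-curvature cancellation, and the combined use of $(\mathcal{A})$ and connectivity to pass from \emph{piecewise constant with vanishing weighted normal component} to \emph{identically zero} in the $C^1$, $\theta=0$ regime.
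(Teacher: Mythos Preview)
Your proposal is correct and follows essentially the same route as the paper: reduce to the homogeneous system, test (\ref{eq:GF2a}) with $\vec X$, (\ref{eq:GF2b}) with $\vec Y_i$, and (\ref{eq:GF2c}) with the $\alpha^G_i\,\vec{\rm m}_i$ to obtain the key non-negative identity, then use $\theta^m_\star\equiv1$ on $\gamma^m$ to kill $\vec X$ on the boundary, feed this back into (\ref{eq:GF2b}) to obtain $\vec X\equiv\vec0$, and finally exploit that $\vec Y_1,\vec Y_2$ glue to a global $\vec Y\in\Vh$ so that (\ref{eq:GF2a}) makes it constant, with the $C_1=1$, $\theta=0$ case resolved precisely via the span condition in $(\mathcal{A})$. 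The only cosmetic differences are the order in which you deduce $\vec\kappa_\gamma^{m+1}=\vec0$ and that the paper tests (\ref{eq:GF2c}) with each $\vec{\rm m}_i$ separately rather than with the sum; both are immaterial.
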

\begin{proof}
As (\ref{eq:GF2a}--e) is linear, existence follows from uniqueness.
To investigate the latter, we consider the system:
Find $\vec X \in \Vh$, $(\vec Y_i,\vec{\rm m}_i)_{i=1}^2 
\in \Vhone \times \Vhpartial \times \Vhtwo\times\Vhpartial$,
$\vec\kappa_\gamma\in \Vhpartial$ and $C_1\,\Phi \in \Vhpartial$
such that
\begin{subequations}
\begin{align}
& \sum_{i=1}^2 \left[ \frac1{\ttau_m}
\left\langle \mat Q^m_{i,\theta^m_\star}\,\vec X, 
\vec\chi \right\rangle_{\Gamma^m_i}^h
-  \left\langle \nabs\,\vec Y_i, \nabs\,\vec\chi 
\right\rangle_{\Gamma^m_i} 
+ \alpha^G_i \left\langle (\vec{\rm m}_i)_s, 
\vec\chi_s \right\rangle_{\gamma^m}\right]
+ \varsigma \left\langle \vec X_s, \vec\chi_s \right\rangle_{\gamma^m}
+ \varrho \left\langle \vec X, \vec\chi \right\rangle_{\gamma^m}^h\black 
 = 0 
\nonumber \\ & \hspace{11cm}
\quad \forall\ \vec\chi \in \Vh\,, \label{eq:proofGF2a} \\
& \alpha_i^{-1} \left\langle \mat Q^m_{i,\theta^m}\,\vec Y_i, 
\mat Q^m_{i,\theta^m}\,\vec\eta \right\rangle_{\Gamma^m_i}^h +
\left\langle \nabs\,\vec X, \nabs\,\vec\eta \right\rangle_{\Gamma^m_i} 
= \left\langle \vec{\rm m}_i, \vec\eta \right\rangle_{\gamma^m}^h 
 \quad\forall\ \vec\eta \in \Vhi\,,\ i=1,2\,, \label{eq:proofGF2b} \\
& \left\langle \vec\kappa_\gamma, \vec\chi 
\right\rangle_{\gamma^m}^h
+ \left\langle \vec X_s, \vec\chi_s \right\rangle_{\gamma^m} = 0
\qquad \forall\ \vec\chi \in \Vhpartial\,, \label{eq:proofGF2c} \\
& C_1\,(\vec{\rm m}_1 + \vec{\rm m}_2) = 
\vec 0 \qquad \text{on}\quad \gamma^m\,, \label{eq:proofGF2d} \\
& \alpha^G_i \,\vec\kappa_\gamma
 + \vec Y_i + C_1\,\vec\Phi = \vec0 
\qquad \text{on}\quad \gamma^m \,, \ i = 1,2\,. \label{eq:proofGF2e}
\end{align}
\end{subequations}
Choosing $\vec\chi = \vec X$ in (\ref{eq:proofGF2a}), 
$\vec\eta = \vec Y_i$ in (\ref{eq:proofGF2b}), 
$\vec\chi = \vec{\rm m}_i$ in (\ref{eq:proofGF2c}) 
and noting (\ref{eq:proofGF2d},e), leads to
\begin{align}
& \sum_{i=1}^2 \frac1{\ttau_m} \left\langle \mat Q^m_{i,\theta^m_\star}\,\vec X,
\vec X \right\rangle_{\Gamma^m_i}^h
+ \varsigma \left\langle \vec X_s, \vec X_s \right\rangle_{\gamma^m} 
+ \varrho \left\langle \vec X, \vec X \right\rangle_{\gamma^m}^h\black 
\nonumber \\ & \qquad
= \sum_{i=1}^2 \left[
 \left\langle \vec{\rm m}_i, \vec Y_i \right\rangle_{\gamma^m}^h
- \alpha_i^{-1} \left\langle\mat Q^m_{i,\theta^m}\, \vec Y_i, 
 \mat Q^m_{i,\theta^m}\,\vec Y_i \right\rangle_{\Gamma^m_i}^h
- \alpha^G_i \left\langle (\vec{\rm m}_i)_s, \vec X_s 
 \right\rangle_{\gamma^m} \right]
\nonumber \\ & \qquad
= \sum_{i=1}^2 \left[
 \left\langle \vec Y_i ,\vec{\rm m}_i\right\rangle_{\gamma^m}^h
+ \alpha^G_i \left\langle \vec{\rm m}_i, \vec\kappa_\gamma
 \right\rangle_{\gamma^m} 
- \alpha_i^{-1} \left\langle\mat Q^m_{i,\theta^m}\, \vec Y_i, 
 \mat Q^m_{i,\theta^m}\,\vec Y_i \right\rangle_{\Gamma^m_i}^h
\right] \nonumber \\ & \qquad
= - \sum_{i=1}^2 \alpha_i^{-1} \left\langle\mat Q^m_{i,\theta^m}\, \vec Y_i, 
 \mat Q^m_{i,\theta^m}\,\vec Y_i \right\rangle_{\Gamma^m_i}^h
 \,.
\label{eq:proof3GF2}
\end{align}
It follows from (\ref{eq:proof3GF2}) and the definition of
$\mat Q^{m}_{i,\theta^m_\star}$ that $\vec X = \vec 0$ on 
$\gamma^m$, and so (\ref{eq:proofGF2c}) implies that $\vec\kappa_\gamma=\vec0$.
In addition, (\ref{eq:proof3GF2}) yields that
$\vec\pi^m_i\,[\mat Q^m_{i,\theta^m}\,\vec Y_i] = \vec 0$, $i=1,2$.
Hence, on adding the two equations in (\ref{eq:proofGF2b})
with $\vec\eta=\vec X$, we obtain that 
$\left\langle\nabs \,\vec X,\nabs\, \vec X\right\rangle_{\Gamma^m} = 0$,
and so $\vec X = \vec 0$.
Then (\ref{eq:proofGF2b}) implies that $\vec {\rm m}_1 = \vec{\rm m}_2 = \vec0$.
Next, we have from (\ref{eq:proofGF2e}), on recalling that 
$\vec\kappa_\gamma=\vec0$, that there exists a $\vec Y \in \Vh$
such that $\vec Y_i = \vec Y\!\mid_{\Gamma^m_i}$, $i=1,2$. Choosing
$\vec\eta=\vec Y$ in (\ref{eq:proofGF2a})  yields that
$\left\langle\nabs \,\vec Y,\nabs\, \vec Y\right\rangle_{\Gamma^m} = 0$,
and hence $\vec Y$ is constant. If $C_1=0$ we immediately obtain from
(\ref{eq:proofGF2e}) that $\vec Y=\vec 0$.
If $C_1 = 1$, on the other hand, it follows that
$\mat Q^m_{i,\theta^m}(\vec q^m_{i,k})\,\vec Y = \vec 0$
for $k=1,\ldots K_i$, $i=1,2$, and hence 
\begin{equation} \label{eq:Yoi}
\vec Y \,.\, \vec\omega^m_i(\vec q^m_{i,k}) = 0\qquad
k=1,\ldots K_i,\ i=1,2. 
\end{equation}
The definition of $\mat Q^m_{i,\theta^m}$, recall the fully discrete version of
(\ref{eq:Qalphah}), and (\ref{eq:Yoi}) then yield 
that $\theta\, \vec Y = \vec 0$. 
Hence for $\theta \in (0,1]$ we immediately obtain that $\vec Y=\vec0$, while
in the case $\theta =0$ it follows from assumption $(\mathcal{A})$ 
and (\ref{eq:Yoi}) that $\vec Y = \vec 0$.
Finally, 
we obtain that $\vec\Phi=\vec0$ from (\ref{eq:proofGF2e}). 
\end{proof}

\subsection{Implicit treatment of volume and area conservation} 
\label{rem:implfree}
In practice it can be advantageous to consider implicit Lagrange multipliers
$(\lambda^{V,m+1}, \lambda^{A,m+1}_1, \lambda^{A,m+1}_2)$ 
in order to obtain better discrete volume and surface area conservations.
In particular, we replace {\rm (\ref{eq:GF2aa})} with
\begin{align} \label{eq:GF2aaa}
&\sum_{i=1}^2 \left[
\left\langle \mat Q^m_{i,\theta^m_\star}\,
\frac{\vec X^{m+1} - \vec\id}{\ttau_m} ,
\vec\chi \right\rangle_{\Gamma^m_i}^h
-  \left\langle \nabs\,\vec Y^{m+1}_i, \nabs\,\vec\chi 
\right\rangle_{\Gamma^m_i} 
+ \alpha^G_i \left\langle (\vec{\rm m}^{m+1}_i)_s, 
\vec\chi_s \right\rangle_{\gamma^m}
\right]
\nonumber \\ & \qquad\qquad
+ \varsigma \left\langle \vec X^{m+1}_s, \vec\chi_s \right\rangle_{\gamma^m}
+ \varrho\left\langle \frac{\vec X^{m+1} - \vec\id}{\ttau_m},\vec\chi 
\right\rangle_{\gamma^m}^h \black
\nonumber \\ & \quad
= \left\langle \vec f^m , \vec\chi \right\rangle_{\Gamma^m}^h 
- \lambda^{V,m+1}
\left\langle \vec\omega^{m}, \vec\chi \right\rangle_{\Gamma^m}^h
- \sum_{i=1}^2 \lambda^{A,m+1}_i 
\left\langle \nabs\,\vec X^{m+1}, \nabs\,\vec\chi \right\rangle_{\Gamma^m_i}
\quad \forall\ \vec\chi \in \Vh \,,
\end{align}
and require the coupled solutions 
$\vec X^{m+1} \in \Vh$, 
$(\vec Y^{m+1}_i,\vec{\rm m}^{m+1}_i)_{i=1}^2 
\in \Vhone \times \Vhpartial \times \Vhtwo\times\Vhpartial$,
$\vec\kappa_\gamma^{m+1}\in \Vhpartial$, $C_1\,\Phi^{m+1} \in \Vhpartial$ 
and $(\lambda^{V,m+1},\lambda^{A,m+1}_1,\lambda^{A,m+1}_2) \in \R^3$ 
to satisfy the nonlinear system
{\rm (\ref{eq:GF2aaa})}, {\rm (\ref{eq:GF2b}--e)} 
as well as an adapted variant of {\rm (\ref{eq:mllm}--c)}, where 
the superscript $m$ is replaced by $m+1$ in all occurrences of 
$\vec{\rm m}^{m}_i$, $\vec\kappa^m_i$,
$\vec Y^m_i$, $\lambda^{V,m}$ and $\lambda^{A,m}_i$. In addition,
$\frac{\vec\id - \vec X^{m-1}}{\ttau_{m-1}}$ in {\rm (\ref{eq:bjm1},c)} 
is replaced by $\frac{\vec X^{m+1} - \vec\id}{\ttau_m}$.
In practice this nonlinear system can be solved with a fixed point iteration as
follows. Let $(\lambda^{V,m+1,0},\lambda^{A,m+1,0}_1,\lambda^{A,m+1,0}_2)
= (\lambda^{V,m},\lambda^{A,m}_1,\lambda^{A,m}_2)$
and $\vec X^{m+1,0} = \vec\id\!\mid_{\Gamma^m}$.
Then, for $j\geq 0$, find a solution 
$(\vec X^{m+1,j+1},$ $\vec Y^{m+1,j+1},
\vec\kappa_{\partial\Gamma}^{m+1,j+1}, \vec{\rm m}^{m+1,j+1})$
to the linear system {\rm (\ref{eq:GF2aaa})}, {\rm (\ref{eq:GF2b}--e)},
where any superscript $m+1$ on left hand sides is replaced by $m+1,j+1$,
and by $m+1,j$ on the right hand side of {\rm (\ref{eq:GF2aaa})}. Then 
let $\vec\kappa^{m+1,j+1}_i = \alpha^{-1}_i\,\vec\pi^m_i\,
[\mat Q^m_{i,\theta^m}\,\vec Y^{m+1,j+1}_i] + \spont_i\,\vec\omega^m_i$ 
be defined as usual, and compute
$(\lambda^{V,m+1,j+1},\lambda^{A,m+1,j+1}_1,\lambda^{A,m+1,j+1}_2)$
as the unique solution to
\begin{align*} &
\begin{pmatrix}
\sum_{i=1}^2 a^m_{i,\theta}(\vec\omega^m_i, \vec\omega^m_i) &
a^m_{1,\theta}(\vec\kappa^{m+1,j+1}_1, \vec\omega^m_1) &
a^m_{2,\theta}(\vec\kappa^{m+1,j+1}_2, \vec\omega^m_2) \\
a^m_{1,\theta}(\vec\kappa^{m+1,j+1}_1, \vec\omega^m_1) &
a^m_{1,\theta}(\vec\kappa^{m+1,j+1}_1, \vec\kappa^{m+1,j+1}_1) & 0 \\
a^m_{2,\theta}(\vec\kappa^{m+1,j+1}_2, \vec\omega^m_2) & 0 &
a^m_{2,\theta}(\vec\kappa^{m+1,j+1}_2, \vec\kappa^{m+1,j+1}_2) 
\end{pmatrix}
\begin{pmatrix}
- \lambda^{V,m+1,j+1} \\ \lambda^{A,m+1,j+1}_1\\ \lambda^{A,m+1,j+1}_2
\end{pmatrix} \nonumber \\ & \qquad
= \begin{pmatrix}
b_0^{m+1,j+1} \\ b_1^{m+1,j+1}\\ b_2^{m+1,j+1}
\end{pmatrix} \,,
\end{align*}
where
\begin{align*}
b_0^{m+1,j+1} & = \sum_{i=1}^2 \left[ 
\left\langle (\vec\Pi^m_{i,0} - \mat\Id)\,
\frac{\vec X^{m+1,j+1} - \vec\id }{\ttau_m},
\vec\omega^m_i \right\rangle_{\Gamma^m_i}^h
- \left\langle \nabs\,\vec Y^{m+1,j+1}_i, 
\nabs\, (\vec\Pi^m_{i,0}\,\vec\omega^m_i)
\right\rangle_{\Gamma^m_i} \right. \nonumber \\ & \qquad \left.
- \left\langle \vec f^m, \vec\Pi^m_{i,0}\,\vec\omega^m_i
\right\rangle_{\Gamma^m_i}^h \right], \\ 
b_{i}^{m+1,j+1} & = \left\langle (\vec\Pi^m_{i,0} - \mat\Id)\,
\frac{\vec X^{m+1,j+1} - \vec\id}{\ttau_m},
\mat Q^m_{i,\theta^m}\,\vec\kappa^{m+1,j+1}_i \right\rangle_{\Gamma^m_i}^h 
+ \left\langle \vec{\rm m}^{m+1}_i, 
\frac{\vec X^{m+1,j+1} - \vec\id}{\ttau_m}
\right\rangle_{\gamma^m}^h \nonumber \\ & \qquad 
- \left\langle \nabs\,\vec Y^{m+1,j+1}_i, 
\nabs\,(\vec\Pi^m_{i,0}\,\vec\kappa^{m+1,j+1}_i)
\right\rangle_{\Gamma^m_i}
- \left\langle \vec f^m, \vec\Pi^m_{i,0}\,\vec\kappa^{m+1,j+1}_i 
\right\rangle_{\Gamma^m_i}^h,\ i=1,2\,; 
\end{align*}
and continue the iteration until 
\[|\lambda^{V,m+1,j+1} - \lambda^{V,m+1,j}| 
+|\lambda^{A,m+1,j+1}_1 - \lambda^{A,m+1,j}_1| 
+|\lambda^{A,m+1,j+1}_2 - \lambda^{A,m+1,j}_2| 
< 10^{-8}\,.\]

We remark that the implicit scheme is chosen such that no new system matrices
need to be assembled during the fixed point iteration. In particular, all
integrals are evaluated on the old interfaces $\Gamma^m_i$. But all the
quantities that are calculated during the linear solves are treated
implicitly, i.e.\ $\vec X^{m+1}$, 
$(\vec Y^{m+1}_i,\vec\kappa^{m+1}_i, \vec {\rm m}^{m+1}_i)_{i=1}^2$,
$\vec\kappa_\gamma^{m+1}$, $C_1\,\Phi^{m+1}$,
as well as the Lagrange multipliers.

\setcounter{equation}{0}
\section{Solution methods} \label{sec:6}

Let us briefly outline how we solve the linear system (\ref{eq:GF2a}--e) in
practice. First of all, similarly to our approach in \cite{clust3d} for the
numerical approximation of surface clusters with triple junction lines,
we reformulate (\ref{eq:GF2a}) as follows.

On introducing the following equivalent 
characterization of $\Vh$, recall (\ref{eq:Vh}), 
\[
\hatVh = \{ (\vec\eta_1,\vec\eta_2) \in \mathop{\times}_{i=1}^2\Vhi :
\vec\eta_1 \!\mid_{\gamma^m} = \vec\eta_2 \!\mid_{\gamma^m}\} \,,
\]
we can rewrite (\ref{eq:GF2a}--e) equivalently as:
Find $(\vec X^{m+1}_1,\vec X^{m+1}_2) \in \hatVh$, 
$(\vec Y^{m+1}_i,\vec{\rm m}^{m+1}_i)_{i=1}^2 
\in \Vhone \times \Vhpartial \times \Vhtwo \times \Vhpartial$,
$\vec\kappa_\gamma^{m+1}\in \Vhpartial$
and $C_1\,\Phi^{m+1} \in \Vhpartial$ such that
\begin{align}
& \sum_{i=1}^2 \left[
\left\langle \mat Q^m_{i,\theta^m_\star}\,
\frac{\vec X^{m+1}_i - \vec\id}{\ttau_m} ,
\vec\chi_i \right\rangle_{\Gamma^m_i}^h
-  \left\langle \nabs\,\vec Y^{m+1}_i, \nabs\,\vec\chi_i 
\right\rangle_{\Gamma^m_i} 
+ \alpha^G_i \left\langle (\vec{\rm m}^{m+1}_i)_s, 
[\vec\chi_i]_s \right\rangle_{\gamma^m} \right.
\nonumber \\ & \qquad \qquad \left.
+ \tfrac12\,\varsigma 
\left\langle [\vec X^{m+1}_i]_s, [\vec\chi_i]_s \right\rangle_{\gamma^m}
+ \tfrac12\,\varrho
\left\langle \frac{\vec X^{m+1}_i - \vec\id}{\ttau_m},\vec\chi_i
\right\rangle_{\gamma^m}^h \black
\right]
\nonumber \\ & \quad
= 
\sum_{i=1}^2 \left[
\left\langle \nabs\,.\,\vec Y^m_i, \nabs\,.\,\vec\chi_i 
\right\rangle_{\Gamma^m_i} 
- \left\langle (\nabs\,\vec Y^m_i)^T , 
\mat D(\vec\chi_i)\,(\nabs\,\vec\id)^T \right\rangle_{\Gamma^m_i} 
\right. \nonumber \\ & \qquad \left.
-\tfrac12\left\langle [ \alpha_i\,|\vec\kappa^m_i - \spont_i\,\vec\nu^m_i|^2
- 2\,(\vec Y^m_i\,.\,\mat Q^m_{i,\theta^m}\,\vec\kappa^m_i)]
\,\nabs\,\vec\id,\nabs\,\vec\chi_i \right\rangle_{\Gamma^m_i}^h
-\alpha_i\,\spont_i 
\left\langle \vec\kappa^m_i, [\nabs\,\vec\chi_i]^T\,\vec\nu^m_i
\right\rangle_{\Gamma^m_i}^h 
\right. \nonumber \\ & \qquad \left.
+\left\langle(1-\theta^m)\, (\vec G^m_i(\vec Y^m_i, \vec\kappa^m_i)\,.\,
\vec\nu^m_i)\,
\nabs\,\vec\id, \nabs\,\vec\chi_i \right\rangle_{\Gamma^m_i}^h
- \left\langle(1-\theta^m)\,\vec G^m_i(\vec Y^m_i,\vec\kappa^m_i), 
[\nabs\,\vec\chi_i]^T\,\vec\nu^m_i \right\rangle_{\Gamma^m_i}^h 
 \right]
\nonumber \\ & \qquad 
+ \sum_{i=1}^2 \alpha^G_i 
\left[\left\langle \vec\kappa_\gamma^m \,.\,\vec{\rm m}^m_i,
\vec\id_s\,.\,[\vec\chi_i]_s \right\rangle_{\gamma^m}^h
+ \left\langle(\mat\Id + \mat{\mathcal{P}}^m_\gamma)\,(\vec{\rm m}^m_i)_s, 
[\vec\chi_i]_s \right\rangle_{\gamma^m} \right]
\nonumber \\ & \qquad 
- \lambda^{V,m} \sum_{i=1}^2 
\left\langle \vec\omega^m_i, \vec\chi_i \right\rangle_{\Gamma^m_i}^h
- \sum_{i=1}^2 \lambda^{A,m}_i 
\left\langle \nabs\,\vec\id, \nabs\,\vec\chi_i \right\rangle_{\Gamma^m_i}
\qquad \forall\ (\vec\chi_1,\vec\chi_2) \in \hatVh \label{eq:hatGFa}
\end{align}
and (\ref{eq:GF2b}--e) hold, 
where in (\ref{eq:hatGFa}) 
we have used the fully discrete version of (\ref{eq:zo}). 

The above reformulation is crucial for the construction 
of fully practical solution methods, as it avoids the use of the global
finite element space $\Vh$. With the help of (\ref{eq:hatGFa}), it is now
possible to work with the basis of the
simple product finite element space $\hatVh$, on employing suitable projections
in the formulation of the linear problem. This
construction is similar to e.g.\ the standard technique used for 
an ODE with periodic boundary conditions. 

We recall from \cite[(4.4a--d)]{pwfopen} the following finite element
approximation for Willmore flow of a single open surface $\Gamma_i(t)$
with free boundary conditions for $\partial\Gamma_i(t)$.
For $m=0,\ldots,M-1$, 
find $(\delta\vec X^{m+1}_i, \vec Y^{m+1}_i) \in \Vhi \times \Vhi$, with
$\vec X^{m+1}_i = \vec\id\!\mid_{\Gamma^m_i} + \delta\vec X^{m+1}_i$,
and $(\vec\kappa_{\partial\Gamma_i}^{m+1}, \vec{\rm m}^{m+1}_i) 
\in [\Vhipartial]^2$ such that
\begin{subequations}
\begin{align}
&
\left\langle \mat Q^m_{i,\theta^m_\star}\,\frac{\vec X^{m+1}_i - 
\vec\id}{\ttau_m} , \vec\chi \right\rangle_{\Gamma^m_i}^h
- \left\langle \nabs\,\vec Y^{m+1}_i , \nabs\,\vec\chi 
\right\rangle_{\Gamma^m_i}
+ \varsigma \left\langle [\vec X^{m+1}_i]_s, \vec\chi_s 
\right\rangle_{\partial\Gamma^m_i} 
+ \alpha^G_i \left\langle [\vec{\rm m}^{m+1}_i]_s, 
\vec\chi_s \right\rangle_{\partial\Gamma^m_i}
\nonumber \\ & \
= \left\langle \nabs\,.\,\vec Y^{m}_i , \nabs\,.\,\vec\chi 
 \right\rangle_{\Gamma^m_i}
 - \left\langle (\nabs\,\vec Y^{m}_i)^T, \mat D(\vec\chi)\,(\nabs\,\vec\id)^T
 \right\rangle_{\Gamma^m_i}
 - \alpha_i\,\spont_i \left\langle \vec\kappa^m_i ,
  [\nabs\,\vec\chi]^T\,\vec\nu^m_i \right\rangle_{\Gamma^m_i}^h
\nonumber \\ & \quad
-\tfrac12 \left\langle \left[ \alpha_i\,
 |\vec{\kappa}^m_i - \spont_i\,\vec\nu^m_i|^2 
 - 2\,\vec Y^m_i\,.\,\mat Q^m_{i,\theta^m}\,\vec\kappa^m_i \right] 
\nabs\,\vec\id, \nabs\,\vec\chi \right\rangle_{\Gamma^m_i}^h
\nonumber \\ & \quad
+\left\langle(1-\theta^m)\, (\vec G^m_i(\vec Y^m_i, \vec\kappa^m_i)\,.\,
\vec\nu^m_i)\,
\nabs\,\vec\id, \nabs\,\vec\chi \right\rangle_{\Gamma^m_i}^h
- \left\langle(1-\theta^m)\,\vec G^m_i(\vec Y^m_i,\vec\kappa^m_i), 
[\nabs\,\vec\chi]^T\,\vec\nu^m_i \right\rangle_{\Gamma^m_i}^h 
\nonumber \\ & \quad
+ \alpha^G_i \left\langle \vec\kappa_{\partial\Gamma_i}^m \,.\,
 \vec{\rm m}^{m}_i,
\vec\id_s\,.\,\vec\chi_s \right\rangle_{\partial\Gamma^m_i}^h
+ \alpha^G_i \left\langle (\mat\Id+\mat{\mathcal{P}}^m_{\partial\Gamma_i})\,
[\vec{\rm m}^{m}_i]_s, \vec\chi_s \right\rangle_{\partial\Gamma^m_i}
- \lambda^{A,m}_i \left\langle \nabs\,\vec\id,\nabs\,\vec\chi 
\right\rangle_{\Gamma^m_i}
\nonumber \\ & \hspace{11cm}
\qquad \forall\ \vec\chi \in \Vhi\,, \label{eq:ifreePWFa} \\
& \alpha^{-1}_i
\left\langle \mat Q^m_{i,\theta^m}\,\vec Y^{m+1}_i, \mat Q^m_{i,\theta^m}\,
\vec\eta \right\rangle_{\Gamma^m_i}^h +
\left\langle \nabs\,\vec X^{m+1}_i , \nabs\,\vec\eta \right\rangle_{\Gamma^m_i}
 = 
\left\langle \vec{\rm m}^{m+1}_i, \vec\eta \right\rangle_{\partial\Gamma^m_i}^h
- \spont \left\langle \vec\omega^m_i, 
\vec\eta \right\rangle_{\Gamma^m_i}^h
\quad \forall\ \vec\eta \in \Vhi\,, \label{eq:ifreePWFb} \\
& \left\langle \alpha^G_i \,\vec\kappa^{m+1}_{\partial\Gamma_i}
+ \vec Y^{m+1}_i, \vec\varphi \right\rangle_{\partial\Gamma^m_i}^h  = 0 \qquad
\forall\ \vec\varphi \in \Vhipartial \,, \label{eq:ifreePWFc}  \\
& \left\langle \vec\kappa_{\partial\Gamma_i}^{m+1}, 
\vec\eta \right\rangle_{\partial\Gamma^m_i}^h +
\left\langle [\vec X^{m+1}_i]_s , \vec\eta_s 
\right\rangle_{\partial\Gamma^m_i} = 
0 \quad \forall\ \vec\eta \in \Vhipartial\,. \label{eq:ifreePWFd} 
\end{align}
\end{subequations}
The corresponding linear system from \cite[(5.1)]{pwfopen} is then given by
\begin{align} &
\begin{pmatrix} \vec A & - \frac1{\ttau_m}\,\vec{\mathcal{M}}_{Q^\star} 
- \vec A_\varsigma
& 0 & -\alpha^G_i\,\vec A_{\partial\Gamma,\Gamma} \\ 
\vec{\mathcal{M}}_{Q^2} & \vec A 
& 0 & -\vec M_{\partial\Gamma,\Gamma} \\
(\vec M_{\partial\Gamma,\Gamma})^T & 0 & \alpha^G_i\,\vec M_{\partial\Gamma} & 0
\\
0 & (\vec A_{\partial\Gamma,\Gamma})^T & \vec M_{\partial\Gamma} & 0
\end{pmatrix}\begin{pmatrix} \vec Y^{m+1}_i \\ \delta\vec X^{m+1}_i \\
\vec\kappa^{m+1}_{\partial\Gamma_i} \\ \vec{\rm m}^{m+1}_i
\end{pmatrix}
\nonumber \\ & \quad
= \begin{pmatrix}  
[\vec{\mathcal{B}}^\star - \vec{\mathcal{B}} +\vec{\mathcal{R}}]\,\vec Y^m_i 
+ (\vec A_\theta + \vec A_\varsigma + \lambda^{A,m}_i\,\vec A)\,\vec X^m_i 
+ \vec b_{\theta} - \vec b_\alpha  \\ 
-\vec A\,\vec X^m_i 
 - \spont\,\vec M\,\vec\omega^m_i \\ \vec 0 \\ 
-(\vec A_{\partial\Gamma,\Gamma})^T\,\vec X^m_i
\end{pmatrix}.
\label{eq:pwfopen}
\end{align}
On replacing $\vec A_\varsigma$ with 
($\tfrac12\,\vec A_\varsigma + \tfrac1{2\ttau}\,\vec M_\varrho)$,
where the definition of $\vec M_\varrho$ is clear from (\ref{eq:hatGFa}),
and similarly adapting the first entry in the right hand side 
of (\ref{eq:pwfopen}) 
to account for the term involving $\lambda^{V,m}$, we write 
(\ref{eq:pwfopen}) as 
\[
B_i\,Z_i = g_i\,.
\]
Hence we can rewrite the linear system for 
(\ref{eq:hatGFa}), (\ref{eq:GF2b}--e) as
\begin{subequations}
\begin{equation}
\mathcal{P}_B\,\mathcal{B}\,
\,\mathcal{P}_Z\,
\begin{pmatrix}
Z_1 \\ Z_2 \\ \vec \Phi^{m+1}
\end{pmatrix}
= \mathcal{P}_B\,
\begin{pmatrix}
g_1 \\ g_2 \\ 0
\end{pmatrix}\,,
\label{eq:block}
\end{equation}
where 
\begin{equation} \label{eq:blockB}
\mathcal{B} = 
\begin{pmatrix}
B_1 & 0 & \begin{pmatrix} 0 \\ 0 \\ C_1\,\vec M^\gamma \\ 0 \end{pmatrix} \\
0 & B_2 & \begin{pmatrix} 0 \\ 0 \\ C_1\,\vec M^\gamma \\ 0 \end{pmatrix} \\
(0\ 0\ 0\ C_1\,\vec M^\gamma) & (0\ 0\ 0\ C_1\,\vec M^\gamma) & 0 
\end{pmatrix},
\end{equation}
\end{subequations}
and where $\vec M^\gamma$ is a mass matrix on $\gamma^m$.
Moveover,
$\mathcal{P}_B$ and $\mathcal{P}_Z$ are the orthogonal projections that 
encode the test and trial space $\hatVh$ in (\ref{eq:hatGFa}), i.e.\
they act on the first and fifth block row in 
(\ref{eq:blockB}), and on the second entries of $Z_1$ and $Z_2$, respectively.

The system (\ref{eq:block}) 
can be efficiently solved in practice with a preconditioned 
BiCGSTAB or GMRES iterative solver, where we employ the preconditioners
\[
\mathcal{P}_Z\,
\begin{pmatrix}
B_1^{-1} & 0 & 0 \\
0 & B_2^{-1} & 0 \\
0 & 0 & \Id 
\end{pmatrix}
\,\mathcal{P}_B
\quad\text{and}\quad
\mathcal{P}_Z\,\mathcal{B}^{-1}
\,\mathcal{P}_B
\]
for the cases $C_1=0$ and $C_1=1$, respectively.
Here we recall from \cite{pwfopen} that $B_1$ and $B_2$ are invertible. The
inverses $B_1^{-1}$ and $B_2^{-1}$
can be computed with the help of the sparse factorization
package UMFPACK, see \cite{Davis04}. Similarly, the inverse
$\mathcal{B}^{-1}$, which existed in all our numerical tests, can also be
computed with the help of UMFPACK.

In practice we note that the preconditioned Krylov subspace
solvers usually take fewer than ten iterations per time step to converge. 
We stress that the chosen preconditioners are crucial, as without appropriate
preconditioning the iterative solvers do not converge. This suggests that the
linear systems (\ref{eq:block}) are badly conditioned.

\setcounter{equation}{0}
\section{Numerical results}  \label{sec:7}

We implemented our fully discrete finite element approximations within the
finite element toolbox ALBERTA, see \cite{Alberta}. The arising systems of
linear equations were solved with the help of the sparse factorization 
package UMFPACK, see \cite{Davis04}.
For the computations involving surface area preserving Willmore flow, we 
always employ the implicit Lagrange multiplier formulation discussed
in \S\ref{rem:implfree}.

The fully discrete scheme (\ref{eq:GF2a}--e) needs initial
data $\vec\kappa^0_i$, $\vec Y^0_i$, $\vec{\rm m}^0_i$, $i=1,2$, and
$\vec\kappa_\gamma^0$.
Given the initial triangulation $\Gamma^0_i$, we let 
$\vec{\rm m}^0_i \in \Vhpartialz$ be such that
$$
\left\langle \vec{\rm m}^0_i, \vec\eta \right\rangle_{\gamma^0}^h
= \left\langle \vec\mu^0_i, \vec\eta \right\rangle_{\gamma^0}
\qquad\forall\ \vec\eta \in \Vhpartialz\,,
$$
with $\vec\mu^0_i$ denoting the conormal on $\partial\Gamma^0_i$, $i=1,2$. 
In addition, we let 
\begin{equation*} 
\vec\kappa^0_i = - \tfrac2R\, \vec\omega^0_i
\end{equation*}
for simulations where $\Gamma_i(0)$ is part of a sphere of radius $R$, i.e.\
$\Gamma_i(0) \subset \partial B_R(\vec 0)$, and otherwise define
$\vec\kappa^0_i \in \Vhiz$ to be the solution of
\begin{equation*} 
 \left\langle \vec\kappa^0_i , \vec\eta \right\rangle_{\Gamma^0_i}^{h}
+ \left\langle \nabs\,\vec\id, \nabs\,\vec\eta \right\rangle_{\Gamma^0_i}
 = 
\left\langle \vec{\rm m}^0_i, \vec\eta \right\rangle_{\gamma^0}^h
  \qquad\forall\ \vec\eta \in \Vhiz\,.
\end{equation*}
Then we define
\begin{equation*} 
\vec Y^0_i = \alpha_i\,[\vec\kappa^0_i - \spont_i\,\vec\omega^0_i]\,.
\end{equation*}
Moreover, we let $\vec\kappa_\gamma^0 \in \Vhpartialz$  be such that
$$
\left\langle \vec\kappa_\gamma^0, 
\vec\eta \right\rangle_{\gamma^0}^h +
\left\langle \vec\id_s , \vec\eta_s \right\rangle_{\gamma^0} = 
0 \quad \forall\ \vec\eta \in \Vhpartialz\,.
$$

Throughout this section we use uniform time steps
$\ttau_m=\ttau$, $m=0,\ldots, M-1$, and set $\ttau=10^{-3}$ unless stated
otherwise. In addition, unless stated otherwise, we fix 
$\alpha_i = 1$ and
$\spont_i = \alpha^G_i = 0$, $i=1,2$, as well as $\varsigma = 0$.
At times we will discuss the discrete energy of the numerical solutions,
which, similarly to (\ref{eq:Eh}), is defined by
\begin{align*}
E^{m+1}((\Gamma^m_i)_{i=1}^2) & 
:= \sum_{i=1}^2 
\left[\tfrac12\,\alpha_i
\left\langle |\vec\kappa^{m+1}_i - \spont_i\,\vec\nu^m_i|^2, 1
\right\rangle_{\Gamma^m_i}^h
 + \alpha^G_i \left[ \left\langle \vec\kappa_\gamma^{m+1},
 \vec{\rm m}^{m+1}_i \right\rangle_{\gamma^m}^h
 + 2\,\pi\,m(\Gamma^m_i)\right] \right] \nonumber \\ & \qquad
+ \varsigma\, \mathcal{H}^{d-2}(\gamma^m) \,.
\end{align*}
Finally, we fix $\theta = 0$ throughout, unless otherwise stated.

For the visualization of our numerical results we will use the colour red for
$\Gamma_1^m$, and the colour yellow for $\Gamma_2^m$.

\subsection{The $C^0$--case}

In Figure~\ref{fig:c0cat0} we show the evolution of the outer shell of a torus
joined with two spherical caps. Here the two caps make up phase $1$, with the
remainder representing phase $2$.
The initial surface $\Gamma^0$ 
satisfies $(J_1,J_2) = (2048,4096)$ and $(K_1,K_2) = (1090,2112)$ and
has maximal dimensions $6\times6\times6$, i.e.\ up to translations, the
smallest cuboid containing $\Gamma^0$ is $[0,6]^3$.
For the parameters $\spont_1 = \spont_2 = 0$ and $\varsigma = 0.1$,
the surface evolves towards a catenoid.
\begin{figure}
\center
\includegraphics[angle=-0,width=0.24\textwidth]{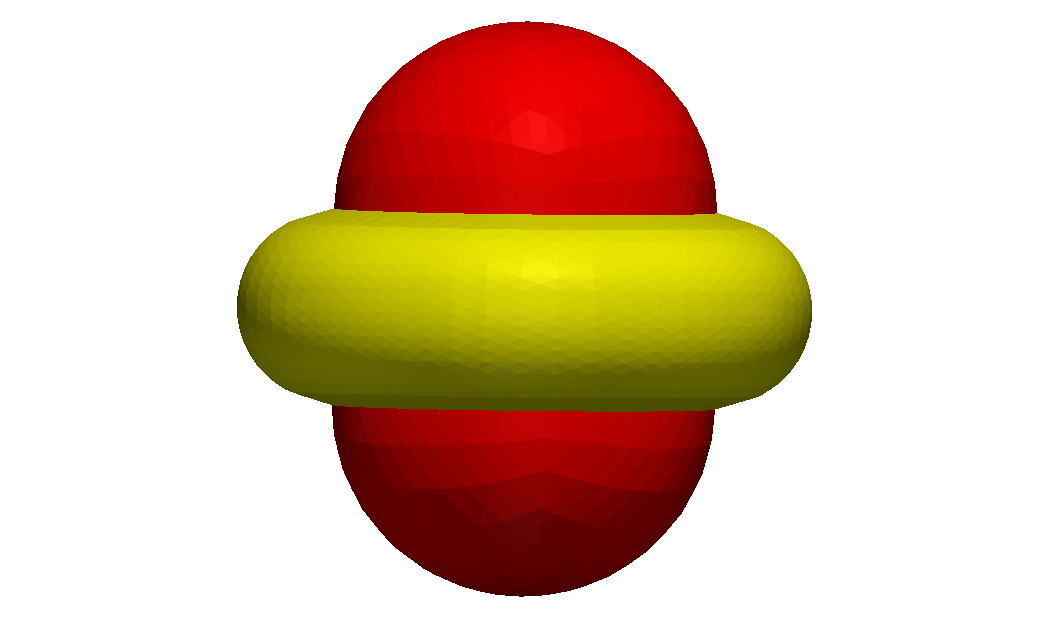}
\includegraphics[angle=-0,width=0.24\textwidth]{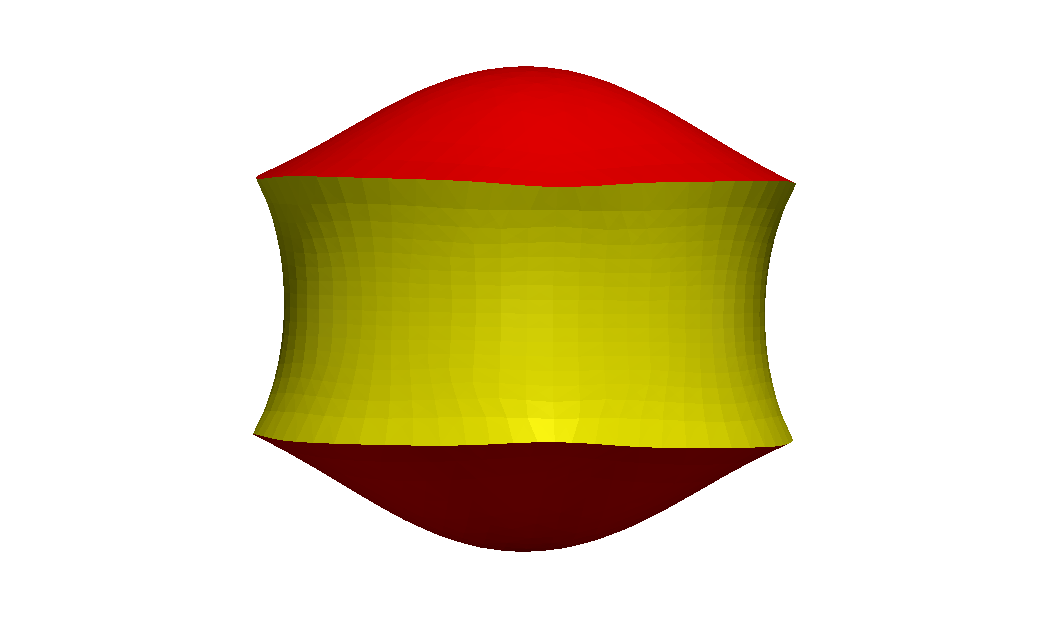}
\includegraphics[angle=-0,width=0.24\textwidth]{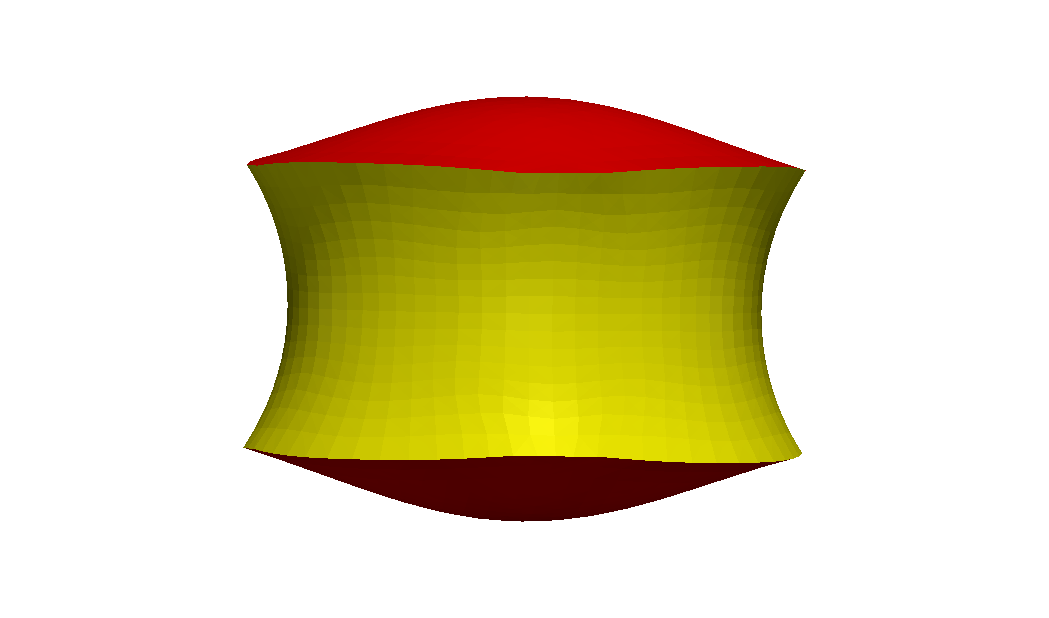} 
\includegraphics[angle=-0,width=0.24\textwidth]{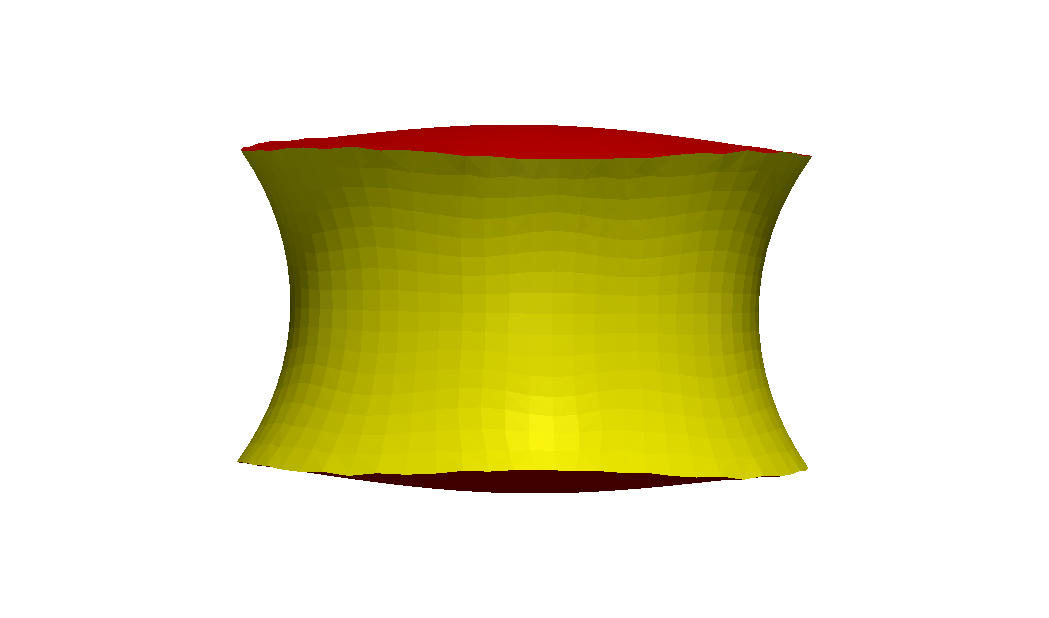} \\
\includegraphics[angle=-90,width=0.32\textwidth]{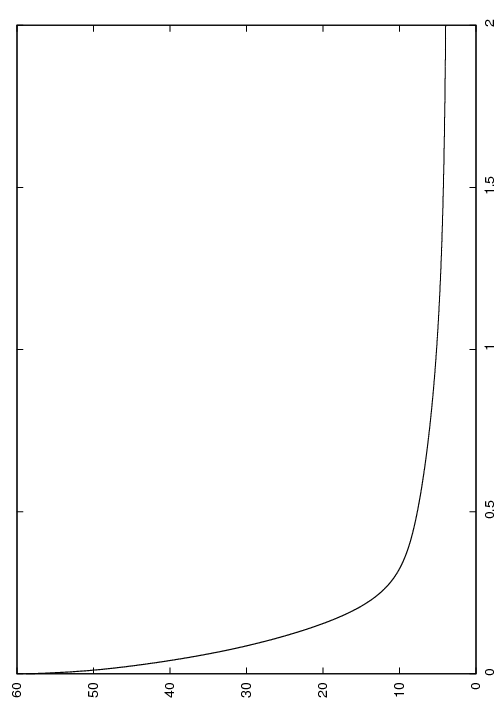}
\caption{($C^0$: $\spont_1 = \spont_2 = 0$, $\varsigma = 0.1$)
A plot of $(\Gamma^m_i)_{i=1}^2$ at times $t=0,\ 0.5,\ 1,\ 2$.
Below a plot of the discrete energy $E^{m+1}((\Gamma^m)_{i=1}^2)$.
}
\label{fig:c0cat0}
\end{figure}%
In Figure~\ref{fig:c0udb0} we show the same evolution for the values
$\spont_1 = -2$ and $\spont_2 = -0.5$, which is now markedly different.
\begin{figure}
\center
\includegraphics[angle=-0,width=0.24\textwidth]{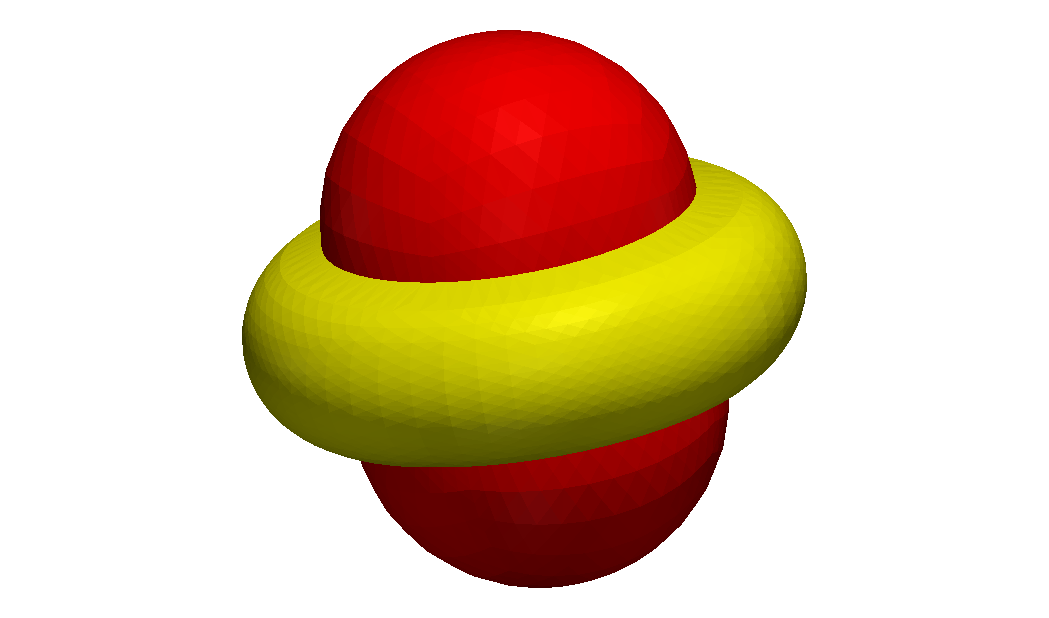}
\includegraphics[angle=-0,width=0.24\textwidth]{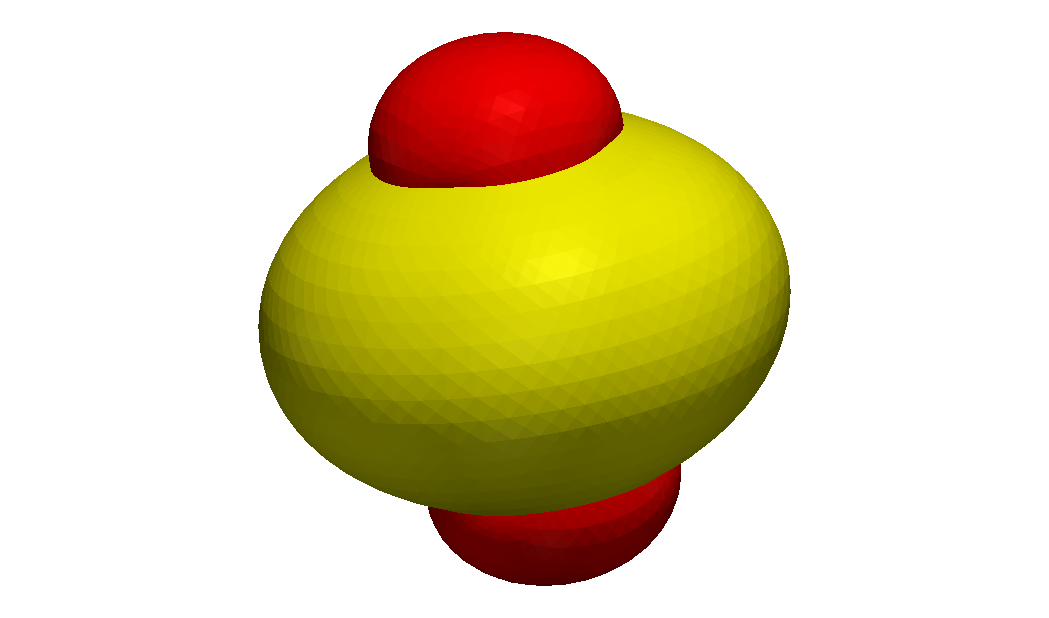}
\includegraphics[angle=-0,width=0.24\textwidth]{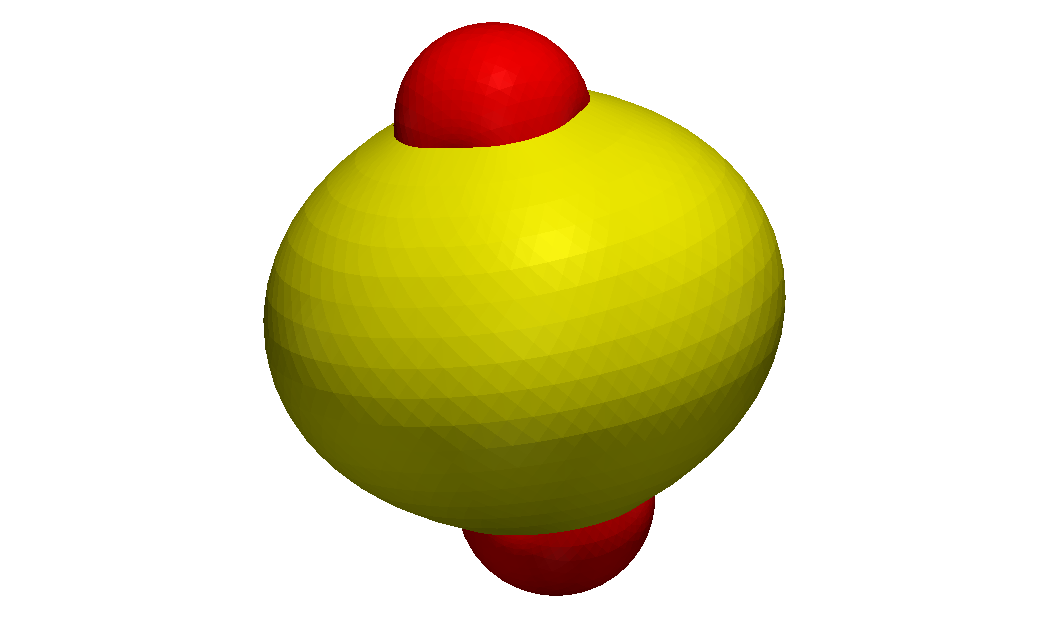} 
\includegraphics[angle=-0,width=0.24\textwidth]{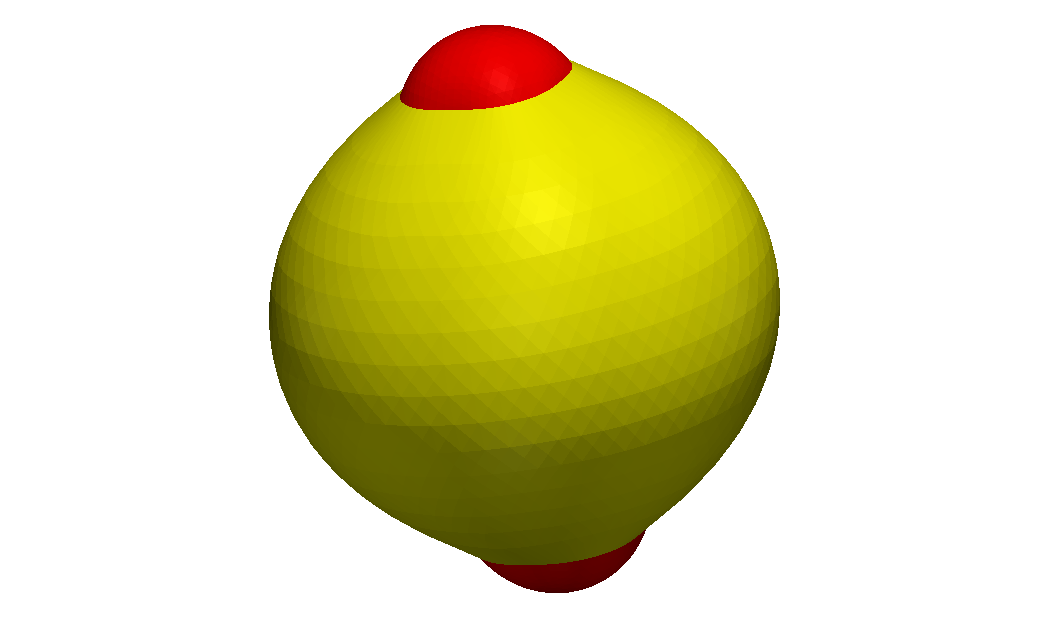} \\
\includegraphics[angle=-90,width=0.32\textwidth]{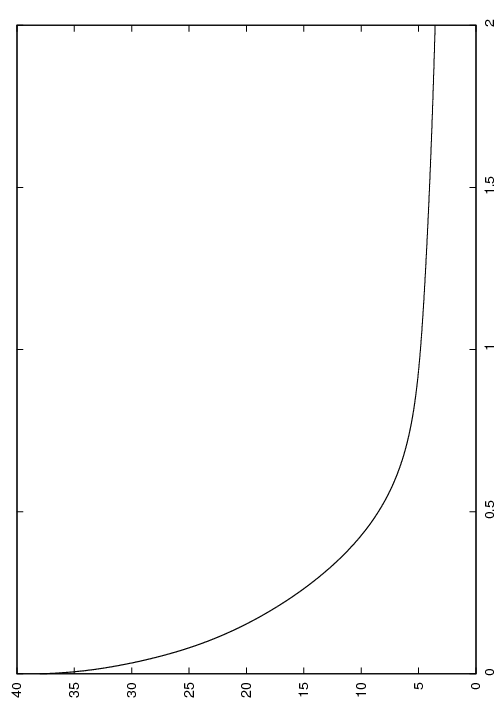}
\caption{($C^0$: $\spont_1 = -2$, $\spont_2 = -0.5$, $\varsigma = 0.1$)
A plot of $(\Gamma^m_i)_{i=1}^2$ at times $t=0,\ 0.5,\ 1,\ 2$.
Below a plot of the discrete energy $E^{m+1}((\Gamma^m)_{i=1}^2)$.
}
\label{fig:c0udb0}
\end{figure}%
The same evolution with $\varrho = 2$, which shows the slowing influence
of $\varrho > 0$, is shown in Figure~\ref{fig:c0udb0_varrho}. 
In both experiments the effect of the two different spontaneous curvature
values for the two phases can clearly be seen.
\begin{figure}
\center
\includegraphics[angle=-0,width=0.24\textwidth]{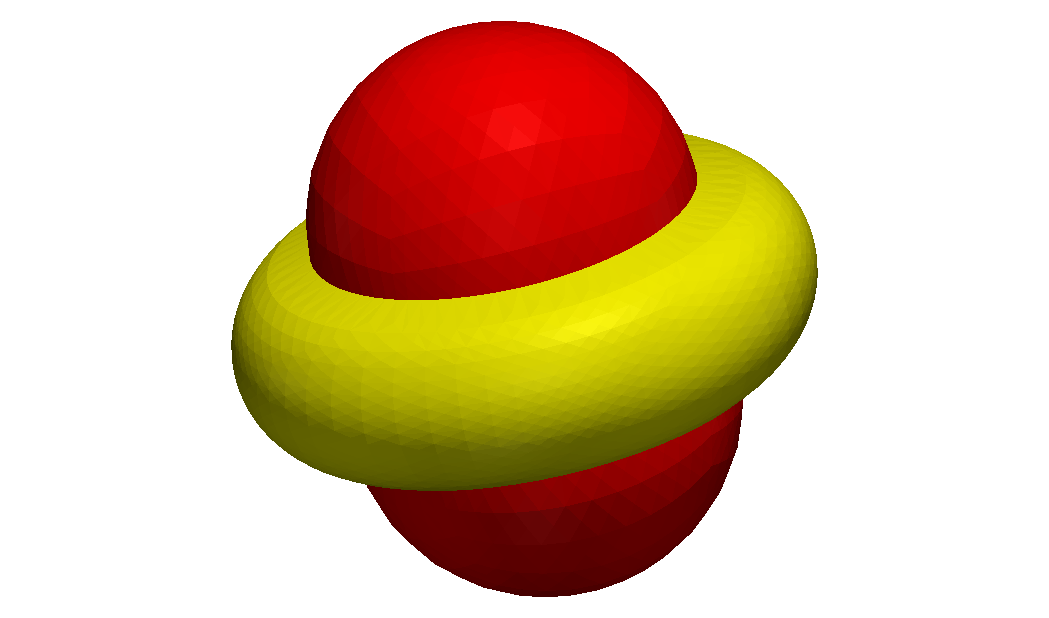}
\includegraphics[angle=-0,width=0.24\textwidth]{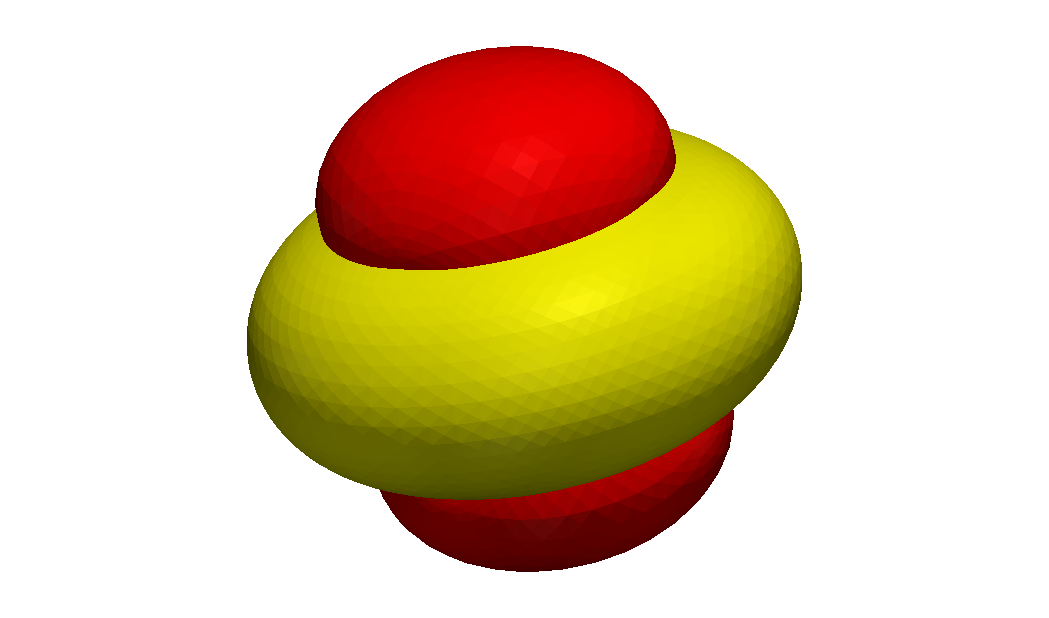}
\includegraphics[angle=-0,width=0.24\textwidth]{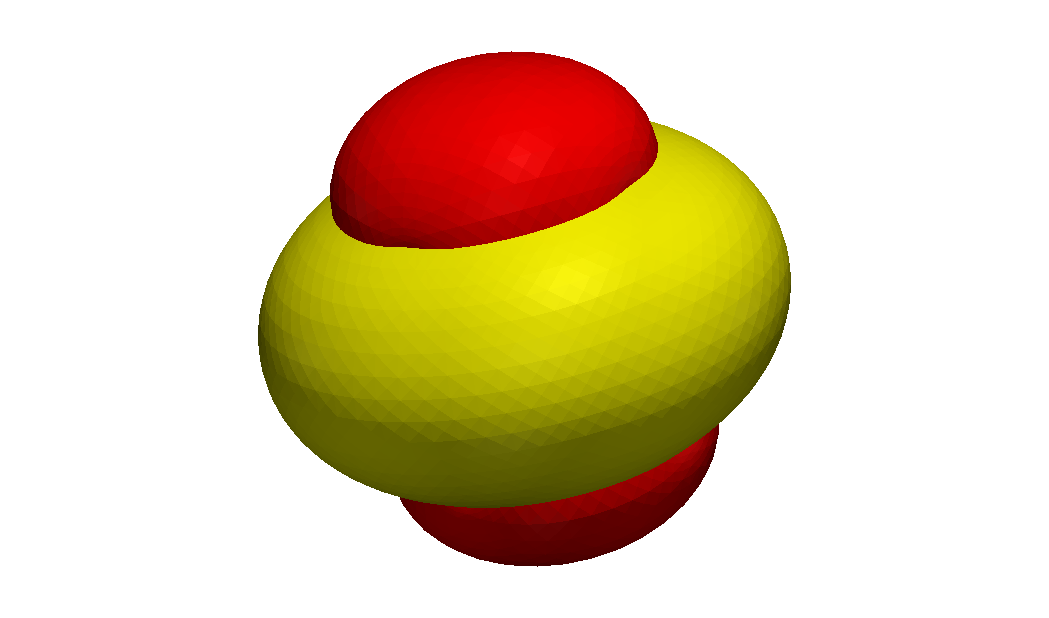} 
\includegraphics[angle=-0,width=0.24\textwidth]{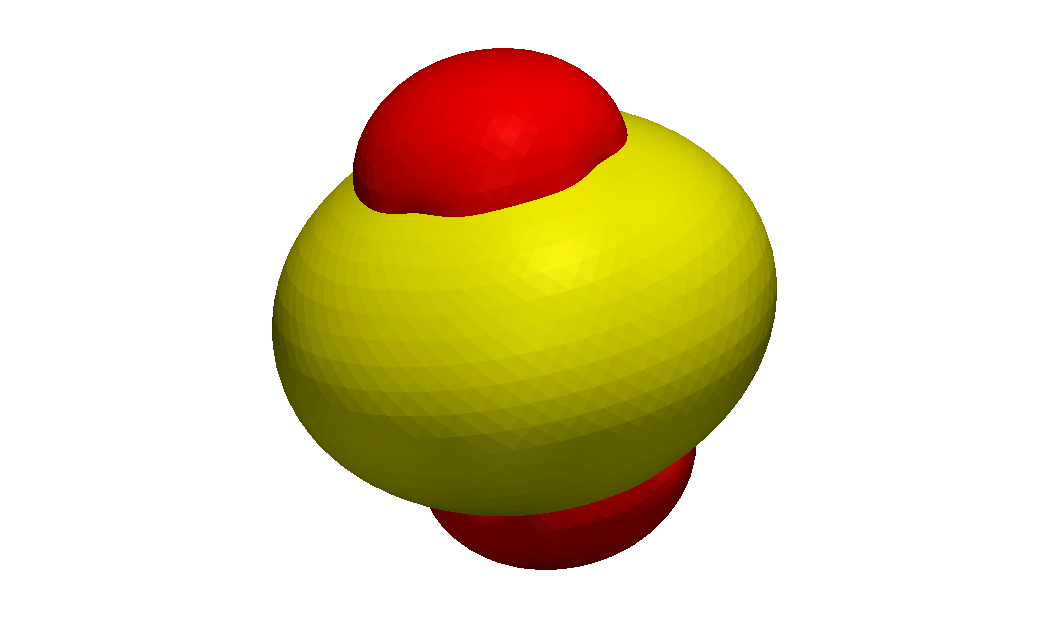} \\
\includegraphics[angle=-90,width=0.32\textwidth]{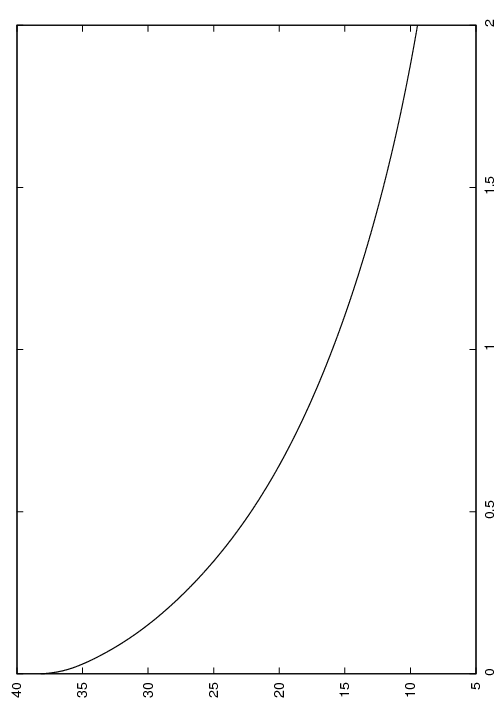}
\caption{($C^0$: $\spont_1 = -2$, $\spont_2 = -0.5$, $\varsigma = 0.1$,
$\varrho=2$)
A plot of $(\Gamma^m_i)_{i=1}^2$ at times $t=0,\ 0.5,\ 1,\ 2$.
Below a plot of the discrete energy $E^{m+1}((\Gamma^m)_{i=1}^2)$.
}
\label{fig:c0udb0_varrho}
\end{figure}%
The same evolution as in Figure~\ref{fig:c0udb0_varrho}, but now for 
surface area preserving flow, is shown in Figure~\ref{fig:c0udb1}. 
Here the observed relative surface area loss is $0.12\%$.
The interplay between the different values of $\spont_i$, the surface area
constraints, and the $C^0$--attachment condition lead to an interesting
evolution.
\begin{figure}
\center
\includegraphics[angle=-0,width=0.24\textwidth]{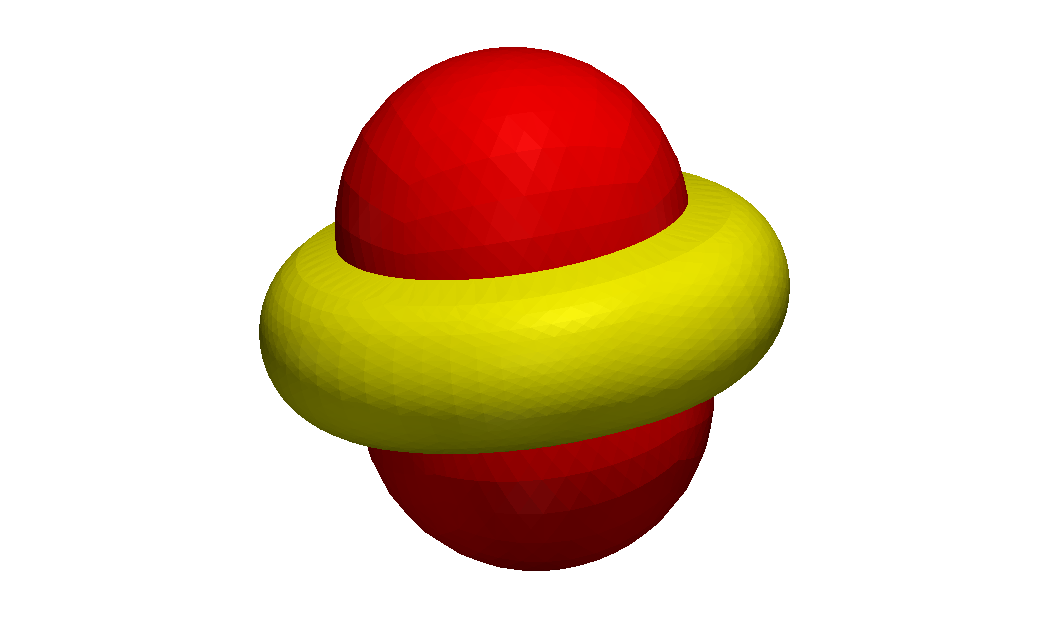}
\includegraphics[angle=-0,width=0.24\textwidth]{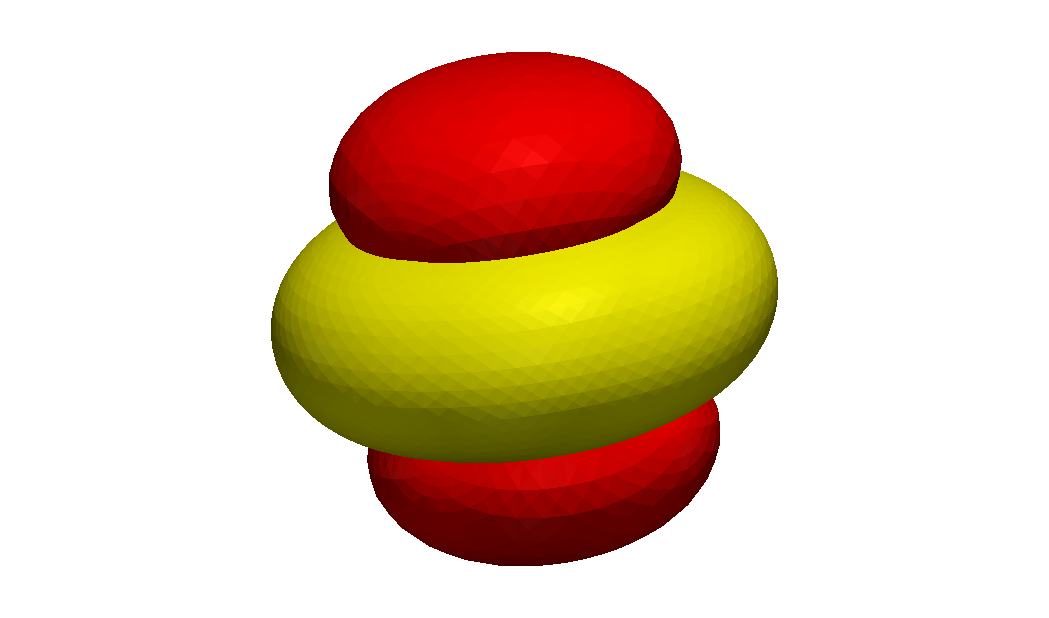}
\includegraphics[angle=-0,width=0.24\textwidth]{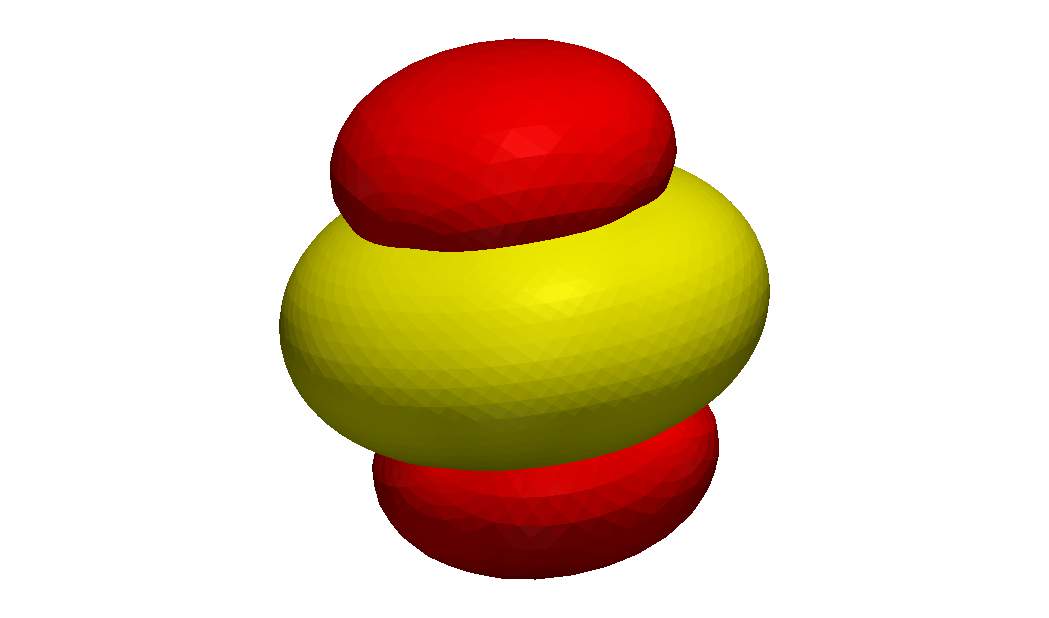} 
\includegraphics[angle=-0,width=0.24\textwidth]{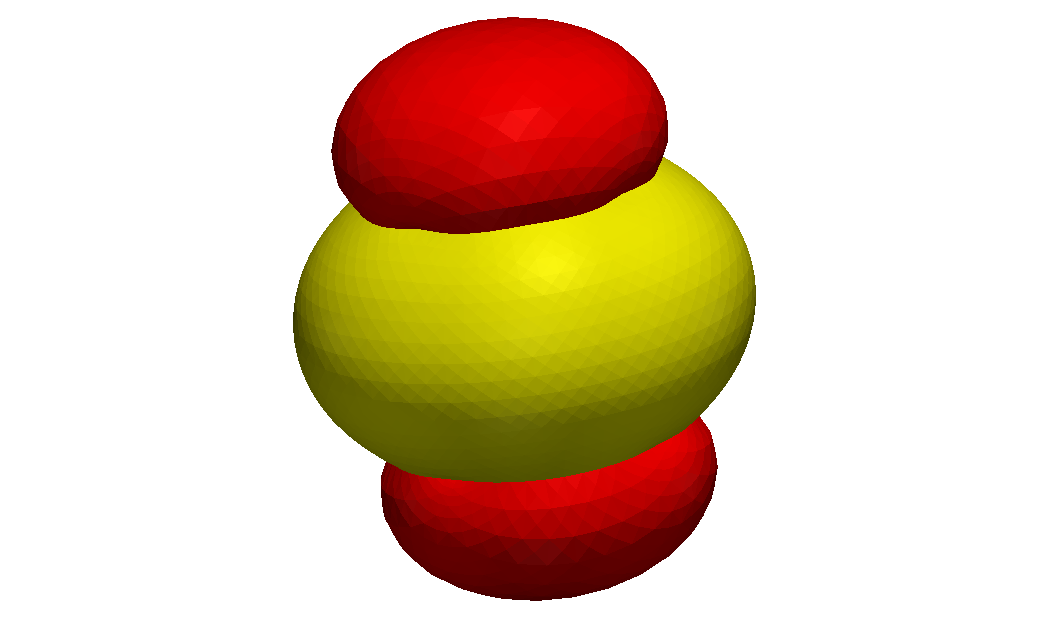} \\
\includegraphics[angle=-90,width=0.32\textwidth]{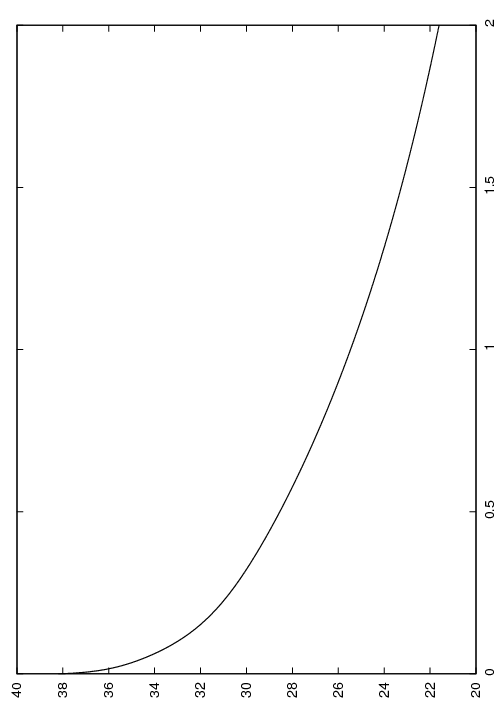}
\caption{($C^0$: $\spont_1 = -2$, $\spont_2 = -0.5$, $\varsigma = 0.1$,
$\varrho = 2$) Surface area preserving flow.
A plot of $(\Gamma^m_i)_{i=1}^2$ at times $t=0,\ 0.5,\ 1,\ 2$.
Below a plot of the discrete energy $E^{m+1}((\Gamma^m)_{i=1}^2)$.
}
\label{fig:c0udb1}
\end{figure}%
A completely different evolution is obtained when we replace surface area
conservation with volume conservation. This new simulation is visualized in 
Figure~\ref{fig:c0udb2}, where the observed relative volume loss is 
$0.00\%$.
\begin{figure}
\center
\includegraphics[angle=-0,width=0.24\textwidth]{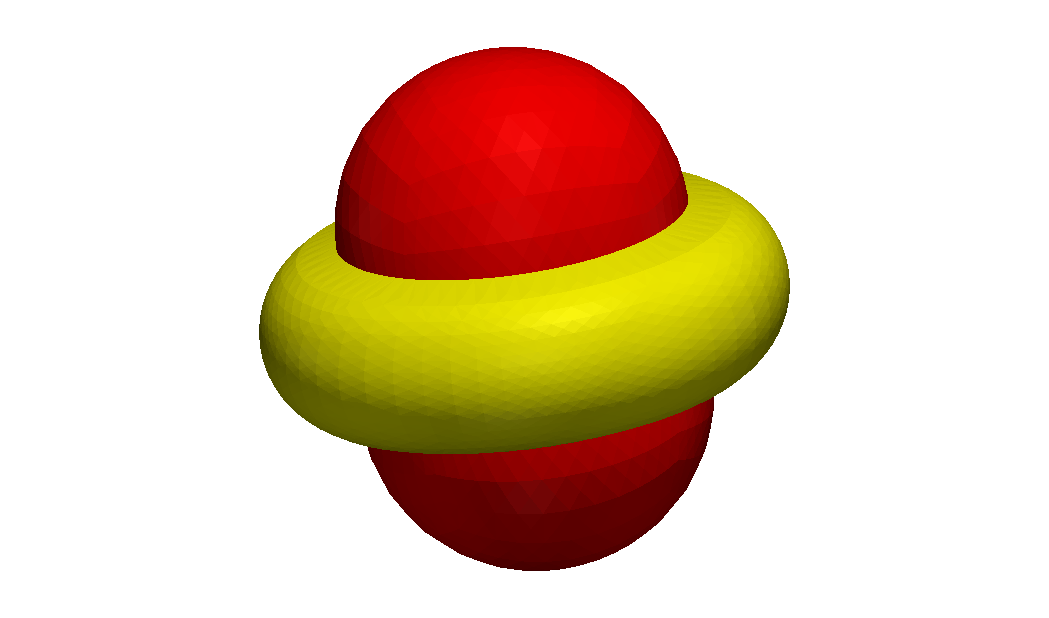}
\includegraphics[angle=-0,width=0.24\textwidth]{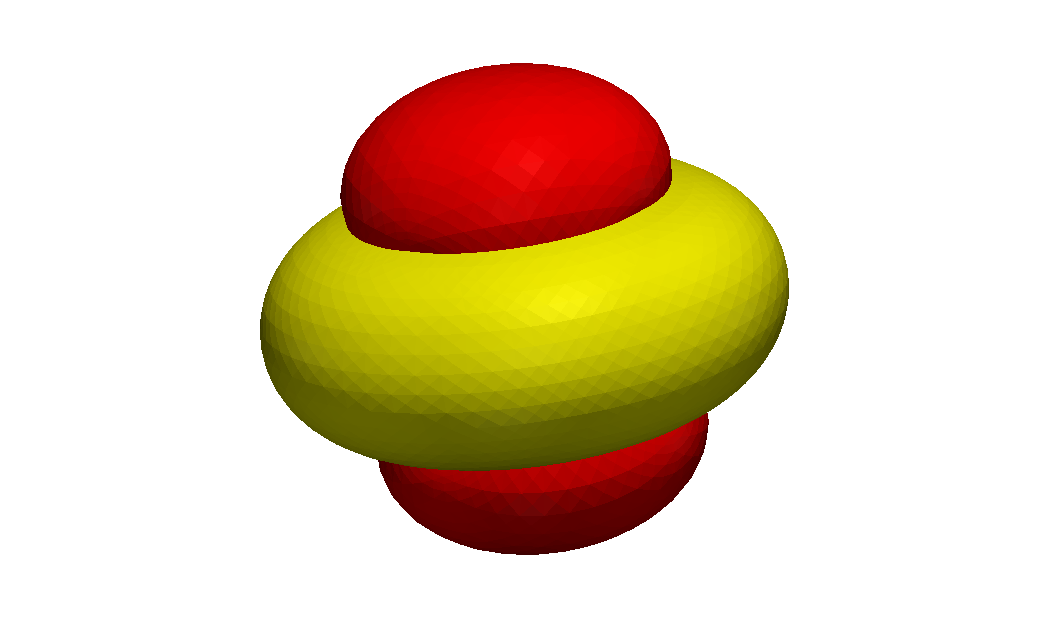}
\includegraphics[angle=-0,width=0.24\textwidth]{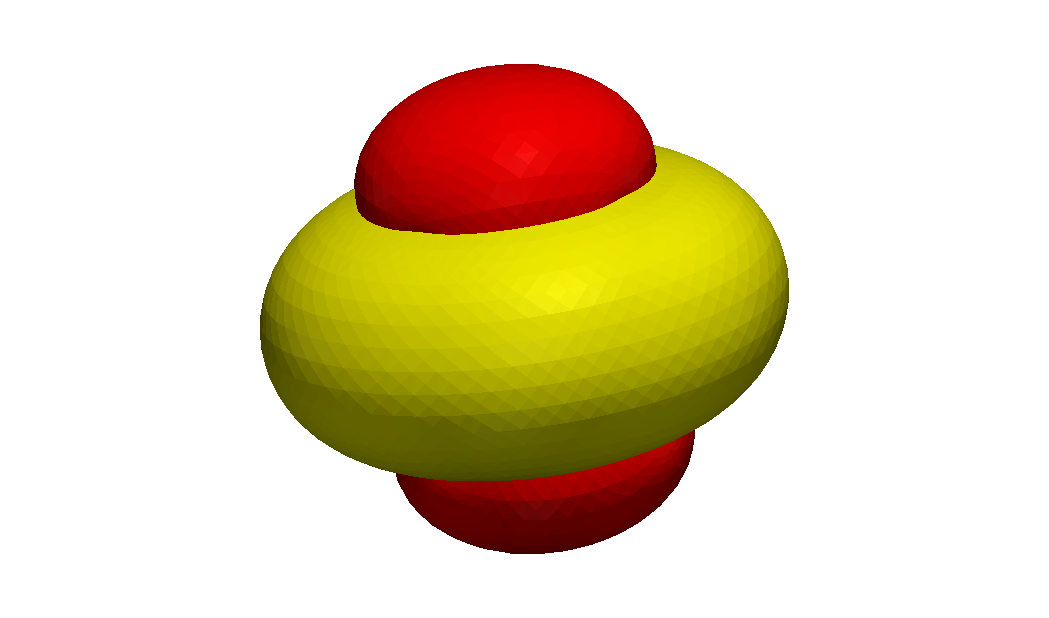} 
\includegraphics[angle=-0,width=0.24\textwidth]{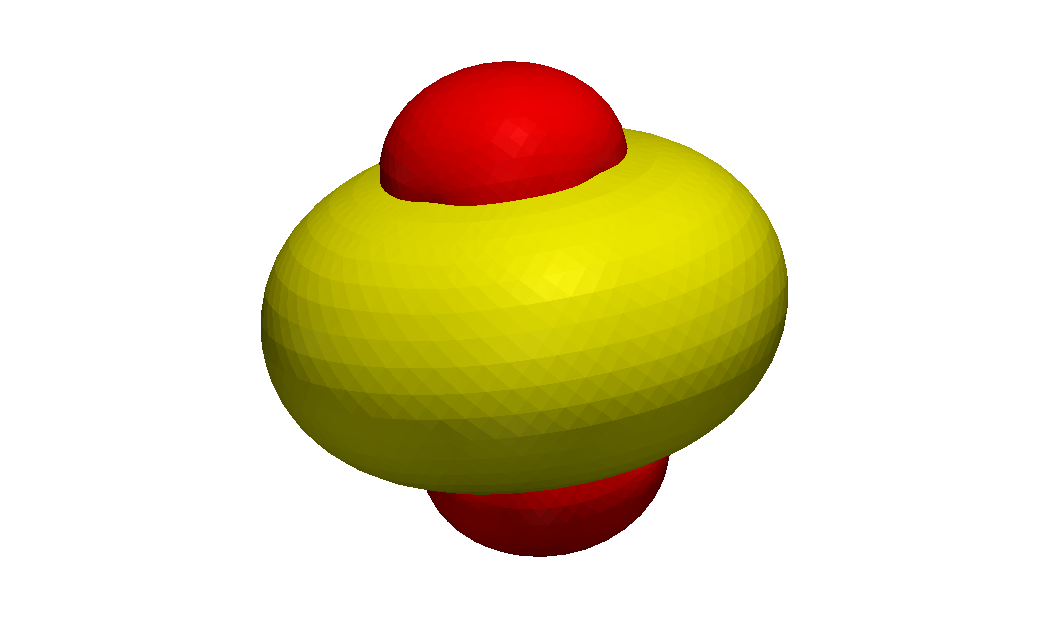} \\
\includegraphics[angle=-90,width=0.32\textwidth]{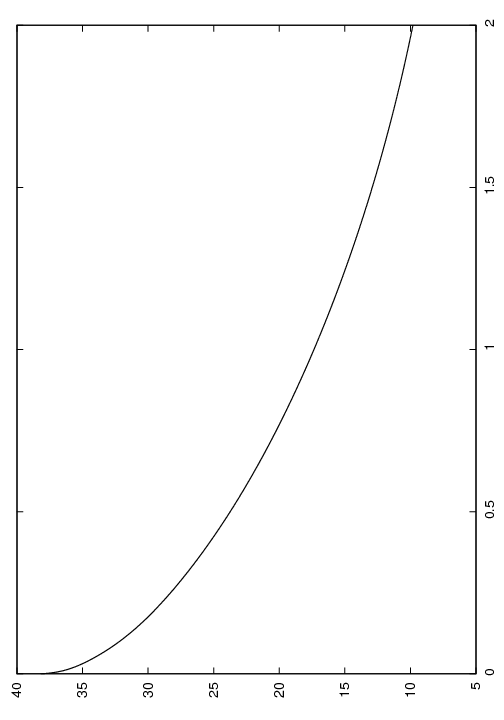}
\caption{($C^0$: $\spont_1 = -2$, $\spont_2 = -0.5$, $\varsigma = 0.1$,
$\varrho = 2$) Volume preserving flow.
A plot of $(\Gamma^m_i)_{i=1}^2$ at times $t=0,\ 0.5,\ 1,\ 2$.
Below a plot of the discrete energy $E^{m+1}((\Gamma^m)_{i=1}^2)$.
}
\label{fig:c0udb2}
\end{figure}%

A simulation with four disconnected components for phase $1$ is shown in
Figure~\ref{fig:c0budding0}. 
The initial surface $\Gamma^0$ 
satisfies $(J_1,J_2) = (1816,4328)$ and $(K_1,K_2) = (1000,2250)$ and
has maximal dimensions $4.2\times4.2\times1.1$.
The evolution for the parameters $\spont_1 = \spont_2 = 0$ and $\varsigma = 1$
goes towards a fournoid.
\begin{figure}
\center
\includegraphics[angle=-0,width=0.24\textwidth]{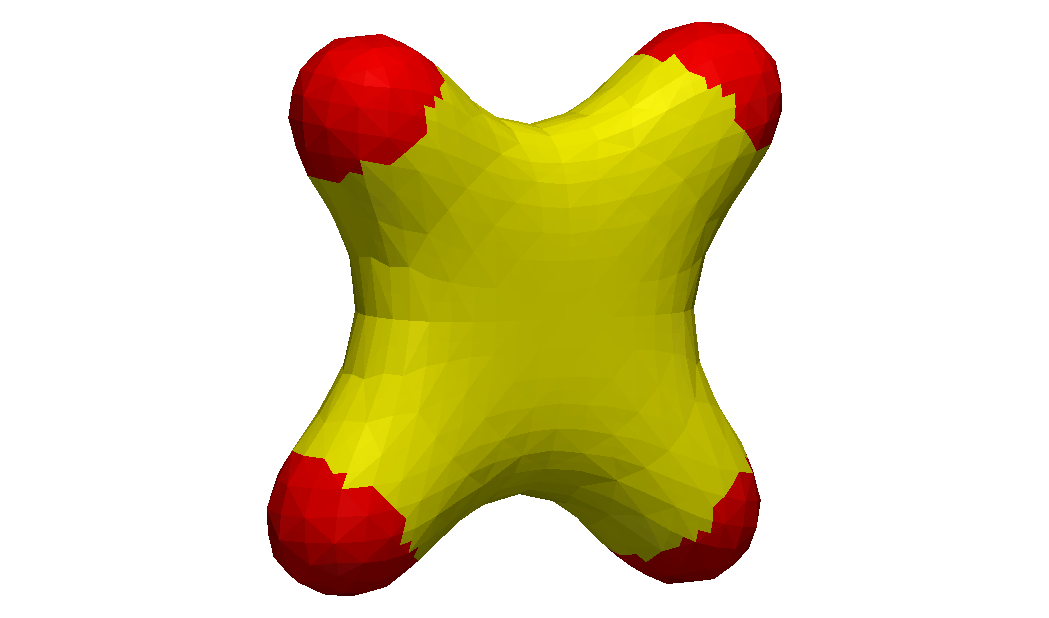}
\includegraphics[angle=-0,width=0.24\textwidth]{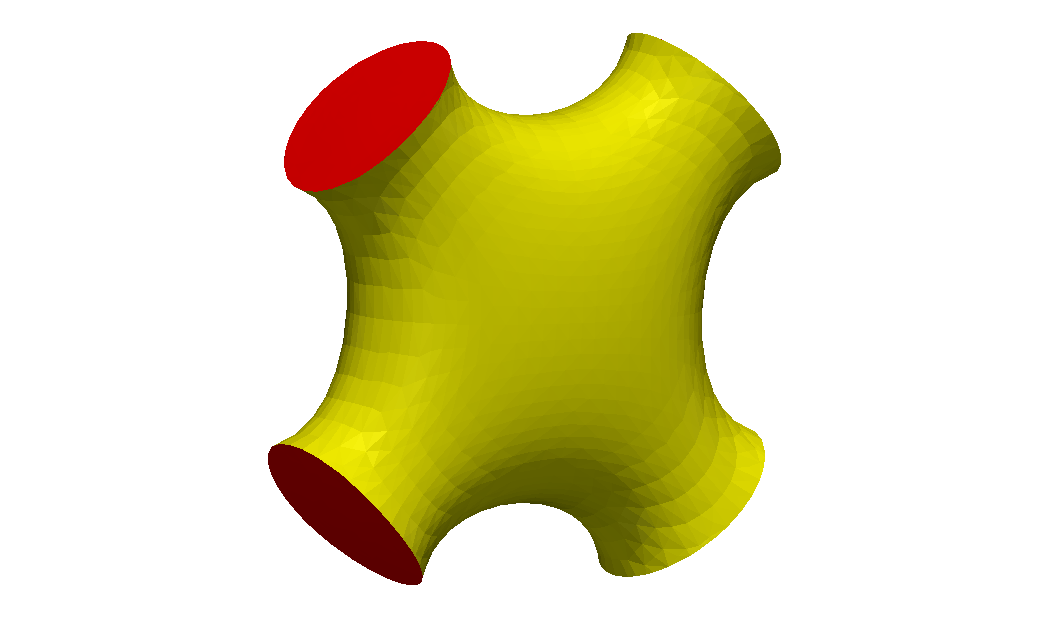}
\includegraphics[angle=-0,width=0.24\textwidth]{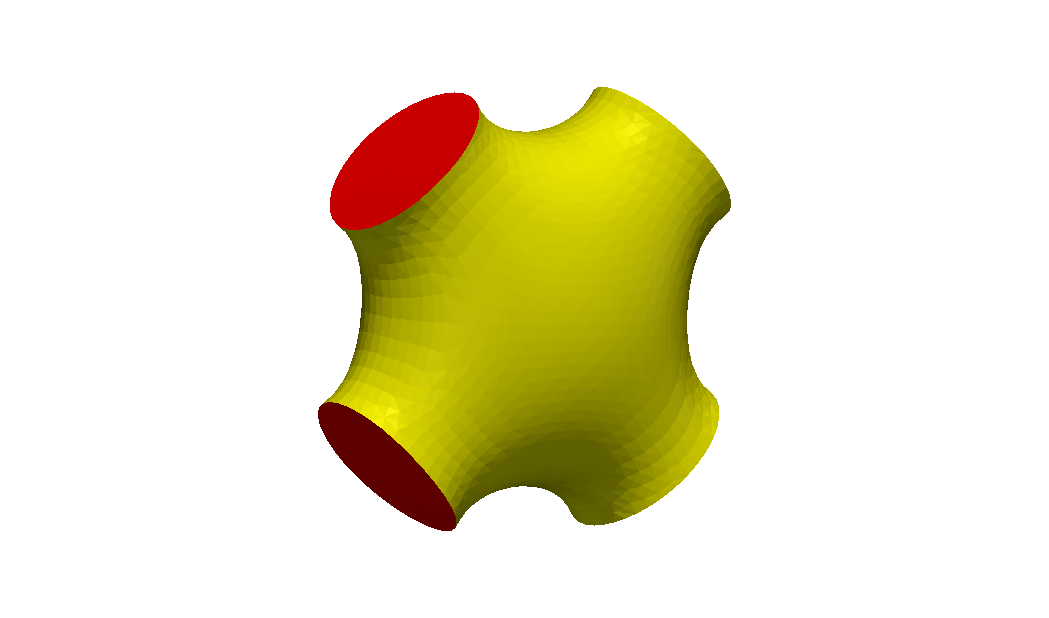}
\includegraphics[angle=-0,width=0.24\textwidth]{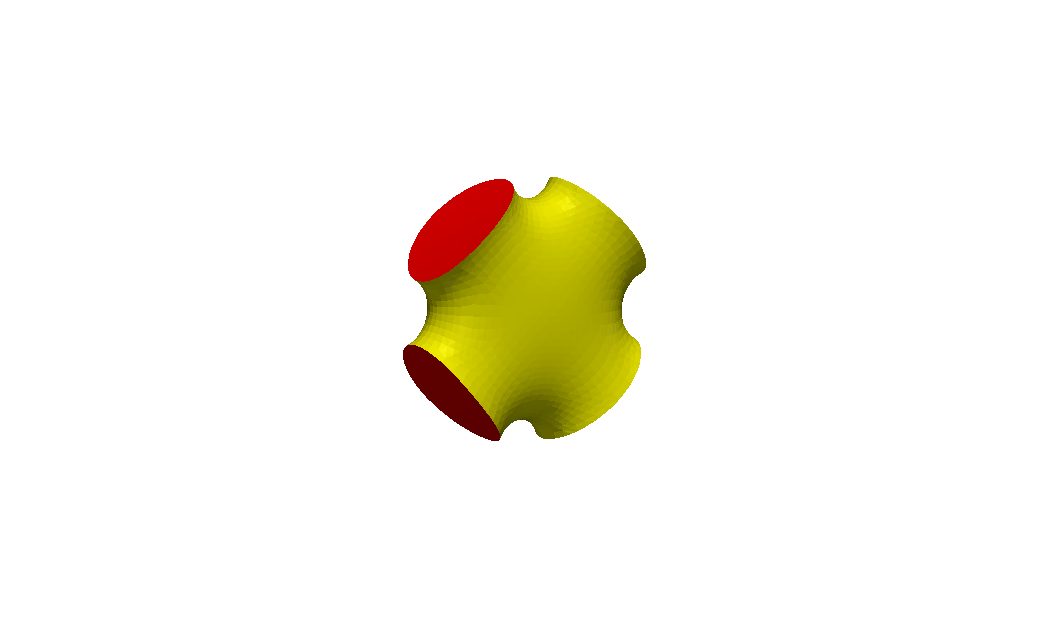}
\includegraphics[angle=-90,width=0.32\textwidth]{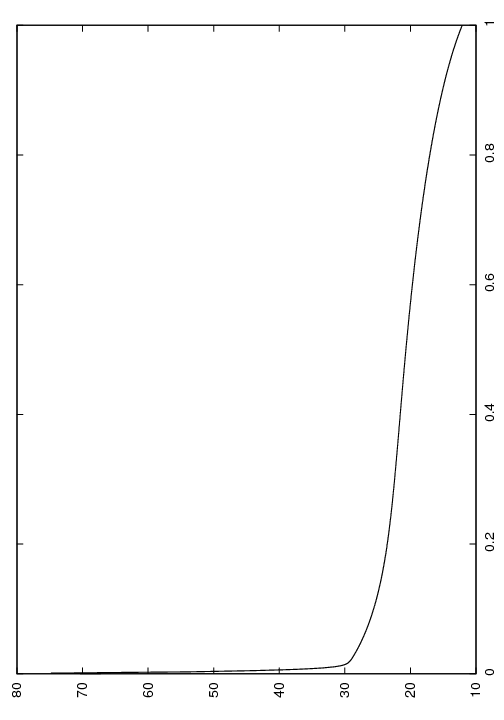}
\caption{($C^0$: $\spont_1 = \spont_2 = 0$, $\varsigma = 1$)
A plot of $(\Gamma^m_i)_{i=1}^2$ at times $t=0,\ 0.1,\ 0.5,\  1$.
Below a plot of the discrete energy $E^{m+1}((\Gamma^m)_{i=1}^2)$.
}
\label{fig:c0budding0}
\end{figure}%

We now consider surface area preserving experiments for setups where
phase $1$ is represented by six or eight disconnected components on the unit
sphere.
For these experiments we use the time step size $\ttau=10^{-4}$
and let $\spont_1 = -4$, $\spont_2 = -2$, $\varsigma = 1$ and $\varrho=2$.
The initial surface $\Gamma^0$ in Figure~\ref{fig:c0sixdot1}
satisfies $(J_1,J_2) = (1032,7160)$ and $(K_1,K_2) = (614,3668)$ and
is an approximation of the unit sphere.
Phase $1$ is made up of six disconnected components.
Here the observed relative surface area loss is $0.36\%$.
\begin{figure}
\center
\includegraphics[angle=-0,width=0.24\textwidth]{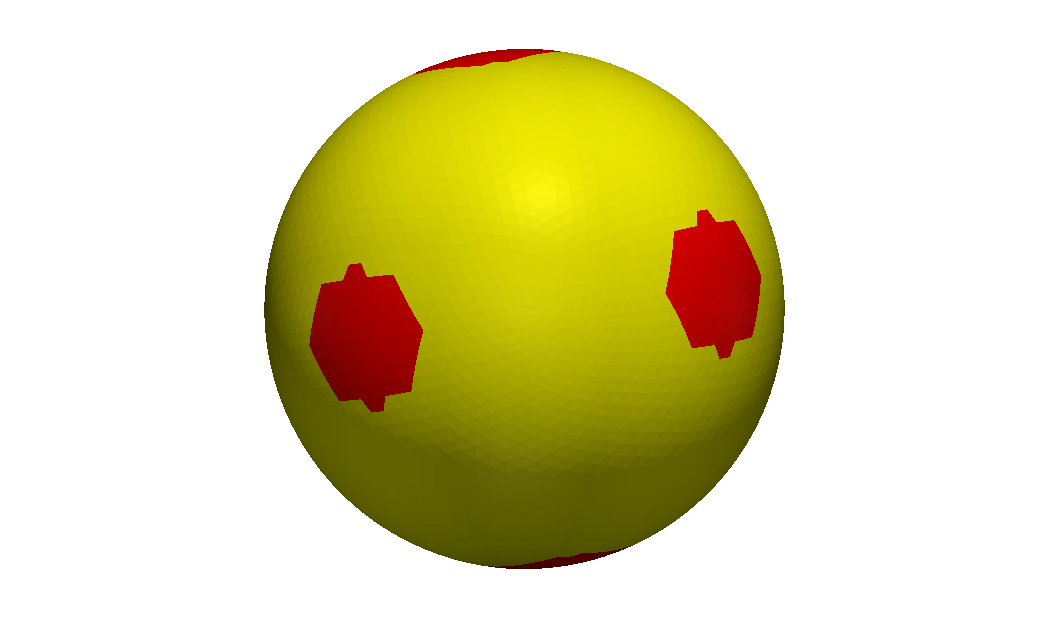}
\includegraphics[angle=-0,width=0.24\textwidth]{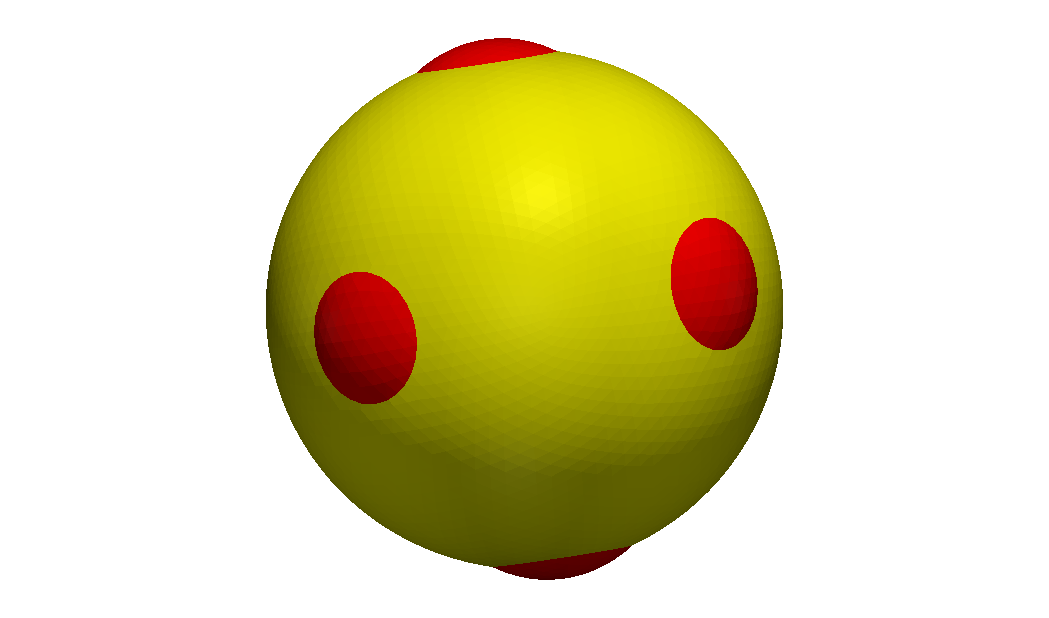}
\includegraphics[angle=-0,width=0.24\textwidth]{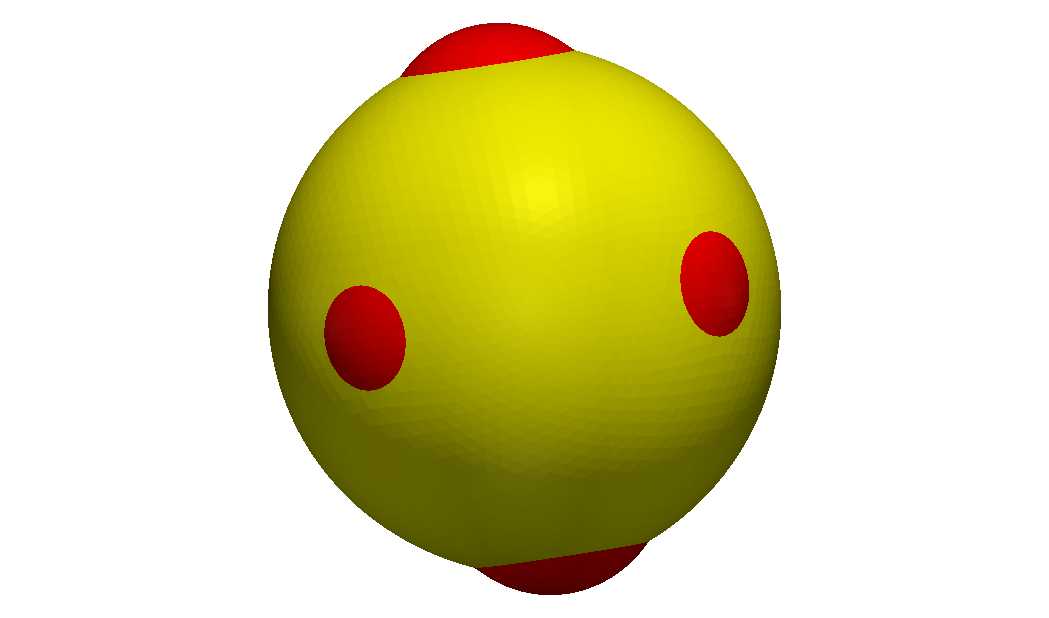}
\includegraphics[angle=-0,width=0.24\textwidth]{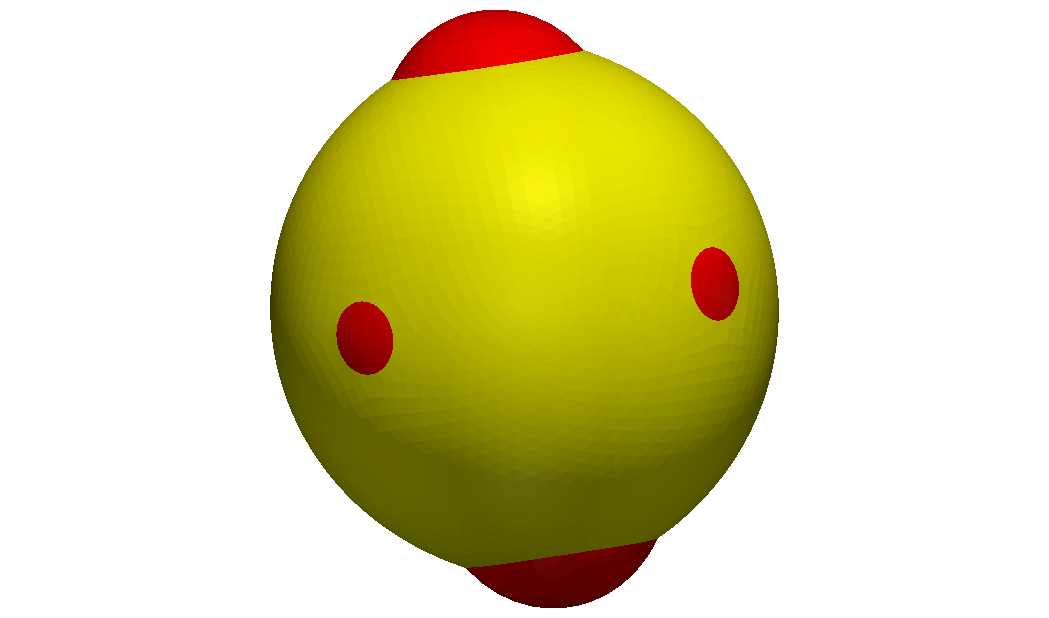}
\includegraphics[angle=-90,width=0.32\textwidth]{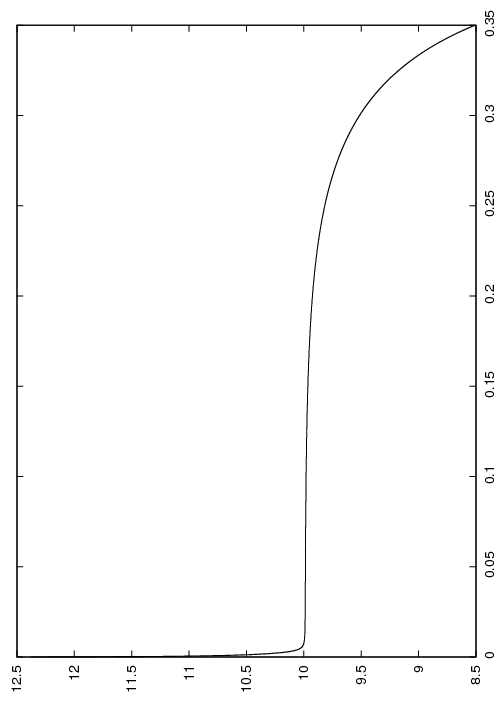}
\caption{($C^0$: $\spont_1 = -4$, $\spont_2 = -2$, $\varsigma = 1$,
$\varrho=2$)
Surface area preserving flow.
A plot of $(\Gamma^m_i)_{i=1}^2$ at times $t=0,\ 0.1,\ 0.3,\ 0.35$.
Below a plot of the discrete energy $E^{m+1}((\Gamma^m)_{i=1}^2)$.
}
\label{fig:c0sixdot1}
\end{figure}%
A simulation with eight disconnected components for phase $1$
is shown in Figure~\ref{fig:c0eightdot1}.
The initial surface $\Gamma^0$ 
satisfies $(J_1,J_2) = (2048,6144)$ and $(K_1,K_2) = (1184,3218)$. 
Here the observed relative surface area loss is $0.28\%$.
\begin{figure}
\center
\includegraphics[angle=-0,width=0.24\textwidth]{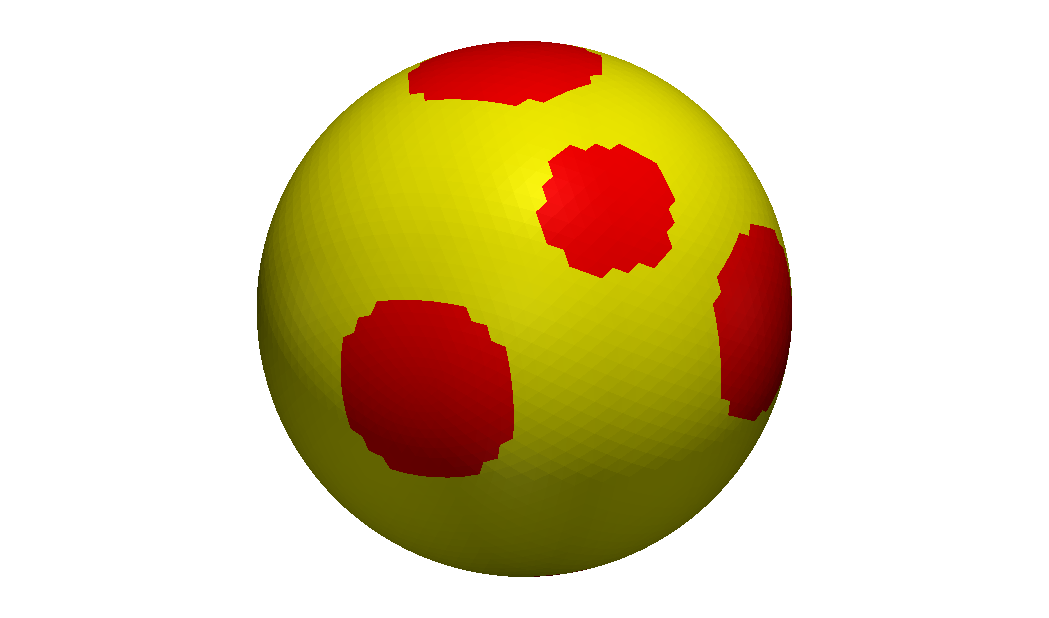}
\includegraphics[angle=-0,width=0.24\textwidth]{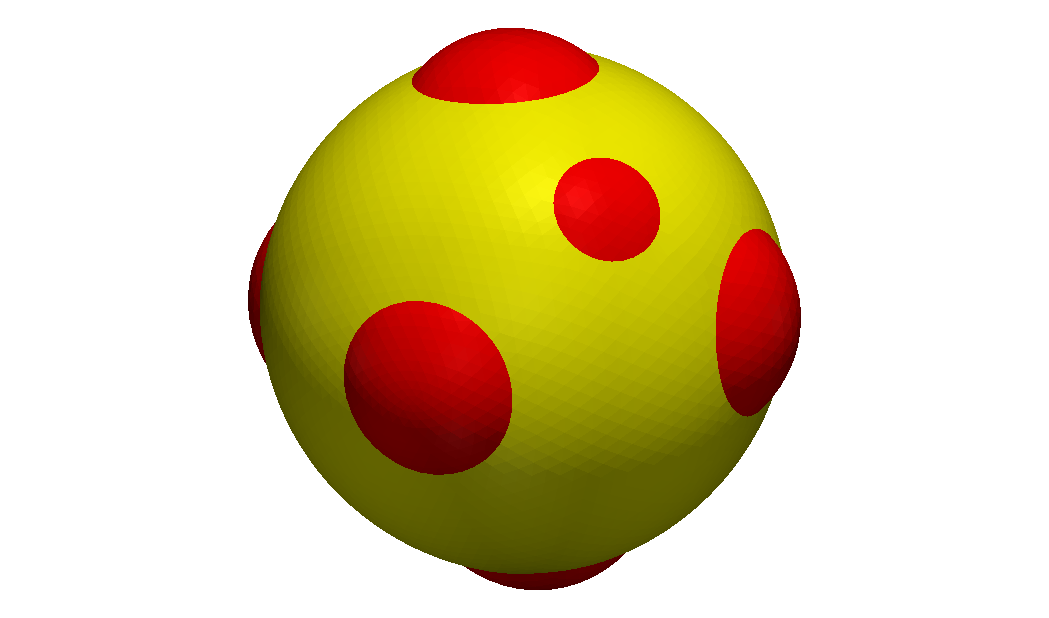}
\includegraphics[angle=-0,width=0.24\textwidth]{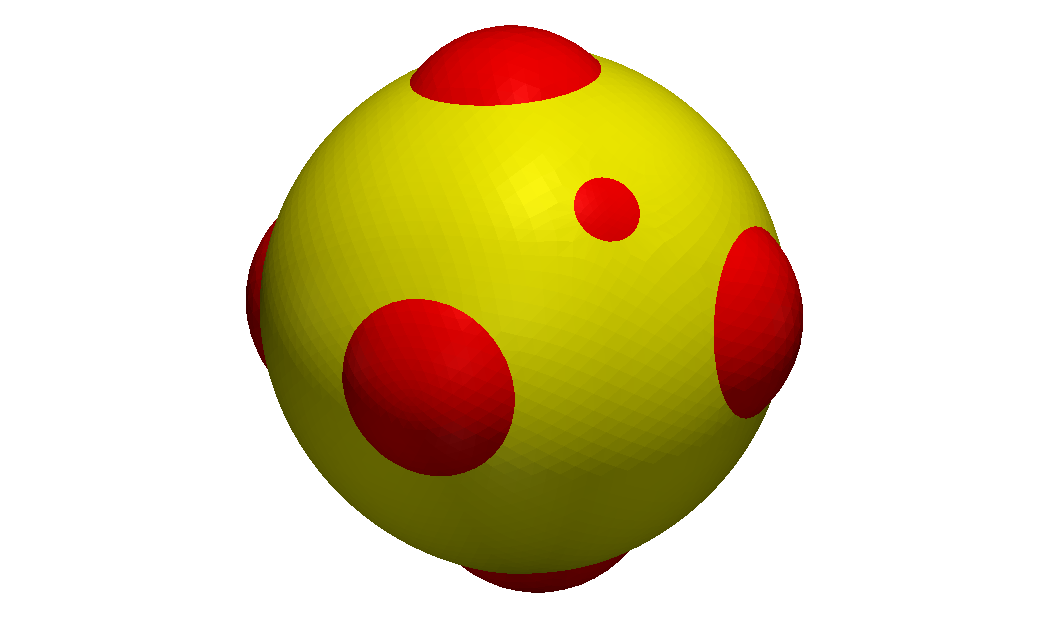}
\includegraphics[angle=-0,width=0.24\textwidth]{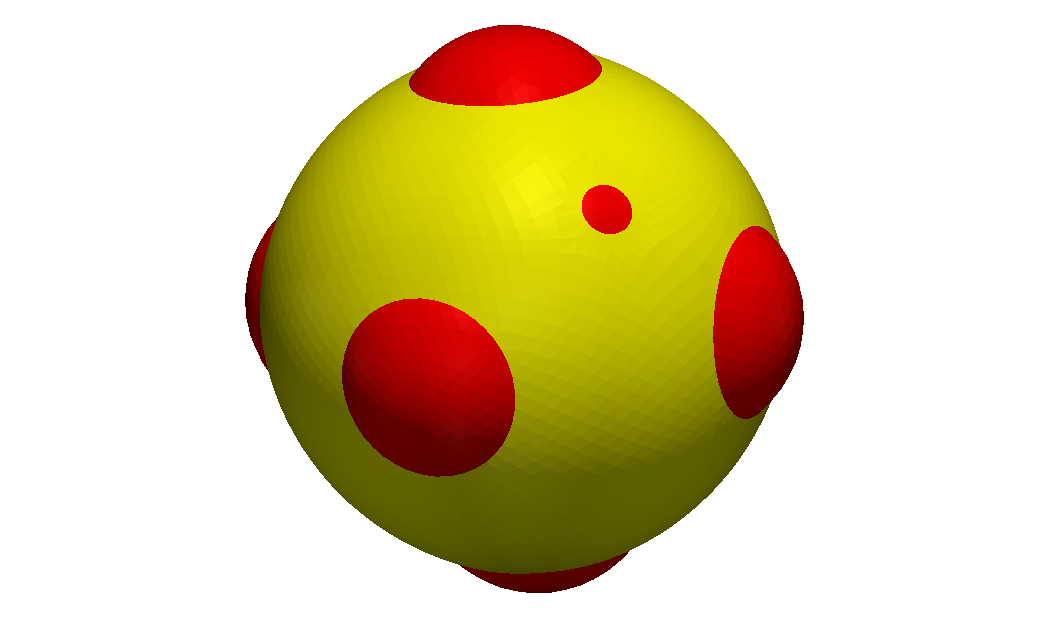}
\includegraphics[angle=-90,width=0.32\textwidth]{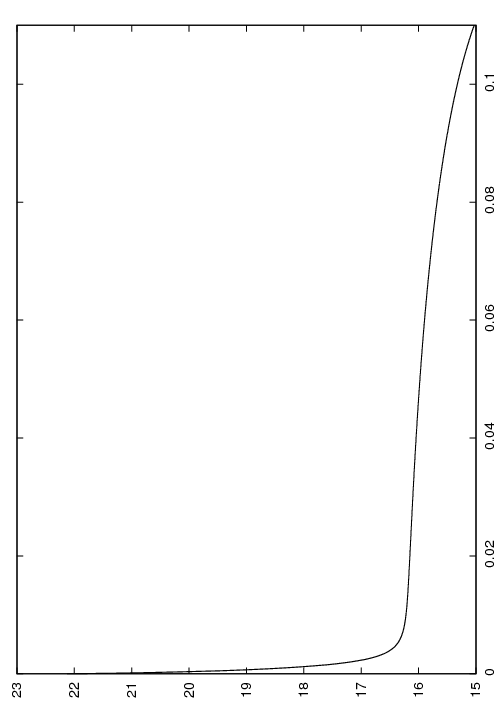}
\caption{($C^0$: $\spont_1 = -4$, $\spont_2 = -2$, $\varsigma = 1$,
$\varrho=2$)
Surface area preserving flow.
A plot of $(\Gamma^m_i)_{i=1}^2$ at times $t=0,\ 0.05,\ 0.1,\ 0.11$.
Below a plot of the discrete energy $E^{m+1}((\Gamma^m)_{i=1}^2)$.
}
\label{fig:c0eightdot1}
\end{figure}%

An example for volume and surface area preserving flow
is shown in Figure~\ref{fig:c0volarea}. 
The initial surface $\Gamma^0$ 
satisfies $(J_1,J_2) = (2274,2274)$ and $(K_1,K_2) = (1188,1188)$ and
has maximal dimensions $1.5\times1.5\times2.8$.
In this experiment we choose $\spont_1 = \spont_2 = -1$, $\varsigma = 1$ and
$\varrho = 2$.
The relative surface area loss for this experiment is $0.07\%$, while the
relative volume loss is $0.00\%$.
\begin{figure}
\center
\includegraphics[angle=-0,width=0.24\textwidth]{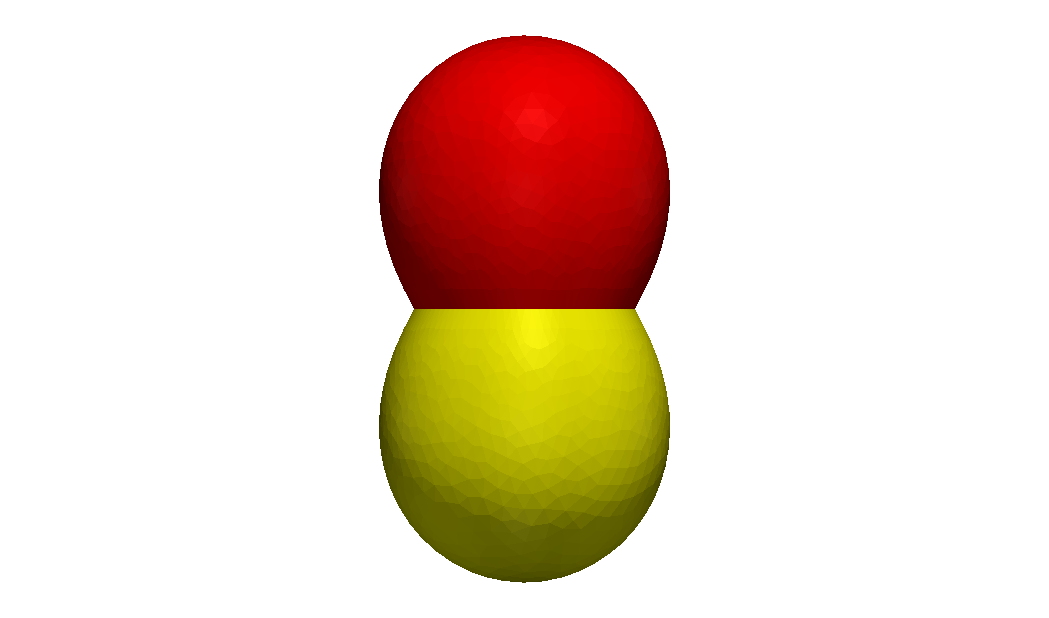}
\includegraphics[angle=-0,width=0.24\textwidth]{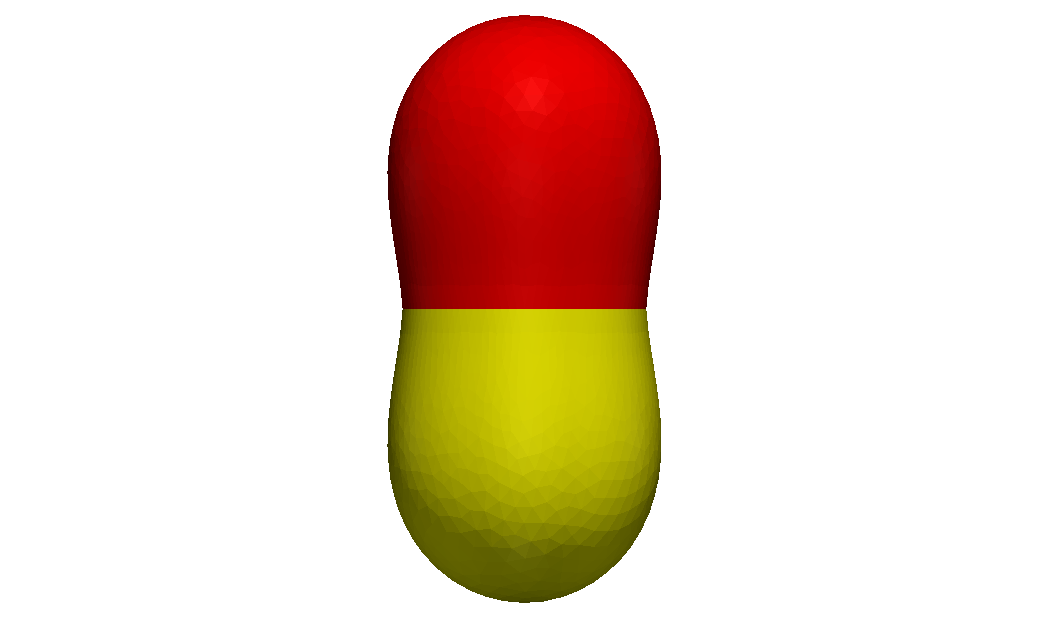}
\includegraphics[angle=-0,width=0.24\textwidth]{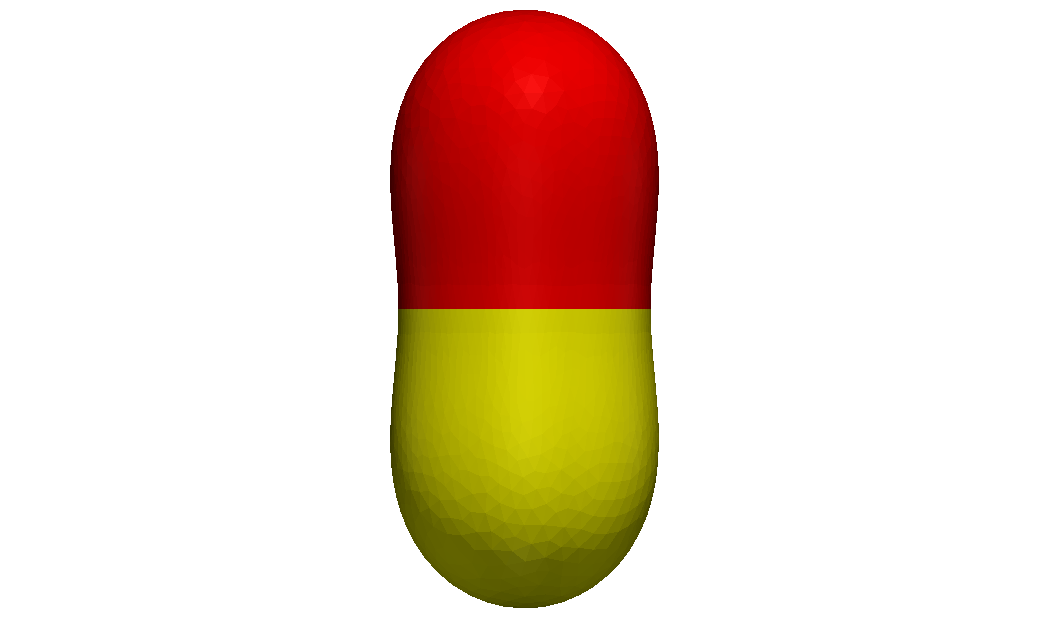} 
\includegraphics[angle=-0,width=0.24\textwidth]{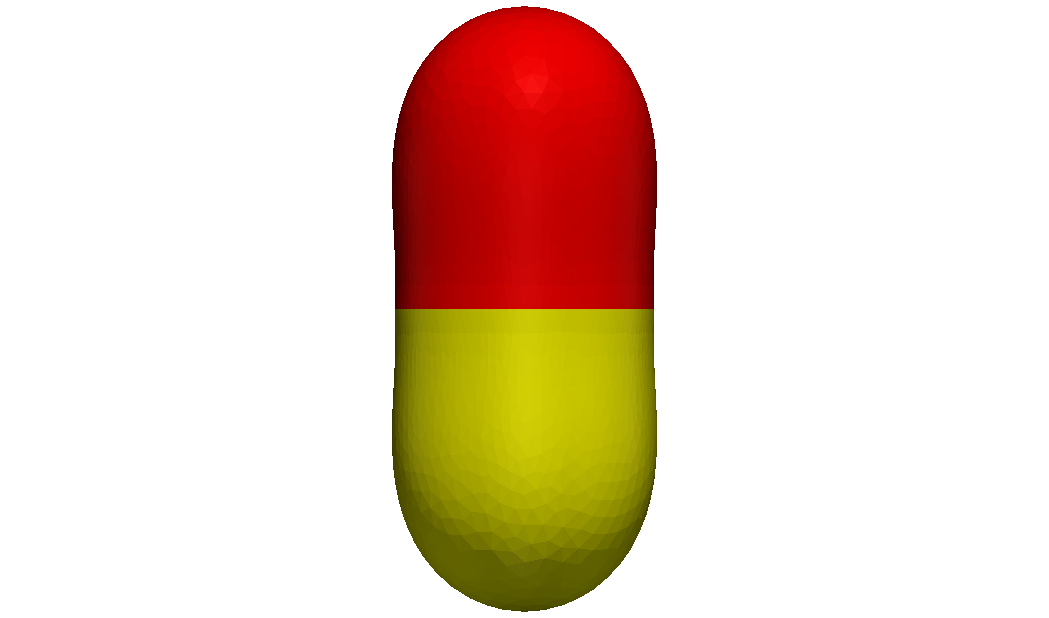} \\
\includegraphics[angle=-90,width=0.32\textwidth]{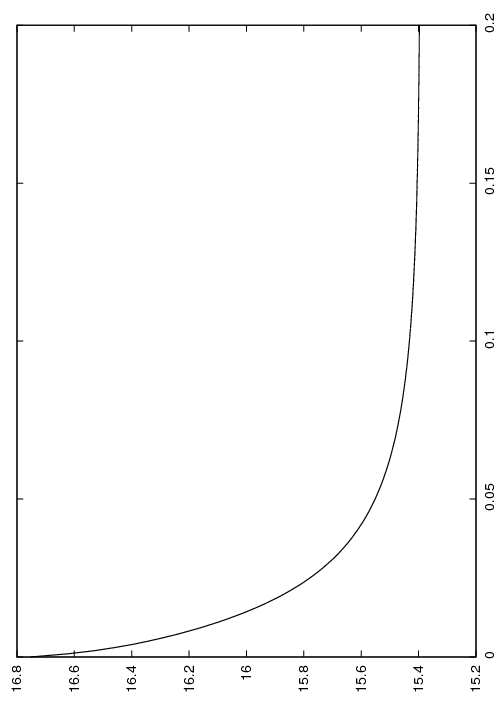}
\caption{($C^0$: $\spont_1 = \spont_2 = -1$, $\varsigma = 1$,
$\varrho = 2$)
Volume and surface area preserving flow.
A plot of $(\Gamma^m_i)_{i=1}^2$ at times $t=0,\ 0.05,\ 0.1,\ 0.2$.
Below a plot of the discrete energy $E^{m+1}((\Gamma^m)_{i=1}^2)$.
}
\label{fig:c0volarea}
\end{figure}%

The next set of experiments illustrates the impact of the Gaussian curvature
energy.
The initial surface $\Gamma^0$ is made up of two halves of 
an approximation of the unit sphere and
satisfies $(J_1,J_2) = (2274,2274)$ and $(K_1,K_2) = (1188,1188)$.
An experiment for $\spont_1 = \spont_2 = 0$, $\varsigma = 1$ and $\varrho=2$ 
is shown in Figure~\ref{fig:c0exp0_lvr}. 
The evolution eventually reaches a slowly shrinking disk.
\begin{figure}
\center
\includegraphics[angle=-0,width=0.24\textwidth]{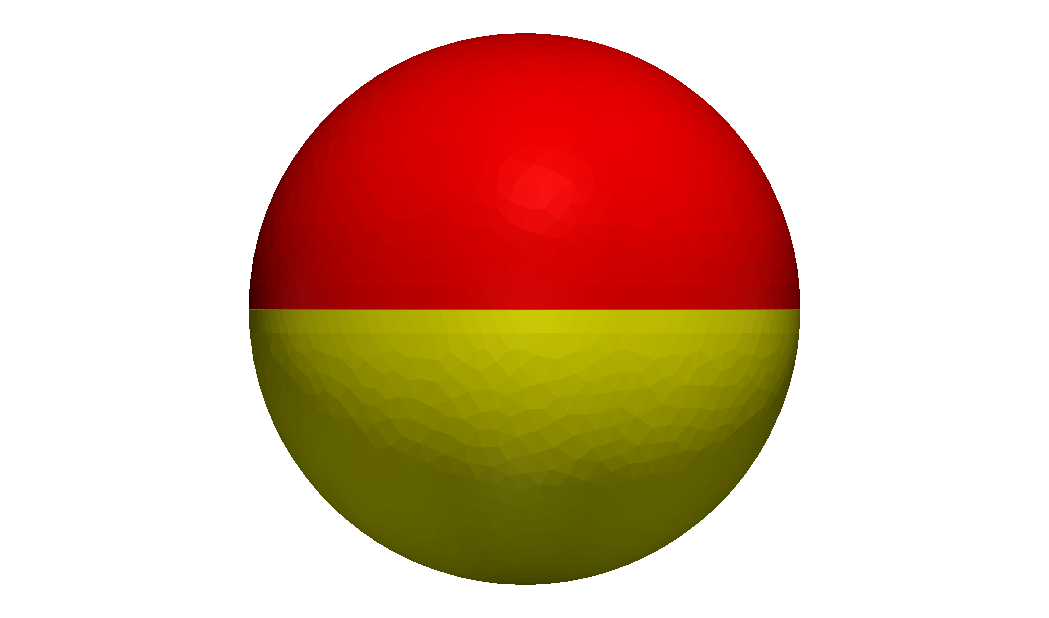}
\includegraphics[angle=-0,width=0.24\textwidth]{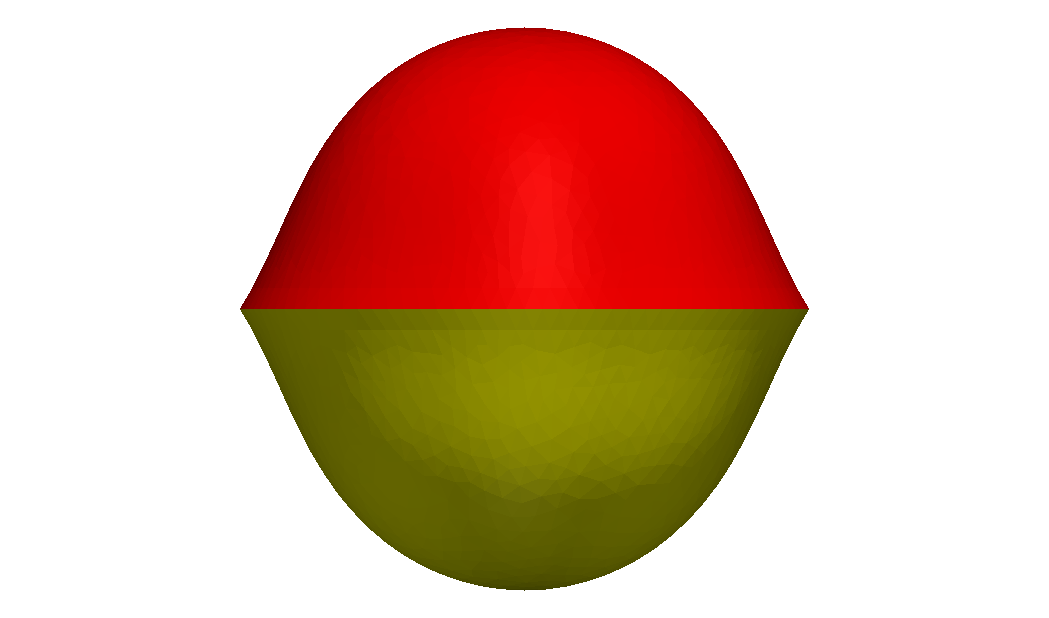}
\includegraphics[angle=-0,width=0.24\textwidth]{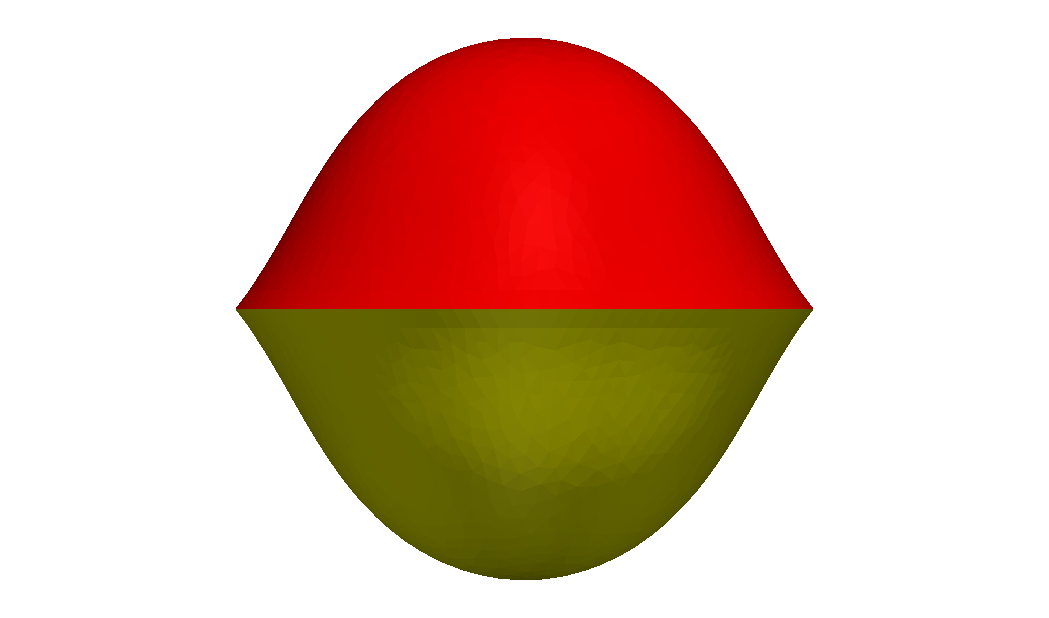} \\
\includegraphics[angle=-0,width=0.24\textwidth]{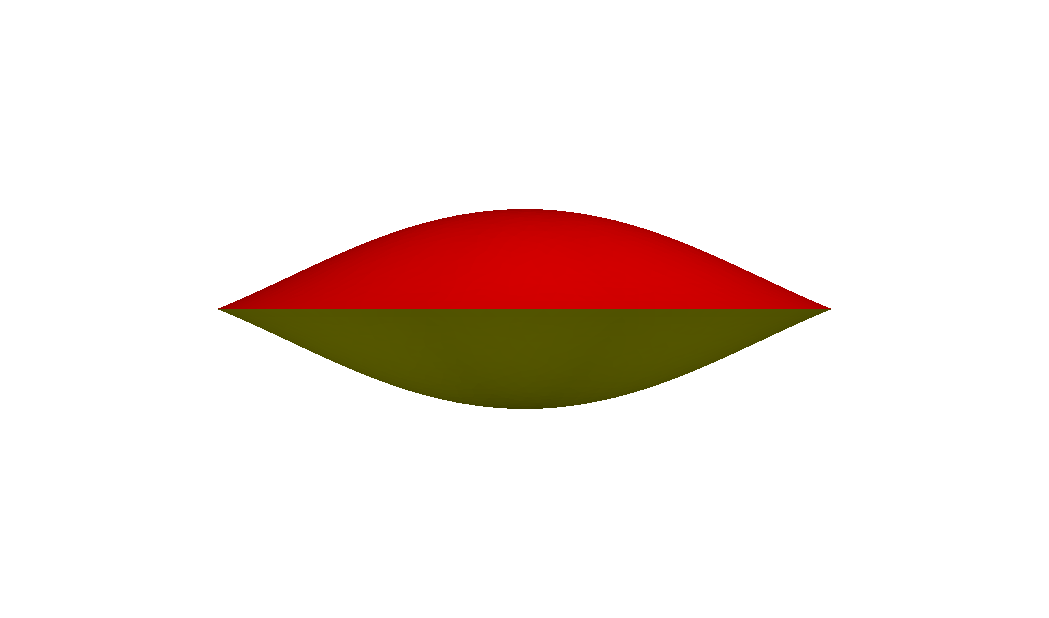}
\includegraphics[angle=-0,width=0.24\textwidth]{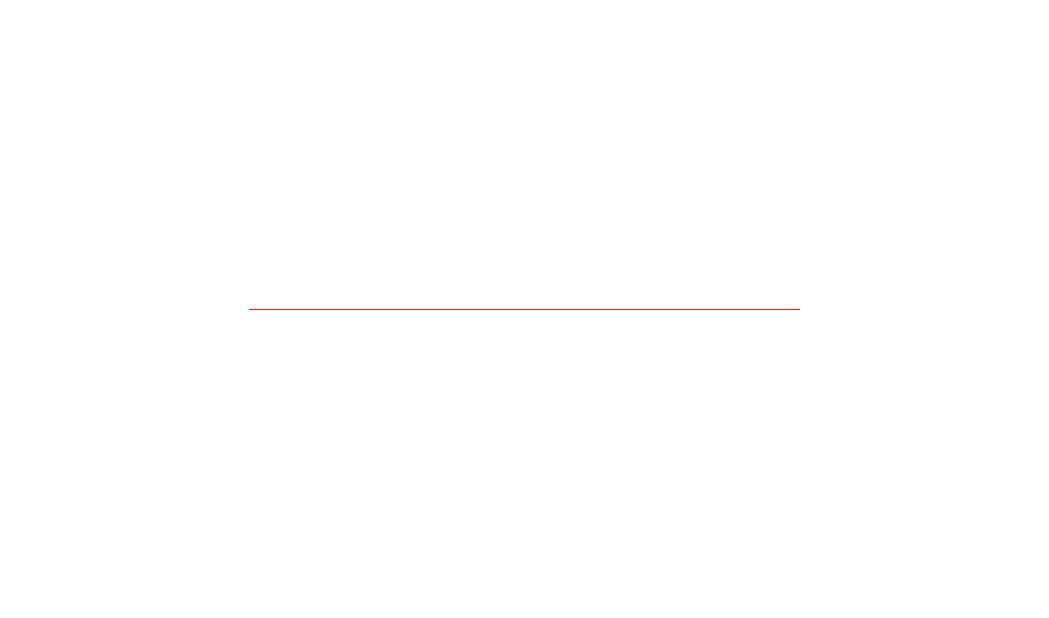} \\
\includegraphics[angle=-90,width=0.32\textwidth]{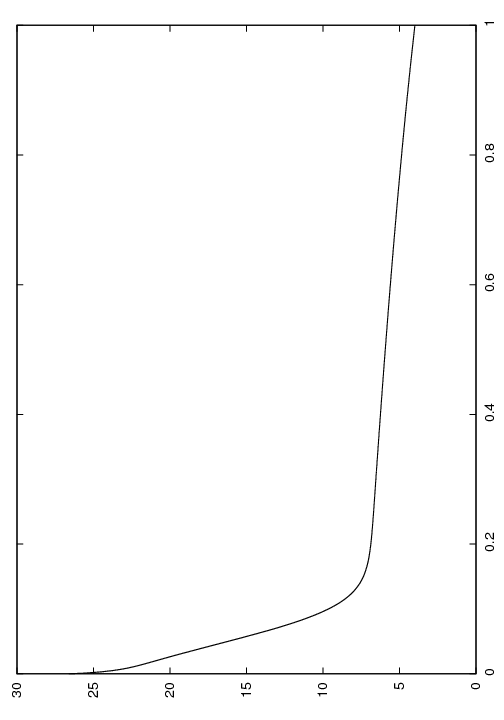}
\caption{($C^0$: $\spont_1 = \spont_2 = 0$, $\varsigma = 1$,
$\varrho=2$)
A plot of $(\Gamma^m_i)_{i=1}^2$ at times $t=0,\ 0.01,\ 0.02,\ 0.1,\ 1$.
At time $t=1$ the evolution has reached a disk.
Below a plot of the discrete energy $E^{m+1}((\Gamma^m)_{i=1}^2)$.
}
\label{fig:c0exp0_lvr}
\end{figure}%
Choosing the parameters $\alpha^G_1 = \alpha^G_2 = -1$, and using the time 
step size $\ttau=10^{-5}$, we obtain the simulation in 
Figure~\ref{fig:c0gauss1}.
We remark that the conditions (\ref{eq:C0alphaGbound}) trivially hold.
Moreover, and in contrast to the $C^1$--case, a nonzero Gaussian bending energy
coefficient has an influence on the evolution even if
$\alpha^G_1 = \alpha^G_2$. In this example we observe that for a negative
$\alpha^G_1 = \alpha^G_2$, the term
$\sum_{i=1}^2\alpha^G_i\,\int_{\Gamma_i} \Gauss_i \dH{2}$ for the initial
sphere is negative, and hence the evolution remains convex throughout, in
contrast to the evolution in Figure~\ref{fig:c0exp0_lvr}. Moreover,
the evolution in Figure~\ref{fig:c0gauss1} is generally slower, since
large values of the Gaussian curvatures make 
$\sum_{i=1}^2 \alpha^G_i\,\int_{\Gamma_i} \Gauss_i \dH{2}$ more negative.
\begin{figure}
\center
\includegraphics[angle=-0,width=0.24\textwidth]{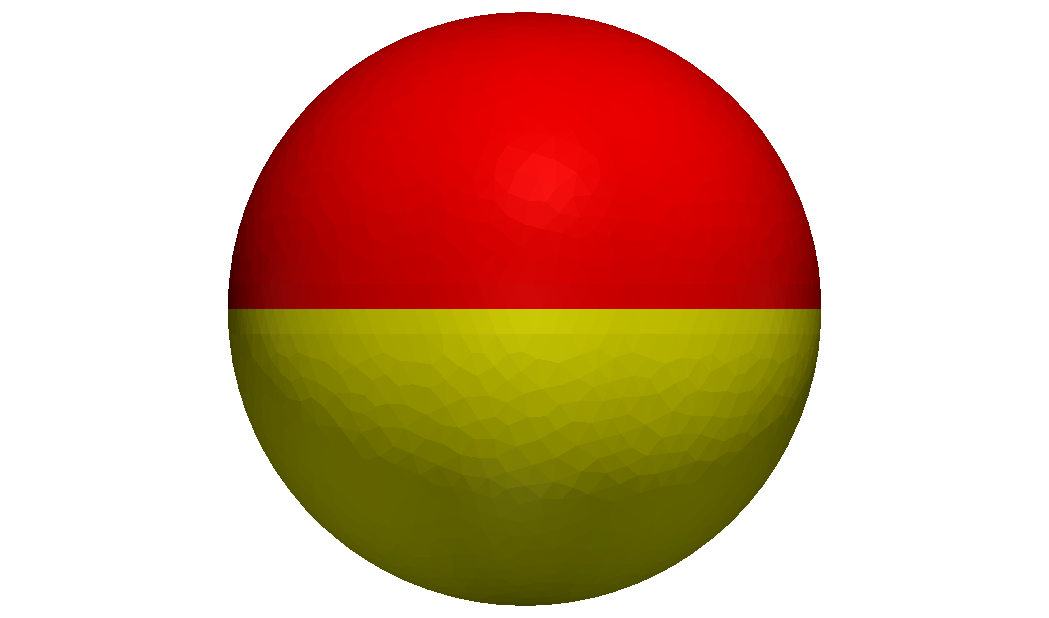}
\includegraphics[angle=-0,width=0.24\textwidth]{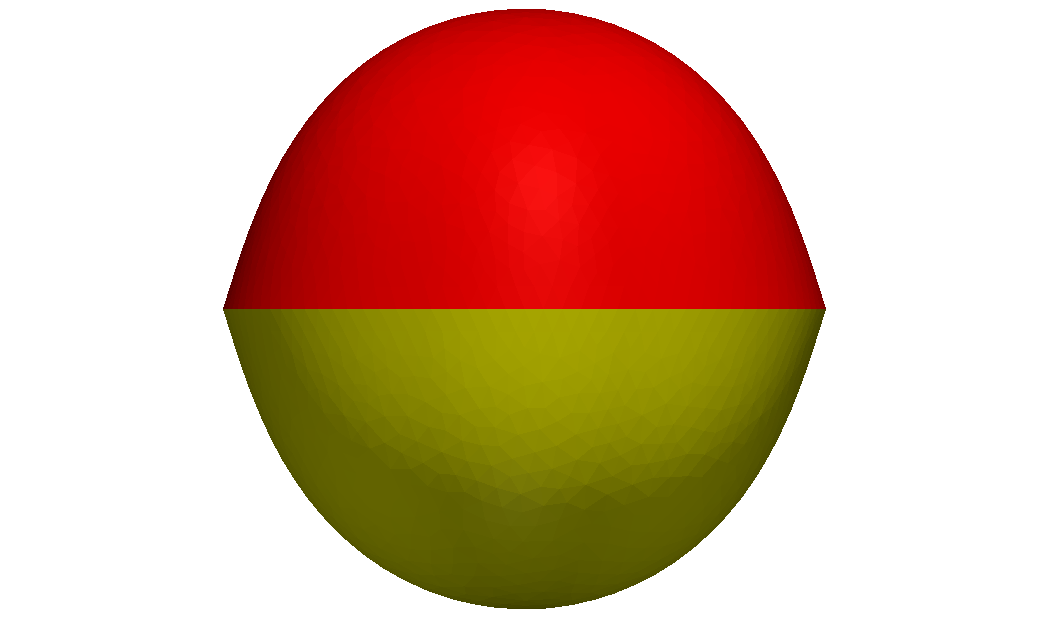}
\includegraphics[angle=-0,width=0.24\textwidth]{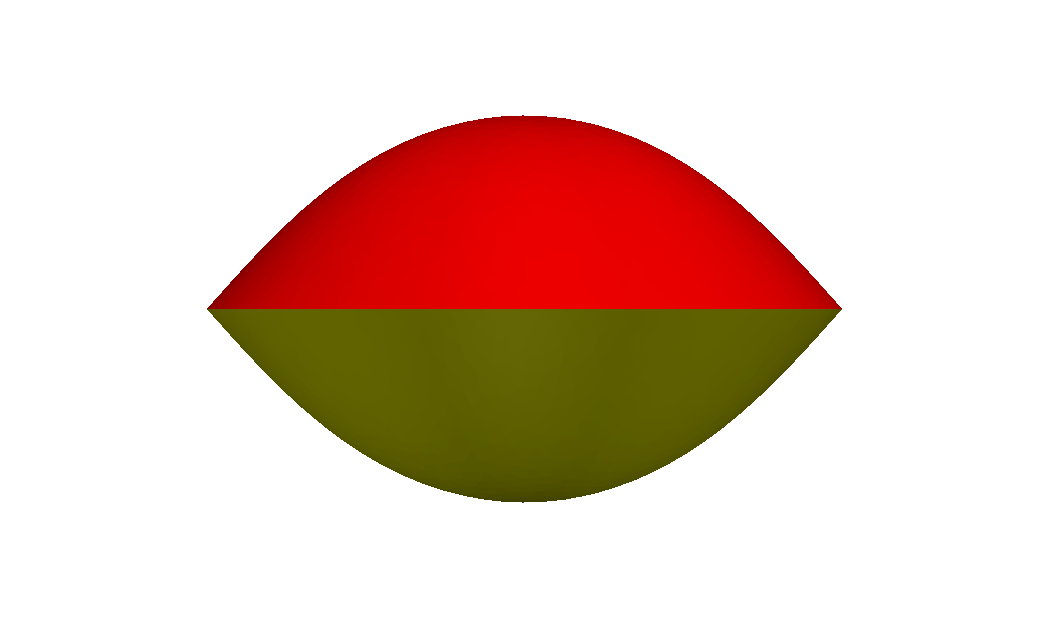}
\includegraphics[angle=-0,width=0.24\textwidth]{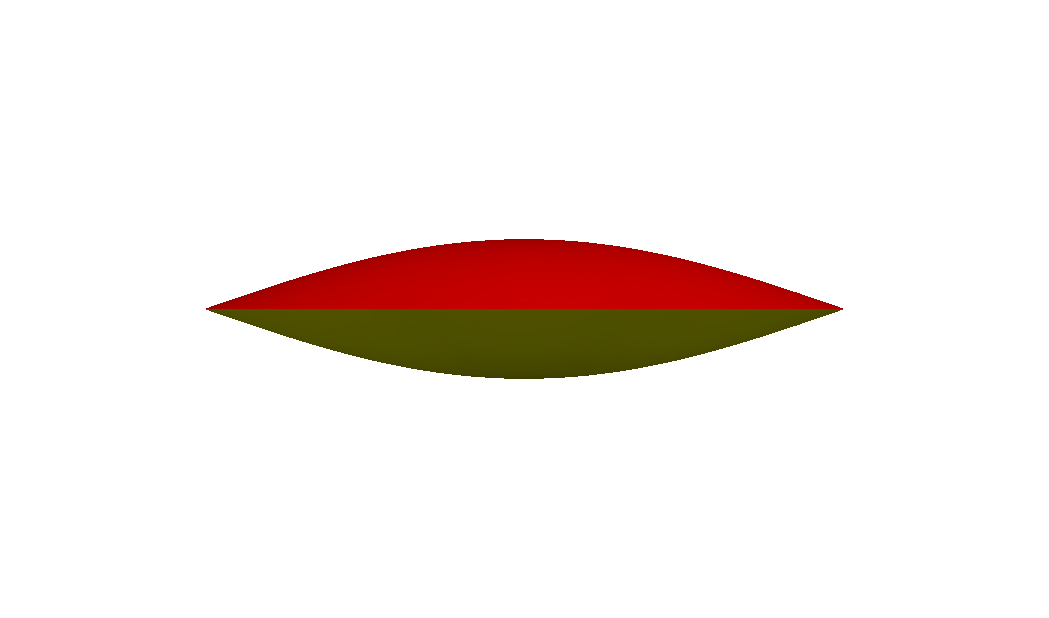}
\includegraphics[angle=-90,width=0.32\textwidth]{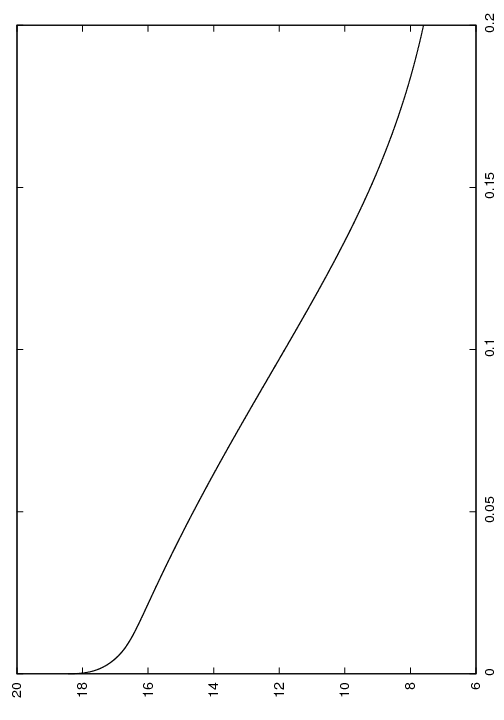}
\caption{($C^0$: $\spont_1 = \spont_2 = 0$, $\varsigma = 1$,
$\alpha^G_1=\alpha^G_2=-1$, $\varrho=2$)
A plot of $(\Gamma^m_i)_{i=1}^2$ at times $t=0,\ 0.01,\ 0.1,\ 0.2$.
Below a plot of the discrete energy $E^{m+1}((\Gamma^m)_{i=1}^2)$.
}
\label{fig:c0gauss1}
\end{figure}%
Repeating the computation for $\alpha^G_1=-1$ and $\alpha^G_2=-1.5$ yields the
results in Figure~\ref{fig:c0gauss2}. We note once again that the conditions
(\ref{eq:C0alphaGbound}) hold.
For the evolution in Figure~\ref{fig:c0gauss2} we observe that the curvature 
of phase 2 is decreasing slower due to the fact that
large values of $\int_{\Gamma_2} \Gauss_2 \dH{2}$ decrease the energy.
\begin{figure}
\center
\includegraphics[angle=-0,width=0.24\textwidth]{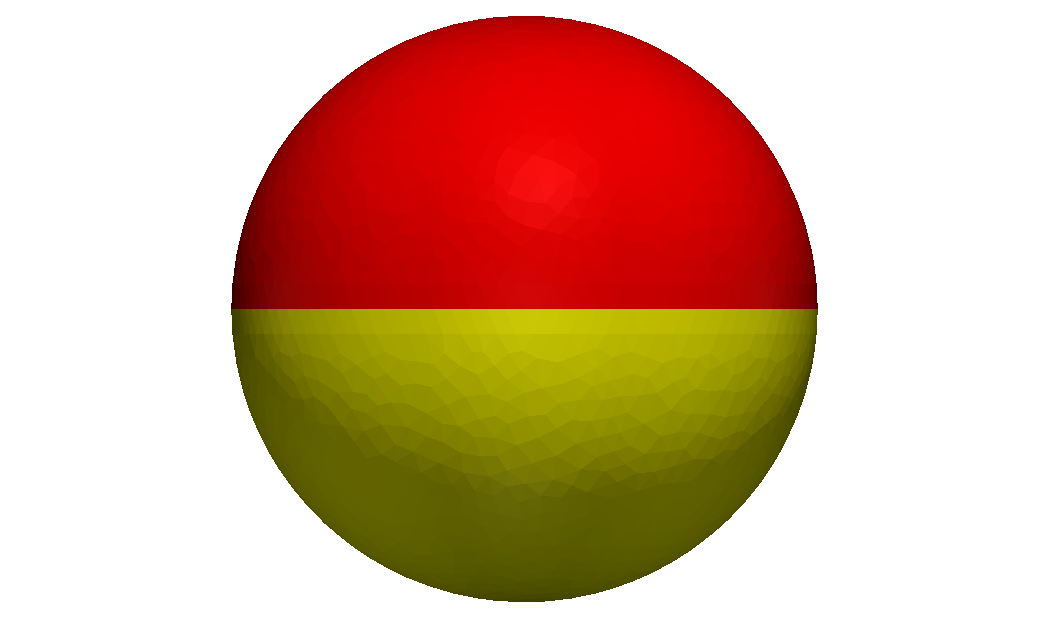}
\includegraphics[angle=-0,width=0.24\textwidth]{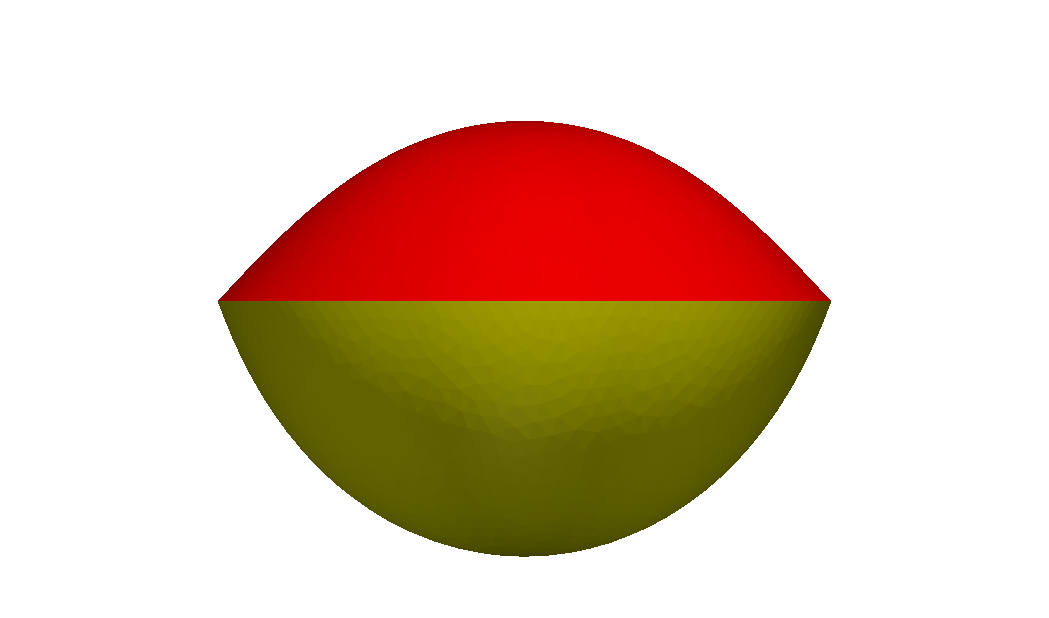}
\includegraphics[angle=-0,width=0.24\textwidth]{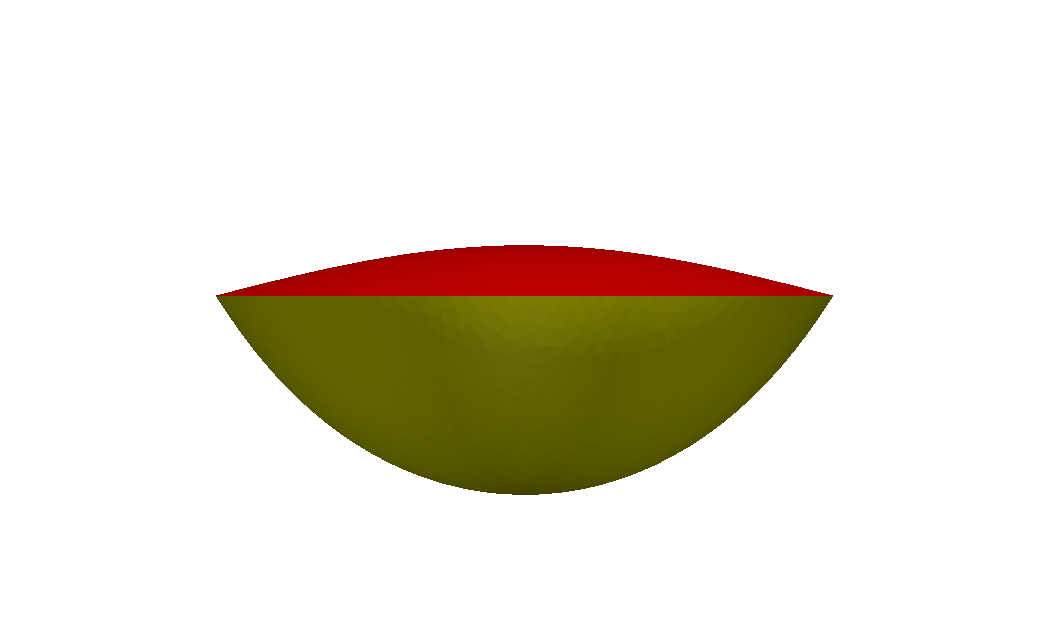} 
\includegraphics[angle=-0,width=0.24\textwidth]{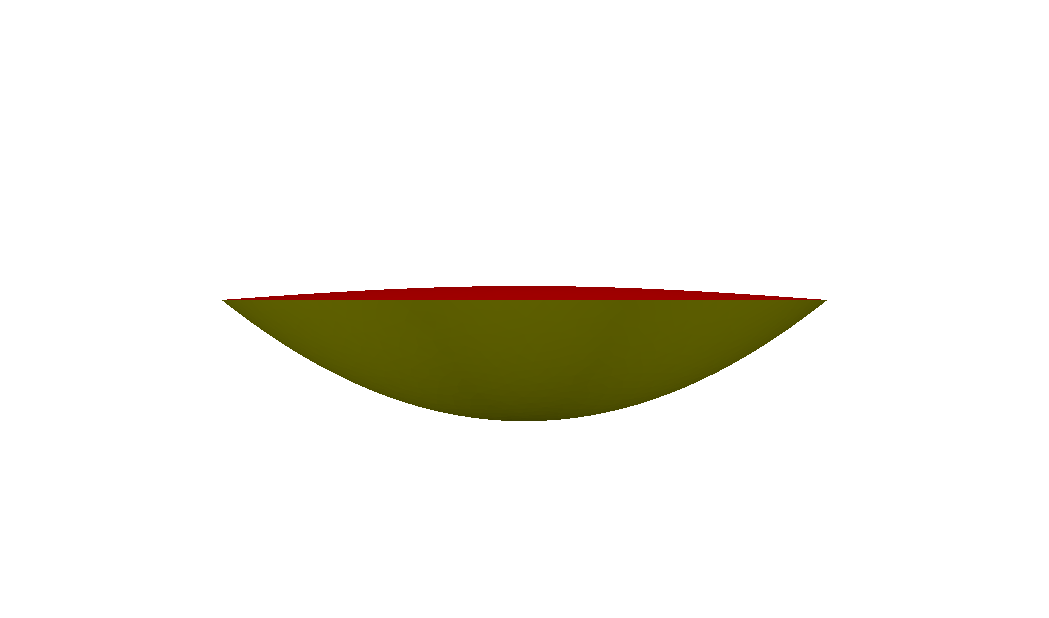} 
\includegraphics[angle=-90,width=0.32\textwidth]{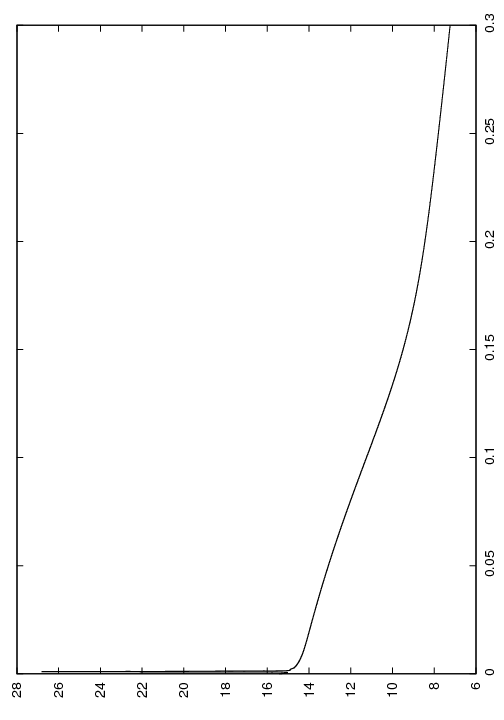}
\caption{($C^0$: $\spont_1 = \spont_2 = 0$, $\varsigma = 1$,
$\alpha^G_1=-1$, $\alpha^G_2=-1.5$, $\varrho=2$)
A plot of $(\Gamma^m_i)_{i=1}^2$ at times $t=0,\ 0.1,\ 0.2,\ 0.3$.
Below a plot of the discrete energy $E^{m+1}((\Gamma^m)_{i=1}^2)$.
}
\label{fig:c0gauss2}
\end{figure}%

\subsection{The $C^1$--case}

We remark that in the $C^1$--case, with uniform data $\alpha_1 = \alpha_2 =
\alpha$, $\spont_1 = \spont_2 = \spont$ and 
$\varsigma = \varrho = \alpha^G_1=\alpha^G_2 = 0$, 
our finite element approximation
collapses to the scheme from \cite{pwfade} for the Willmore flow of closed
surfaces. Indeed, as a numerical check we confirmed that Table~1 in
\cite{pwfade}, for the approximation of 
the nonlinear ODE \cite[(5.1)]{pwfade}, 
is reproduced exactly by our implementation of the scheme (\ref{eq:GF2a}--e).

A repeat of the simulation in Figure~\ref{fig:c0udb0_varrho}
in the context of a $C^1$--condition on $\gamma$ is shown in 
Figure~\ref{fig:c1udb0_varrho}, where for this experiment we use the time 
step size $\ttau=10^{-4}$.
The evolution goes towards a cylinder with two round caps, which is
dramatically different to the evolution in the $C^0$--case.
\begin{figure}
\center
\includegraphics[angle=-0,width=0.24\textwidth]{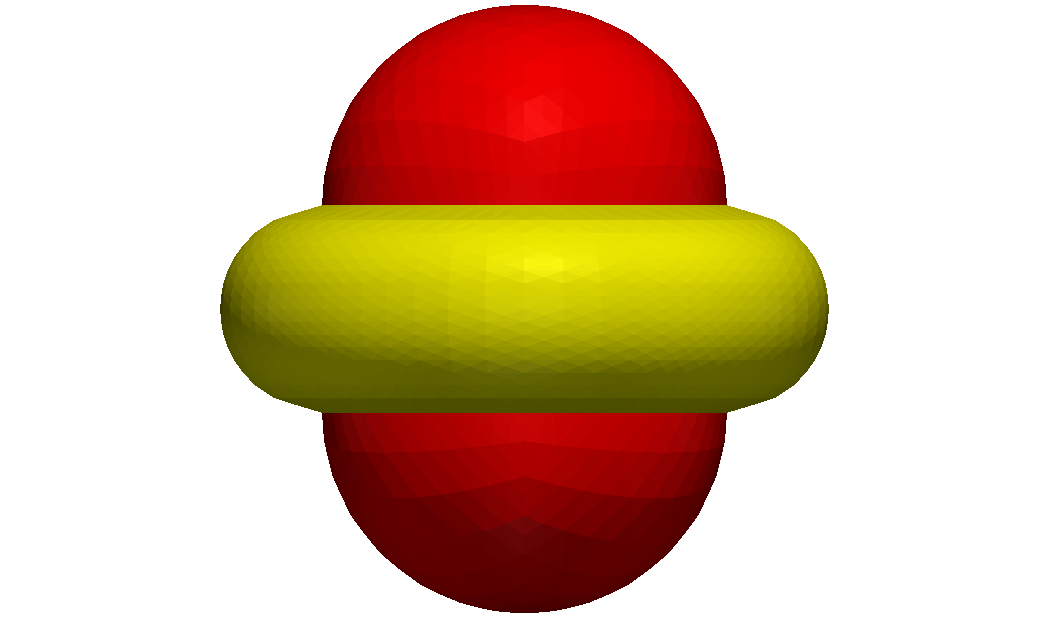}
\includegraphics[angle=-0,width=0.24\textwidth]{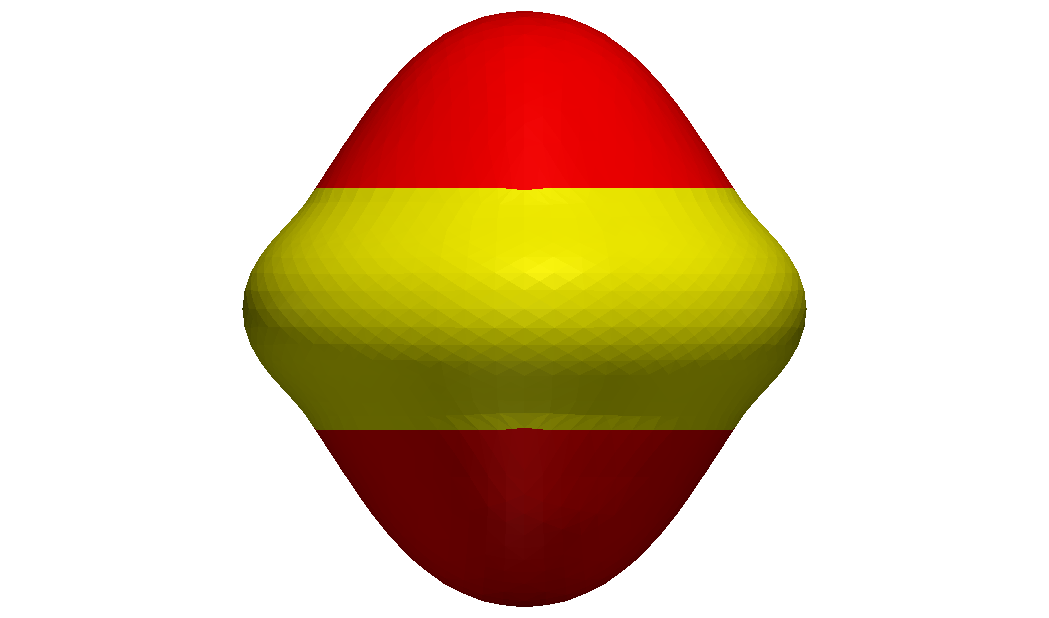}
\includegraphics[angle=-0,width=0.24\textwidth]{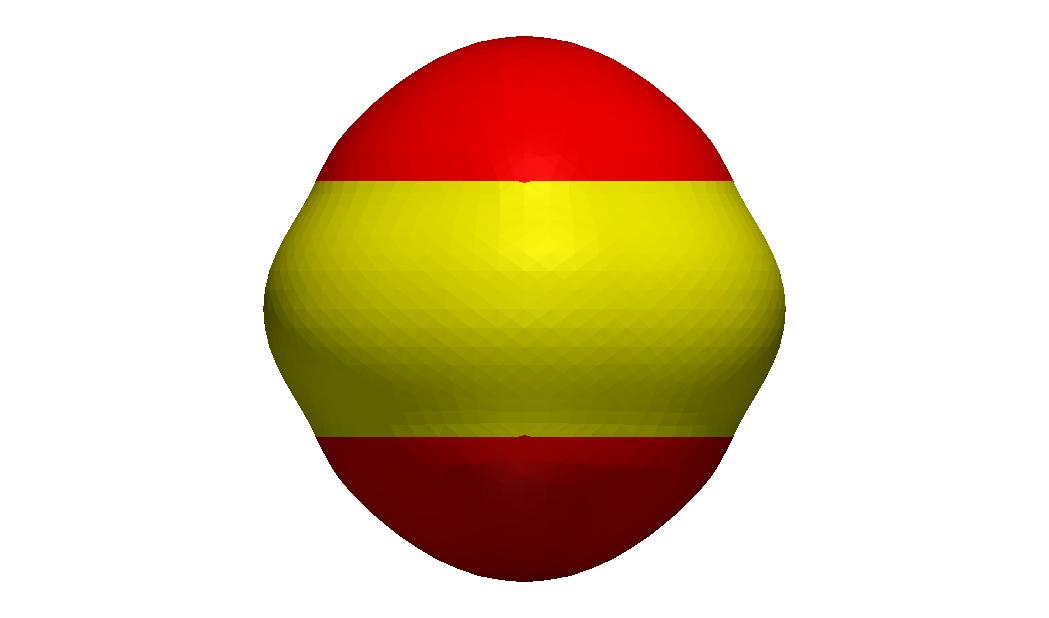} 
\includegraphics[angle=-0,width=0.24\textwidth]{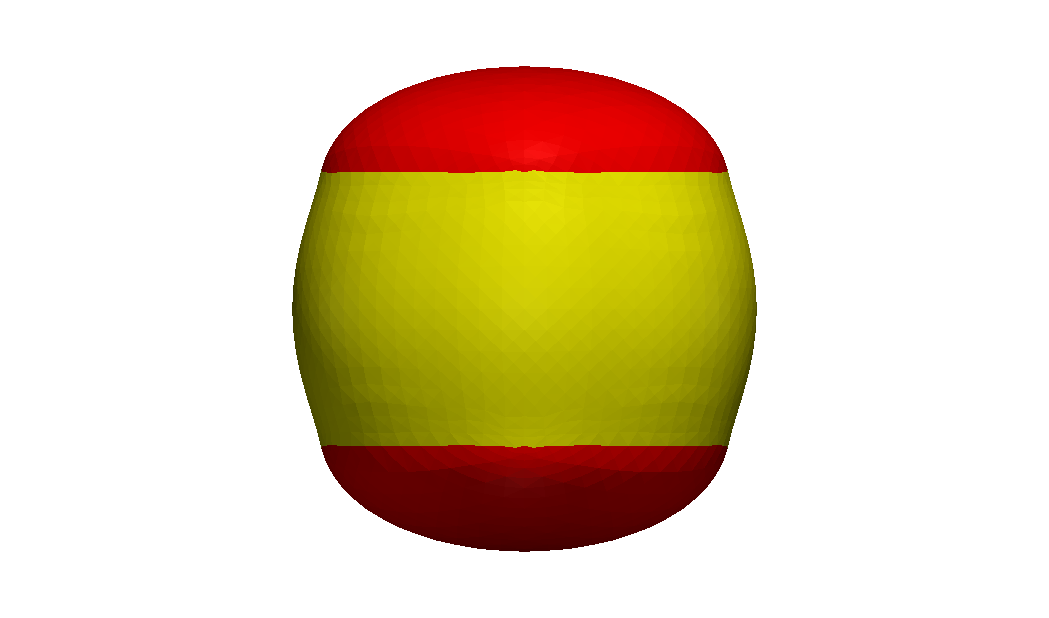} \\
\includegraphics[angle=-90,width=0.32\textwidth]{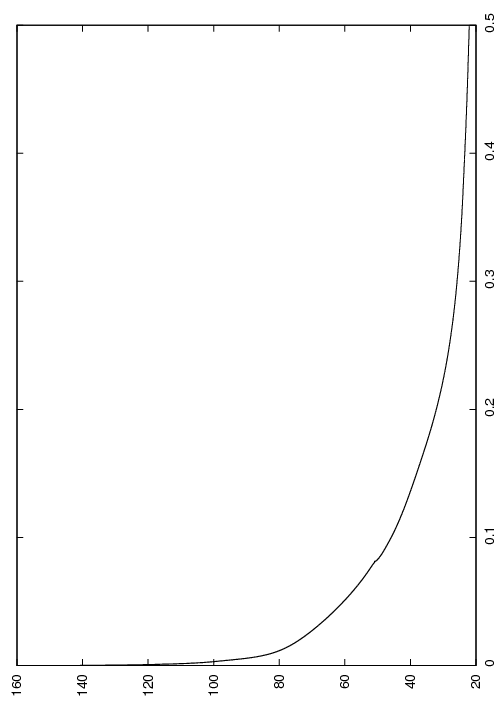}
\caption{($C^1$: $\spont_1 = -2$, $\spont_2 = -0.5$, $\varsigma = 0.1$,
$\varrho=2$)
A plot of $(\Gamma^m_i)_{i=1}^2$ at times $t=0,\ 0.1,\ 0.2,\ 0.5$.
Below a plot of the discrete energy $E^{m+1}((\Gamma^m)_{i=1}^2)$.
}
\label{fig:c1udb0_varrho}
\end{figure}%

If we project the initial surface from Figure~\ref{fig:c1udb0_varrho} 
to the unit sphere, we obtain the evolution shown in
Figure~\ref{fig:c1ball0}. 
The evolution goes towards a cylinder with two round caps.
\begin{figure}
\center
\includegraphics[angle=-0,width=0.24\textwidth]{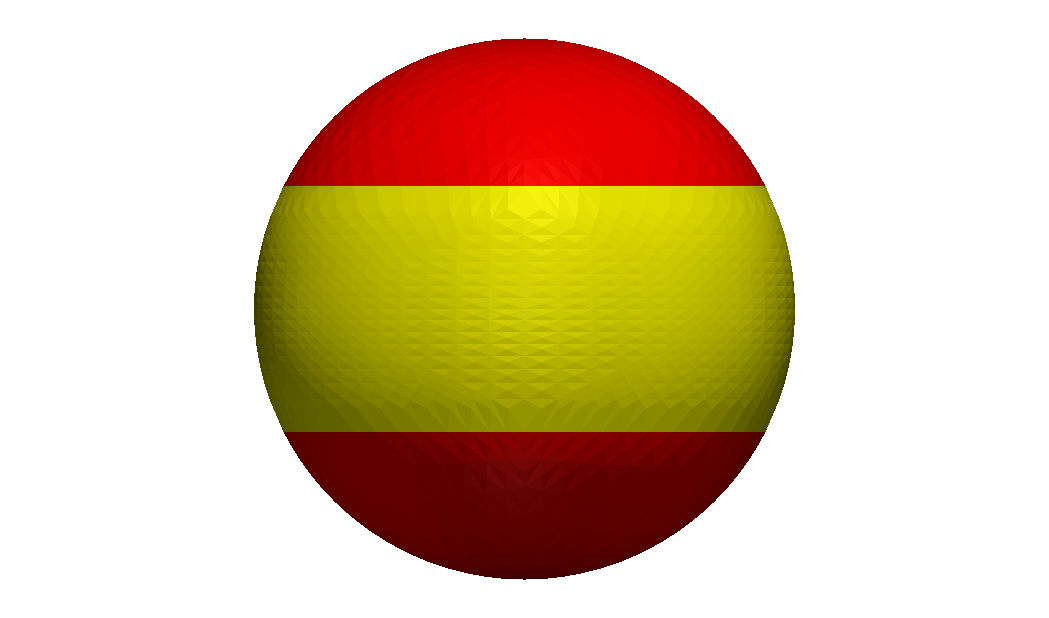}
\includegraphics[angle=-0,width=0.24\textwidth]{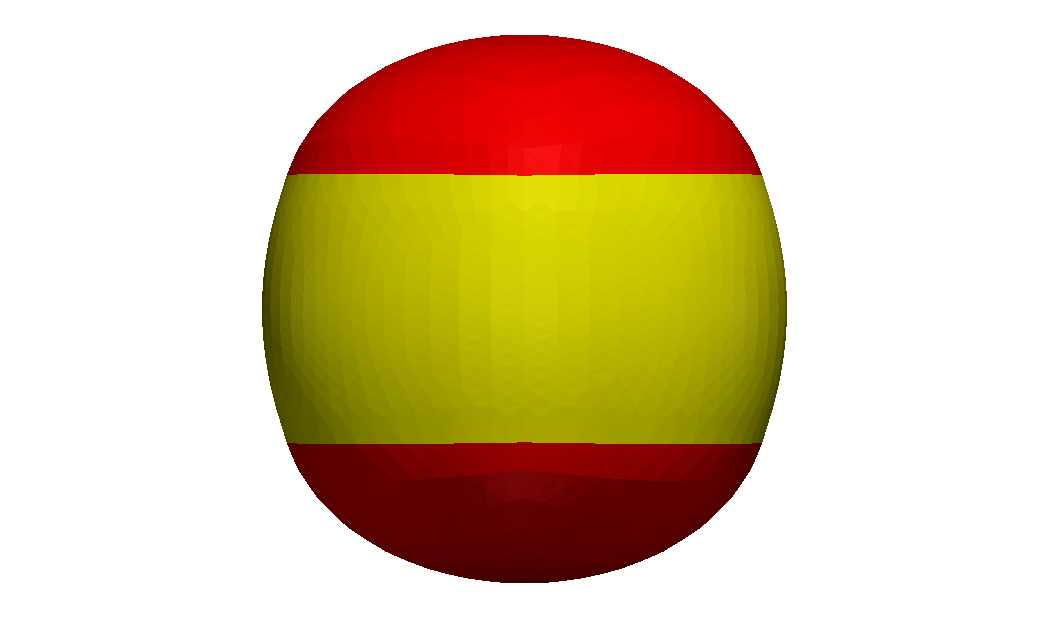}
\includegraphics[angle=-0,width=0.24\textwidth]{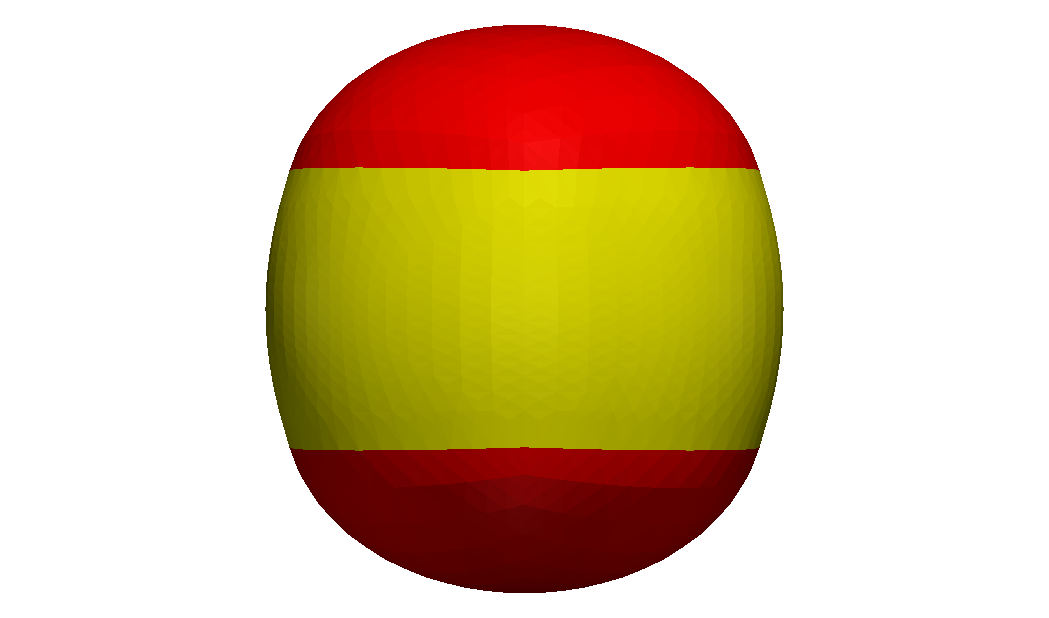} 
\includegraphics[angle=-0,width=0.24\textwidth]{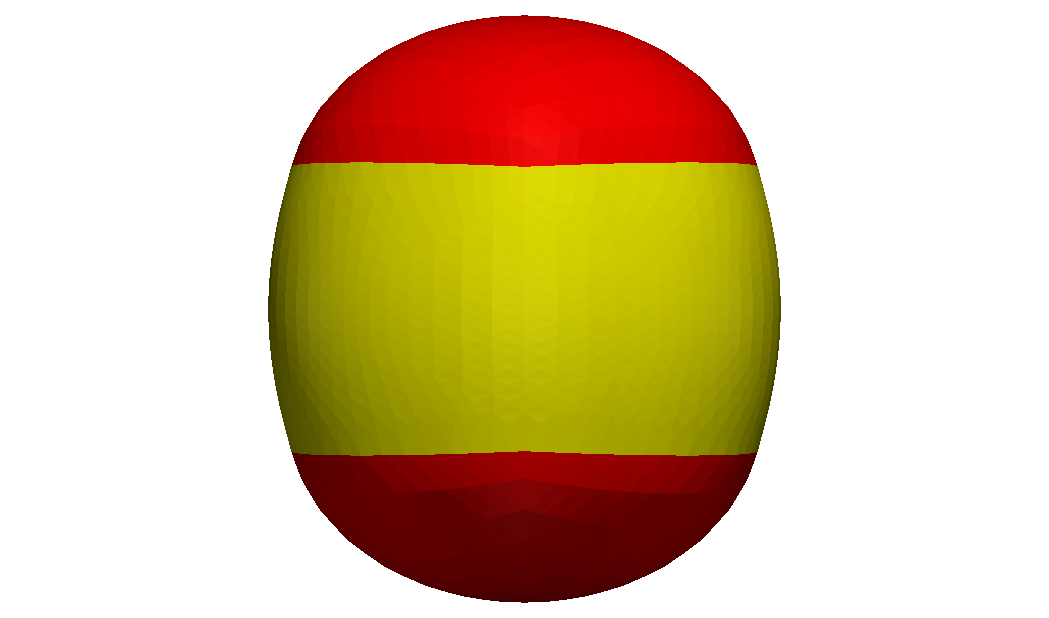} \\
\includegraphics[angle=-90,width=0.32\textwidth]{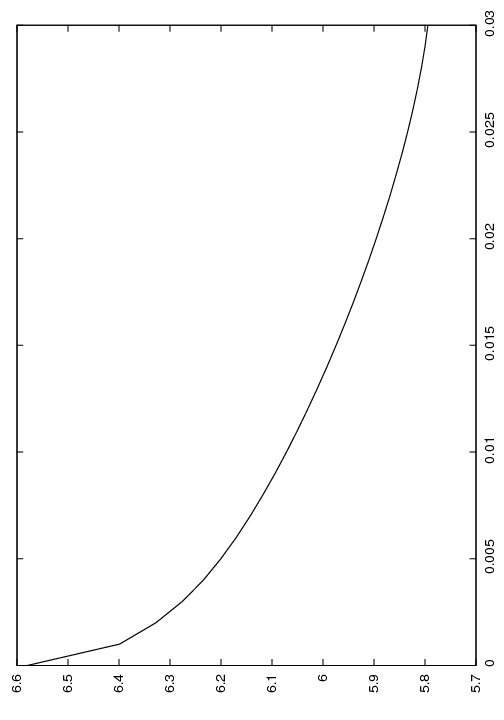}
\caption{($C^1$: $\spont_1 = -2$, $\spont_2 = -0.5$, $\varsigma = 0.1$)
A plot of $(\Gamma^m_i)_{i=1}^2$ at times $t=0,\ 0.01,\ 0.02,\ 0.03$.
Below a plot of the discrete energy $E^{m+1}((\Gamma^m)_{i=1}^2)$.
}
\label{fig:c1ball0}
\end{figure}%
Using the same parameters as in Figure~\ref{fig:c1ball0} to simulate
surface area preserving flow, 
we obtain the evolution shown in Figure~\ref{fig:c1ball1},
where here we have chosen $\varrho = 2$.
The evolution goes towards a more elongated cylinder with two round caps.
Here the observed relative surface area loss is $0.18\%$.
\begin{figure}
\center
\includegraphics[angle=-0,width=0.24\textwidth]{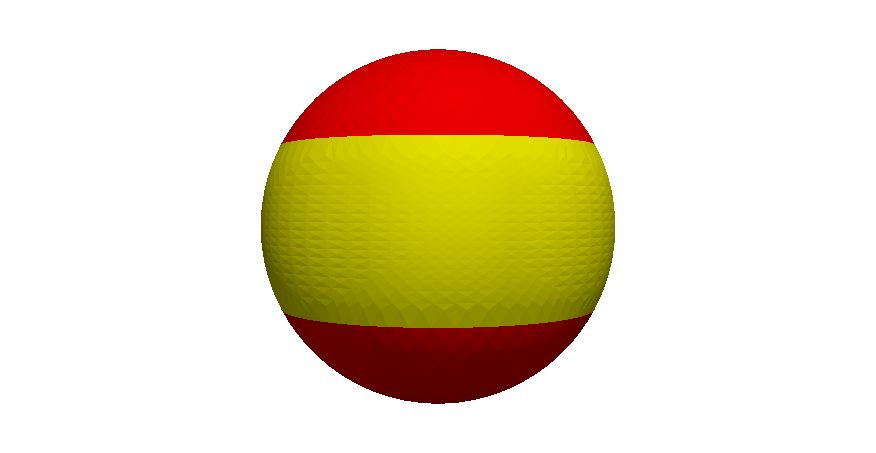}
\includegraphics[angle=-0,width=0.24\textwidth]{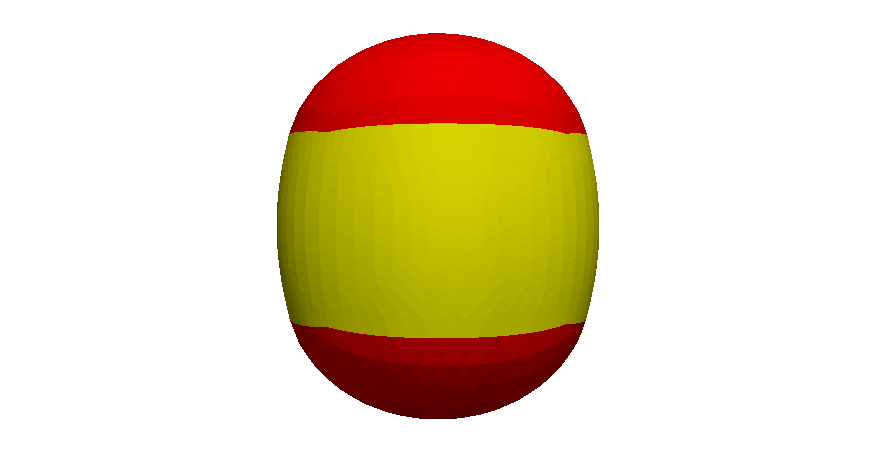}
\includegraphics[angle=-0,width=0.24\textwidth]{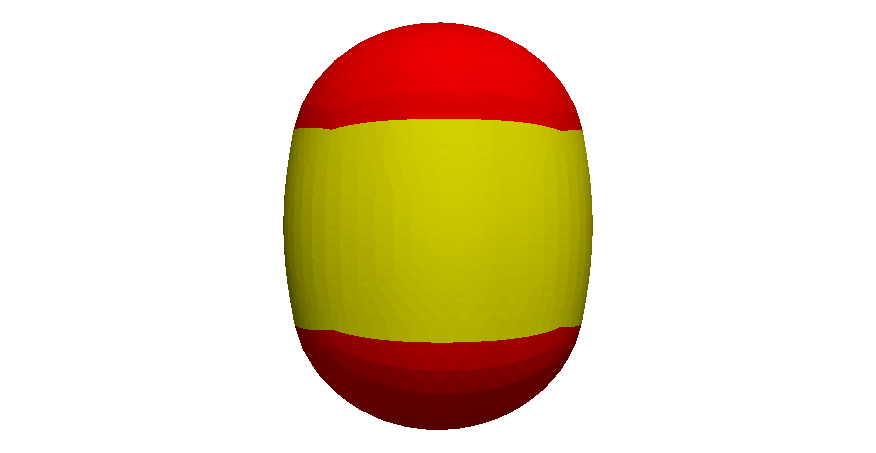} 
\includegraphics[angle=-0,width=0.24\textwidth]{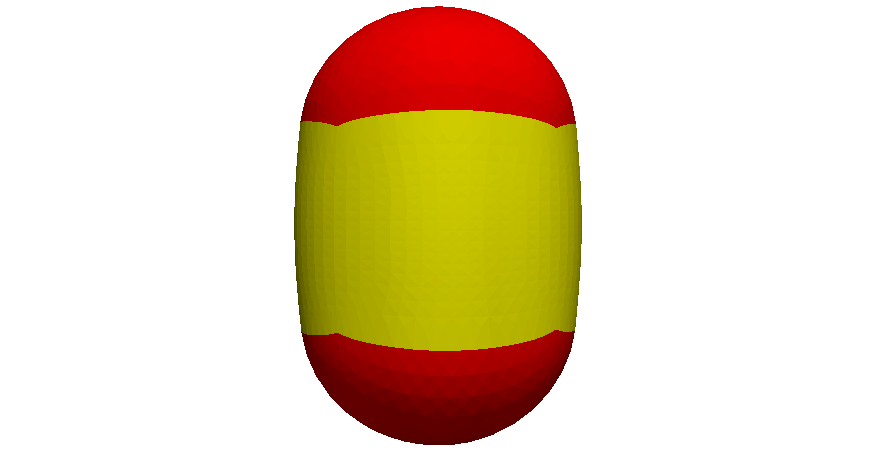} \\
\includegraphics[angle=-90,width=0.32\textwidth]{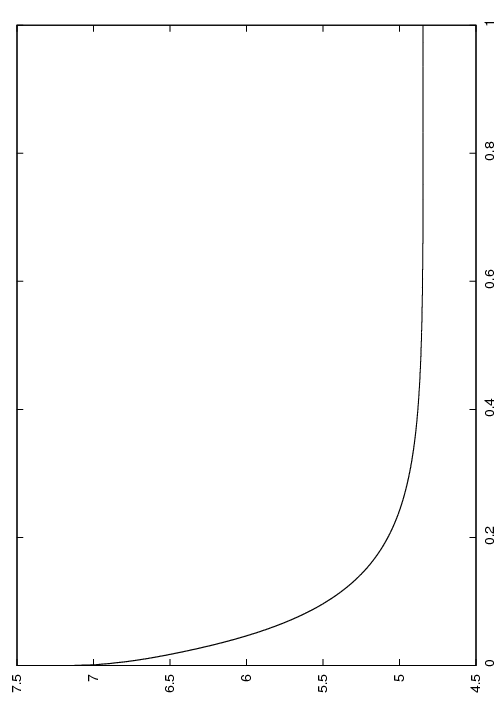}
\caption{($C^1$: $\spont_1 = -2$, $\spont_2 = -0.5$, $\varsigma = 0.1$,
$\varrho = 2$)
Surface area preserving flow.
A plot of $(\Gamma^m_i)_{i=1}^2$ at times $t=0,\ 0.1,\ 0.2,\ 1$.
Below a plot of the discrete energy $E^{m+1}((\Gamma^m)_{i=1}^2)$.
}
\label{fig:c1ball1}
\end{figure}%
The volume preserving variant is shown in Figure~\ref{fig:c1ball2},
where in order to dampen the tangential motion we choose $\theta = 0.05$.
Here the observed relative volume loss is $-0.12\%$.
\begin{figure}
\center
\includegraphics[angle=-0,width=0.24\textwidth]{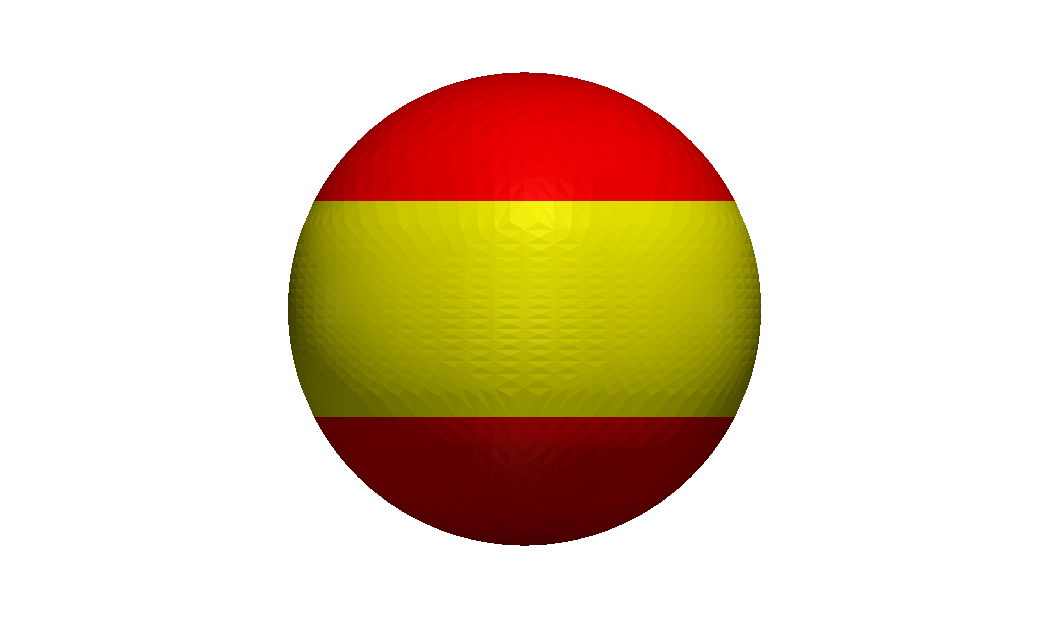}
\includegraphics[angle=-0,width=0.24\textwidth]{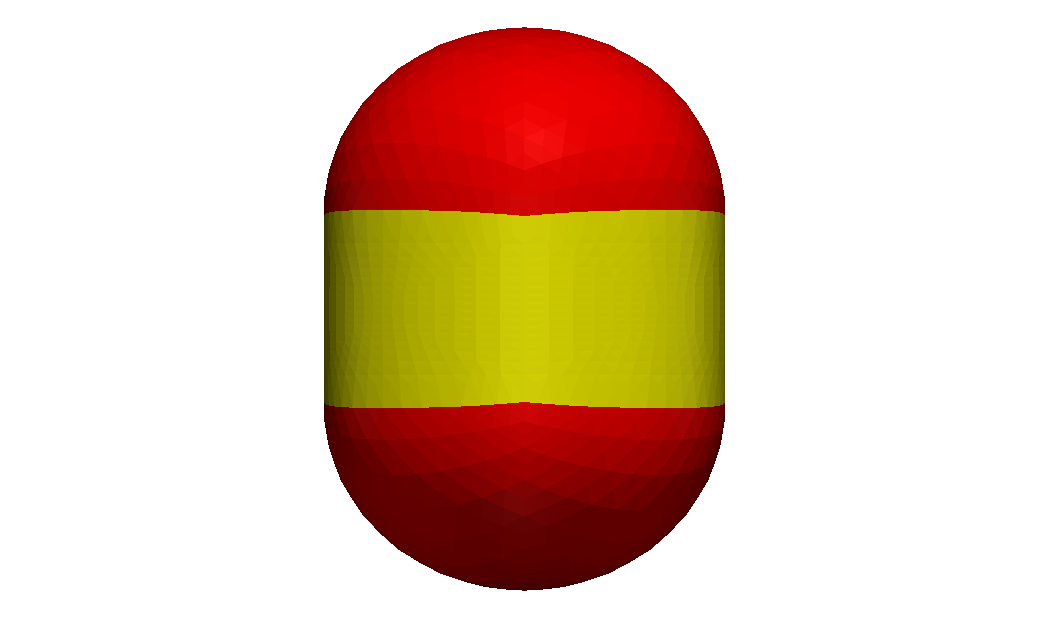}
\includegraphics[angle=-0,width=0.24\textwidth]{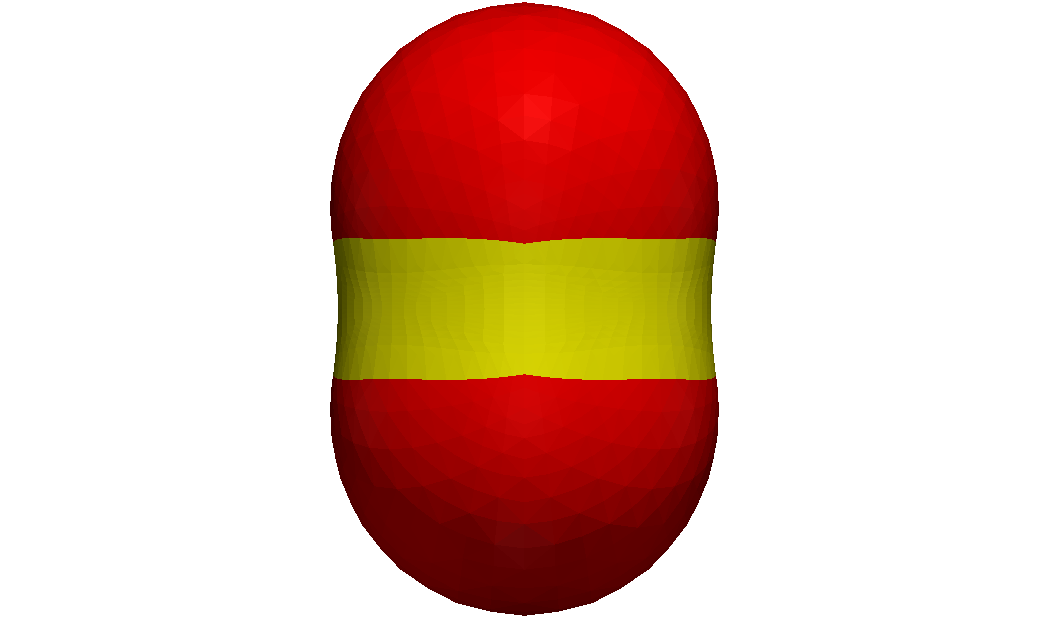} 
\includegraphics[angle=-0,width=0.24\textwidth]{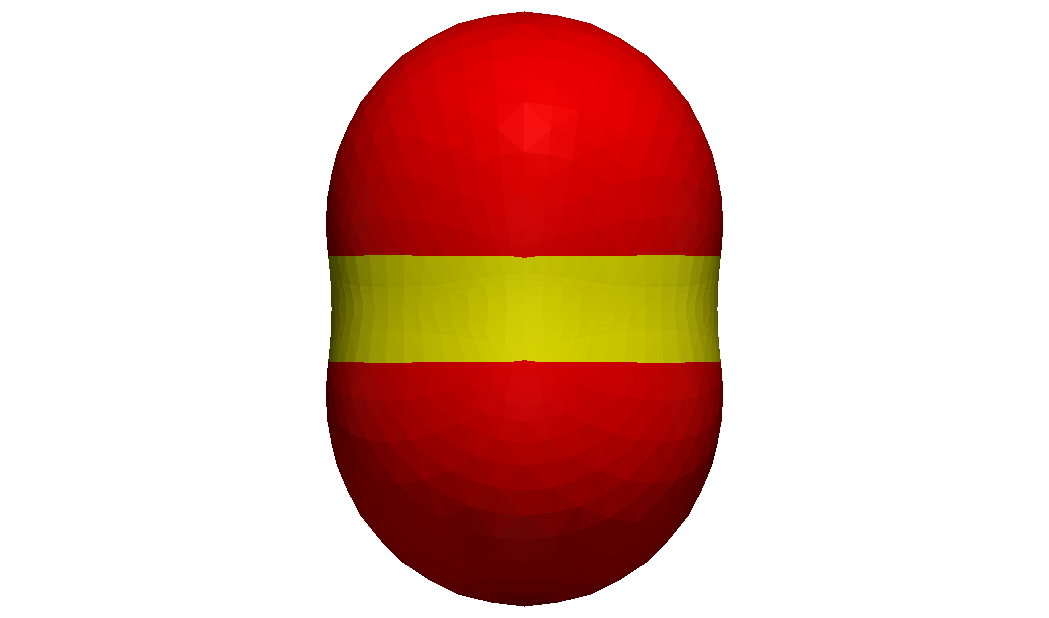} \\
\includegraphics[angle=-90,width=0.32\textwidth]{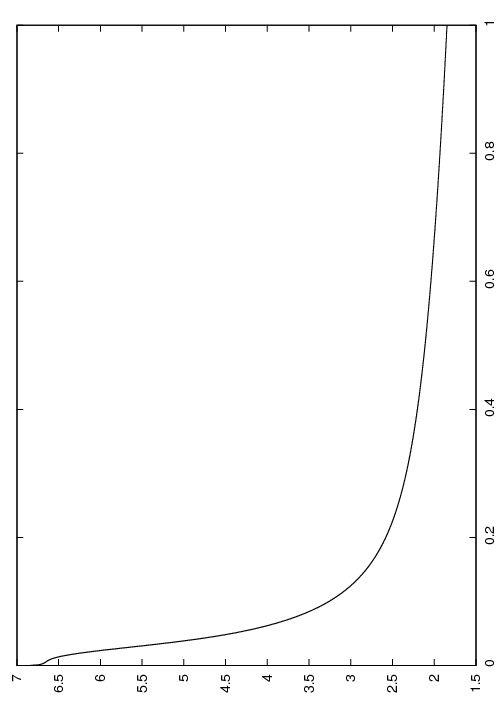}
\caption{($C^1$: $\spont_1 = -2$, $\spont_2 = -0.5$, $\varsigma = 0.1$,
$\varrho = 2$, $\theta = 0.05$)
Volume preserving flow.
A plot of $(\Gamma^m_i)_{i=1}^2$ at times $t=0,\ 0.1,\ 0.5,\  1$.
Below a plot of the discrete energy $E^{m+1}((\Gamma^m)_{i=1}^2)$.
}
\label{fig:c1ball2}
\end{figure}%

A repeat of the simulation in Figure~\ref{fig:c0budding0}, 
now in the context of a $C^1$--condition on $\gamma$, 
is shown in Figure~\ref{fig:c1budding0}. Once again, we observe that the
$C^1$--condition has a dramatic effect on the evolution.
\begin{figure}
\center
\includegraphics[angle=-0,width=0.24\textwidth]{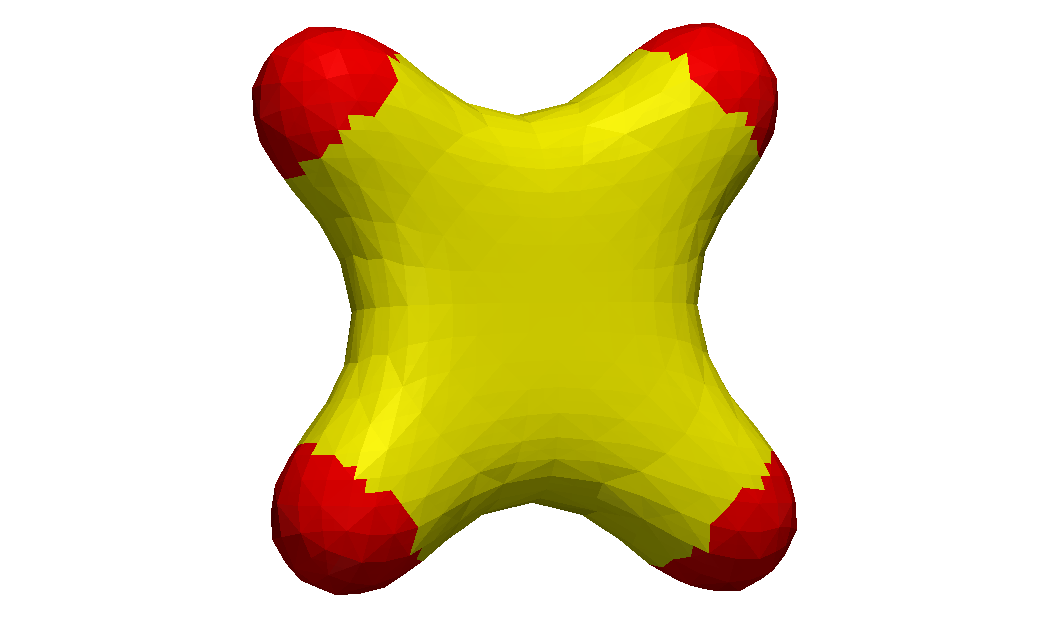}
\includegraphics[angle=-0,width=0.24\textwidth]{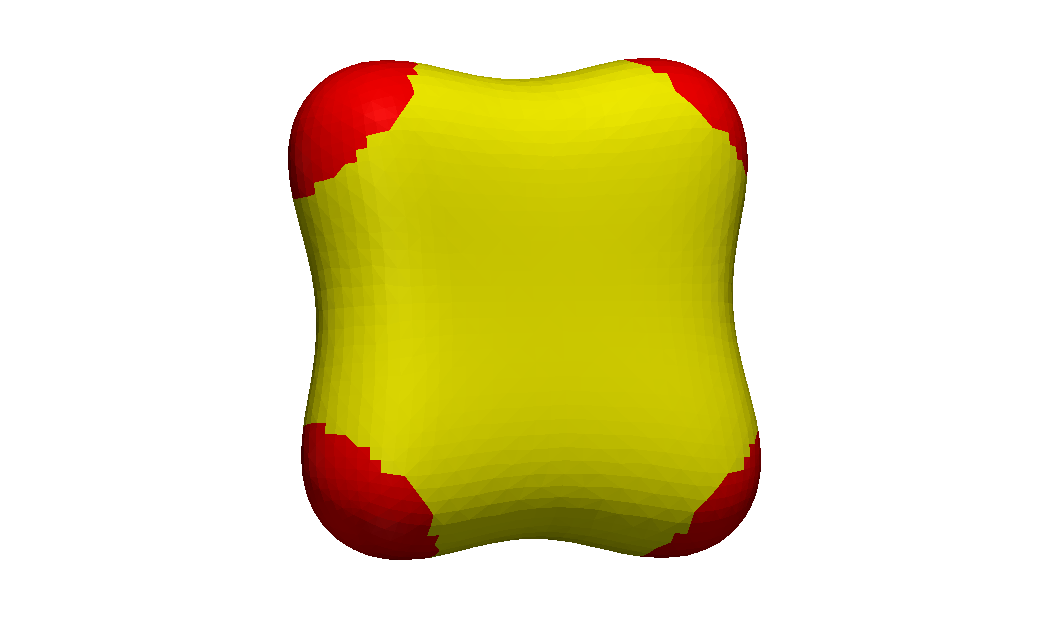}
\includegraphics[angle=-0,width=0.24\textwidth]{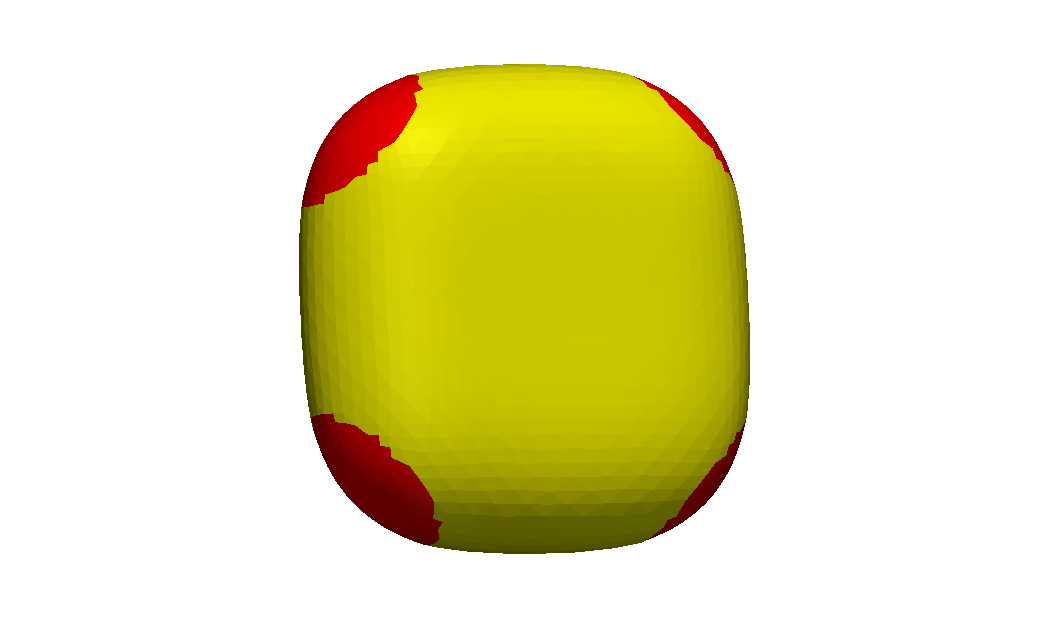}
\includegraphics[angle=-0,width=0.24\textwidth]{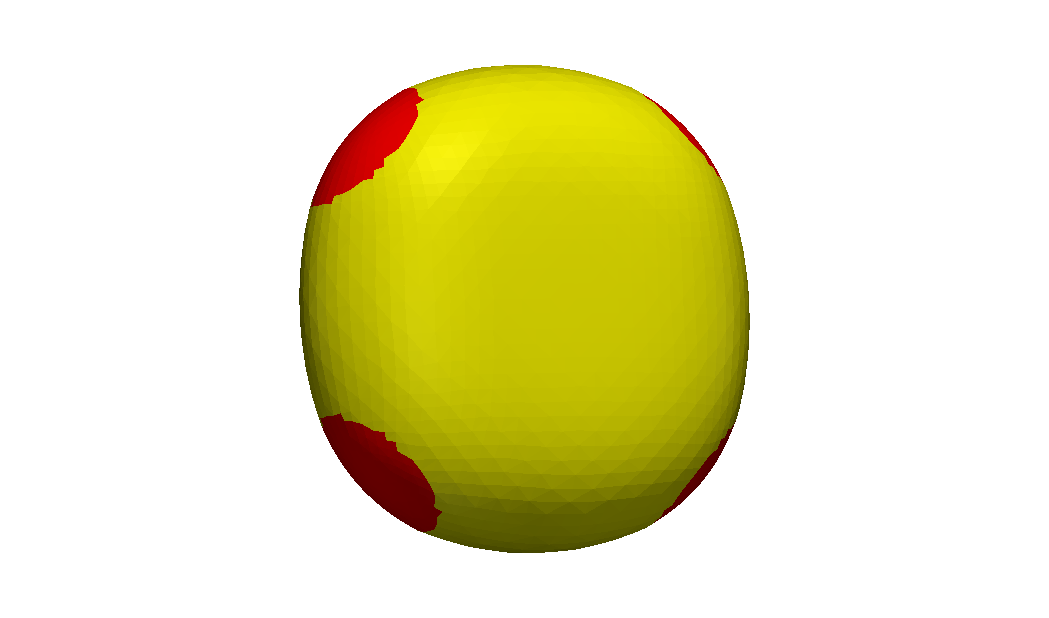}
\includegraphics[angle=-90,width=0.32\textwidth]{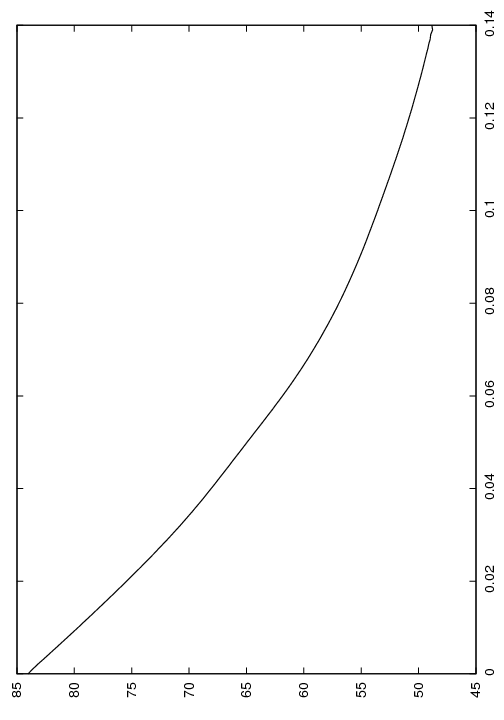}
\caption{($C^1$: $\spont_1 = \spont_2 = 0$, $\varsigma = 1$)
A plot of $(\Gamma^m_i)_{i=1}^2$ at times $t=0,\ 0.05,\ 0.1,\ 0.14$.
Below a plot of the discrete energy $E^{m+1}((\Gamma^m)_{i=1}^2)$.
}
\label{fig:c1budding0}
\end{figure}%

In the 
next experiments
we investigate the possible influence of the Gaussian
curvature energy. If we choose the initial surface as in
Figure~\ref{fig:c0gauss1}, and running with 
$\spont_1 = \spont_2 = -0.5$ and $\varsigma = \alpha^G_1=\alpha^G_2 = 0$, 
then we obtain an expanding sphere, with symmetric phases $1$ and $2$, which
approximates the solution to the nonlinear ODE \cite[(5.1)]{pwfade}.
On the continuous level, thanks to the Gauss--Bonnet theorem, the same solution
is obtained when choosing $\alpha^G_1=\alpha^G_2=0.5$, and this is also
replicated by our numerical approximation.
Choosing $\alpha^G_1=0.5$ and $\alpha^G_2=1$, on the other hand, leads to
phase $1$ growing on the expanding surface. 
Here we remark that (\ref{eq:alphaGbound}) clearly holds, and that reducing the
relative size of phase $2$ is energetically favourable.
See Figure~\ref{fig:c1gauss} for the evolution.
\begin{figure}
\center
\includegraphics[angle=-0,width=0.24\textwidth]{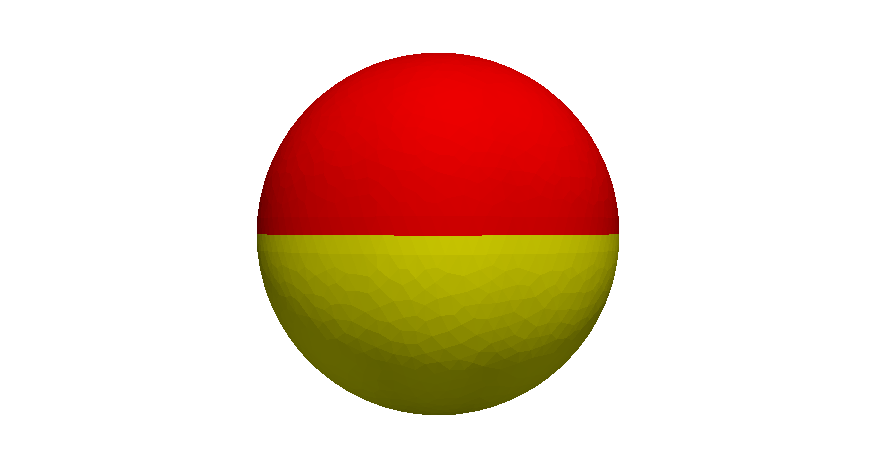}
\includegraphics[angle=-0,width=0.24\textwidth]{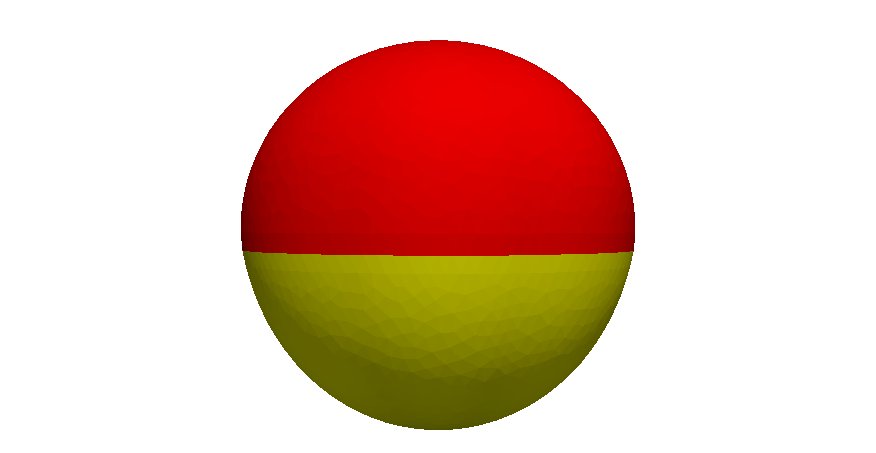}
\includegraphics[angle=-0,width=0.24\textwidth]{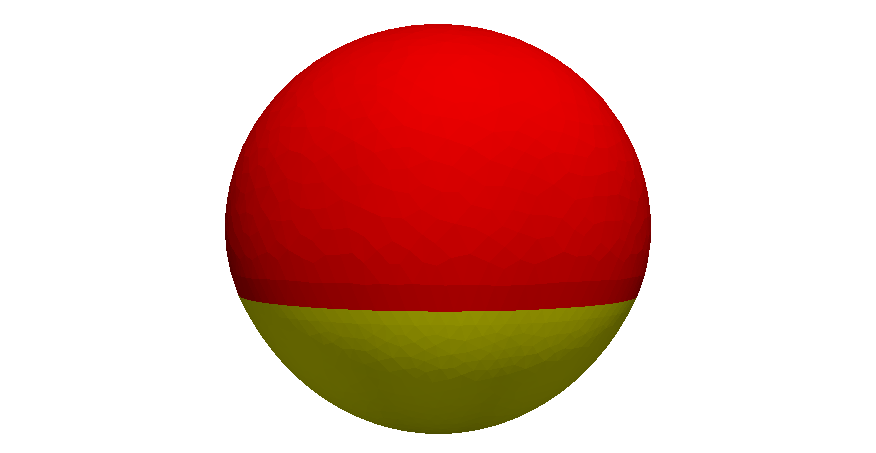}
\includegraphics[angle=-0,width=0.24\textwidth]{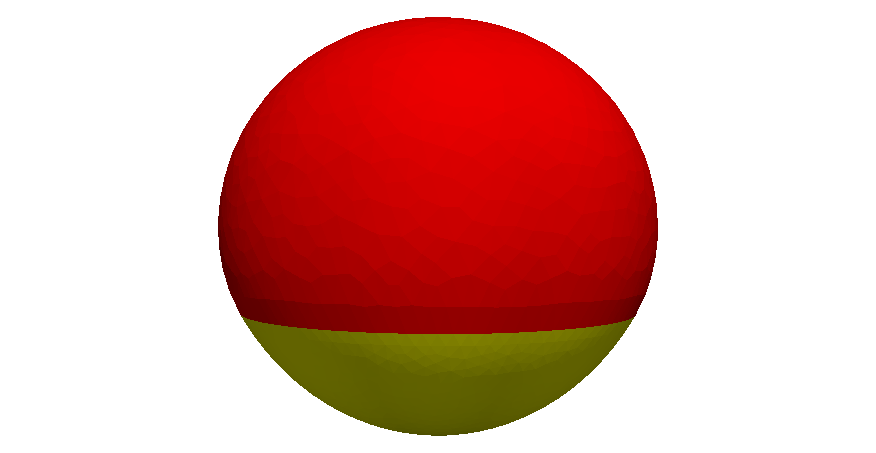}
\includegraphics[angle=-90,width=0.32\textwidth]{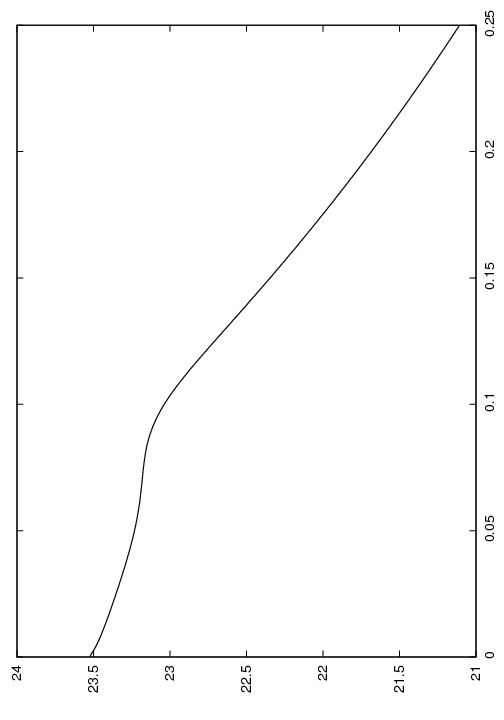}
\caption{($C^1$: $\spont_1 = \spont_2 = -0.5$, $\varsigma = 0$,
$\alpha^G_1=0.5$, $\alpha^G_2=1$)
A plot of $(\Gamma^m_i)_{i=1}^2$ at times $t=0,\ 0.1,\ 0.2,\ 0.25$.
Below a plot of the discrete energy $E^{m+1}((\Gamma^m)_{i=1}^2)$.
}
\label{fig:c1gauss}
\end{figure}%

A well known phenomenon is the moving of the phase boundary in relation to the
neck of a dumbbell for different values of the Gaussian bending rigidities,
see e.g.\ \cite[\S4.3]{ElliottS13}. We now demonstrate this behaviour in the
sharp interface context. To this end, we choose as initial data a membrane with
a neck, and then start an evolution of volume and surface area preserving flow
with $\alpha^G_1 \in \{-2, 0, 2\}$, while $\alpha^G_2 = 0$ and $\varsigma=9$.
For these experiments we choose $\ttau = 10^{-4}$ and let $\varrho = 1$.
See Figure~\ref{fig:C1gaussneck} for the different evolution. Here we can
clearly see that for $\alpha^G_1 = 2$ the interface moves down
relative to the neck of the dumbbell, while for $\alpha^G_1 = -2$ it moves up.
Of course, this is due to the neck having negative Gaussian curvature.
\begin{figure}
\center
\includegraphics[angle=-0,width=0.24\textwidth]{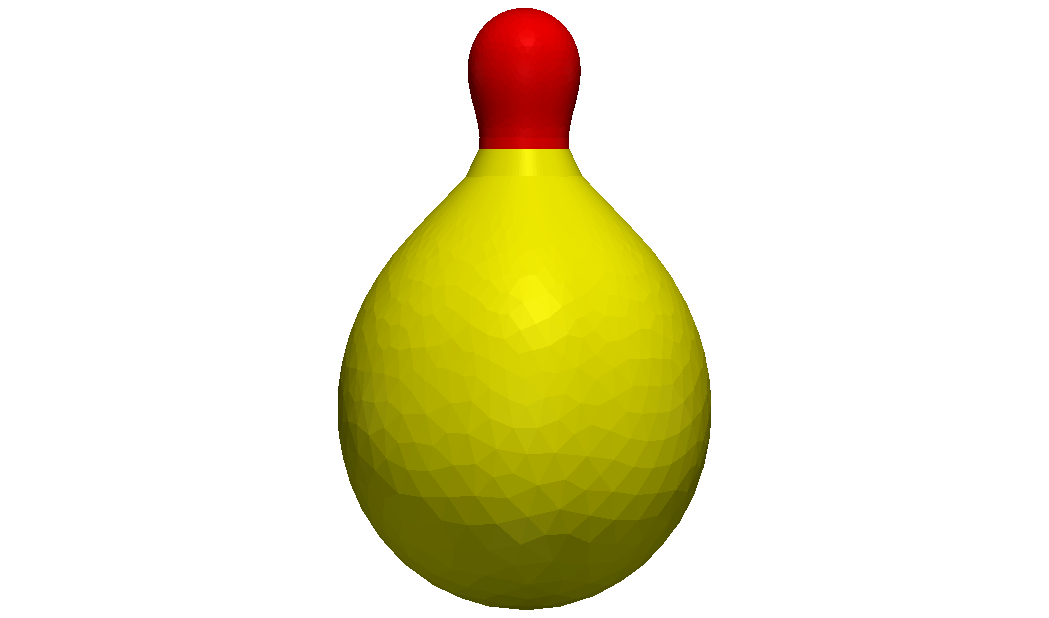}
\includegraphics[angle=-0,width=0.24\textwidth]{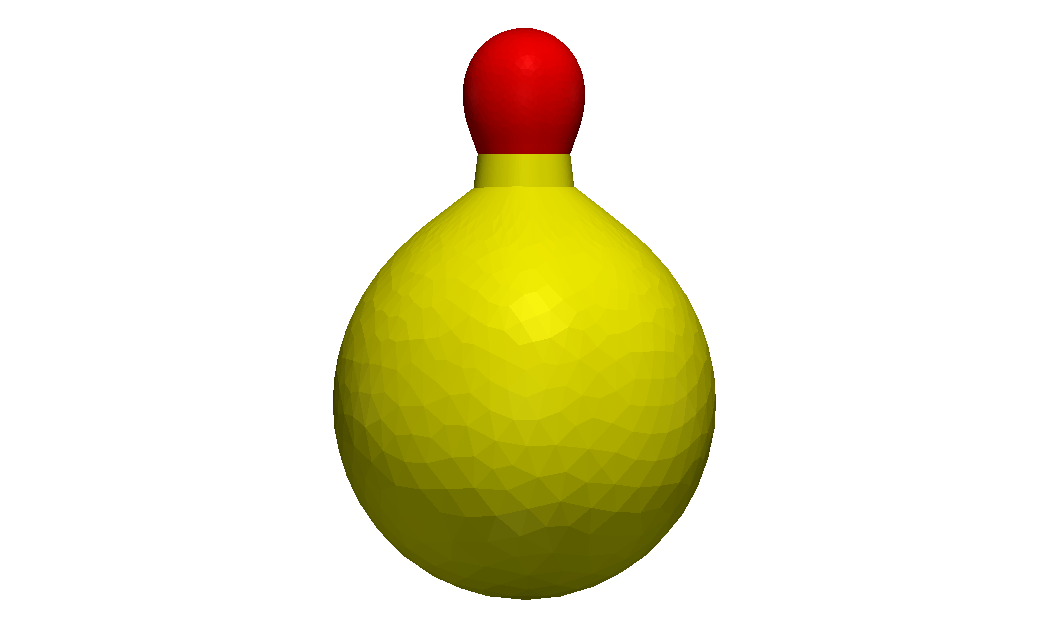}
\includegraphics[angle=-0,width=0.24\textwidth]{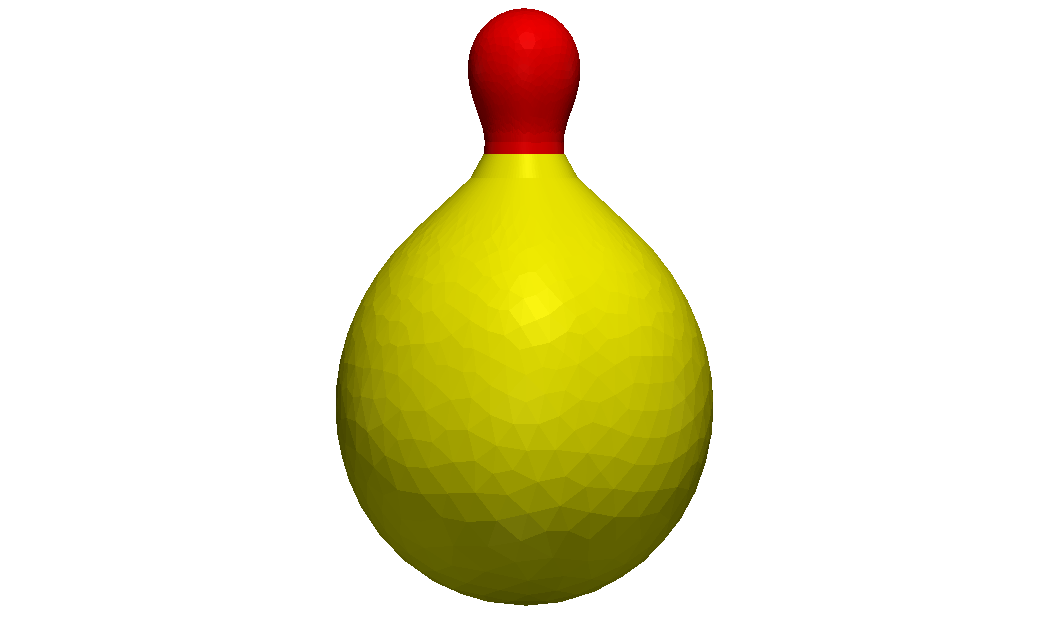}
\includegraphics[angle=-0,width=0.24\textwidth]{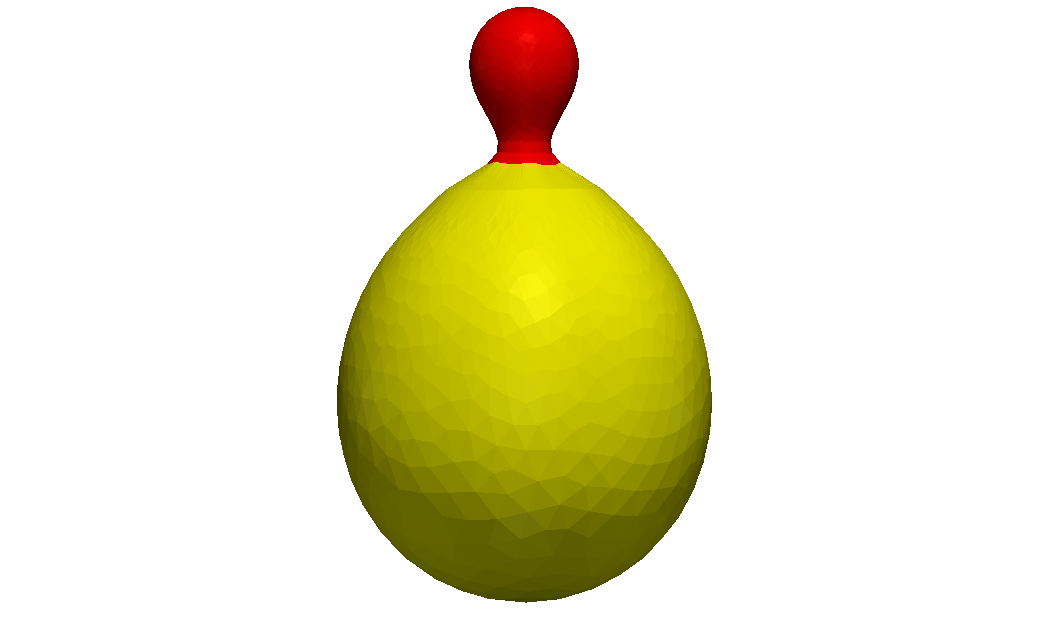}
\caption{($C^1$: $\spont_1 = \spont_2 = 0$, $\varsigma = 9$, $\alpha^G_2 = 0$,
$\varrho=4$)
The initial shape on the left, and $(\Gamma^m_i)_{i=1}^2$ at time $t=0.01$ 
for $\alpha^G_1 = -2$, $0$ and $2$, respectively.
}
\label{fig:C1gaussneck}
\end{figure}%

In the final experiments we approximate well-known equilibrium shapes from
\cite[Fig.\ 8]{JulicherL96}. 
To this end, we consider volume and surface area conserving flow for 
initial surfaces with reduced volumes $v_r \in \{0.95,\, 0.91,\, 0.9\}$,
where
\[
v_r = \frac{3\,\mathcal{L}^3(\Omega^0)}
{4\,\pi\,(\frac{\mathcal{H}^2(\Gamma^0)}{4\,\pi})^\frac32}
= \frac{6\,\pi^\frac12\,\mathcal{L}^3(\Omega^0)}
{({\mathcal{H}^2(\Gamma^0))^\frac32}}
\,,
\]
with $\Omega^0$ denoting the interior of $\Gamma^0$. In addition,
the two phases are chosen such that they have a surface area ratio of $0.1$.
See Figure~\ref{fig:c1ESinit} for the initial shapes,
where in each case we have that the initial discrete surface $\Gamma^0$ 
satisfies $(J_1,J_2) = (2274,2274)$ and $(K_1,K_2) = (1188,1188)$ and
$\mathcal{H}^{2}(\Gamma^0) = 4\,\pi$.
For these experiments we set $\varsigma = 9$ and $\varrho = 4$.
Choosing a time step size of $\ttau = 10^{-4}$, we integrate the volume and
surface area conserving flow to a final time of $t=0.25$ and report on the
obtained shapes in Figure~\ref{fig:c1ESt025}. These configurations appear to
agree well with the computed shapes in \cite[Fig.\ 8]{JulicherL96}.
\begin{figure}
\includegraphics[angle=-0,width=0.24\textwidth]{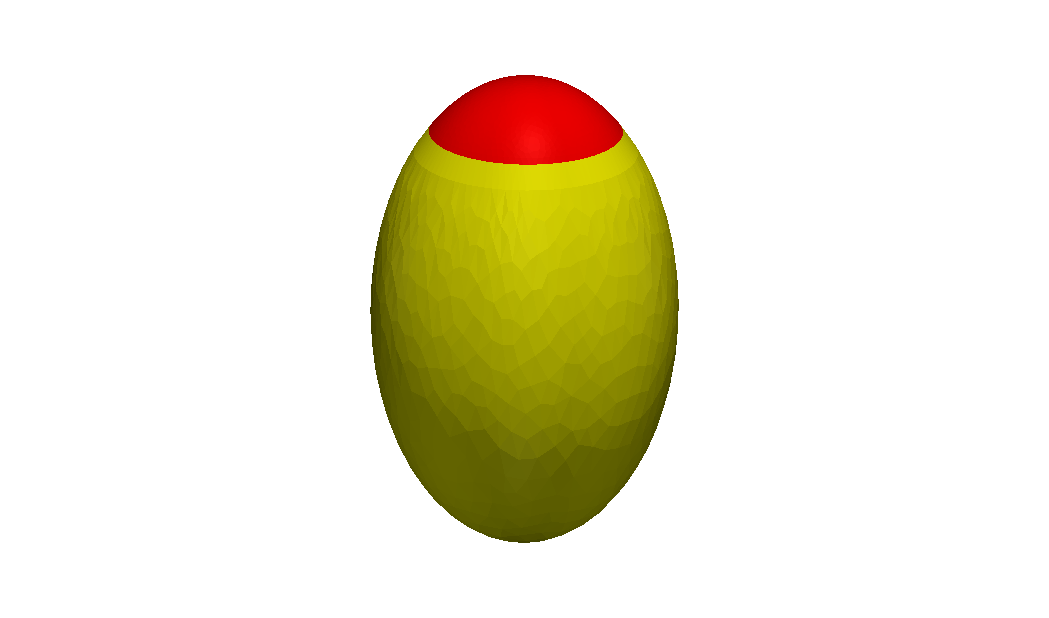}
\includegraphics[angle=-0,width=0.24\textwidth]{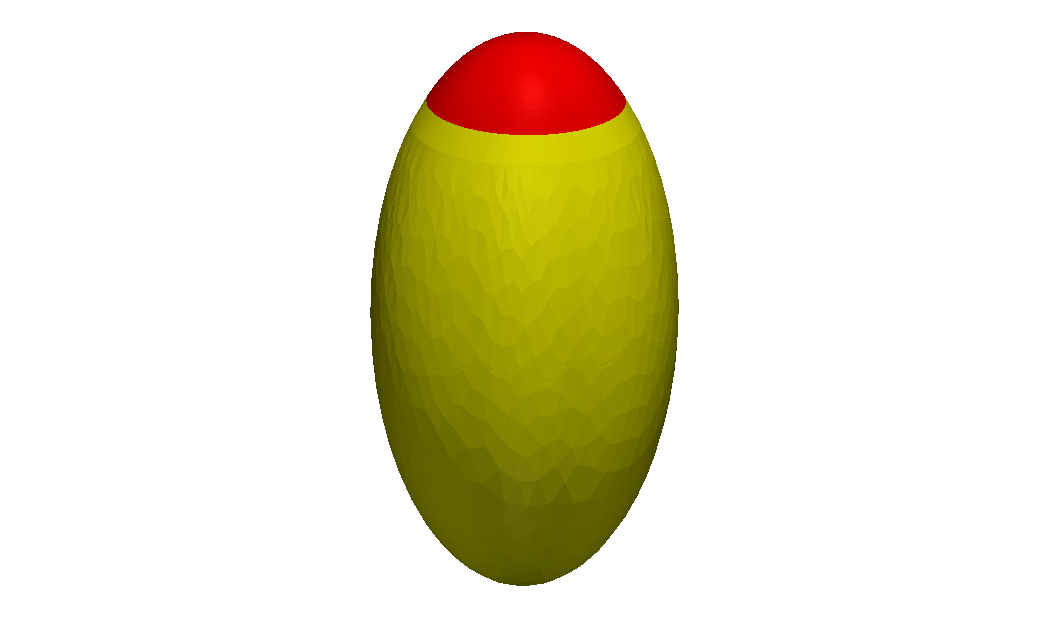}
\includegraphics[angle=-0,width=0.24\textwidth]{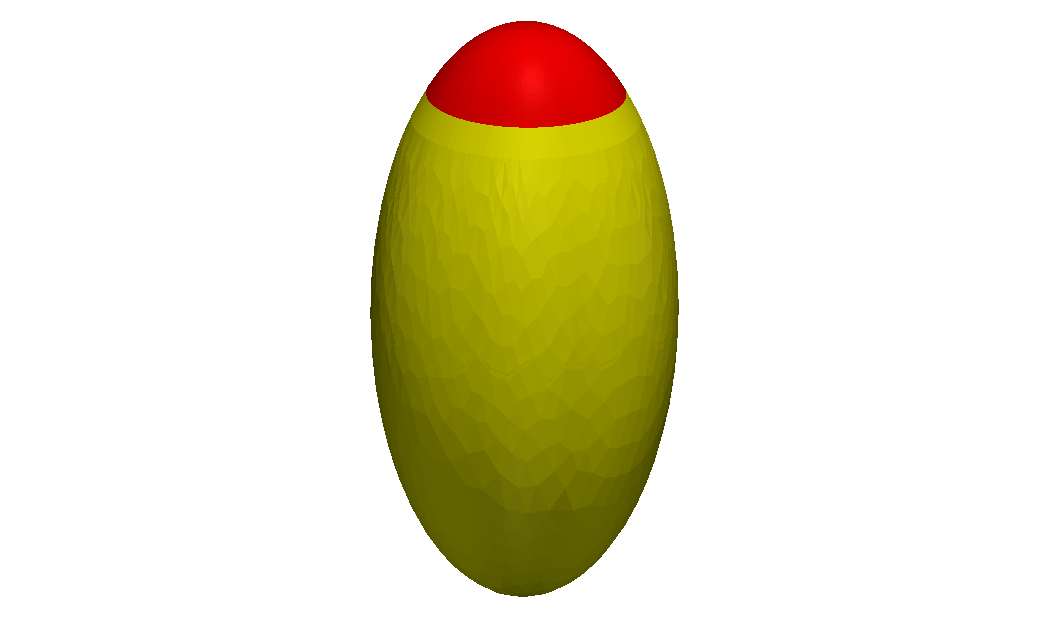}
\caption{The initial shapes for $v_r = 0.95$, $0.91$ and $0.9$, respectively.
}
\label{fig:c1ESinit}
\end{figure}%
\begin{figure}
\center
\includegraphics[angle=-0,width=0.24\textwidth]{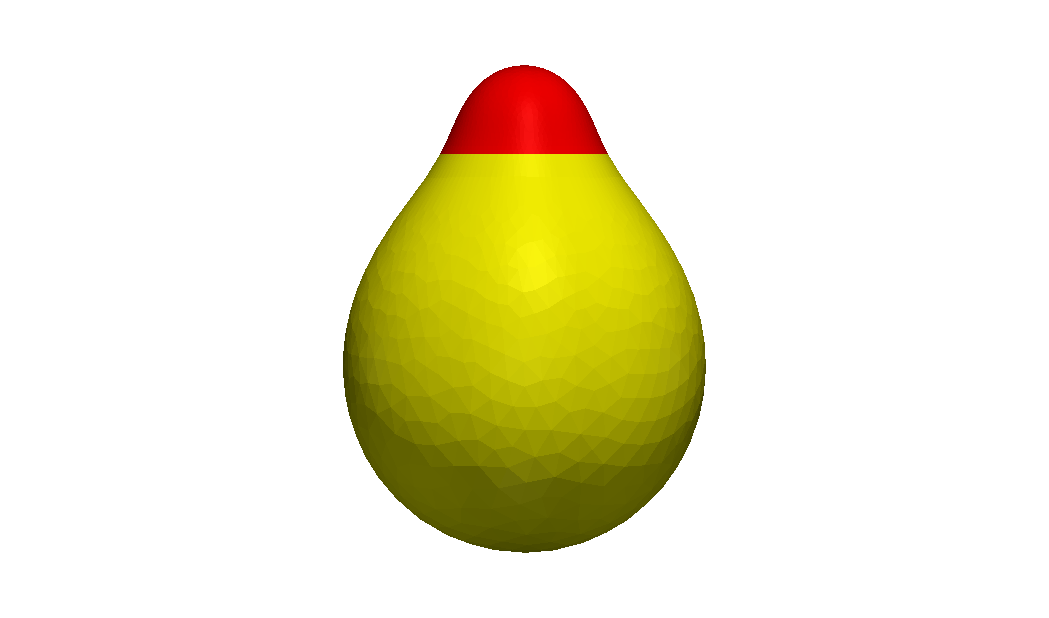}
\includegraphics[angle=-0,width=0.24\textwidth]{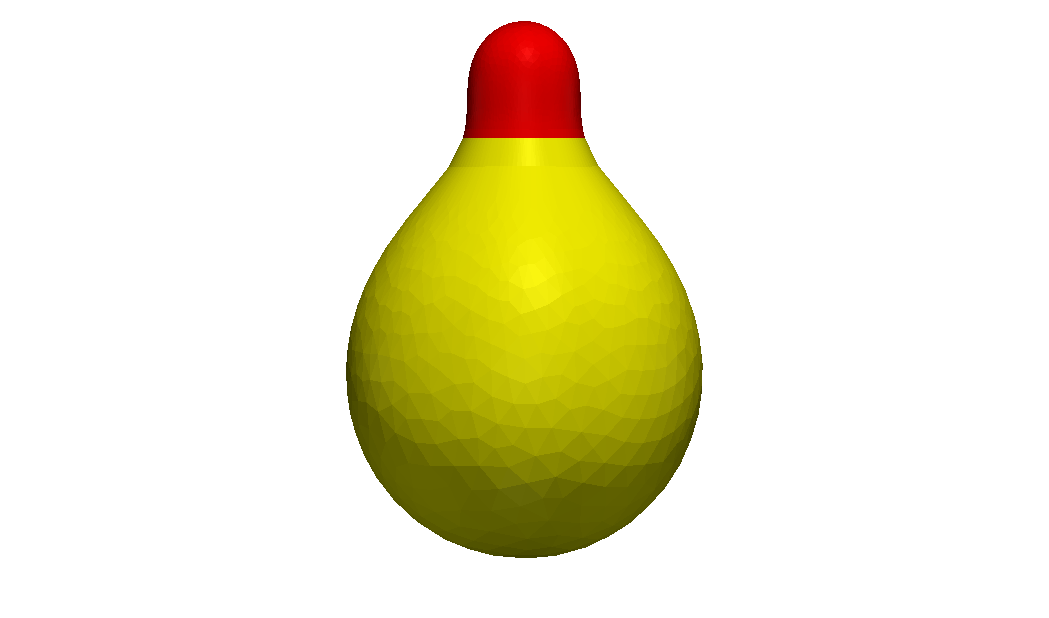}
\includegraphics[angle=-0,width=0.24\textwidth]{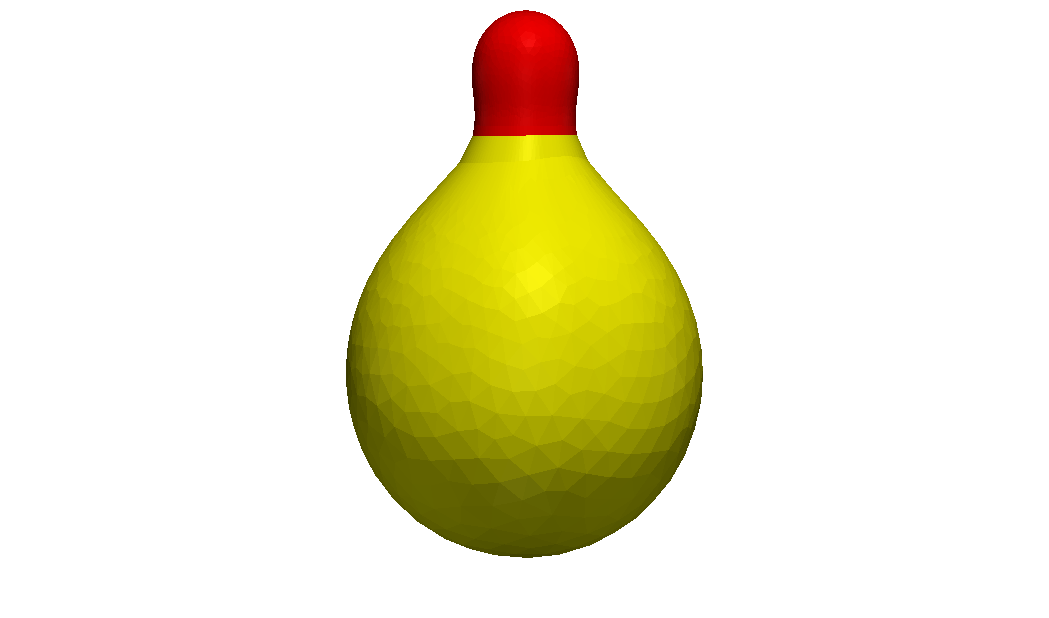}
\caption{($C^1$: $\spont_1 = \spont_2 = 0$, $\varsigma = 9$,
$\varrho=4$)
A plot of $(\Gamma^m_i)_{i=1}^2$ at time $t=0.25$ for the 
reduced volumes $v_r = 0.95$, $0.91$ and $0.9$, respectively.
}
\label{fig:c1ESt025}
\end{figure}%

\appendix

\section{
Derivation of strong formulation and boundary conditions} 

We recall from Section~\ref{sec:2} that our numerical method is based on the
weak formulation 
(\ref{eq:weakGD3a}) and (\ref{eq:LM4a}--f)
of the generalized $L^2$--gradient flow of the energy
$E((\Gamma_i(t))_{i=1}^2)$, see (\ref{eq:E2}). 
It follows from (\ref{eq:LBop}), (\ref{eq:LM4b})
and (\ref{eq:normvar}) that
\begin{align*}
& \vec\varkappa_i\,.\,\partial_\epsilon^0\,\vec\nu_i = 0\,,\qquad
\partial_\epsilon^0\,(\mat Q_{i,\theta}\,\vec\varkappa_i) = - (1-\theta)\,
\varkappa_i\,[\nabs\,\vec\chi]^T\,\vec\nu_i\,,\nonumber \\
& \tfrac12\left[ \alpha_i\,
|\vec\varkappa_i - \spont_i\,\vec\nu_i|^2 - 
2\, \mat Q_{i,\theta}\,\vec y_i\,.\,\vec\varkappa_i
\right] = -\tfrac12\,\alpha_i\,(\varkappa_i^2 - \spont_i^2)
\quad\text{on }\ \Gamma_i(t),\ i=1,2\,.
\end{align*}
We recall that on the continuous level $\vec{\rm m}_i = \vec\mu_i$
and that $\theta \in [0,1]$ is a fixed parameter. Here we need to choose
$\theta = 0$, 
as otherwise the two conditions in (\ref{eq:LM3b},c) are 
incompatible in general.
Then this weak formulation can be formulated as follows.
Given $\Gamma_i(0)$, for all $t\in(0,T]$ find
$\Gamma_i(t)$ 
and $\vec y_i(t) \in \Vti$ such that
\begin{align}
& \left\langle \vec{\mathcal{V}} , \vec\chi \right\rangle_{\Gamma(t)} \!
+ \varrho \left\langle \vec{\mathcal{V}} , \vec\chi \right\rangle_{\gamma(t)}
 =
\sum_{i=1}^2 \left[
\left\langle \nabs\,\vec y_i, \nabs\,\vec\chi \right\rangle_{\Gamma_i(t)} 
+ \left\langle \nabs\,.\,\vec y_i, \nabs\,.\,\vec\chi 
\right\rangle_{\Gamma_i(t)}
- \left\langle (\nabs\,\vec y_i)^T, \mat D(\vec\chi)\,(\nabs\,\vec\id)^T
\right\rangle_{\Gamma_i(t)}
\right. \nonumber \\ & \qquad \left.
+ \tfrac12\,\alpha_i\left\langle (\varkappa_i^2 - \spont_i^2)
\,\nabs\,\vec\id,\nabs\,\vec\chi \right\rangle_{\Gamma_i(t)}
- (1-\theta) \left\langle \varkappa_i\,\vec y_i, 
[\nabs\,\vec\chi]^T\,\vec\nu_i \right\rangle_{\Gamma_i(t)} \right]
- \varsigma \left\langle \vec\id_s, \vec\chi_s \right\rangle_{\gamma(t)}
\nonumber \\ & \qquad 
+ \sum_{i=1}^2
\alpha^G_i \left[ \left\langle \vec\varkappa_\gamma\,.\,\vec\mu_i, 
\vec\id_s\,.\,\vec\chi_s \right\rangle_{\gamma(t)}
+ \left\langle \mat{\mathcal{P}}_\gamma\,(\vec\mu_i)_s, \vec\chi_s
\right\rangle_{\gamma(t)} \right]
\quad \forall\ \vec\chi \in \xspacec \,, \label{eq:weakopena}
\end{align}
with $\gamma(t) = \partial\Gamma_1(t) = \partial\Gamma_2(t)$,
\begin{equation} \label{eq:yiui}
\vec y_i = y_i\,\vec\nu_i + \vec u_i\,, \quad\text{where}\quad
y_i = \alpha_i\,(\varkappa_i - \spont_i)
\ \text{ and }\
\vec u_i \,.\,\vec\nu_i = 0\,,\quad \text{on } \Gamma_i(t)\,,\ i=1,2\,.
\end{equation}
Of course, (\ref{eq:LM4b}) implies that 
$\vec u_i = \vec0$ if $\theta \in (0,1]$.
Hence, as
$\vec\nu_i\,.\,[\nabs\,\vec\chi_i]^T\,\vec\nu_i = 
([\nabs\,\vec\chi_i]\,\vec\nu_i)\,.\,\vec\nu_i = \vec 0\,.\,\vec\nu_i = 0$, 
it holds that
\begin{align} 
(1-\theta) \left\langle \varkappa_i\,\vec y_i, 
[\nabs\,\vec\chi_i]^T\,\vec\nu_i \right\rangle_{\Gamma_i(t)}
& =
(1-\theta) \left\langle \varkappa_i\,y_i\,\vec\nu_i,
[\nabs\,\vec\chi_i]^T\,\vec\nu_i \right\rangle_{\Gamma_i(t)}
+ (1-\theta)\left\langle \varkappa_i\,\vec u_i, 
[\nabs\,\vec\chi_i]^T\,\vec\nu_i \right\rangle_{\Gamma_i(t)} 
\nonumber \\ & 
= (1-\theta)\left\langle \varkappa_i\,\vec u_i, 
[\nabs\,\vec\chi_i]^T\,\vec\nu_i \right\rangle_{\Gamma_i(t)}
= \left\langle \varkappa_i\,\vec u_i, 
[\nabs\,\vec\chi_i]^T\,\vec\nu_i \right\rangle_{\Gamma_i(t)} \label{eq:T59}
\end{align}
for all $\theta\in[0,1]$.

In (\ref{eq:weakopena}) 
the mean curvatures $\varkappa_i$ are defined by (\ref{eq:LBop}),
the curve curvature vector $\vec\varkappa_\gamma$ is given by
(\ref{eq:ident}), and the conormals $\vec\mu_i(t)$ are 
defined by (\ref{eq:mu}) and satisfy $C_1\,(\vec\mu_1 + \vec\mu_2)=\vec0$. 
In addition, we have from (\ref{eq:LM4c}) that
\begin{equation} \label{eq:appybound}
\vec y_i =  - \alpha^G_i\,\vec\varkappa_\gamma - C_1\,\vec\phi
\quad \text{on }\ \gamma(t)\,,\quad i=1,2\,.
\end{equation}

Starting from the weak formulation (\ref{eq:weakopena}), 
we will now recover the
corresponding strong formulation together with the boundary conditions that are
enforced by it.
It follows from (\ref{eq:weakopena}) and (\ref{eq:T59}) that
\begin{align}
& \left\langle \vec{\mathcal{V}} , \vec\chi \right\rangle_{\Gamma(t)} 
+ \varrho \left\langle \vec{\mathcal{V}} , \vec\chi \right\rangle_{\gamma(t)}
=
\sum_{i=1}^2 \left[
\left\langle \nabs\,(y_i\,\vec\nu_i), \nabs\,\vec\chi \right\rangle_{\Gamma_i(t)} 
+ 
\left\langle\nabs\,.\,(y_i\,\vec\nu_i), \nabs\,.\,\vec\chi \right\rangle_{\Gamma_i(t)}
\right. \nonumber \\ & \qquad \left.
- \left\langle [\nabs\,(y_i\,\vec\nu_i)]^T, \mat D(\vec\chi)\,(\nabs\,\vec\id)^T
\right\rangle_{\Gamma_i(t)} 
+ \tfrac12\,\alpha_i\left\langle (\varkappa_i^2 - \spont_i^2),
\nabs\,.\,\vec\chi \right\rangle_{\Gamma_i(t)}
+ \left\langle \nabs\,\vec u_i, \nabs\,\vec\chi \right\rangle_{\Gamma_i(t)} 
\right. \nonumber \\ & \qquad \left.
+ 
\left\langle \nabs\,.\,\vec u_i, \nabs\,.\,\vec\chi \right\rangle_{\Gamma_i(t)}
- \left\langle (\nabs\,\vec u_i)^T, \mat D(\vec\chi)\,(\nabs\,\vec\id)^T
\right\rangle_{\Gamma_i(t)}
- \left\langle \varkappa_i\,\vec u_i, 
[\nabs\,\vec\chi]^T\,\vec\nu_i \right\rangle_{\Gamma_i(t)} \right]
\nonumber \\ & \qquad 
+ \varsigma \left\langle \vec\varkappa_\gamma, \vec\chi 
\right\rangle_{\gamma(t)}
+ \sum_{i=1}^2 \alpha^G_i 
\left[ \left\langle \vec\varkappa_\gamma\,.\,\vec\mu_i, 
\vec\id_s\,.\,\vec\chi_s \right\rangle_{\gamma(t)}
+ \left\langle \mat{\mathcal{P}}_\gamma\,[\vec\mu_i]_s, \vec\chi_s
\right\rangle_{\gamma(t)} \right]
\nonumber \\ & \qquad 
=: \sum_{i=1}^2 \sum_{\ell=1}^8 T_\ell^{(i)}
+ \varsigma \left\langle \vec\varkappa_\gamma, \vec\chi 
\right\rangle_{\gamma(t)}
+ \sum_{i=1}^2 \alpha^G_i 
\left[ \left\langle \vec\varkappa_\gamma\,.\,\vec\mu_i, 
\vec\id_s\,.\,\vec\chi_s \right\rangle_{\gamma(t)}
+ \left\langle \mat{\mathcal{P}}_\gamma\,[\vec\mu_i]_s, \vec\chi_s
\right\rangle_{\gamma(t)} \right]
\nonumber \\ & \quad \hspace{10cm}
\quad \forall\ \vec\chi \in \xspacec\,.
\label{eq:appdagger}
\end{align}
In order to identify the first term on the right hand side in
(\ref{eq:appdagger}), we now recall (A.21) and (A.30) in \cite{pwfopen},
where we note that in our situation $\beta = 0$, and that the results there are
for $d=3$, but are also true for $d=2$, where we always assume that
$\varsigma = \alpha^G_1 = \alpha^G_2 = 0$.
Hence we have that
\begin{align}
\sum_{\ell=1}^8 T_\ell^{(i)} & =
\left\langle - \alpha_i\,\Delta_s\, \varkappa_i 
+ \tfrac12\,\alpha_i\,(\varkappa_i - \spont_i)^2 \,\varkappa_i
- \alpha_i\,(\varkappa_i - \spont_i)\,|\nabs\,\vec\nu_i|^2,
\vec\chi \,.\,\vec\nu_i \right\rangle_{\Gamma_i(t)}
+ \alpha_i\left\langle (\nabs\, \varkappa_i)\,.\,\vec\mu_i , \vec\chi\,.\,\vec\nu_i
\right\rangle_{\gamma(t)} 
\nonumber \\ & \quad
- \alpha_i \left\langle (\varkappa_i - \spont_i) \,(\nabs\,\vec\nu_i)\,\vec\mu_i, \vec\chi \right\rangle_{\gamma(t)} 
- \tfrac12\,\alpha_i \left\langle (\varkappa_i - \spont_i)^2 ,
\vec\chi\,.\,\vec\mu_i \right\rangle_{\gamma(t)} + B^{(i)}
\qquad\forall\ \vec\chi \in \xspacec\,, 
\label{eq:strongopena}
\end{align}
where
\begin{align}
B^{(i)} & = \left\langle \nabs\,.\,\vec u_i , \vec\chi\,.\,\vec\mu_i
\right\rangle_{\gamma(t)}
- \left\langle \nabs\,\vec u_i , \vec\mu_i\otimes\vec\chi
\right\rangle_{\gamma(t)}
- \left\langle \varkappa_i\,\vec u_i\,.\,\vec\mu_i, \vec\chi\,.\,\vec\nu_i
\right\rangle_{\gamma(t)} 
+ \left\langle (\nabs\,\vec u_i)\,\vec\mu_i,
(\vec\chi\,.\,\vec\nu_i)\,\vec\nu_i \right\rangle_{\gamma(t)} 
\nonumber \\ & 
=: \sum_{\ell=1}^4 D_\ell^{(i)}\,.
\label{eq:B}
\end{align}
It immediately follows from (\ref{eq:strongopena}) that the strong formulation
of the flow equation is
\begin{equation} 
\vec{\mathcal{V}} = \left[ - \alpha_i\,\Delta_s\,\varkappa_i + 
\tfrac12\,\alpha_i\,(\varkappa_i - \spont_i)^2 \,\varkappa_i -
\alpha_i\,(\varkappa_i - \spont_i) \,|\nabs\,\vec\nu_i|^2 \right] \vec\nu_i
\quad \text{on }\ \Gamma_i(t)\,.
\label{eq:strongopen}
\end{equation}
Collecting the boundary terms arising in (\ref{eq:appdagger}) and
(\ref{eq:strongopena}), similarly to \cite[(A.32)]{pwfopen}, gives:
\begin{align}
\varrho \left\langle \vec{\mathcal{V}} , \vec\chi \right\rangle_{\gamma(t)}
& = \varsigma \left\langle \vec\varkappa_\gamma, \vec\chi 
\right\rangle_{\gamma(t)} +
\sum_{i=1}^2 \sum_{\ell = 1}^6 B_\ell^{(i)}  \nonumber \\ & =
\varsigma \left\langle \vec\varkappa_\gamma, \vec\chi 
\right\rangle_{\gamma(t)}
+  \sum_{i=1}^2 \left[
 \left\langle \alpha_i\,(\nabs\, \varkappa_i)\,.\,\vec\mu_i
, \vec\chi\,.\,\vec\nu \right\rangle_{\gamma(t)} 
- \left\langle \tfrac12\,\alpha_i\,(\varkappa_i - \spont_i)^2, 
\vec\chi\,.\,\vec\mu_i \right\rangle_{\gamma(t)} \right.
\nonumber \\ & \qquad \qquad \qquad \qquad \left.
- \left\langle \alpha_i\,(\varkappa_i - \spont_i)\,(\nabs\,\vec\nu_i)\,
\vec\mu_i, \vec\chi \right\rangle_{\gamma(t)} 
+ \alpha^G_i \left\langle \vec\varkappa_\gamma\,.\,\vec\mu_i, 
\vec\id_s\,.\,\vec\chi_s \right\rangle_{\gamma(t)} \right.
\nonumber \\ & \qquad \qquad \qquad \qquad \left.
+ \alpha^G_i \left\langle \mat{\mathcal{P}}_\gamma\,\vec\chi_s, [\vec\mu_i]_s
\right\rangle_{\gamma(t)} + B^{(i)} \right] 
\qquad\forall\ \vec\chi \in \xspacec\,.
\label{eq:BBB}
\end{align}

We now investigate the boundary conditions arising from (\ref{eq:BBB}) 
in the case $C_1 = 0$. To this end, we recall from (\ref{eq:yiui}),
(\ref{eq:appybound}) and (\ref{eq:idss}) that 
\begin{equation} \label{eq:vecu}
\alpha_i\,(\varkappa_i - \spont_i)
+\alpha^G_i\,\vec\varkappa_\gamma\,.\,\vec\nu_i 
= 0 \quad\text{and}\quad
\vec u_i = -\alpha^G_i\,(\vec\varkappa_\gamma\,.\,\vec\mu_i)\,\vec\mu_i 
\quad\text{on }\ \gamma(t)\,,\ i=1,2\,.
\end{equation}
Using the simplifications in (A.34)--(A.41) in 
\cite{pwfopen}, as well as (\ref{eq:vecu}), the right hand side of 
(\ref{eq:BBB}) can be simplified to obtain
\begin{align}
\varrho \left\langle \vec{\mathcal{V}} , \vec\chi \right\rangle_{\gamma(t)}
 & =\varsigma \left\langle \vec\varkappa_\gamma, \vec\chi 
\right\rangle_{\gamma(t)} +
\sum_{i=1}^2 \sum_{\ell = 1}^6 B_\ell^{(i)} \nonumber \\ & =
\varsigma \left\langle \vec\varkappa_\gamma, \vec\chi 
\right\rangle_{\gamma(t)}
+ \sum_{i=1}^2 \left\langle
(-\tfrac12\,\alpha_i\,(\varkappa_i - \spont_i)^2 
- \alpha^G_i\,\Gauss_i)\,\vec\mu_i 
+ ( (\alpha_i\,(\nabs\,\varkappa_i)\,.\,\vec\mu_i - 
 \alpha^G_i\,(\tau_i)_s)\,\vec\nu_i, \vec\chi \right\rangle_{\gamma(t)}
\nonumber \\ & \hspace{9cm}
\qquad\forall\ \vec\chi \in \xspacec\,.
\label{eq:BBB2}
\end{align}
It follows from (\ref{eq:vecu}) and (\ref{eq:BBB2}) that the necessary
boundary conditions are
\begin{subequations}
\begin{align}
&
\alpha_i\,(\varkappa_i - \spont_i) + \alpha^G_i\,
\vec\varkappa_\gamma\,.\,\vec\nu_i = 0 
\quad \text{on }\ \gamma(t)\,,\quad i = 1,2\,, \label{eq:strongC0bc1} \\
& \varsigma\, \vec\varkappa_\gamma+ 
\sum_{i=1}^2
(-\tfrac12\,\alpha_i\,(\varkappa_i - \spont_i)^2 
- \alpha^G_i\,\Gauss_i)\,\vec\mu_i 
+ ( (\alpha_i\,(\nabs\,\varkappa_i)\,.\,\vec\mu_i - 
 \alpha^G_i\,(\tau_i)_s)\,\vec\nu_i
 = \varrho\,\vec{\mathcal{V}} \quad
\text{on }\ \gamma(t)\,.
\label{eq:strongC0bc2} \end{align}
\end{subequations}
In the case of surface area preservation, there is an extra term 
\[
-\sum_{i=1}^2 
\lambda^A_i\left\langle \nabs\,\vec\id,\nabs\,\vec\chi 
\right\rangle_{\Gamma_i(t)}
 = -\sum_{i=1}^2 \lambda^A_i \left\langle \nabs\,.\,\vec\chi,1 
\right\rangle_{\Gamma_i(t)} 
= \sum_{i=1}^2 \lambda^A_i\left[\left\langle \varkappa_i\,\vec\nu_i,\vec\chi 
\right\rangle_{\Gamma_i(t)}
-\left\langle 1, \vec\chi\,.\,\vec\mu_i\right\rangle_{\gamma(t)}
\right] 
\]
on the right hand side of (\ref{eq:weakopena}), on
recalling (\ref{eq:areaE}), (\ref{eq:vardet}), (\ref{eq:varEkb}) and
(\ref{eq:DEthm2.10}). 
Similarly, in the case of volume conservation, there is
an extra term $-\lambda^V\,\sum_{i=1}^2 \langle \vec\nu_i,\vec\chi
\rangle_{\Gamma_i(t)}$ on the right hand side of (\ref{eq:weakopena}),
on recalling (\ref{eq:areaE}), a variational variant of (\ref{eq:dtvol}) and
(\ref{eq:DEthm2.10}). 
Hence overall we obtain
\begin{equation} \label{eq:stronggradflowlambda}
\vec{\mathcal{V}}\,.\,\vec\nu_i = - \alpha_i\,\Delta_s\,\varkappa_i +
\tfrac12\,\alpha_i\,(\varkappa_i - \spont_i)^2 \,\varkappa_i -
\alpha_i\,(\varkappa_i - \spont_i)\,|\nabs\,\vec\nu_i|^2 +
\lambda^A_i\,\varkappa_i - \lambda^V
\quad\text{on } \Gamma_i(t)\,,
\end{equation}
in place of (\ref{eq:strongopen}), as well as 
\begin{equation} \label{eq:strong2C0bc2}
\sum_{i=1}^2 \left[
 ( (\alpha_i\,(\nabs\,\varkappa_i)\,.\,\vec\mu_i - 
 \alpha^G_i\,(\tau_i)_s)\,\vec\nu_i
-(\tfrac12\,\alpha_i\,(\varkappa_i - \spont_i)^2 
+ \alpha^G_i\,\Gauss_i + \lambda^A_i)\,\vec\mu_i \right] +
 \varsigma\, \vec\varkappa_\gamma
 = \varrho\,\vec{\mathcal{V}} 
\quad \text{on }\ \gamma(t)\,,
\end{equation}
in place of (\ref{eq:strongC0bc2}).

We now investigate the boundary conditions arising from (\ref{eq:BBB}) 
in the case $C_1 = 1$, where we recall that in this situation 
$\vec\nu = \vec\nu_1 = \vec\nu_2$ and $\vec\mu = \vec\mu_2 = - \vec\mu_1$
on $\gamma(t)$.
To this end, we obtain from (\ref{eq:yiui}), (\ref{eq:appybound}) 
and (\ref{eq:idss}) that 
$\vec y_2 - \vec y_1 = -(\alpha^G_2 - \alpha^G_1)\,\vec\varkappa_\gamma$
on $\gamma(t)$, and hence
\begin{subequations}
\begin{align} 
\alpha_1\,(\varkappa_1 - \spont_1) +
\alpha^G_1\,\vec\varkappa_\gamma\,.\,\vec\nu & =
\alpha_2\,(\varkappa_2 - \spont_2) +
\alpha^G_2\,\vec\varkappa_\gamma\,.\,\vec\nu \quad\text{on }\ \gamma(t)\,,
\label{eq:akkakn} \\
\vec u_2 - \vec u_1 & = -(\alpha^G_2 - \alpha^G_1)\,
(\vec\varkappa_\gamma\,.\,\vec\mu)\,\vec\mu
\quad\text{on }\ \gamma(t)\,.\label{eq:vecuC1}
\end{align}
\end{subequations}
We now rewrite some of the terms in (\ref{eq:BBB}). To this end,
we first note that it follows from (\ref{eq:idss}), 
(\ref{eq:secondform}) and (\ref{eq:II}) that
\begin{equation} \label{eq:kappanu}
\vec\varkappa_\gamma\,.\,\vec\nu_i = 
\vec\id_{ss}\,.\,\vec\nu_i = - \vec\id_s\,.\,[\vec\nu_i]_s
= \II_i(\vec\id_s,\vec\id_s) \quad\text{and}\quad
\varkappa_i = \II_i(\vec\id_s,\vec\id_s) + \II_i(\vec\mu_i,\vec\mu_i)
\quad\text{on }\ \gamma(t)\,.
\end{equation}
Moreover, it follows from (\ref{eq:torsion2}) and (\ref{eq:mu}) that
\begin{equation} \label{eq:torsion}
[\vec\nu_i]_s \times \vec\id_s = - \tau_i\,\vec\mu_i\times\vec\id_s =
(-1)^{i}\,\tau_i\,\vec\nu_i \qquad \text{on } \gamma(t)\,,
\end{equation}
where we have observed that $[\vec\nu_i]_s$ is perpendicular to $\vec\nu_i$.
We also note from (\ref{eq:torsion}) and (\ref{eq:mu}) that
\begin{equation} \label{eq:mus}
[\vec\mu_i]_s = 
(-1)^{i}\left(
[\vec\nu_i]_s\times \vec\id_s + \vec\nu_i\times\vec\id_{ss} \right)
= \tau_i\,\vec\nu_i + (-1)^{i}\,(\vec\varkappa_\gamma\,.\,\vec\mu_i)\,
\vec\nu_i\times \vec\mu_i 
= \tau_i\,\vec\nu_i - (\vec\varkappa_\gamma\,.\,\vec\mu_i)\,\vec\id_s
\quad \text{on } \gamma(t)\,.
\end{equation}
Now it follows from (\ref{eq:II}) that
\begin{equation*} 
(\nabs\,\vec\nu_i)\,\vec\mu_i\,.\,\vec\chi
= -\II_i(\vec\mu_i,\vec\id_s)\,\vec\chi\,.\,\vec\id_s -
\II_i(\vec\mu_i,\vec\mu_i)\,\vec\chi\,.\,\vec\mu_i\,,
\end{equation*}
and so
\begin{align}
B_3^{(i)} & = 
\alpha_i \left\langle (\varkappa_i - \spont_i), 
\II_i(\vec\mu_i, \vec\id_s)\,\vec\chi\,.\,\vec\id_s 
+ \II_i(\vec\mu_i,\vec\mu_i)\,\vec\chi\,.\,\vec\mu_i 
\right\rangle_{\gamma(t)} \nonumber \\ &
= \alpha_i \left\langle (\varkappa_i - \spont_i), 
\tau_i\,\vec\chi\,.\,\vec\id_s + (\varkappa_i -
\vec\varkappa_\gamma\,.\,\vec\nu_i)\,
\vec\chi\,.\,\vec\mu_i \right\rangle_{\gamma(t)}
\,,
\label{eq:B3}
\end{align}
where we have noted (\ref{eq:torsion2}) and (\ref{eq:kappanu}). 
It follows from (A.36) and (A.37) in \cite{pwfopen} that
\begin{align}
B_4^{(i)} + B_5^{(i)} & = 
\alpha^G_i \left\langle \left[ 
\vec\varkappa_\gamma\,.\,\vec\nu_i\,\tau_i 
- (\vec\varkappa_\gamma\,.\,\vec\mu_i)_s \right] \vec\id_s
+ \left[ \tau_i^2 - (\vec\varkappa_\gamma\,.\,\vec\mu_i)^2 \right]\vec\mu_i 
,\vec\chi \right\rangle_{\gamma(t)} \nonumber \\ & \qquad
- \alpha^G_i \left\langle 
 \left[ (\tau_i)_s + (\vec\varkappa_\gamma\,.\,\vec\mu_i)\,
  \vec\varkappa_\gamma\,.\,\vec\nu_i \right] \vec\nu_i
,\vec\chi \right\rangle_{\gamma(t)}\,.
\label{eq:B45}
\end{align}
We have from (\ref{eq:B}) above and (A.39) in \cite{pwfopen} that
\begin{equation} \label{eq:D12}
D_1^{(i)} + D_2^{(i)} =
\left\langle (\vec u_i)_s, (\vec\chi\,.\,\vec\mu_i)\,
\vec\id_s - (\vec\chi\,.\,\vec\id_s)\,\vec\mu_i \right\rangle_{\gamma(t)} .
\end{equation}
As $\vec u_i\,.\,\vec\nu_i=0$, we have from (\ref{eq:B}), (\ref{eq:II}) and
(\ref{eq:kappanu}) that
\begin{align}
D_3^{(i)} + D_4^{(i)} & =
- \left\langle \varkappa_i\,\vec\mu_i + (\nabs\,\vec\nu_i)\,\vec\mu_i,
(\vec\chi\,.\,\vec\nu_i)\,\vec u_i \right\rangle_{\gamma(t)} 
\nonumber \\ &
= \left\langle \II_i(\vec\mu_i,\vec\id_s), (\vec
u_i\,.\,\vec\id_s)\,\vec\chi\,.\,\vec\nu_i \right\rangle_{\gamma(t)} 
+ \left\langle \II_i(\vec\mu_i,\vec\mu_i) - \varkappa_i, (\vec
u_i\,.\,\vec\mu_i)\,\vec\chi\,.\,\vec\nu_i \right\rangle_{\gamma(t)} 
\nonumber \\ &
= \left\langle \tau_i\, \vec u_i\,.\,\vec\id_s -
(\vec\varkappa_\gamma\,.\,\vec\nu_i)\,\vec u_i\,.\,\vec\mu_i,
\vec\chi\,.\,\vec\nu_i \right\rangle_{\gamma(t)} .
\label{eq:D34}
\end{align}
Therefore (\ref{eq:B}), (\ref{eq:D12}) and (\ref{eq:D34}) yield that
\begin{align*}
\sum_{i=1}^2 
B_6^{(i)} & = 
\left\langle (\vec u_2 - \vec u_1)_s, (\vec\chi\,.\,\vec\mu)\,
\vec\id_s - (\vec\chi\,.\,\vec\id_s)\,\vec\mu \right\rangle_{\gamma(t)} 
+ \left\langle (\vec u_2 - \vec u_1)\,.\,\vec\id_s, 
\tau\,(\vec\chi\,.\,\vec\nu) \right\rangle_{\gamma(t)} \nonumber \\ & \qquad
- \left\langle (\vec u_2 - \vec u_1)\,.\,\vec\mu
(\vec\varkappa_\gamma\,.\,\vec\nu)\,\vec\chi\,.\,\vec\nu 
\right\rangle_{\gamma(t)} , 
\end{align*}
where $\tau = \tau_2 = -\tau_1$. Hence we obtain from (\ref{eq:vecuC1})
and (\ref{eq:mus}) that
\begin{align}
\sum_{i=1}^2 
B_6^{(i)} & = [\alpha^G_i]_1^2
\left\langle (\vec\varkappa_\gamma\,.\,\vec\mu)_s\,\vec\id_s 
+ (\vec\varkappa_\gamma\,.\,\vec\mu)^2\,\vec\mu
+ (\vec\varkappa_\gamma\,.\,\vec\mu)\,(\vec\varkappa_\gamma\,.\,\vec\nu)
\,\vec\nu , \vec\chi \right\rangle_{\gamma(t)} .
\label{eq:B6new}
\end{align}
Combining (\ref{eq:BBB}), (\ref{eq:B3}), (\ref{eq:B45}) and (\ref{eq:B6new}) 
yields, on noting (\ref{eq:akkakn}) and (\ref{eq:idss}), that
\begin{align*}
&\varrho \left\langle \vec{\mathcal{V}} , \vec\chi \right\rangle_{\gamma(t)}
 =\varsigma \left\langle \vec\varkappa_\gamma, \vec\chi 
\right\rangle_{\gamma(t)} +
\sum_{i=1}^2 \sum_{\ell = 1}^6 B_\ell^{(i)} 
=
 \left\langle [\alpha_i\,(\nabs\, \varkappa_i)]_1^2\,.\,\vec\mu
- [\alpha^G_i]_1^2\,\tau_s + \varsigma\,\vec\varkappa_\gamma\,.\,\vec\nu
, \vec\chi\,.\,\vec\nu \right\rangle_{\gamma(t)} 
\nonumber \\ & \quad\quad
+ \left\langle -\tfrac12\,[\alpha_i\,(\varkappa_i - \spont_i)^2]_1^2
+ [\alpha_i\,(\varkappa_i - \spont_i)\,(\varkappa_i -
  \vec\varkappa_\gamma\,.\,\vec\nu)]_1^2
+ [\alpha^G_i]_1^2\,\tau^2 + \varsigma\,\vec\varkappa_\gamma\,.\,\vec\mu,
 \vec\chi\,.\,\vec\mu \right\rangle_{\gamma(t)} 
\nonumber \\ & \hspace{11cm}
\qquad\forall\ \vec\chi \in \xspacec\,. 
\end{align*}
This yields the boundary conditions
\begin{subequations}
\begin{align}
& [ \alpha_i\,(\nabs\,\varkappa_i)]_1^2\,.\,\vec\mu 
- [\alpha^G_i]_1^2\,\tau_s +
\varsigma\,\vec\varkappa_\gamma\,.\,\vec\nu
 = \varrho\,\vec{\mathcal{V}}\,.\,\vec\nu 
\quad\text{on }\ \gamma(t) \,, \label{eq:C1norm} \\
& -\tfrac12\, [ \alpha_i\,(\varkappa_i - \spont_i)^2]_1^2
+ [\alpha_i\,(\varkappa_i - \spont_i)\,(\varkappa_i -
\vec\varkappa_\gamma\,.\,\vec\nu)]_1^2
+ [\alpha^G_i]_1^2\,\tau^2 
 + \varsigma\,\vec\varkappa_\gamma\,.\,\vec\mu = 
\varrho\,\vec{\mathcal{V}}\,.\,\vec\mu 
\quad\text{on }\ \gamma(t)\,,
\label{eq:C1conorm}
\end{align}
\end{subequations}
as well as $\varrho\,\vec{\mathcal{V}}\,.\,\vec\id_s = 0$ on $\gamma(t)$,
which has no effect on the evolution of $(\Gamma_i(t))_{i=1}^2$.
Clearly, (\ref{eq:akkakn}) and (\ref{eq:C1norm},b) yield
the conditions (\ref{eq:C1bc1}--c), on accounting for the surface area
constraints analogously to the case $C_1=0$.

\section*{Acknowledgements}
The authors gratefully acknowledge the support 
of the Regensburger Universit\"atsstiftung Hans Vielberth.

\providecommand\noopsort[1]{}\def\soft#1{\leavevmode\setbox0=\hbox{h}\dimen7=\ht0\advance
  \dimen7 by-1ex\relax\if t#1\relax\rlap{\raise.6\dimen7
  \hbox{\kern.3ex\char'47}}#1\relax\else\if T#1\relax
  \rlap{\raise.5\dimen7\hbox{\kern1.3ex\char'47}}#1\relax \else\if
  d#1\relax\rlap{\raise.5\dimen7\hbox{\kern.9ex \char'47}}#1\relax\else\if
  D#1\relax\rlap{\raise.5\dimen7 \hbox{\kern1.4ex\char'47}}#1\relax\else\if
  l#1\relax \rlap{\raise.5\dimen7\hbox{\kern.4ex\char'47}}#1\relax \else\if
  L#1\relax\rlap{\raise.5\dimen7\hbox{\kern.7ex
  \char'47}}#1\relax\else\message{accent \string\soft \space #1 not
  defined!}#1\relax\fi\fi\fi\fi\fi\fi}

\end{document}